\def\red{\color{red}}
\def\rr{{\mathbb R}}
\def\rn{{\mathbb{R}^n}}
\def\nn{{\mathbb N}}
\def\zz{{\mathbb Z}}
\def\vi{\varphi}
\def\fz{\infty }
\def\lf{\left}
\def\r{\right}
\def\ls{\lesssim}
\def\wz{\widetilde}
\def\XXint#1#2#3{{\setbox0=\hbox{$#1{#2#3}{\int}$ }
		\vcenter{\hbox{$#2#3$ }}\kern-.6\wd0}}
\newcommand{\bbbr}{\mathbb R}
\newcommand{\bbbz}{\mathbb Z}
\DeclareMathOperator{\diam}{diam}
\newtheorem{theorem}{Theorem}[section]
\newtheorem{lemma}[theorem]{Lemma}
\newtheorem{proposition}[theorem]{Proposition}
\theoremstyle{definition}
\newtheorem{example}[theorem]{Example}
\newtheorem{remark}[theorem]{Remark}
\newtheorem{convention}[theorem]{Convention}
\renewcommand{\appendix}{\par
	\setcounter{section}{0}%
	\setcounter{subsection}{0}%
	\setcounter{subsubsection}{0}%
	\gdef\thesection{\@Alph\c@section}%
	\gdef\thesubsection{\@Alph\c@section.\@arabic\c@subsection}%
	\gdef\theHsection{\@Alph\c@section.}%
	\gdef\theHsubsection{\@Alph\c@section.\@arabic\c@subsection}%
	\csname appendixmore\endcsname
}
\numberwithin{equation}{section}
\def\mvint_#1{\mathchoice
	{\mathop{\vrule width 6pt height 3 pt depth -2.5pt
			\kern -9pt \intop}\limits_{\kern -3pt #1}}%
	{\mathop{\vrule width 5pt height 3 pt depth -2.6pt
			\kern -6pt \intop}\nolimits_{#1}}%
	{\mathop{\vrule width 5pt height 3 pt depth -2.6pt
			\kern -6pt \intop}\nolimits_{#1}}%
	{\mathop{\vrule width 5pt height 3 pt depth -2.6pt
			\kern -6pt \intop}\nolimits_{#1}}}
\title{\bf Optimal Embeddings for Triebel--Lizorkin and Besov Spaces on Quasi-Metric Measure Spaces
\footnotetext{\hspace{-0.35cm} 2020 {\it Mathematics Subject Classification}.
Primary 46E36, 46E35; Secondary  43A85, 42B35, 30L99.
\endgraf {\it Key words and phrases.}  lower measure bound, measure density condition, quasi-metric measure space, doubling measure, Sobolev space, Besov space, Triebel--Lizorkin space, Sobolev inequality, Sobolev-Poincar\'e inequality.
\endgraf This project is partially supported by the National
Key Research and Development Program of China (Grant No. 2020YFA0712900)
and the National Natural Science Foundation of China (Grant Nos. 11971058,
12071197, 12122102  and 11871100).}}
\author{Ryan Alvarado\footnote{Corresponding
author, E-mail: \texttt{rjalvarado@amherst.edu}/{\red February 12, 2022}/Final version.},\ \
Dachun Yang and Wen Yuan}
\date{}
\begin{document}
	
\maketitle
\vspace{-0.8cm}
	
\begin{center}
\begin{minipage}{13cm}
{\small {\bf Abstract.}\quad In this article, via certain lower bound conditions on the measures under consideration, the authors fully characterize the Sobolev  embeddings for the scales of
Haj\l asz--Triebel--Lizorkin and Haj\l asz--Besov spaces  in the general
context of quasi-metric measure spaces for an optimal range of the
smoothness parameter $s$. An interesting facet of this work is how the
range of $s$  for which the above characterizations of
these  embeddings  hold true is intimately linked (in a quantitative manner)
to the geometric makeup of the underlying space. Moreover, although stated for
Haj\l asz--Triebel--Lizorkin and Haj\l asz--Besov spaces   in the context of quasi-metric spaces,
the main results in this article  improve  known work even for Sobolev spaces in
the metric setting.}
\end{minipage}
\end{center}

\tableofcontents	
\vspace{0.5cm}

\section{Introduction}
The Sobolev embedding theorem for the classical Sobolev space $W^{1,p}(\Omega)$ on a domain $\Omega\subset \rn$  is
an indispensable tool in analysis with far-reaching applications and it highlights the fact
that functions belonging to $W^{1,p}(\Omega)$ possess certain nice properties
depending on if $p\in[1,n)$, $p=n$, or $p\in(n,\fz)$; see \cite{sob36,sob38,gag,Niren,trud,ad}.
As expected, there is significant interplay between the geometrical properties of the underlying domain $\Omega$ and the availability of these embeddings. In fact, for domains $\Omega$ with a sufficiently regular  boundary,
it is well known that the  Sobolev embedding  $W^{1,p}(\Omega)
\hookrightarrow L^{\frac{np}{n-p}}(\Omega)$  with $p\in[1,n)$
holds true if and only if $\Omega$ satisfies the so-called measure density condition, that is,
there exists a positive constant $C$ such that, for any $x\in\Omega$ and $r\in(0,1]$, one has
\begin{equation}
\label{eq22-1}
|B(x,r)\cap \Omega|\ge C r^n,
\end{equation}
where $|B(x,r)\cap \Omega|$
denotes the $n$-dimensional Lebesgue measure of $B(x,r)\cap \Omega$,
and $B(x,r)$ the ball in $\rn$ centered at $x$ with the radius $r$;
see, for instance, \cite{ad,hajlaszkt1}.  For a related
result regarding Slobodeckij--Sobolev spaces, we refer the reader to \cite{z15}. From a geometric perspective, the measure density condition implies, among other things, that $\Omega$ is, in a sense, ``thick" nearby its boundary, and it is easy to see from definitions that the category of domains satisfying \eqref{eq22-1} encompasses a wide variety of environments that appear in many branches of mathematics, including the classes of Lipschitz, John, and $(\varepsilon,\delta)$ domains, just to name a few (see \cite{EG,MS,Jon81}).

One significant development in the theory of Sobolev spaces emerged in the 1990s with the introduction of Sobolev spaces defined on environments much more general than the Euclidean ambient, namely, on metric measure spaces \cite{hajlasz2,cheeger,SMP,SMP2,shanmugalingam}. In this more general context, if one assumes a lower bound for the growth of the measure $\mu$, in the sense that
\begin{equation}
\label{eq22}
\mu(B(x,r))\geq \kappa r^Q,
\end{equation}
where $Q\in(0,\fz)$ and $\kappa$ is a positive constant independent of $x$ and $r$, then a Sobolev embedding theorem holds true for the Haj\l asz--Sobolev spaces $M^{1,p}$ (introduced by Haj\l asz in \cite{hajlasz2}), where  the nature of the embeddings depends on if $p\in(0,Q)$, $p=Q$, or $p\in(Q,\fz)$ (see \cite{hajlasz,hajlasz2,agh20}). Thus, this exponent $Q$ gives the counterpart of the dimension of the Euclidean
space $\mathbb{R}^n$. It was recently shown in \cite{agh20} the lower measure bound  \eqref{eq22} is actually equivalent
to the existence of the Sobolev embeddings for $M^{1,p}$.
We also refer the reader to \cite{gorka,hajlaszkt1,hajlaszkt2,hhhpl21,hebey,HK21,Karak1,korobenko,korobenkomr,z15}
for  partial or related results.

The main purpose of this article is to extend the work in \cite{agh20} through the consideration of both a more general geometric setting and a more general scale of function spaces that naturally contain $M^{1,p}$ spaces;
more precisely,  we aim to characterize the Sobolev embeddings for
the  Haj\l asz--Besov and  Haj\l asz--Triebel--Lizorkin spaces in the  more general setting of quasi-metric measure spaces via certain lower bound conditions on the  measures under
consideration.
A distinguishing feature of our work is that we obtain such a generalization without compromising the quantitative aspects of the theory which, in turn, permits us to improve known results even for $M^{1,p}$ spaces in the metric setting. We will expand more on this aspect below. Further applications of the optimal embeddings obtained in this work to Triebel--Lizorkin and Besov extension domains are given in
the forthcoming article \cite{AYY21}.

Besov and Triebel--Lizorkin spaces provide natural scales of spaces that include a number of function spaces used in analysis such as Lebesgue spaces, Hardy spaces, Sobolev spaces, H\"older spaces, and BMO, and they have retained their significance to this very day, playing an important role in both theoretical and applied branches of mathematics. We refer the reader to \cite{T92,T06} for a detailed introduction to these spaces. There have been several approaches to defining Besov and Triebel--Lizorkin spaces on quasi-metric spaces over the years \cite{KYZ11,HaLuYa99i, HaLuYa99ii, HaYa02, HaYa03, HaMuYa08}; however, an important virtue that the Haj\l asz--Besov and Haj\l asz--Triebel--Lizorkin spaces (defined in \cite{KYZ11}) possess over other definitions, is that there is a fruitful theory for this particular brand of spaces without needing to assume that the space is connected or that the measure is doubling.

Turning first to the geometric and measure theoretic considerations in this work, the pair $(X,\rho)$ shall
be called a \emph{quasi-metric space}
if $X$ is a set of cardinality at least 2 and $\rho$ is a quasi-metric, namely,
a nondegenerate, quasi-symmetric nonnegative function defined on
$X\times X$ satisfying the following \emph{quasi-subadditivity condition}
\begin{equation}\label{TR-ineq.2}
\rho(x,y)\leq C\max \left\{\rho(x,z),\rho(z,y)\right\},\qquad\forall\ x,\,y,\,z\in X,
\end{equation}
which is equivalent to the, perhaps, more standard \emph{quasi-triangle inequality}
$$\rho(x,y)\leq C'\left[\rho(x,z)+\rho(z,y)\right],\quad\forall\ x,\,y,\,z\in X,$$
where $C$ and $C'$ are positive constants independent of $x$, $y$, and $z$.
In this context, let $\mu$ be a nonnegative Borel measure on $(X,\rho)$ such that, for all $x\in X$ and   $r\in (0,\fz)$, the ball  $B_\rho(x,r)$ is $\mu$-measurable and
$\mu(B_\rho(x,r))\in(0,\fz)$, where
$B_\rho(x,r):=\{y\in X:\, \rho(x,y)<r\}$ (which is called a \emph{$\rho$-ball}). Under these assumptions, the triplet $(X,\rho,\mu)$ shall be called a \textit{quasi-metric measure space}.

It is important to stress that if we limited the scope of our work
to only \emph{genuine metrics} satisfying the standard triangle inequality
\begin{equation}\label{TR-ineq}
\rho(x,y)\leq\rho(x,z)+\rho(z,y),\qquad\forall\ x,\,y,\,z\in X,
\end{equation}
as opposed to the more general quasi-subadditivity condition \eqref{TR-ineq.2}, then we would not be able to establish the full strengths of our results, as the triangle inequality fails to fully capture the various subtleties within the very class of metrics. For example, ultrametrics are metrics satisfying \eqref{TR-ineq.2} with $C=1$ which is a strictly stronger inequality than \eqref{TR-ineq}. Simply put, some metrics are better behaved than others and the inability to detect these finer qualities would lead to a suboptimal function space theory, even in the metric setting. From this perspective, it is necessary for us to work in the general setting of quasi-metric spaces using \eqref{TR-ineq.2}. We shall return to this point later. More details on the underlying spaces are listed in Section \ref{s-set} below.

As concerns the analytical assumptions, instead of $M^{1,p}(X)$ considered in \cite{agh20} (on metric spaces),
in this article, we  consider the more general scale of Haj\l asz--Besov spaces $\dot{N}^s_{p,q}(X)$
and  Haj\l asz--Triebel--Lizorkin spaces $\dot{M}^s_{p,q}(X)$ on quasi-metric measure spaces $(X,\rho,\mu)$
with $s,\,p\in(0,\fz)$ and
$q\in(0,\infty]$. The definitions and some basic properties of these spaces can be
found in Section \ref{s-func} below.
In particular, it is known that $M^{1,p}(X)$ coincides with $\dot{M}^1_{p,\fz}(X) \cap L^p(X)$
for any $p\in(0,\fz)$.

The main results of this article give the equivalence between the Sobolev embeddings of the Haj\l asz--Besov spaces $\dot{N}^s_{p,q}(X)$
and the Haj\l asz--Triebel--Lizorkin spaces $\dot{M}^s_{p,q}(X)$ (and, in particular, the fractional
Haj\l asz--Sobolev spaces $\dot{M}^{s,p}(X)=\dot{M}^s_{p,\fz}(X)$),
and  certain lower bound conditions on the underlying measure.
These
results are optimal regarding the smoothness parameter $s$, where the optimality
is related to the nature of the ``best" quasi-metric on $X$ which is bi-Lipschitz equivalent to $\rho$. The latter notion is quantified via the following ``index":
\begin{equation}
\label{index-INT}
{\rm ind}\,(X,\rho):=\sup_{\varrho\approx\rho}\left(\log_2\left[\sup_{\substack{x,\,y,\,z\in X\\\mbox{\scriptsize{not all equal}}}}
\frac{\varrho(x,y)}{\max\{\varrho(x,z),\varrho(z,y)\}}\right]\right)^{-1}
\in(0,\infty],
\end{equation}
where the first supremum is taken over all
quasi-metrics $\varrho$ on $X$ which are  bi-Lipschitz equivalent to $\rho$,
and the second supremum is taken over all points $x,\,y,\,z$ in $X$ which are not all
equal.
This index was introduced in \cite{MMMM13} and its value reflects the
geometry of the underlying space, as evidenced by the following examples
(see Section~\ref{section:preliminaries} for more examples highlighting this fact):
\begin{itemize}[itemsep=1pt]
\item {${\rm ind}\,(\mathbb{R}^n,|\cdot-\cdot|)=1$ and  ${\rm ind}\,([0,1]^n,|\cdot-\cdot|)=1$,
where $|\cdot-\cdot|$ denotes the Euclidean distance;}

\item
{${\rm ind}\,(X,\rho)\geq 1$ if there exists a genuine distance on $X$ which is
pointwise equivalent to $\rho$;}

\item {$(X,\rho)$ cannot be bi-Lipschitzly embedded into some ${\mathbb{R}}^n$ with
$n\in{\mathbb{N}}$, whenever ${\rm ind}\,(X,\rho)<1$;}

\item {${\rm ind}\,(X,\rho)=\infty$ if there exists an ultrametric
on $X$ which is pointwise equivalent to $\rho$;}

\item {${\rm ind}\,(X,\rho)=1$ if  $(X,\rho)$ is a metric space that is equipped with a doubling measure and supports a weak $(1, p)$-Poincar\'e inequality with $p>1$.}
\end{itemize}

To facilitate the statement of the main theorems in this work, we make a few notational conventions: First, given a quasi-metric space $(X,\rho)$ and  fixed numbers $s\in(0,\infty)$ and $q\in(0,\infty]$, we will understand by $s\preceq_q{\rm ind}\,(X,\rho)$ that $s\leq{\rm ind}\,(X,\rho)$ and that the value $s={\rm ind}\,(X,\rho)$ is only permissible when $q=\infty$ and the supremum defining the number ${\rm ind}\,(X,\rho)$ in \eqref{index-INT} is attained. Secondly, we let $C_\rho\in[1,\infty)$ denote the least constant playing the role of $C$ in \eqref{TR-ineq.2}; see \eqref{C-RHO.111} for a more formal definition. Moreover, in what follows,  for any measurable set $E\subset X$ with $\mu(E)\in(0,\fz)$ and any measurable
function $u$,  let $\mvint_{E}:=\frac1{\mu(E)}\int_E$  and $u_E:=\mvint_E u\,d\mu$, whenever the integral is well defined. Also, recall that a quasi-metric space $(X,\rho)$ is said to be {\it uniformly} {\it perfect}
if there exists a constant $\lambda\in(0,1)$ such that, for any $x\in X$ and  $r\in(0,\infty)$,
$$
B_\rho(x,r)\setminus B_\rho(x,\lambda r)\neq\emptyset\quad
\mbox{ whenever }\quad X\setminus B_\rho(x,r)\neq\emptyset.
$$
Note that every connected quasi-metric space is uniformly perfect;
however, there are very disconnected Cantor-type sets that are also uniformly perfect.

The following first main result of this article establishes the
fact that local embeddings for $\dot{M}^s_{p,q}$ and $\dot{N}^s_{p,q}$ spaces
are \textit{equivalent} to the following \textit{lower $Q$-Ahlfors-regular condition} on  the underlying measure:
\begin{equation}\label{lowermeasure-intro}
\kappa\,r^Q\leq\mu(B_\rho(x,r))\qquad \mbox{for any }\ x\in X\
\mbox{and any finite}\ r\in(0,{\rm diam}_\rho(X)],
\end{equation}
where $Q\in(0,\fz)$ and $\kappa$ is a positive constant independent of $x$ and $r$; see also
Theorem~\ref{LBembedding} and Theorem~\ref{LMeasINT} below.
Here and thereafter, ${\rm diam}_\rho(X):=\sup_{x,\,y\in X}\rho(x,y)$, and for any ball $B$ of radius $r_B$,
and any positive number $\sigma$, $\sigma B$ is the ball with radius $\sigma r_B$ and the same center as $B$.

\begin{theorem}\label{LMeasINTCor}
Suppose that $(X,\rho,\mu)$ is a uniformly perfect quasi-metric measure space and fix parameters $q\in(0,\infty]$,  $Q\in(0,\infty)$, and $\sigma\in[C_\rho,\infty)$. Also, assume that $s\in(0,\infty)$ satisfies $s\preceq_q{\rm ind}\,(X,\rho)$. Then the following statements are equivalent.
\begin{enumerate}[label=\rm{(\alph*)}]
\item The measure $\mu$ is lower $Q$-Ahlfors-regular on $X$ $($in the sense of \eqref{lowermeasure-intro}$)$.

\item There exist a $p\in(0,Q/s)$ and a $C_S\in(0,\infty)$ such that, for any ball
$B:=B_\rho(x,r)$ with $x\in X$ and finite $r\in(0,{\rm diam}_\rho(X)]$,  one has
\begin{equation*}
\left(\, \mvint_{B} |u|^{p^*}\, d\mu\right)^{1/p^*}\leq C_Sr^{-Q/p}\left[
r^s\Vert u\Vert_{\dot{M}^s_{p,q}(\sigma B)}
+\Vert u\Vert_{L^{p}(\sigma B)}\right],
\end{equation*}
whenever $u\in \dot{M}^s_{p,q}(\sigma B,\rho,\mu)$. Here,  $p^*:=Qp/(Q-sp)$.
		
\item There exist a $p\in(0,Q/s)$ and a $C_P\in(0,\infty)$ such that, for any ball
$B:=B_\rho(x,r)$ with $x\in X$ and finite $r\in(0,{\rm diam}_\rho(X)]$, one has
\begin{equation*}
\label{HHs-175-Cor}
\inf_{\gamma\in\mathbb{R}}\left(\, \mvint_{B} |u-\gamma|^{p^*}\, d\mu\right)^{1/p^*}\leq
C_Pr^{s-Q/p}\Vert u\Vert_{\dot{M}^s_{p,q}(\sigma B)},
\end{equation*}
whenever $u\in\dot{M}^s_{p,q}(\sigma B,\rho,\mu)$.
Here,  $p^*:=Qp/(Q-sp)$.
	
\item There exist positive constants $c_1$, $c_2$, and $\omega$ such that
\begin{eqnarray*}
\mvint_{B} {\rm exp}\left(c_1\frac{|u-u_{B}|}{\Vert u\Vert_{\dot{M}^s_{Q/s,q}(\sigma B)}}\right)^{\omega}\,d\mu\leq c_2,
\end{eqnarray*}
whenever $B\subset X$ is a ball $($with radius at most ${\rm diam}_\rho(X)$$)$ and $u\in\dot{M}^s_{Q/s,q}(\sigma B,\rho,\mu)$ with $\Vert u\Vert_{\dot{M}^s_{Q/s,q}(\sigma B)}>0$.

\item There exist a $p\in(Q/s,\infty)$ and a $C_H\in(0,\infty)$
such that every $u\in \dot{M}^s_{p,q}(X,\rho,\mu)$ has a H\"older continuous representative of
order $s-Q/p$ on $X$, denoted by $u$ again,  satisfying
\begin{eqnarray*}
|u(x)-u(y)|\leq C_H\,[\rho(x,y)]^{s-Q/p}\Vert u\Vert_{\dot{M}^s_{p,q}(X)},\qquad\forall\ x,\,y\in X.
\end{eqnarray*}
\end{enumerate}
In addition, if $q\leq p$, then all of the statements above continue to be equivalent with  $\dot{M}^s_{p,q}$ replaced by $\dot{N}^s_{p,q}$.
\end{theorem}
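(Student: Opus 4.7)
The plan is to recognize Theorem~\ref{LMeasINTCor} as a consolidated restatement of the two technical theorems announced immediately afterward, namely Theorem~\ref{LBembedding} (which supplies the forward implications starting from the lower $Q$-Ahlfors-regular bound) and Theorem~\ref{LMeasINT} (which furnishes the converse characterizations). My proof therefore reduces to verifying that the hypotheses of Theorem~\ref{LMeasINTCor} feed correctly into each of these two theorems and to sketching the routes that each of them will follow; the Besov addendum is then a short additional step.

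For the direction (a)$\,\Rightarrow\,$(b)--(e), I would exploit the defining pointwise Haj\l asz-type inequality
$$
|u(x)-u(y)| \leq [\rho(x,y)]^s \bigl[g_k(x)+g_k(y)\bigr]\quad \mbox{for}\ 2^{-k-1} \leq \rho(x,y) < 2^{-k},
$$
valid for $u\in \dot M^s_{p,q}(X)$ with associated fractional $s$-gradient sequence $\{g_k\}$. The lower $Q$-Ahlfors regularity, together with uniform perfectness, is what makes the relevant fractional maximal operator bounded at the correct scales; a standard truncation/Hedberg-type argument then yields the Sobolev--Poincar\'e inequality (c) for any $p\in(0,Q/s)$. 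Statement (b) follows from (c) by subtracting the average and absorbing the oscillation via the $L^p$ term; the Trudinger-type inequality (d) arises by tracking the constants in (c) as $p\uparrow Q/s$ and summing the exponential Taylor series; and the H\"older estimate (e) for $p>Q/s$ is obtained by a chain of dyadically shrinking $\rho$-balls connecting any two points.

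The converse direction (any of (b), (c), (d), (e)$\,\Rightarrow\,$(a)) is, to my mind, the main technical heart. The unifying strategy is to test each inequality against an explicit unit-oscillation function supported on a ball $B=B_\rho(x_0,r)$, of the form $u(y):=\phi\bigl(\varrho(x_0,y)/r\bigr)$ for a suitable bump $\phi$ and a bi-Lipschitz equivalent quasi-metric $\varrho\approx\rho$ whose subadditivity exponent is (almost) realizing $\ind(X,\rho)$. The hypothesis $s\preceq_q \ind(X,\rho)$ is precisely what makes this $u$ admissible in $\dot M^s_{p,q}$ (or $\dot N^s_{p,q}$) with a Haj\l asz $s$-gradient sequence supported in a fixed dilate of $B$ and of controlled size, so that its norm is comparable to $\mu(B)^{1/p}$. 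Substituting such a $u$ into (b), (c), or (e) and comparing both sides directly produces $\mu(B)\gtrsim r^Q$; for the Trudinger inequality (d) the same test function, rescaled so that its norm is of unit order, yields the same bound via an exponential moment estimate. Uniform perfectness is invoked to guarantee that this test function genuinely oscillates at unit order on a non-trivial portion of $B$.

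For the Besov addendum, under the extra hypothesis $q\leq p$ one has a continuous inclusion $\dot N^s_{p,q}(X)\hookrightarrow \dot M^s_{p,q}(X)$ obtained from Minkowski's inequality by interchanging the spatial $L^p$ norm and the $\ell^q$ norm on the gradient sequences; this immediately transports every one of the Triebel--Lizorkin inequalities onto the Besov side, and the converse implications follow from the same test-function argument applied now in $\dot N^s_{p,q}$. The most delicate part that I anticipate is the converse direction at the boundary $s=\ind(X,\rho)$ (permitted only when $q=\infty$ and the supremum in \eqref{index-INT} is attained): there one cannot afford to pick an $\varrho$ with a strictly smaller subadditivity constant and must work with a genuine extremal representative of the quasi-metric class, which is exactly the situation that the convention $s\preceq_q\ind(X,\rho)$ was introduced to handle.
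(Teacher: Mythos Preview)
Your high-level architecture is correct: Theorem~\ref{LMeasINTCor} is indeed assembled from Theorem~\ref{LBembedding} (for (a)$\Rightarrow$(b)--(e)) and Theorem~\ref{LMeasINT} (for the converses), and the Besov addendum does go through the inclusion $\dot N^s_{p,q}\hookrightarrow\dot M^s_{p,q}$ for $q\le p$ (Proposition~\ref{sobequal}(ii)). But two of your sketched mechanisms do not match what actually works in this setting, and each hides a genuine gap.

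\textbf{Forward direction.} You propose a Hedberg-type argument via boundedness of a fractional maximal operator. The hypothesis here is only lower $Q$-Ahlfors regularity, with no doubling assumed; in that generality the Hardy--Littlewood maximal operator need not be bounded on $L^p$, so the standard Hedberg machinery is unavailable. The paper instead runs a level-set/chain argument (Theorem~\ref{embedding}): one fixes $g\in\mathcal D^s_\rho(u)$, sets $E_k:=\{g\le 2^k\}$, observes that $u\!\restriction_{E_k}$ is $2^{k+1}$-H\"older of order $s$, and then, using only the $V(\sigma B_0,Q,b)$ condition, builds chains of points $x_k\in E_k,\,x_{k-1}\in E_{k-1},\ldots$ inside $\sigma B_0$ to control $|u-\gamma|$ on each $E_k$. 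This avoids maximal functions entirely. (Incidentally, uniform perfectness is not used in this direction; only $\sigma\ge C_\rho$ is.)

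\textbf{Converse direction.} For (e)$\Rightarrow$(a) your single test-function substitution is exactly what the paper does. But for (b), (c), (d)$\Rightarrow$(a) a single bump does \emph{not} ``directly produce $\mu(B)\gtrsim r^Q$'': testing with one bump supported in $B_\rho(x,r)$ and equal to $1$ on a smaller ball $B_\rho(x,r')$ only yields an inequality relating $\mu(B_\rho(x,r'))$ to $\mu(B_\rho(x,r))$, and without doubling there is no a priori relation between these. The paper closes this loop with an iteration: Lemma~\ref{HolderBump} produces a nested sequence of radii $\{r_j\}$ and bumps $\{u_j\}$ with $\delta r<r_{j+1}<r_j<r$, and testing each $u_j$ gives
\[
\bigl[\mu(B_{\varrho_\#}(x,r_{j+1}))\bigr]^{1/p^*}\lesssim \theta\,2^{j}\bigl[\mu(B_{\varrho_\#}(x,r_j))\bigr]^{1/p},
\]
after which the abstract iteration Lemma~\ref{iteration} converts this family of recursive bounds into $\mu(B_\rho(x,r))\gtrsim r^Q$. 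For (c) and (d) one also needs the auxiliary radius $\varphi^x_\rho(r)$ (Lemma~\ref{Gds.24}) and the reduction Lemma~\ref{en2-4} to guarantee, via uniform perfectness, that the bumps genuinely oscillate on a set of comparable measure (Lemma~\ref{HolderBump}(f)). Your sketch omits this iteration step, which is the essential idea making the non-doubling converse work.
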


In fact, a slightly stronger version Theorem~\ref{LMeasINTCor} follows from Theorem~\ref{LBembedding} and Theorem~\ref{LMeasINT} below. More precisely, we show that if just one of the $\dot{M}^s_{p,q}$ embeddings in (b)-(e) (or their $\dot{N}^s_{p,q}$ variants) holds true for some $p,q,s$, and $\sigma$ (in their respective ranges) then all of the embeddings in Theorem~\ref{LMeasINTCor} hold true for \textit{both} $\dot{M}^s_{p,q}$ or $\dot{N}^s_{p,q}$ spaces for \textit{all} $p,q,s$, and $\sigma$ (again, in their respective ranges). It is also instructive to remark here that the additional assumption that $(X,\rho,\mu)$ is uniformly perfect is only used in proving that each of the estimates in (c)-(e) imply the measure condition in  (a), and the restriction $\sigma\geq C_\rho$ is only used in proving that (a) implies (b)-(e). Note that the uniform perfectness of the space $X$ is a natural assumption, in general. Indeed, if $\mu$ is \textit{upper $Q$-Ahlfors-regular on $X$}, that is, if there is a positive constant $C$ such that $\mu(B_\rho(x,r))\leq Cr^Q$ for any $x\in X$ and any finite $r\in(0,{\rm diam}_\rho(X)]$, then the uniformly perfect property is necessary for the lower measure bound \eqref{lowermeasure-intro} to hold true; see, for instance \cite[Lemma~4.7]{DaSe97}.

One distinguishing feature of Theorem~\ref{LMeasINTCor} is the range of $s$ for which the conclusion of this result holds true because it is the largest range of this type to be identified and it turns out to be in the nature of best possible. More specifically, if the underlying space is $\mathbb{R}^n$, equipped with the Euclidean distance, then ${\rm ind}\,(\mathbb{R}^n, |\,\cdot-\cdot\,|)=1$, and Theorem~\ref{LMeasINTCor} is valid for $\dot{M}^1_{p,\infty}=\dot{M}^{1,p}$, and for $\dot{M}^s_{p,q}$ and $\dot{N}^s_{p,q}$   whenever $s\in(0,1)$. Therefore, we recover the expected range for $s$ in the Euclidean setting. Similar considerations also hold whenever the underlying space is a domain $\Omega\subset\rn$ or, more generally, a metric measure space.  Remarkably, there are spaces where the range of $s$ is strictly larger than what it would be in the Euclidean setting. For example, if the underlying  space $(X,\rho)$ is an ultrametric space (like a Cantor-type set), then ${\rm ind}\,(X,\rho)=\infty$ and, in this case,  Theorem~\ref{LMeasINTCor} is valid for the  spaces $\dot{M}^{s,p}$, $\dot{M}^s_{p,q}$ and $\dot{N}^s_{p,q}$ for {\it all} $s\in(0,\infty)$.

Results currently appearing in the literature that are related or similar to Theorem~\ref{LMeasINTCor} typically require that the measure is doubling or that the space is connected; see, for instance, \cite{Karak1,Karak2,HIT16}. In contrast, Theorem~\ref{LMeasINTCor} is established under minimal assumptions on the ambient space. In particular, for a \textit{geodesic metric space}  (that is,  a metric
space with the property that any two points in it can be  joined by a curve whose length equals
the distance between these two points), it was shown in \cite{Karak1,Karak2} (see also \cite{HIT16})
that the lower measure bound \eqref{lowermeasure-intro} is necessary for certain $\dot{M}^{s}_{p,q}$ and $\dot{N}^s_{p,q}$-embeddings to hold true when $s\in(0,1)$. However, this is a rather restrictive connectivity
condition that precludes many  underlying
spaces where $\dot{M}^{s}_{p,q}$ and $\dot{N}^s_{p,q}$ continue to have rich theory. For example, on $\big(\mathbb{R}^n,|\cdot-\cdot|^{1/2}\big)$, which is a metric space that is clearly not geodesic, the results in \cite{Karak1,Karak2} are not applicable, whereas Theorem~\ref{LMeasINTCor} in this work implies that the lower measure bound \eqref{lowermeasure-intro} is actually \textit{equivalent}  to the $\dot{M}^{s}_{p,q}$ and $\dot{N}^s_{p,q}$ embeddings  for all $s\in(0,2)$, because  ${\rm ind}\,(\mathbb{R}^n,|\cdot-\cdot|^{1/2})=2\cdot{\rm ind}\,(\mathbb{R}^n,|\cdot-\cdot|)=2$. As this last example illustrates, there are connected metric spaces on which $\dot{M}^s_{p,q}$ and $\dot{N}^s_{p,q}$ are nontrivial even for $s>1$, which is, perhaps, a striking consequence of our work
because the range of $s$ for these spaces is typically limited to the interval $(0,1]$ in the literature. 	
The triviality of the spaces $\dot{M}^s_{p,q}$ and $\dot{N}^s_{p,q}$ is explored further in Section~\ref{sec-triv}.

Given a quasi-metric measure space $(X,\rho,\mu)$, the measure $\mu$ is said to be {\it doubling} provided
there exists a positive  constant $C$ such that
\begin{equation}\label{doub}
\mu(2B)\leq C\mu(B)\quad\mbox{ for all $\rho$-balls\,\,$B\subset X$.}
\end{equation}
The least constant playing the role of $C$ in \eqref{doub} is denoted
by $C_{\rm doub}$. Quasi-metric measure spaces where the measure is doubling are known as \emph{spaces of homogeneous type} \cite{CoWe71,CoWe77}. It follows from \eqref{doub} that, if $X$ contains at least two elements, then
$C_{\rm doub}>1$ (see \cite[p.\,72, Proposition~3.1]{AM15}). Given an exponent $Q\in(0,\infty)$, the
measure $\mu$ is said to be $Q$-{\it doubling} provided there exists a positive constant $\kappa$ satisfying
\begin{equation}\label{Doub-2}
\kappa\left(\frac{r}{R}\right)^{Q}\leq\frac{\mu(B_\rho(x,r))}{\mu(B_\rho(y,R))},
\end{equation}
whenever $x,\,y\in X$ satisfy $B_\rho(x,r)\subset B_\rho(y,R)$ and $0<r\leq R<\infty$. It is well known that the doubling property  \eqref{doub} implies \eqref{Doub-2} for each $Q\in[\log_2C_{\rm doub},\infty)$ (see, for instance, \cite[Lemma~4.7]{hajlasz}), and it is easy to see that any measure satisfying \eqref{Doub-2} for some $Q\in(0,\infty)$ is necessarily doubling. Although these two conditions are equivalent, the advantage of \eqref{Doub-2} over  \eqref{doub} is that the exponent $Q$ provides us with a notion of dimension for the space $X$.

The  following theorem is an analogue of Theorem~\ref{LMeasINTCor} for doubling measures which illustrates that spaces of homogeneous type are, in a sense, optimal environments when working with certain local $\dot{M}^s_{p,q}$ and $\dot{N}^s_{p,q}$ embeddings. The reader is also referred to Theorems~\ref{DOUBembedding} and  \ref{DoubMeasINT} below.

\begin{theorem}\label{DoubMeasINTCor}
Suppose that $(X,\rho,\mu)$ is a uniformly perfect
quasi-metric measure space and fix parameters $q\in(0,\infty]$,
$Q\in(0,\infty)$, and $\sigma\in[C_\rho,\infty)$.
Also, assume that $s\in(0,\infty)$ satisfies $s\preceq_q{\rm ind}\,(X,\rho)$.
Then the following statements are equivalent.
\begin{enumerate}[label=\rm{(\alph*)}]
\item The measure $\mu$ is $Q$-doubling on $X$.

\item There exist a $p\in(0,Q/s)$ and a $C_S\in(0,\infty)$ such that, for any
ball $B:=B_\rho(x,r)$ with $x\in X$ and $r\in(0,\infty)$, one has
\begin{equation*}
\Vert u\Vert_{L^{p^\ast}(B)}\leq
\frac{C_S}{[\mu(\sigma B)]^{s/Q}}\left[r^s\Vert u\Vert_{\dot{M}^s_{p,q}(\sigma B)}
+\Vert u\Vert_{L^{p}(\sigma B)}\right],
\end{equation*}
whenever $u\in\dot{M}^s_{p,q}(\sigma B,\rho,\mu)$.
Here,  $p^*:=Qp/(Q-sp)$.

\item There exist a $p\in(0,Q/s)$ and a $C_P\in(0,\infty)$ such that, for any
ball $B:=B_\rho(x,r)$ with $x\in X$ and $r\in(0,\infty)$, one has
\begin{equation*}
\inf_{\gamma\in\mathbb{R}}\Vert u-\gamma\Vert_{L^{p^\ast}(B)}\leq
\frac{C_P}{[\mu(\sigma B)]^{s/Q}}\,r^s\Vert u\Vert_{\dot{M}^s_{p,q}(\sigma B)},
\end{equation*}
whenever $u\in \dot{M}^s_{p,q}(\sigma B,\rho,\mu)$.
Here,  $p^*:=Qp/(Q-sp)$.

\item There exist a $p\in(Q/s,\infty)$ and a $C_H\in(0,\infty)$
such  that, for any ball $B:=B_\rho(x,r)$ with $x\in X$ and $r\in(0,\infty)$,
every function $u\in\dot{M}^s_{p,q}(\sigma B,\rho,\mu)$ has a
H\"older continuous representative of order $s-Q/p$ on $B$, denoted by $u$ again,  satisfying
\begin{eqnarray*}
|u(x)-u(y)|\leq C_H\,[\rho(x,y)]^{s-Q/p}\frac{r^{Q/p}}{[\mu(\sigma B)]^{1/p}}\Vert u\Vert_{\dot{M}^s_{p,q}(\sigma B)},\quad\forall\,x,\,y\in B.
\end{eqnarray*}
\end{enumerate}
In addition, if $q\leq p$, then all of the statements above  continue to be equivalent with  $\dot{M}^s_{p,q}$ replaced by $\dot{N}^s_{p,q}$.
\end{theorem}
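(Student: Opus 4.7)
The overall plan is to deduce Theorem~\ref{DoubMeasINTCor} from the lower-Ahlfors-regular version Theorem~\ref{LMeasINTCor} via a localization-and-rescaling argument. The key observation is that the $Q$-doubling condition \eqref{Doub-2} can be reformulated as a uniform family of \emph{localized} lower $Q$-Ahlfors-regular bounds: for every ball $B:=B_\rho(x,r)$ in $X$, the auxiliary measure
$$
\wz{\mu}_B:=\frac{(\sigma r)^Q}{\mu(\sigma B)}\,\mu\big|_{\sigma B}
$$
is lower $Q$-Ahlfors-regular on $(\sigma B,\rho)$ with a constant independent of $B$; conversely, this uniform family of localized bounds immediately implies the $Q$-doubling condition on $X$.

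For (a)$\Rightarrow$(b), (c), (d), I would fix a ball $B$ and apply the corresponding implication of Theorem~\ref{LMeasINTCor} to the uniformly perfect quasi-metric measure space $(\sigma B,\rho,\wz{\mu}_B)$; the hypotheses on $s$ and $q$ persist because both $C_\rho$ and ${\rm ind}\,(X,\rho)$ are intrinsic to the quasi-metric $\rho$. The scaling identities
$$
\Vert u\Vert_{L^p(\wz{\mu}_B)}=\left[\frac{(\sigma r)^Q}{\mu(\sigma B)}\right]^{1/p}\Vert u\Vert_{L^p(\mu|_{\sigma B})},
$$
together with the analogous identities for the Haj\l asz norms (the Haj\l asz gradient sequence itself is unchanged, and only its $L^p$-norm rescales), then convert the $r^{-Q/p}$ factor appearing in Theorem~\ref{LMeasINTCor} into $[\mu(\sigma B)]^{-s/Q}$ in the Sobolev and Sobolev--Poincar\'e inequalities (b) and (c), and into $r^{Q/p}[\mu(\sigma B)]^{-1/p}$ in the H\"older estimate (d), producing exactly the stated expressions.

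For the converses (b)$\Rightarrow$(a), (c)$\Rightarrow$(a), and (d)$\Rightarrow$(a), I would fix a nested pair $B_\rho(x,r)\subset B_\rho(y,R)$ with $0<r\leq R<\infty$ and test the relevant embedding on a bump function centered at $x$. A natural candidate is $u_0(z):=\max\{0,1-[\varrho(z,x)/r]^s\}$, where $\varrho\approx\rho$ is a quasi-metric with quasi-subadditivity constant close to $2^{1/s}$; such a $u_0$ is supported in a bounded $\varrho$-enlargement of $B_\rho(x,r)$ and admits a Haj\l asz $s$-gradient bounded by a constant multiple of $r^{-s}\chi_{B_\rho(x,Cr)}$. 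Combining the lower bound $\Vert u_0\Vert_{L^{p^*}(B_\rho(y,R))}\geq c_0\,[\mu(B_\rho(x,r/2))]^{1/p^*}$ with the upper bound $\Vert u_0\Vert_{\dot{M}^s_{p,q}(\sigma B_\rho(y,R))}\leq C_1\, r^{-s}[\mu(B_\rho(x,Cr))]^{1/p}$ and inserting them into the inequality in (b) (or its (c), (d) analogues) yields precisely the ratio bound in \eqref{Doub-2} characterizing $Q$-doubling. The corresponding $\dot{N}^s_{p,q}$-statements under the additional assumption $q\leq p$ are handled along the same lines with only minor modifications.

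The hardest part will be constructing admissible test functions at the extremal endpoint $s={\rm ind}\,(X,\rho)$, allowed only when $q=\infty$ and the supremum in \eqref{index-INT} is attained: in that regime, one must select $\varrho\approx\rho$ for which $[\varrho(\cdot,x)/r]^s$ is genuinely H\"older continuous of order $s$ on $(X,\rho)$, so that its Haj\l asz $s$-gradient is an honest $L^p$-function with the correct support, rather than a merely pointwise-finite majorant. An equally delicate bookkeeping issue is aligning the enlargement constant $\sigma\ge C_\rho$ with the various $\varrho$-ball comparisons needed in the rescaling step, so that all radii line up correctly in the final ratio estimate.
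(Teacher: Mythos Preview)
Your plan has a genuine gap in the converse implications (b)$\Rightarrow$(a) and (c)$\Rightarrow$(a): a \emph{single} bump function test does not yield the $Q$-doubling bound. Inserting your $u_0$ into (b), applied on $B_0=B_\rho(y,R)$, gives only
\[
[\mu(B_\rho(x,cr))]^{1/p^*}\ \lesssim\ \frac{(R/r)^s}{[\mu(B_\rho(y,R))]^{s/Q}}\,[\mu(B_\rho(x,Cr))]^{1/p}.
\]
Since $1/p^*=1/p-s/Q$, rearranging produces $(M/m_1)^{s/Q}(m_1/m_2)^{1/p}\lesssim(R/r)^s$ with $m_1=\mu(B_\rho(x,cr))\le m_2=\mu(B_\rho(x,Cr))$ and $M=\mu(B_\rho(y,R))$. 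To extract $m_1/M\gtrsim(r/R)^Q$ you would need $m_1/m_2$ bounded below, which is exactly the doubling you are trying to prove; without it the factor $(m_1/m_2)^{1/p}\le1$ kills the estimate. The paper resolves this circularity by an \emph{iteration}: Lemma~\ref{HolderBump} supplies a nested family $\{u_j\}$ with supports $B_{\varrho_\#}(x,r_j)$, yielding the chain
\[
[\mu(B_{\varrho_\#}(x,r_{j+1}))]^{1/p^*}\le\theta\,2^j[\mu(B_{\varrho_\#}(x,r_j))]^{1/p},\qquad\theta\approx\frac{(R/r)^s}{[\mu(B_\rho(y,R))]^{s/Q}},
\]
and the abstract iteration Lemma~\ref{iteration} converts this chain into the lower bound $\mu(B_{\varrho_\#}(x,r_1))\gtrsim\theta^{-Q/s}$, which is precisely \eqref{Doub-2}. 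Your single-bump idea \emph{does} work for (d)$\Rightarrow$(a), and that matches the paper's argument there; but for (b) and (c) the iteration is the missing idea.

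There is also a difficulty in your forward direction. The restricted measure $\wz{\mu}_B$ is \emph{not} automatically lower $Q$-Ahlfors-regular on $(\sigma B,\rho)$ with a uniform constant: balls in the restricted space are intersections $B_\rho(z,t)\cap\sigma B$, and for $z$ close to the edge of $\sigma B$ and $t$ comparable to $\sigma r$ the intersection can carry far less $\mu$-mass than $B_\rho(z,t)$ itself; $Q$-doubling alone does not control this defect. The paper sidesteps the problem by working directly with the $V(\sigma B_0,Q,b)$ condition \eqref{measbound}, which only demands the lower bound for balls \emph{contained} in $\sigma B_0$; $Q$-doubling immediately gives $V(\sigma B_0,Q,b)$ with $b=\kappa\,\mu(\sigma B_0)(\sigma R_0)^{-Q}$, and Theorem~\ref{embedding} then yields (b)--(d) with exactly the right $[\mu(\sigma B)]^{-s/Q}$ and $R_0^{Q/p}[\mu(\sigma B)]^{-1/p}$ factors.
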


\begin{remark}
Note that Theorem~\ref{DoubMeasINTCor} does not cover the case $p=Q/s$. For that case, see Theorems~\ref{DoubMeasINT} and \ref{AS-U-BDD} below.
\end{remark}

As with Theorem~\ref{LMeasINTCor}, the conclusions of Theorem~\ref{DoubMeasINTCor} are in the nature of best possible (in the manner described above). We also wish to mention that if $q=\infty$, then Theorems~\ref{LMeasINTCor} and \ref{DoubMeasINTCor} are valid when $s=(\log_2C_\rho)^{-1}<\infty$. Since $(\log_2C_\rho)^{-1}\geq 1$ whenever $(X,\rho)$ is a metric space, it follows that Theorems~\ref{LMeasINTCor} and \ref{DoubMeasINTCor} are valid for  $M^s_{p,\infty}$ with $s\in(0,1]$. This observation, along with the fact that $M^1_{p,\infty}=M^{1,p}$ (cf. Proposition~\ref{sobequal}), implies that Theorems~\ref{LMeasINTCor} and \ref{DoubMeasINTCor} extend to full generality the related work in \cite{agh20}.

The last main result of this article concerns the equivalence between the lower measure bound condition \eqref{lowermeasure-intro} and
the  \textit{global} embeddings of the  spaces $\dot{M}^s_{p,q}$ and $\dot{N}^s_{p,q}$.

\begin{theorem}
\label{GlobalEmbeddCor-INT}
Let $(X,\rho,\mu)$ be a quasi-metric measure space and fix parameters $s,\,p,\,Q\in(0,\infty)$ and $q\in(0,\infty]$ satisfying $s\preceq_q{\rm ind}\,(X,\rho)$. Also, suppose that either $X$ is a bounded set
or $\mu$ is $Q$-doubling.	Then  the following statements are equivalent.
\begin{enumerate}[label=\rm{(\alph*)}]
\item The measure $\mu$ is lower $Q$-Ahlfors-regular on $X$ $($in the sense of \eqref{lowermeasure-intro}$)$.

\item There exist a $p\in(0,Q/s)$ and a $C_S\in(0,\fz)$ satisfying
\begin{equation*}
\|u\|_{L^{p^*}(X)}\leq C_S\|u\|_{\dot{M}^s_{p,q}(X)}+\frac{C_S}{[{\rm diam}_\rho(X)]^s}\,\|u\|_{L^p(X)},\quad\forall\ u\in \dot{M}^s_{p,q}(X,\rho,\mu).
\end{equation*}
Here,  $p^*:=Qp/(Q-sp)$.
\end{enumerate}

\noindent If the space $(X,\rho)$ is uniformly perfect, then {\rm(a)} $($hence,
also {\rm(b)}$)$ is further equivalent to the following statements:

\begin{enumerate}[label=\rm{(\alph*)}]\addtocounter{enumi}{2}	
\item There exist a $p\in(0,Q/s)$ and a $C_P\in(0,\fz)$ satisfying
\begin{equation*}
\inf_{\gamma\in\mathbb{R}}\|u-\gamma\|_{L^{p^\ast}(X)}\leq
C_P\|u\|_{\dot{M}^s_{p,q}(X)},\quad\forall\ u\in \dot{M}^s_{p,q}(X,\rho,\mu).
\end{equation*}
Here,  $p^*:=Qp/(Q-sp)$.	

\item There exist positive constants $c_1,\,c_2$, and $\omega$ such that
\begin{equation*}
\mvint_{B} {\rm exp}\lf(c_1\frac{|u-u_B|}{\Vert u\Vert_{\dot{M}^s_{Q/s,q}(X)}}\r)^{\omega}\,d\mu\leq c_2
\end{equation*}
holds true for any ball $B\subset X$  $($with radius at most ${\rm diam}_\rho(X)$$)$ and any nonconstant function $u\in\dot{M}^s_{Q/s,q}(X,\rho,\mu)$.

\item There exist a $p\in(Q/s,\infty)$ and a $C_H\in(0,\infty)$ such that every $u\in \dot{M}^s_{p,q}(X,\rho,\mu)$ has
a H\"older continuous representative of order $s-Q/p$ on $X$, denoted by $u$ again, satisfying
\begin{eqnarray*}
|u(x)-u(y)|\leq C_H\,[\rho(x,y)]^{s-Q/p}\Vert u\Vert_{\dot{M}^s_{p,q}(X)},\qquad\forall\ x,\,y\in X.
\end{eqnarray*}
\end{enumerate}
In addition, if $q\leq p$, then all of the statements above continue to be equivalent with $\dot{M}^s_{p,q}$ replaced by $\dot{N}^s_{p,q}$.
\end{theorem}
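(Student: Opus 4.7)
The overall plan is to derive the global characterizations from the already-established local versions in Theorems~\ref{LBembedding} and \ref{LMeasINT} (packaged concretely in Theorem~\ref{LMeasINTCor}) by a localization-and-summation argument in one direction, and by explicit Lipschitz bump test functions in the other. Throughout, the smoothness restriction $s\preceq_q\mathrm{ind}(X,\rho)$ coincides with the hypothesis under which the local theorems apply, so no new constraints on $s$ arise.

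For (a)~$\Rightarrow$~(b)--(e), I would split according to the alternative hypothesis. If $X$ is bounded with $R:=\mathrm{diam}_\rho(X)<\infty$, fix $x_0\in X$, note that $X\subset B_\rho(x_0,2R)$, and invoke the corresponding local forward implication from Theorems~\ref{LBembedding} and \ref{LMeasINT} (which, per the remarks following Theorem~\ref{LMeasINTCor}, do not require uniform perfectness in this direction) on this single ball; the additive $[\mathrm{diam}_\rho(X)]^{-s}\|u\|_{L^p(X)}$ term in (b) corresponds precisely to the $\|u\|_{L^p(\sigma B)}$ contribution in the local Sobolev inequality after normalizing by the lower Ahlfors estimate $r^{-Q/p}[\mu(B)]^{1/p^*}\lesssim 1$. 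If $X$ is unbounded and $\mu$ is $Q$-doubling, then $[\mathrm{diam}_\rho(X)]^{-s}=0$ and (b) reads as the homogeneous Sobolev inequality; cover $X$ by a countable bounded-overlap family $\{B_\rho(x_i,R_0)\}_i$ with fixed radius $R_0$ (the overlap of the $\sigma$-dilates being uniformly controlled by $Q$-doubling), apply the local inequality on each ball, raise to the $p^*$-th power and sum, use bounded overlap on the right to collapse the sum of local Haj\l asz seminorms into a single global one, take $p^*$-th roots, and then let $R_0\to\infty$ to eliminate the $R_0^{-s}$-decaying $L^p$ correction. Statements (c)--(e) require only the single-ball application (together with uniform perfectness, which is now assumed) and do not need any summation.

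For (b)--(e)~$\Rightarrow$~(a), I would construct at every small scale $r$ a Lipschitz bump $u_{x,r}$ equal to $1$ on $B_\rho(x,\lambda r)$, vanishing outside $B_\rho(x,r)$, and of Lipschitz character $\lesssim r^{-1}$, where $\lambda$ is the uniform-perfectness constant of $(X,\rho)$. A direct computation yields a fractional Haj\l asz gradient of order $s$ majorized by $C r^{-s}\chi_{B_\rho(x,\sigma r)}$, giving $\|u_{x,r}\|_{\dot M^s_{p,q}(X)}\lesssim r^{-s}[\mu(B_\rho(x,\sigma r))]^{1/p}$. For (e), uniform perfectness supplies $z\in B_\rho(x,r)\setminus B_\rho(x,\lambda r)$; with $y=x$ one has $|u_{x,r}(y)-u_{x,r}(z)|=1$ and $\rho(y,z)\leq r$, forcing $1\lesssim r^{s-Q/p}\cdot r^{-s}[\mu(B_\rho(x,\sigma r))]^{1/p}$, which is exactly (a). For (b), plugging $u_{x,r}$ yields $[\mu(B_\rho(x,\lambda r))]^{1/p^*}\lesssim r^{-s}[\mu(B_\rho(x,\sigma r))]^{1/p}$ up to a $[\mathrm{diam}_\rho(X)]^{-s}$-weighted term that is negligible for small $r$; invoking either $Q$-doubling (to compare the inner and outer balls of the same center) or the bounded alternative, and rearranging via the arithmetic identity $1/p-1/p^*=s/Q$, recovers $\mu(B_\rho(x,\lambda r))\gtrsim r^Q$. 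Cases (c) and (d) use the same bumps with the natural substitutions $\gamma=1/2$ in the Poincar\'e inequality and a direct evaluation of the exponential integral. The Besov variant with $q\leq p$ is handled identically, verifying the same $r^{-s}[\mu(\cdot)]^{1/p}$ scaling of the bumps in the $\dot N^s_{p,q}$-seminorm.

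The principal obstacle I anticipate is the bounded-overlap covering-and-summation step in the unbounded $Q$-doubling case of (b): the overlap constant of the $\sigma$-dilates must be independent of $R_0$, which is exactly what \emph{quantitative} $Q$-doubling (as opposed to mere doubling) delivers, and summing local $L^{p^*}$-quasinorm inequalities as $R_0\to\infty$ must not accumulate constants from the subadditivity defect of the quasinorm; this is typically handled by observing that $L^{p^*}$ is a Banach space for $p^*\geq 1$ and by passing to an equivalent normed structure (after a suitable $p^*$-th-power regularization) when $p^*<1$. Endpoint issues at $s=\mathrm{ind}(X,\rho)$ with $q=\infty$ are inherited from the local theorems and require no new argument.
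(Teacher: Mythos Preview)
Your proposal has a genuine gap in the direction \{(b),(c),(d)\}$\Rightarrow$(a) when $X$ is bounded but $\mu$ is not assumed $Q$-doubling. Plugging a single bump $u_{x,r}$ into, say, (b) yields
\[
[\mu(B_\rho(x,\lambda r))]^{1/p^*}\lesssim r^{-s}[\mu(B_\rho(x,\sigma r))]^{1/p},
\]
which is an \emph{upper} bound on the measure of the inner ball in terms of the outer one. Under $Q$-doubling you may replace $\mu(B_\rho(x,\sigma r))$ by a multiple of $\mu(B_\rho(x,\lambda r))$ and solve via $1/p-1/p^*=s/Q$, as you indicate; but in the bounded non-doubling alternative nothing prevents $\mu(B_\rho(x,\lambda r))\ll\mu(B_\rho(x,\sigma r))$, and the displayed inequality then gives no lower bound at all. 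The paper closes this with an iteration: Lemma~\ref{HolderBump} produces a \emph{sequence} of nested bumps $\{u_j\}$ supported in balls $B_{\varrho_\#}(x,r_j)$ with $r_j$ decreasing but $r_j>\delta r$ for a fixed $\delta>0$, yielding
\[
[\mu(B_{\varrho_\#}(x,r_{j+1}))]^{1/p^*}\lesssim 2^{j}\, r^{-s}[\mu(B_{\varrho_\#}(x,r_j))]^{1/p}\qquad\forall\,j\in\mathbb{N},
\]
and Lemma~\ref{iteration} converts this infinite chain (with radii bounded below) into the quantitative lower bound $\mu(B_\rho(x,r))\gtrsim r^Q$, without any doubling hypothesis. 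Your ``bounded alternative'' is unspecified and cannot substitute for this step.

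Two further issues. First, in a general quasi-metric space nonconstant Lipschitz functions need not exist; this is exactly why the hypothesis $s\preceq_q\mathrm{ind}(X,\rho)$ enters. The paper's Lemma~\ref{GVa2} constructs H\"older-$\alpha$ bumps with $\alpha\in\big[s,(\log_2 C_\varrho)^{-1}\big]$ for a suitable $\varrho\approx\rho$, from which the gradient bound $\lesssim r^{-s}[\mu(B)]^{1/p}$ follows; your ``Lipschitz bump of character $\lesssim r^{-1}$'' is simply unavailable in general. Second, for (a)$\Rightarrow$(c) in the unbounded $Q$-doubling case, a single-ball application only gives $\inf_\gamma\|u-\gamma\|_{L^{p^*}(B_k)}\lesssim\|u\|_{\dot M^s_{p,q}(X)}$ with optimal $\gamma_k$ depending on $k$; the paper shows $\{\gamma_k\}$ is bounded (using the already-proven (b)), extracts a convergent subsequence, and passes to the limit with Fatou to produce one global $\gamma$. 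Your covering-and-summation route to (a)$\Rightarrow$(b) in the unbounded case differs from the paper's single-growing-ball argument via Theorem~\ref{DOUBembedding}, but can be made to work using the superadditivity of $t\mapsto t^{p^*/p}$ on $[0,\infty)$.
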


Theorem~\ref{GlobalEmbeddCor-INT} is an improvement of known work for the embeddings of
Sobolev spaces in the metric setting (see, for instance, \cite{agh20,Karak1}).
More specifically, we have the following consequence of Theorem~\ref{GlobalEmbeddCor-INT}
which is, to our knowledge, brand new for the spaces $\dot{M}^{1,p}$   in the metric setting.

\begin{theorem}
\label{GlobalEmbeddCor-INT-S}
Let $(X,\rho,\mu)$ be a metric measure space and fix parameters $p,\,Q\in(0,\infty)$.
Also, suppose that either $X$ is a bounded set or $\mu$ is $Q$-doubling.  Then
the following statements are equivalent.
\begin{enumerate}[label=\rm{(\alph*)}]
\item The measure $\mu$ is lower $Q$-Ahlfors-regular on $X$ $($in the sense of \eqref{lowermeasure-intro}$)$.

\item There exist a $p\in(0,Q)$ and a $C_S\in(0,\fz)$ satisfying
\begin{equation*}
\|u\|_{L^{p^*}(X)}\leq C_S\|u\|_{\dot{M}^{1,p}(X)}+\frac{C_S}{\big[{\rm diam}_\rho(X)\big]^s}\,\|u\|_{L^p(X)},\quad\forall\ u\in \dot{M}^{1,p}(X,\rho,\mu).
\end{equation*}
Here,  $p^*:=Qp/(Q-p)$.		
\end{enumerate}

\noindent If the space $(X,\rho)$ is uniformly perfect, then {\rm(a)} $($hence,
also {\rm(b)}$)$ is further equivalent to the following statements:

\begin{enumerate}[label=\rm{(\alph*)}]\addtocounter{enumi}{2}	
\item There exist a $p\in(0,Q/s)$ and a $C_P\in(0,\fz)$ satisfying
\begin{equation*}
\inf_{\gamma\in\mathbb{R}}\|u-\gamma\|_{L^{p^\ast}(X)}\leq
C_P\|u\|_{\dot{M}^{1,p}(X)},\quad\forall\ u\in \dot{M}^{1,p}(X,\rho,\mu).
\end{equation*}
Here,  $p^*:=Qp/(Q-p)$.

\item There exist positive constants $c_1$, $c_2$, and $\omega$ such that
\begin{equation*}
\mvint_{B} {\rm exp}\lf(c_1\frac{|u-u_{B}|}{\Vert u\Vert_{\dot{M}^{1,Q}(X)}}\r)^{\omega}\,d\mu\leq c_2
\end{equation*}
holds true for any ball $B\subset X$   $($with radius at most ${\rm diam}_\rho(X)$$)$ and any nonconstant function $u\in\dot{M}^{1,Q}(X,\rho,\mu)$.

\item There exist a $p\in(Q/s,\infty)$ and a $C_H\in(0,\infty)$ such that every $u\in \dot{M}^{1,p}(X,\rho,\mu)$
has a H\"older continuous representative of order $1-Q/p$ on $X$, denoted by $u$ again,  satisfying
\begin{eqnarray*}
|u(x)-u(y)|\leq C_H\,[\rho(x,y)]^{1-Q/p}\Vert u\Vert_{\dot{M}^{1,p}(X)},\qquad\forall\,x,\,y\in X.
\end{eqnarray*}
\end{enumerate}
\end{theorem}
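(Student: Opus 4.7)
The plan is to deduce Theorem~\ref{GlobalEmbeddCor-INT-S} directly from Theorem~\ref{GlobalEmbeddCor-INT} applied with the specific parameters $s=1$ and $q=\infty$, invoking the identification $\dot{M}^1_{p,\infty}(X,\rho,\mu)=\dot{M}^{1,p}(X,\rho,\mu)$ (recorded earlier in the excerpt via $M^{1,p}=\dot{M}^1_{p,\infty}\cap L^p$, applied at the level of the homogeneous seminorm). Granting this, each item in Theorem~\ref{GlobalEmbeddCor-INT-S} is literally the corresponding item of Theorem~\ref{GlobalEmbeddCor-INT} after substituting $s=1$ and $q=\infty$; in particular, the space $\dot{M}^s_{Q/s,q}$ appearing in part (d) becomes $\dot{M}^1_{Q,\infty}=\dot{M}^{1,Q}$, as written.

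The only non-trivial check is that the standing hypothesis $s\preceq_q{\rm ind}(X,\rho)$ of Theorem~\ref{GlobalEmbeddCor-INT} is satisfied in this metric setting with $s=1$ and $q=\infty$. Since $\rho$ is a genuine metric, one of the bulleted properties of the index stated after \eqref{index-INT} gives ${\rm ind}(X,\rho)\geq 1$. If ${\rm ind}(X,\rho)>1$, the condition $s\preceq_q {\rm ind}(X,\rho)$ holds trivially. In the borderline case ${\rm ind}(X,\rho)=1$, one must verify that the supremum defining ${\rm ind}(X,\rho)$ in \eqref{index-INT} is actually attained; but this is immediate by taking $\varrho:=\rho$, because the triangle inequality yields
$$
\rho(x,y)\leq \rho(x,z)+\rho(z,y)\leq 2\max\{\rho(x,z),\rho(z,y)\}
$$
for all $x,y,z\in X$, so the inner supremum in \eqref{index-INT} equals $2$, its logarithm base $2$ equals $1$, and $\varrho=\rho$ witnesses the value $1$. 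Since $q=\infty$, the permissibility of $s={\rm ind}(X,\rho)$ is thus confirmed.

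With the hypothesis verified, the equivalences (a)$\Leftrightarrow$(b) under the mere assumption that $X$ is bounded or $\mu$ is $Q$-doubling, and the remaining equivalences (a)$\Leftrightarrow$(c)$\Leftrightarrow$(d)$\Leftrightarrow$(e) under the additional assumption that $(X,\rho)$ is uniformly perfect, are transported verbatim from Theorem~\ref{GlobalEmbeddCor-INT}. There is no genuine obstacle in this deduction; the only real content is the justification of the index condition at $s=1$, which is handled by the attainment argument above. Consequently, the work lies entirely in Theorem~\ref{GlobalEmbeddCor-INT}, and the metric corollary follows as a direct specialization.
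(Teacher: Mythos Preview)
Your proposal is correct and matches the paper's own approach: Theorem~\ref{GlobalEmbeddCor-INT-S} is presented in the paper explicitly as a consequence of Theorem~\ref{GlobalEmbeddCor-INT}, obtained by specializing to $s=1$, $q=\infty$, and invoking the identification $\dot{M}^1_{p,\infty}=\dot{M}^{1,p}$ from Proposition~\ref{sobequal}(i), with the index condition $1\preceq_\infty{\rm ind}(X,\rho)$ in the metric case justified exactly as you do (cf.\ the paragraph following Theorem~\ref{DoubMeasINTCor}). One cosmetic point: your triangle-inequality display only shows $C_\rho\le 2$, not $C_\rho=2$; the equality in the borderline case ${\rm ind}(X,\rho)=1$ then follows because $(\log_2 C_\rho)^{-1}\ge 1$ is itself a term in the supremum defining the index, forcing $(\log_2 C_\rho)^{-1}=1$.
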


The layout of this article is as follows. In Section~\ref{section:preliminaries}, we review some basic terminology and results
pertaining to quasi-metric spaces and the main classes of function spaces considered in this article,
including the fractional Haj\l asz--Sobolev spaces $\dot{M}^{s,p}$, the Haj\l asz--Triebel--Lizorkin spaces $\dot{M}^s_{p,q}$
and the Haj\l asz--Besov  spaces  $\dot{N}^s_{p,q}$.

The proofs of our main results, Theorems \ref{LMeasINTCor}, \ref{DoubMeasINTCor}, and \ref{GlobalEmbeddCor-INT},
are presented in Sections~\ref{section:embeddings} and \ref{section:measequiv}.
More precisely, Section~\ref{section:embeddings} is devoted to establishing
the Sobolev  embeddings for these Haj\l asz type spaces. Under  a local lower bound condition of
the measure (the so-called  $V(\sigma B_0,Q,b)$ condition given in \eqref{measbound}),
we first establish  certain Sobolev  embeddings of the
Haj\l asz--Sobolev spaces  $\dot{M}^{s,p}$ in Theorem \ref{embedding}
below, which, together with some inclusion properties of
the spaces $\dot{M}^{s,p}$,  $\dot{M}^s_{p,q}$, and   $\dot{N}^s_{p,q}$ (see \eqref{MN-inclusion}
and Proposition~\ref{sobequal}), implies the  desired Sobolev embeddings  of $\dot{M}^s_{p,q}$  and   $\dot{N}^s_{p,q}$
listed in the statements ({b})-({e}) of  Theorems~\ref{LMeasINTCor}   and \ref{GlobalEmbeddCor-INT}, as well as
({b})-({d}) of  Theorem  \ref{DoubMeasINTCor} (see also Theorems \ref{LBembedding}, \ref{DOUBembedding},
and \ref{mainembedding-epsilon} below).

The main focus of Section~\ref{section:measequiv} is to prove that the Sobolev embeddings   established in Section~\ref{section:embeddings} imply a lower bound of the measure, which proves that any of the items in ({b})-({e}) of  Theorems~\ref{LMeasINTCor}   and \ref{GlobalEmbeddCor-INT},  or ({b})-({d}) of  Theorem  \ref{DoubMeasINTCor},
implies the related items ({a}) of these theorems.
En route, we first establish several necessary tools in Subsection~\ref{sssec:maintools}, and use them to prove our desired results, respectively, in Subsection~\ref{non-db} for non-doubling measures and Subsection~\ref{dbc} for doubling measures.
In particular, we construct a suitable family of maximally smooth ``bump" functions that approximate arbitrarily well the characteristic function of a given ball (see Lemma \ref{HolderBump} below), which is a
key tool used in the arguments of Section~\ref{section:measequiv},  and are of independent interest.
Recall that, in the metric setting,  \cite[Lemma~13]{agh20}  constructed such bump
functions that are Lipschitz. However,
in a general quasi-metric space, there is no guarantee
that nonconstant Lipschitz
functions exist. To overcome this, we construct our ``bump" functions in the  more
general scale of H\"older continuous functions, where the
maximal amount of the smoothness for the H\"older scale
that such a bump function can possess, denoted by ${\rm ind}_H(X,\rho)$, is intimately linked to
the geometry of the underlying space. This further leads to
an optimal range of the smoothness parameter $s$
for the characterizations of these Sobolev
embeddings.

As an application, in  Section \ref{sec-triv}, we prove that, on a quasi-metric measure space
$(X,\rho,\mu)$, if the measure
satisfies the lower bound condition \eqref{lowermeasure-intro},
then
both of the spaces $\dot{M}^s_{p,q}(X)$ and
$\dot{N}^s_{p,q}(X)$ contain only  constant functions whenever $s$ is strictly
larger than ${\rm ind}_H(X,\rho)$.  Here the index
${\rm ind}_H(X,\rho)$ (see \eqref{VVV-AGBVdef}) is known to coincide with
the infimum of all $\alpha\in(0,\fz)$ for which the corresponding H\"older class of smoothness order $\alpha$
on $(X,\rho)$  is non-trivial, and it quantitatively reflects   the geometry of the underlying quasi-metric space
$(X,\rho)$.

Finally, we make  some conventions on notation.
Let $\zz$ denote all integers and $\nn:=\{1,2,\ldots\}$.
 We always denote by $C$ a \emph{positive constant}
which is independent of the main parameters, but it
may vary from line to line. We also use
$C_{(\alpha,\beta,\ldots)}$ to denote a positive
constant depending on the indicated parameters $\alpha,
\beta,\ldots.$ The \emph{symbol} $f\lesssim g$ means
that $f\le Cg$. If $f\lesssim g$ and $g\lesssim f$,
we then write $f\approx g$. If $f\le Cg$ and $g=h$ or
$g\le h$, we then write $f\ls g\approx h$ or $f\ls g\ls h$,
\emph{rather than} $f\ls g=h$ or $f\ls g\le h$. The integral average of a  measurable function $u$ on a measurable set
$E\subset X$ with $\mu(E)\in(0,\fz)$ is denoted by
$$
u_E:=\mvint_Eu\, d\mu :=\frac{1}{\mu(E)}\int_E u\, d\mu,
$$	
whenever the integral is well defined.
For sets $E\subset (X,\rho)$, let ${\rm diam}_\rho(E):=\sup\{\rho(x,y):\, x,\,y\in E\}$ and ${\bf 1}_E$ be the characteristic
function of $E$.
For any $p\in(0,\fz]$, let $L^p(X):=L^p(X,\mu)$ denote the \emph{Lebesgue space} on $(X,\mu)$ consisting of all $p$-integrable functions on $X$.

\section{Preliminaries}
\label{section:preliminaries}
In this section, we present some basic assumptions on  quasi-metric measure spaces,
as well as some definitions and basic properties of the function spaces under consideration.

\subsection{Underlying Quasi-Metric Measure Spaces}\label{s-set}

Let $X$ be a nonempty set. A function $\rho:\,X\times X\to[0,\infty)$ is called a
{\it quasi}-{\it metric} on $X$, provided there exist
two positive constants $C_0$ and $C_1$ such  that, for any $x$, $y$, $z\in X$, one has
\begin{eqnarray}\label{gabn-T.2}
\begin{array}{c}
\rho(x,y)=0\Longleftrightarrow x=y,\quad
\rho(y,x)\leq C_0\rho(x,y),
\\[6pt]
\mbox{and }\quad
\rho(x,y)\leq C_1\max\{\rho(x,z),\rho(z,y)\}.
\end{array}
\end{eqnarray}
A pair $(X,\rho)$ is called a {\it quasi-metric} {\it space} if
$X$ is a nonempty set and $\rho$ is a quasi-metric on $X$.
We will tacitly assume that $X$ is of cardinality $\geq2$.

In this context,
two quasi-metrics $\rho$ and $\varrho$ on $X$ are said to be {\it equivalent},
denoted by $\rho\approx\varrho$, if there exists a positive constant $c$ such that
$$
c^{-1}\varrho(x,y)\leq\rho(x,y)\leq c\varrho(x,y),\quad\forall\ x,\, y\in X.
$$
It was pointed out in \cite[(4.289)]{MMMM13} that $\rho\approx\varrho$ if and only if the
 identity $(X,\rho)\to (X,\varrho)$  is bi-Lipschitz (see \cite[Definition 4.32)]{MMMM13}).
So sometimes we also say $\rho\approx\varrho$ as $\rho$
 is bi-Lipschitz equivalent to $\varrho$.

Since $X$ has cardinality $\geq2$, it follows  that the
constants $C_0$ and $C_1$ appearing in \eqref{gabn-T.2} are $\geq1$. Let
$C_\rho$   be the least constant which can play the role of
$C_1$  in \eqref{gabn-T.2}, that is,
\begin{eqnarray}\label{C-RHO.111}
C_\rho:=\sup_{\substack{x,\,y,\,z\in X\\\mbox{\scriptsize{{not all equal}}}}}
\frac{\rho(x,y)}{\max\{\rho(x,z),\rho(z,y)\}}\in[1,\fz).
\end{eqnarray}
Also, let $\widetilde{C}_\rho$   be the least constant
which can play the role of $C_0$  in
\eqref{gabn-T.2}, that is,
\begin{eqnarray}\label{C-RHO.111XXX}
\widetilde{C}_\rho:=\sup_{\substack{x,\,y\in X\\x\not=y}}
\frac{\rho(y,x)}{\rho(x,y)}\in[1,\fz).
\end{eqnarray}
If $C_\rho=\widetilde{C}_\rho=1$,
then $\rho$ is a genuine metric which
is commonly referred  as an ultrametric. Note that, if the underlying metric space is
$\mathbb{R}^n$, $n\in\mathbb{N}$, equipped with the Euclidean distance,
then $C_\rho=2$. It was shown in \cite[Theorem 2.6]{AM15} that there always exist nonconstant H\"older continuous functions of order $s$, for each finite $s\in(0,(\log_2C_\rho)^{-1}]$.

We now recall the notion of an ``index" which was originally
introduced in \cite[Definition 4.26]{MMMM13}. The \textit{lower smoothness index} of a quasi-metric space $(X,\rho)$ is defined as
\begin{equation}
\label{index}
{\rm ind}\,(X,\rho):=\sup_{\varrho\approx\rho}\big(\log_2C_\varrho\big)^{-1}\in(0,\infty],
\end{equation}
where the supremum is taken over all quasi-metrics $\varrho$ on $X$ which are bi-Lipschitz equivalent to $\rho$.
Generally speaking, ${\rm ind}\,(X,\rho)$ encodes
information about the geometry of the underlying space, as evidenced by the following properties:
\begin{itemize}[itemsep=1pt]
\item
{${\rm ind}\,(X,\rho)\geq 1$ if there exists a genuine distance on $X$ which is
equivalent to $\rho$;}


\item {${\rm ind}\,(X,\|\cdot-\cdot\|)=1$ if $(X,\|\cdot\|)$ is a nontrivial
normed vector space; hence ${\rm ind}\,(\mathbb{R}^n,|\cdot-\cdot|)=1$;}

\item {${\rm ind}\,(Y,\|\cdot-\cdot\|)=1$ if $Y$ is a subset
of a normed vector space $(X,\|\cdot\|)$ containing an open line segment; hence ${\rm ind}\,([0,1]^n,|\cdot-\cdot|)=1$;}

\item {${\rm ind}\,(X,\rho)\leq 1$ whenever the interval $[0,1]$ can be
bi-Lipschitzly embedded into $(X,\rho)$;}

\item {$(X,\rho)$ cannot be bi-Lipschitzly embedded into some ${\mathbb{R}}^n$,
$n\in{\mathbb{N}}$, whenever ${\rm ind}\,(X,\rho)<1$;}

\item {${\rm ind}\,(X,\rho)\leq Q$ if $(X,\tau_\rho)$ is pathwise connected
and $(X,\rho)$ is equipped with a $Q$-Ahlfors-regular measure, where $\tau_\rho$ denotes the topology generated by $\rho$;}

\item{there are compact, totally disconnected, Ahflors regular spaces
with lower smoothness index $\infty$; for instance, the four-corner planar Cantor set equipped with $|\cdot-\cdot|$;}

\item {${\rm ind}\,(X,\rho)=\infty$ if there exists an ultrametric
on $X$ which is equivalent to $\rho$, in particular, ${\rm ind}\,(X,\rho)=\infty$ whenever the underlying set
$X$ has finite cardinality; }

\item  {${\rm ind}\,(X,\rho)=1$ if $(X,\rho)$ is a metric space that is equipped with a doubling measure  and supports a weak $(1, p)$-Poincar\'e inequality with $p>1$.}
\end{itemize}
see Remark~\ref{trivpoin} and \cite[Section 4.7]{MMMM13} or \cite[Section 2.5]{AM15} for more details.

We next recall a sharp metrization theorem from \cite[Theorem 3.46]{MMMM13}.

\begin{theorem}\label{DST1}
Let $(X,\rho)$ be a quasi-metric space and assume that 	$C_\rho$, $\widetilde{C}_\rho\in[1,\infty)$ are as in
\eqref{C-RHO.111} and \eqref{C-RHO.111XXX}. Then there exists a symmetric quasi-metric $\rho_{\#}$ on $X$ with the property that for any finite number $\alpha\in\big(0,(\log_2C_\rho)^{-1}\big]$, the function $(\rho_{\#})^\alpha\colon X\times X\to [0,\fz)$ is  a genuine metric on $X$
satisfying
\begin{eqnarray}\label{XYB-11.4}
[\rho_{\#}(x,y)]^\alpha\leq [\rho_{\#}(x,z)]^\alpha+[\rho_{\#}(z,y)]^\alpha,
\quad\ \forall\ x,\,y,\,z\in X.
\end{eqnarray}
Moreover, $\rho_{\#}\approx\rho$ and
\begin{eqnarray}\label{DEQV1}
(C_\rho)^{-2}\rho(x,y)\leq\rho_{\#}(x,y)\leq\widetilde{C}_\rho\,\rho(x,y),\quad\ \forall\ x,\,y\in X.
\end{eqnarray}
Additionally, the function $\rho_{\#}:X\times X\longrightarrow [0,\infty)$ is continuous when $X\times X$ is equipped with the natural product topology $\tau_\rho\times\tau_\rho$. In particular, all $\rho_{\#}$-balls are open in the topology $\tau_\rho$.
\end{theorem}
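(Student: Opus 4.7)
The plan is to prove Theorem~\ref{DST1} through an Aoki--Rolewicz--Macías--Segovia type chain construction, carried out in three stages.

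\textbf{Stage 1 (symmetrization).} First I would replace $\rho$ by $\rho_s(x,y):=\max\{\rho(x,y),\rho(y,x)\}$; this is symmetric and satisfies $\rho(x,y)\leq \rho_s(x,y)\leq \widetilde{C}_\rho\,\rho(x,y)$ by \eqref{C-RHO.111XXX}, while applying \eqref{C-RHO.111} termwise shows that $\rho_s$ is still a quasi-metric with the same constant $C_\rho$. If $C_\rho=1$, then $\rho_s$ is already an ultrametric and one simply takes $\rho_{\#}:=\rho_s$; so from now on assume $C_\rho>1$.

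\textbf{Stage 2 (key chain estimate).} Set $\alpha_0:=(\log_2 C_\rho)^{-1}$, so that the critical numerical identity $C_\rho^{\alpha_0}=2$ holds, and define
\begin{equation*}
d(x,y):=\inf\left\{\sum_{i=0}^{N-1}[\rho_s(x_i,x_{i+1})]^{\alpha_0} : N\in\nn,\ x=x_0,x_1,\ldots,x_N=y\right\}.
\end{equation*}
By construction $d$ is nonnegative, symmetric, and satisfies the triangle inequality; the two-point chain gives the trivial upper bound $d(x,y)\leq [\rho_s(x,y)]^{\alpha_0}$. The heart of the argument is the reverse inequality $[\rho_s(x,y)]^{\alpha_0}\leq C_\rho^{2\alpha_0}\, d(x,y)$, which I would prove by induction on the chain length: given a chain $x_0,\ldots,x_N$, split at the smallest index $k$ for which the partial sum $\sum_{i<k}[\rho_s(x_i,x_{i+1})]^{\alpha_0}$ first exceeds half of the total, apply quasi-subadditivity $[\rho_s(x_0,x_N)]^{\alpha_0}\leq C_\rho^{\alpha_0}\max\{[\rho_s(x_0,x_k)]^{\alpha_0},[\rho_s(x_k,x_N)]^{\alpha_0}\}$, and bound each subsum using the inductive hypothesis; the identity $C_\rho^{\alpha_0}=2$ is exactly what makes the induction close with the sharp constant $C_\rho^{2\alpha_0}$. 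This sharp bookkeeping is the main technical obstacle of the proof, and it is precisely the reason the threshold exponent is $\alpha_0=(\log_2 C_\rho)^{-1}$ and the constant in \eqref{DEQV1} is $(C_\rho)^{-2}$.

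\textbf{Stage 3 (defining $\rho_{\#}$ and finishing).} Set $\rho_{\#}:=d^{1/\alpha_0}$. Stage~2 combined with Stage~1 then yields precisely the two-sided bound \eqref{DEQV1}, and hence $\rho_{\#}\approx\rho$. For any finite $\alpha\in(0,\alpha_0]$ one has $(\rho_{\#})^\alpha=d^{\alpha/\alpha_0}$, and since $\alpha/\alpha_0\in(0,1]$ the map $t\mapsto t^{\alpha/\alpha_0}$ is subadditive on $[0,\infty)$, so the triangle inequality for $d$ transfers to $(\rho_{\#})^\alpha$, giving \eqref{XYB-11.4}. Finally, continuity of $\rho_{\#}\colon X\times X\to[0,\infty)$ in the product topology $\tau_\rho\times\tau_\rho$ follows from the fact that for any admissible $\alpha$ the metric $(\rho_{\#})^\alpha$ satisfies $|\rho_{\#}(x,y)^\alpha-\rho_{\#}(x',y')^\alpha|\leq \rho_{\#}(x,x')^\alpha+\rho_{\#}(y,y')^\alpha\lesssim [\rho(x,x')]^\alpha+[\rho(y,y')]^\alpha$ via the bi-Lipschitz equivalence with $\rho$, which in particular implies that every $\rho_{\#}$-ball is open in $\tau_\rho$.
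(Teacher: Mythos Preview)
The paper does not prove Theorem~\ref{DST1}; it is quoted from \cite[Theorem~3.46]{MMMM13} (see the sentence immediately preceding the statement in Section~\ref{s-set}). Your three-stage Frink/Mac\'{\i}as--Segovia chain construction is indeed the strategy underlying the proof in that reference, so your outline is on the right track and there is nothing to compare against within the present paper.

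One point of caution concerns the sharp constant. Writing $\sigma:=(\rho_s)^{\alpha_0}$, your Stage~2 asserts that the split-at-the-half-sum induction yields $\sigma(x_0,x_N)\le 4\sum_i\sigma(x_i,x_{i+1})$, i.e.\ the constant $C_\rho^{2\alpha_0}=4$, which is exactly what is needed for the lower bound $(C_\rho)^{-2}\rho\le\rho_{\#}$ in \eqref{DEQV1}. However, the most direct version of that induction---pick the largest $k$ with $\sum_{i<k}\sigma(x_i,x_{i+1})\le S/2$, apply $\sigma(x_0,x_N)\le 2\max\{\sigma(x_0,x_k),\sigma(x_k,x_N)\}$, and invoke the inductive hypothesis on each piece---actually loses an extra factor of~$2$ (one application of the quasi-triangle inequality is needed to pass through the pivot term $\sigma(x_k,x_{k+1})$), producing $8S$ rather than $4S$ and hence only $(C_\rho)^{-3}\rho\le\rho_{\#}$. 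Achieving the sharp constant $(C_\rho)^{-2}$ stated in \eqref{DEQV1} requires a more delicate argument than the one-line sketch you give; this is precisely the refinement carried out in \cite{MMMM13}. Your Stages~1 and~3 are fine as written.
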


\begin{remark}
\label{triquasi}
In the context of Theorem~\ref{DST1}, it follows immediately from \eqref{XYB-11.4} and the equivalence in \eqref{DEQV1} that, for any collection $\{x_1,\ldots,x_n\}$, with $n\in\{2,3,\ldots\}$, of points in $X$,
it holds true that
\begin{eqnarray*}
\rho(x_1,x_n)\leq\widetilde{C}_\rho C_\rho^2
\lf\{\sum_{i=1}^{n-1}[\rho(x_i,x_{i+1})]^\alpha\r\}^{1/\alpha}.
\end{eqnarray*}
\end{remark}	

Given a $\rho$-ball  $B_\rho(x,r):=\{y\in X:\, \rho(x,y)<r\}$ with $x\in X$ and
$r\in (0,\fz)$, we define
$\overline{B}_\rho(x,r):=\{y\in X:\, \rho(x,y)\leq r\}$. As a sign of warning, note that in general $\overline{B}_\rho(x,r)$ is not necessarily equal to the closure of $B_\rho(x,r)$. If $r=0$, then $B_\rho(x,r)=\emptyset$, but $\overline{B}_\rho(x,r)=\{ x\}$. Moreover, since $\rho$ is not necessarily symmetric, one needs to pay particular attention to the order of $x$ and $y$ in the definition of $B_\rho(x,r)$.  The triplet $(X,\rho,\mu)$ is  called  a \textit{quasi-metric measure space} if $X$ is a set of cardinality $\geq2$, $\rho$ is a quasi-metric on $X$, and $\mu$ is a Borel measure such that all $\rho$-balls are $\mu$-measurable and $\mu(B_\rho(x,r))\in(0,\infty)$ for any $x\in X$ and any $r\in(0,\infty)$.
We refer the reader to \cite{MMMM13,AMM13,AM15} for more information on this setting.

\subsection{Fractional Sobolev, Triebel--Lizorkin, and Besov Spaces}\label{s-func}
Suppose $(X,\rho,\mu)$ is a quasi-metric space equipped
with a nonnegative Borel measure, and let $s\in(0,\infty)$.
Following \cite{KYZ11}, a sequence $\{g_k\}_{k\in\mathbb{Z}}$ of nonnegative $\mu$-measurable
functions on $X$ is called a \textit{fractional $s$-gradient} of
a measurable function $u\colon X\rightarrow\mathbb{R}$ if there exists
a $\mu$-measurable set $E\subset X$ with $\mu(E)=0$ such that
\begin{equation*}
\vert u(x)-u(y)\vert\leq [\rho(x,y)]^s \left[g_k(x)+g_k(y)\right]
\end{equation*}
for all $k\in\mathbb{Z}$ and for all $x,\,y\in X\setminus E$ satisfying $2^{-k-1}\leq \rho(x,y)<2^{-k}.$ The collection of all fractional $s$-gradients of $u$ is denoted by $\mathbb{D}_\rho^s(u)$.

Given $p\in(0,\infty)$, $q\in(0,\infty]$, and a sequence $\vec{g}:=\{g_k\}_{k\in\mathbb{Z}}$ of measurable functions, we define
\begin{equation*}
\Vert \vec{g}\Vert_{L^p(X,\ell^q)}:=
\lf\Vert\, \Vert \{g_k\}_{k\in\mathbb{Z}}\Vert_{\ell^q} \r\Vert_{L^p(X,\mu)}
\end{equation*}
and
\begin{equation*}
\Vert \vec{g}\Vert_{\ell^q(L^p(X))}:=\lf\Vert \lf\{\Vert (g_k)\Vert_{L^p(X,\mu)}\r\}_{k\in\mathbb{Z}} \r\Vert_{\ell^q},
\end{equation*}
where
\begin{equation*}
\Vert \{g_k\}_{k\in\mathbb{Z}}\Vert_{\ell^q}:=
\begin{cases}
 \displaystyle\lf(\sum_{k\in\mathbb{Z}}\vert g_k\vert^q\r)^{1/q}& ~\text{when}~q\in(0,\infty),\\[10pt]
\,\,\,\,\,\,\displaystyle \sup_{k\in\mathbb{Z}}\vert g_k\vert& ~\text{when}~q=\infty.
\end{cases}
\end{equation*}

The \textit{homogeneous Haj\l asz--Triebel--Lizorkin space}
$\dot{M}^s_{p,q}(X,\rho,\mu)$ is defined as the collection of
all measurable functions $u\colon X\rightarrow\mathbb{R}$ for which
\begin{equation*}
\Vert u\Vert_{\dot{M}^s_{p,q}(X)}:=\Vert u\Vert_{\dot{M}^s_{p,q}(X,\rho,\mu)}:=
\inf_{\vec{g}\in\mathbb{D}_\rho^s(u)}\Vert\vec{g}\Vert_{L^p(X,\ell^q)}<\infty.
\end{equation*}
Here and thereafter, we keep the agreement that $\inf\emptyset:=\infty$.
The corresponding \textit{inhomogeneous Haj\l asz--Triebel--Lizorkin space}
$M^s_{p,q}(X,\rho,\mu)$ is defined as
$M^s_{p,q}(X,\rho,\mu):=\dot{M}^s_{p,q}(X,\rho,\mu)\cap L^p(X),$
equipped with the following (quasi-)norm
\begin{equation*}
\Vert u\Vert_{{M}^s_{p,q}(X)}:=\Vert u\Vert_{M^s_{p,q}(X,\rho,\mu)}:=\Vert u\Vert_{\dot{M}^s_{p,q}(X,\rho,\mu)}+\Vert u\Vert_{L^p(X)},
\quad\forall\,u\in M^s_{p,q}(X,\rho,\mu).
\end{equation*}

The \textit{homogeneous Haj\l asz--Besov space} $\dot{N}^s_{p,q}(X,\rho,\mu)$ is
defined as the collection of all measurable functions $u\colon X\rightarrow\mathbb{R}$ for which
\begin{equation*}
\Vert u\Vert_{\dot{N}^s_{p,q}(X)}:=\Vert u\Vert_{\dot{N}^s_{p,q}(X,\rho,\mu)}:=\inf_{\vec{g}\in\mathbb{D}_\rho^s(u)}
\Vert\vec{g}\Vert_{\ell^q(L^p(X))}<\infty,
\end{equation*}
and the \textit{inhomogeneous Haj\l asz--Besov space} is defined
as $N^s_{p,q}(X,\rho,\mu):=\dot{N}^s_{p,q}(X,\rho,\mu)\cap L^p(X)$, and
equipped with the following (quasi-)norm
\begin{equation*}
\Vert u\Vert_{N^s_{p,q}(X)}:=\Vert u\Vert_{N^s_{p,q}(X,\rho,\mu)}:=\Vert u\Vert_{\dot{N}^s_{p,q}(X,\rho,\mu)}+\Vert u\Vert_{L^p(X)},\quad\forall\ u\in N^s_{p,q}(X,\rho,\mu).
\end{equation*}

When $p\in[1,\infty)$ and $q\in[1,\infty]$, it follows that  $\Vert\cdot\Vert_{M^s_{p,q}(X,\rho,\mu)}$ and $\Vert\cdot\Vert_{N^s_{p,q}(X,\rho,\mu)}$ are genuine norms and the corresponding spaces   are Banach spaces. Otherwise, these spaces are quasi-Banach.
We will simply use $\dot{M}^s_{p,q}(X)$, $\dot{N}^s_{p,q}(X)$, $M^s_{p,q}(X)$, and $N^s_{p,q}(X)$,
respectively, in place of $\dot{M}^s_{p,q}(X,\rho,\mu)$, $\dot{N}^s_{p,q}(X,\rho,\mu)$, $M^s_{p,q}(X,\rho,\mu)$, and $N^s_{p,q}(X,\rho,\mu)$,  whenever the quasi-metric and the measure are well understood from the context. It was shown in \cite{KYZ11} that $M^s_{p,q}\big(\mathbb{R}^n\big)$ coincides with the classical Triebel--Lizorkin space $F^s_{p,q}(\mathbb{R}^n)$ for any $s\in(0,1)$, $p\in(\frac{n}{n+s},\infty)$, and $q\in(\frac{n}{n+s},\infty]$,  and $N^s_{p,q}(\mathbb{R}^n)$ coincides with the classical Besov space $B^s_{p,q}\big(\mathbb{R}^n\big)$ for any $s\in(0,1)$, $p\in(\frac{n}{n+s},\infty)$, and $q\in(0,\infty]$. For more studies on these spaces, we refer the reader to, for instance,  \cite{GKZ13,HIT16,HKT17,y17,Karak1,Karak2,agh20,AWYY21}.

The following result appears in \cite{AYY21}.

\begin{proposition}
\label{constant}
Let $(X,\rho,\mu)$ be a quasi-metric space equipped with a nonnegative Borel measure, and suppose that $s,\,p\in(0,\infty)$ and $q\in(0,\infty]$. For any fixed $u\in\dot{M}^s_{p,q}(X,\rho,\mu)$, one has that $\Vert u\Vert_{\dot{M}^s_{p,q}(X,\rho,\mu)}=0$   if and only if $u$ is constant $\mu$-almost everywhere in $X$. The above statement also holds true with $\dot{M}^s_{p,q}$ replaced by $\dot{N}^s_{p,q}$.
\end{proposition}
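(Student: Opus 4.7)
The forward implication (constant $u$ yields $\Vert u\Vert_{\dot{M}^s_{p,q}}=0$) is straightforward and I would handle it first: simply verify that the identically zero sequence $g_k\equiv 0$, $k\in\mathbb{Z}$, belongs to $\mathbb{D}^s_\rho(u)$ by taking as the exceptional set $E$ the $\mu$-null set on which $u$ fails to equal its almost-everywhere constant value. Then both $\Vert\vec{g}\Vert_{L^p(X,\ell^q)}$ and $\Vert\vec{g}\Vert_{\ell^q(L^p(X))}$ vanish, giving the two desired zero-norm identities.

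For the converse in the Triebel--Lizorkin case, my plan is to select fractional $s$-gradients $\vec{g}^{(n)}=\{g_k^{(n)}\}_{k\in\mathbb{Z}}\in\mathbb{D}^s_\rho(u)$ with $\Vert\vec{g}^{(n)}\Vert_{L^p(X,\ell^q)}\to 0$ and then pass to the pointwise limit inside the defining gradient inequality. The first key observation is the trivial pointwise bound $|g_k^{(n)}(x)|\leq\Vert\{g_j^{(n)}(x)\}_{j\in\mathbb{Z}}\Vert_{\ell^q}$, which forces $\Vert g_k^{(n)}\Vert_{L^p(X)}\to 0$ as $n\to\infty$ for each fixed $k\in\mathbb{Z}$. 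Since $\mathbb{Z}$ is countable, a standard diagonal extraction then produces a single subsequence, relabeled $n$, along which $g_k^{(n)}\to 0$ $\mu$-almost everywhere on $X$ for every $k\in\mathbb{Z}$ simultaneously.

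The second step exploits the dyadic structure built into the definition of $\mathbb{D}^s_\rho(u)$. For each pair $(n,k)$ there is a $\mu$-null set $E_{n,k}$ off of which $|u(x)-u(y)|\leq [\rho(x,y)]^s[g_k^{(n)}(x)+g_k^{(n)}(y)]\leq 2^{-ks}[g_k^{(n)}(x)+g_k^{(n)}(y)]$ whenever $2^{-k-1}\leq\rho(x,y)<2^{-k}$. I would let $E$ be the countable union of all these $E_{n,k}$ together with the null sets on which $g_k^{(n)}\to 0$ fails; by countability $\mu(E)=0$. For any $x,y\in X\setminus E$ with $x\neq y$, the nondegeneracy axiom in \eqref{gabn-T.2} gives $\rho(x,y)>0$, so $\rho(x,y)$ lies in a unique dyadic band $[2^{-k-1},2^{-k})$; invoking the inequality for that $k$ and sending $n\to\infty$ yields $|u(x)-u(y)|=0$. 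Thus $u$ is constant on $X\setminus E$, as desired.

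The Besov case is obtained by repeating the same argument, since $\Vert\vec{g}^{(n)}\Vert_{\ell^q(L^p(X))}\to 0$ immediately implies $\Vert g_k^{(n)}\Vert_{L^p(X)}\to 0$ for each $k\in\mathbb{Z}$, which is precisely what is needed to launch the diagonal extraction. I do not anticipate a genuine obstacle; the only delicate point is the careful bookkeeping of the exceptional null sets, which remains clean because $\mathbb{Z}$ is countable and the dyadic bands $[2^{-k-1},2^{-k})$ exhaust every positive value of $\rho(x,y)$.
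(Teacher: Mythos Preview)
The paper does not supply its own proof of this proposition; it simply attributes the result to \cite{AYY21}. Your argument is correct and self-contained: the forward implication via the zero gradient sequence is immediate, and for the converse you correctly deduce $\Vert g_k^{(n)}\Vert_{L^p(X)}\to 0$ for each fixed $k$ from either the $L^p(\ell^q)$ or the $\ell^q(L^p)$ hypothesis, run a standard diagonal extraction over the countable index set $\mathbb{Z}$ to obtain $\mu$-almost-everywhere convergence of every $g_k^{(n)}$ to zero along a common subsequence, aggregate the countably many exceptional null sets, and pass to the limit in the gradient inequality on the unique dyadic band $[2^{-k-1},2^{-k})$ containing $\rho(x,y)>0$. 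One minor remark: the definition of $\mathbb{D}^s_\rho(u)$ in the paper already gives a \emph{single} null set $E_n$ per gradient sequence (not one per pair $(n,k)$), so your bookkeeping is slightly more generous than necessary, but this does no harm.
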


The above Triebel--Lizorkin and Besov spaces are monotone in the parameter $q$. Indeed, recall the following very basic inequality: for
any $\theta\in(0,1]$ and any sequence $\{a_k\}_{k\in\mathbb{Z}}$ of complex numbers, it holds true that
\begin{equation}
\label{thetaless1}
\left(\sum_{k\in\mathbb{Z}}|a_k|\right)^{\theta}
\leq\sum_{k\in\mathbb{Z}}|a_k|^{\theta}.
\end{equation}
Using \eqref{thetaless1} with $\theta=q_1/q_2$, it follows that
\begin{align}\label{MN-inclusion}
&\dot{M}^s_{p,q_1}(X,\rho,\mu)\hookrightarrow\dot{M}^s_{p,q_2}(X,\rho,\mu)\quad\mbox{and}\quad
\dot{N}^s_{p,q_1}(X,\rho,\mu)\hookrightarrow\dot{N}^s_{p,q_2}(X,\rho,\mu) \nonumber\\
&\qquad \mbox{(with appropriate control of the quasi-norms) for any } \nonumber\\
&\qquad \mbox{$s,\,p\in(0,\infty)$ and every $q_1,\,q_2\in(0,\infty]$ satisfying $q_1\leq q_2$.}
\end{align}
Here and thereafter, for any two quasi-Banach spaces $(Y,\|\cdot\|_{Y})$ and $(Z,\|\cdot\|_{Z})$,
$Y\hookrightarrow Z$ means that $(Y,\|\cdot\|_Y)$ is continuously embedded into $(Z,\|\cdot\|_{Z})$.

Next, we recall the definition of (fractional) Haj\l asz--Sobolev spaces. Suppose that
$(X,\rho)$ is a quasi-metric space equipped with a nonnegative Borel measure $\mu$, and let $s\in(0,\infty)$. Following \cite{Y03}, a nonnegative $\mu$-measurable function $g$, defined on $X$, is called an \textit{$s$-gradient} of a measurable function $u\colon X\rightarrow\mathbb{R}$ if there exists a set $E\subset X$ with $\mu(E)=0$ such that
\begin{equation}\label{fracHajlasz}
\vert u(x)-u(y)\vert\leq [\rho(x,y)]^s\left[g(x)+g(y)\right],\quad\forall\,x,\,y\in X\setminus E.
\end{equation}
The collection of all $s$-gradients of $u$ is denoted by $\mathcal{D}_\rho^s(u)$.

Let $p\in(0,\infty)$. The \textit{homogeneous Haj\l asz--Sobolev space} $\dot{M}^{s,p}(X,\rho,\mu)$ is defined as the collection of all measurable functions $u\colon X\rightarrow\mathbb{R}$ for which
\begin{equation*}
\Vert u\Vert_{\dot{M}^{s,p}(X)}:=\Vert u\Vert_{\dot{M}^{s,p}(X,\rho,\mu)}:=\inf_{g\in\mathcal{D}_\rho^s(u)}\Vert g\Vert_{L^p(X)}<\infty,
\end{equation*}
again, with the understanding that $\inf\emptyset:=\infty$. The corresponding \textit{inhomogeneous Haj\l asz--Sobolev space} $M^{s,p}(X,\rho,\mu)$ is defined by setting
$M^{s,p}(X,\rho,\mu):=\dot{M}^{s,p}(X,\rho,\mu)\cap L^p(X),$   equipped with the following (quasi-)norm
\begin{equation*}
\Vert u\Vert_{M^{s,p}(X)}:=\Vert u\Vert_{M^{s,p}(X,\rho,\mu)}:=\Vert u\Vert_{\dot{M}^{s,p}(X,\rho,\mu)}+\Vert u\Vert_{L^p(X)},
\quad\forall\ u\in M^{s,p}(X,\rho,\mu).
\end{equation*}
Also, let
$$\mathcal{D}_\rho^{s,+}(u):=\lf\{g\in \mathcal{D}_\rho^s(u):\,\|g\|_{L^p(X)}>0\r\}.$$

Recall that the Haj\l asz--Sobolev spaces $M^{1,p}$ were originally introduced by Haj\l asz \cite{hajlasz2}, while
their fractional variants were introduced in \cite{Hu03,Y03}. The Haj\l asz--Sobolev spaces are naturally related to Haj\l asz--Triebel--Lizorkin and Haj\l asz--Besov spaces as follows.

\begin{proposition}\label{bas}
\label{sobequal}
Let $(X,\rho,\mu)$ be a quasi-metric space
equipped with a nonnegative Borel measure $\mu$, and
suppose that $s,\,p\in(0,\infty)$. Then
\begin{enumerate}
\item[${\rm (i)}$] $\dot{M}^s_{p,\infty}(X,\rho,\mu)=\dot{M}^{s,p}(X,\rho,\mu)$ as sets, with equal semi-norms\footnote{Given a vector space
$\mathscr{X}$ over $\rr$, recall that a function $\|\cdot\|:\mathscr{X}\to\mathbb[0,\infty)$
is called a {\it semi}-{\it norm} provided that, for any $x,\,y\in\mathscr{X}$, the following three conditions hold true:\
${\rm(i)}$ $x=0$ implies $\|x\|=0$, ${\rm(ii)}$
$\|\lambda x\|=|\lambda|\!\cdot\!\|x\|$ for any  $\lambda\in\mathbb{R}$,
and ${\rm(iii)}$ $\|x+y\|\leq \|x\|+\|y\|$.};

\item[${\rm (ii)}$] if $q\in(0,p]$, then $\dot{N}^s_{p,q}(X,\rho,\mu)\hookrightarrow \dot{M}^{s,p}(X,\rho,\mu)\hookrightarrow\dot{N}^s_{p,\infty}(X,\rho,\mu)$;

\item[${\rm (iii)}$] for any $\varepsilon\in(0,s)$, there exists a  positive constant $C$ with the property that, if $B\subset X$ is a $\rho$-ball with radius $r\in(0,\infty)$, then, for any $q\in(0,\infty]$, $\dot{N}^s_{p,q}(B,\rho,\mu)\hookrightarrow\dot{M}^{\varepsilon,p}(B,\rho,\mu)$   and $\Vert u\Vert_{\dot{M}^{\varepsilon,p}(B,\rho,\mu)}\leq Cr^{s-\varepsilon}\Vert u\Vert_{\dot{N}^s_{p,q}(B,\rho,\mu)}$ for all $u\in \dot{N}^s_{p,q}(B,\rho,\mu)$.
\end{enumerate}
\end{proposition}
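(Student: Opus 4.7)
The plan is to exploit a two-way bridge between single $s$-gradients and fractional $s$-gradient sequences: (a) a single gradient $g\in\mathcal{D}_\rho^s(u)$ embeds as the constant sequence $g_k:=g$ ($k\in\zz$), which sits in $\mathbb{D}_\rho^s(u)$ with $\|\{g_k\}\|_{L^p(X,\ell^\infty)}=\|\{g_k\}\|_{\ell^\infty(L^p(X))}=\|g\|_{L^p(X)}$; and (b) a sequence $\vec g=\{g_k\}_{k\in\zz}\in\mathbb{D}_\rho^s(u)$ collapses back via its pointwise supremum $g:=\sup_{k\in\zz}g_k$, which belongs to $\mathcal{D}_\rho^s(u)$ (for $x\ne y$ outside the null set, pick the unique $k$ with $2^{-k-1}\le\rho(x,y)<2^{-k}$ to obtain $|u(x)-u(y)|\le\rho(x,y)^s[g_k(x)+g_k(y)]\le\rho(x,y)^s[g(x)+g(y)]$) and satisfies $\|g\|_{L^p(X)}=\|\vec g\|_{L^p(X,\ell^\infty)}$. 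Combining (a) and (b) and taking infima produces the semi-norm equality in (i), and the embedding $\dot{M}^{s,p}(X)\hookrightarrow\dot{N}^s_{p,\infty}(X)$ in (ii).

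For the remaining half $\dot{N}^s_{p,q}(X)\hookrightarrow\dot{M}^{s,p}(X)$ of (ii), with $q\le p$, I again take $g:=\sup_k g_k$ as an $s$-gradient of $u$ and estimate
\[
\|g\|_{L^p(X)}^p=\int_X\!\Big(\sup_{k\in\zz}g_k\Big)^p\,d\mu\le\int_X\sum_{k\in\zz}g_k^p\,d\mu=\sum_{k\in\zz}\|g_k\|_{L^p(X)}^p.
\]
The monotonicity of $\ell^r$ in $r$ then gives $\big(\sum_k\|g_k\|_{L^p}^p\big)^{1/p}\le\big(\sum_k\|g_k\|_{L^p}^q\big)^{1/q}$ because $q\le p$, whence $\|g\|_{L^p(X)}\le\|\vec g\|_{\ell^q(L^p(X))}$; taking infima completes the embedding.

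For (iii), the key localization is that on $B:=B_\rho(x_0,r)$ the quasi-triangle inequality forces $\rho(x,y)\le C_\rho\widetilde{C}_\rho r$ for all $x,y\in B$, so only the indices $k\ge k_0$ contribute to the $s$-gradient condition on $B$, where $k_0\in\zz$ is the smallest integer with $2^{-k_0}\le 2C_\rho\widetilde{C}_\rho r$. Given $\vec g\in\mathbb{D}_\rho^s(u)$, I will define
\[
h:=\sum_{k\ge k_0}2^{-k(s-\varepsilon)}g_k,
\]
and verify that $h$ is an $\varepsilon$-gradient of $u$ on $B$: for $x\ne y$ in $B$ with $2^{-k-1}\le\rho(x,y)<2^{-k}$ (so $k\ge k_0$), the bound $\rho(x,y)^{s-\varepsilon}<2^{-k(s-\varepsilon)}$ (valid since $s>\varepsilon$) upgrades $|u(x)-u(y)|\le\rho(x,y)^s[g_k(x)+g_k(y)]$ to $|u(x)-u(y)|\le\rho(x,y)^\varepsilon[h(x)+h(y)]$. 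The $L^p$-estimate of $h$ follows by applying the $\min(p,1)$-triangle inequality (Minkowski when $p\ge 1$, subadditivity $(a+b)^p\le a^p+b^p$ when $p<1$) and then summing the geometric series from $k_0$, which evaluates to an expression of order $r^{s-\varepsilon}$, yielding $\|h\|_{L^p(B)}\le C r^{s-\varepsilon}\|\vec g\|_{\ell^\infty(L^p(B))}$ with $C$ depending only on $s,\varepsilon,p,C_\rho,\widetilde{C}_\rho$. Taking infima and then invoking the monotonicity $\|u\|_{\dot{N}^s_{p,\infty}(B)}\le\|u\|_{\dot{N}^s_{p,q}(B)}$ from \eqref{MN-inclusion} gives the $q$-uniform bound. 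The main obstacle is keeping $C$ independent of $r$ and $q$: the former is controlled by the precise choice of $k_0$ (so the geometric series has the right order), and the latter by reducing to $q=\infty$ via sequence-norm monotonicity.
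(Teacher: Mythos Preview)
Your proof is correct and follows essentially the same approach as the paper. The only minor variations are: in (ii) you argue the embedding $\dot{N}^s_{p,q}\hookrightarrow\dot{M}^{s,p}$ directly via $g:=\sup_k g_k$ and $\ell^q\hookrightarrow\ell^p$, whereas the paper routes through the chain $\dot{N}^s_{p,q}\hookrightarrow\dot{N}^s_{p,p}=\dot{M}^s_{p,p}\hookrightarrow\dot{M}^s_{p,\infty}=\dot{M}^{s,p}$; and in (iii) you take the $\varepsilon$-gradient as the linear sum $h=\sum_{k\ge k_0}2^{-k(s-\varepsilon)}g_k$ (requiring a case split on $p\gtrless 1$ for the $L^p$ estimate), while the paper uses the $\ell^p$-sum $g=\bigl[\sum_{k\ge k_0}2^{-k(s-\varepsilon)p}g_k^p\bigr]^{1/p}$, which handles all $p$ uniformly---but both constructions lead to the same geometric-series bound of order $r^{s-\varepsilon}$.
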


\begin{proof}
The equality $\dot{M}^s_{p,\infty}(X,\rho,\mu)=\dot{M}^{s,p}(X,\rho,\mu)$ was proven in \cite[Proposition~2.1]{KYZ11} when $(X,\rho,\mu)$ is a \textit{metric}-measure space.
We include a short proof for the sake of completeness. Let $u\in\dot{M}^{s,p}(X,\rho,\mu)$ and  $g\in\mathcal{D}_\rho^s(u)$. Defining $g_k:=g$ for all $k\in\mathbb{Z}$, it is straightforward to check that $\vec{g}:=\{g_k\}_{k\in\mathbb{Z}}\in\mathbb{D}^s_\rho(u)$ and $\Vert\vec{g}\Vert_{L^p(X,\ell^\infty)}=\Vert g\Vert_{L^p(X,\mu)}$. Therefore, $u\in \dot{M}^s_{p,\infty}(X,\rho,\mu)$ and $\Vert u\Vert_{\dot{M}^{s,p}(X,\rho,\mu)}\leq\Vert u\Vert_{\dot{M}^s_{p,\infty}(X,\rho,\mu)}$. Conversely, if $u\in\dot{M}^s_{p,\infty}(X,\rho,\mu)$ and $\vec{g}:=\{g_k\}_{k\in\mathbb{Z}}\in\mathbb{D}^s_\rho(u)$, then, taking $g:=\sup_{k\in\mathbb{Z}}g_k$, we have $\Vert g\Vert_{L^p(X,\mu)}=\Vert\vec{g}\Vert_{L^p(X,\ell^\infty)}$. It follows that $u\in \dot{M}^s_{p,\infty}(X,\rho,\mu)$ and $\Vert u\Vert_{\dot{M}^s_{p,\infty}(X,\rho,\mu)}\leq\Vert u\Vert_{\dot{M}^{s,p}(X,\rho,\mu)}$.
This finishes the proof of (i).

Turning our attention to  (ii), we  first show that $\dot{M}^{s,p}(X,\rho,\mu)\subset\dot{N}^s_{p,\infty}(X,\rho,\mu)$. Suppose that $u\in \dot{M}^{s,p}(X,\rho,\mu)$ and choose $g\in\mathcal{D}_\rho^s(u)$. Taking $g_k:=g$ for each $k\in\mathbb{Z}$, it is easy to check that $\vec{g}:=\{g_k\}_{k\in\mathbb{Z}}\in\mathbb{D}_\rho^s(u)$ and $\Vert\vec{g}\Vert_{\ell^\infty(L^p(X))}=\Vert g\Vert_{L^p(X,\mu)}$, which implies that $u\in\dot{N}^s_{p,\infty}(X,\rho,\mu)$ and $\Vert u\Vert_{\dot{N}^s_{p,\infty}(X,\rho,\mu)}\leq\Vert u\Vert_{\dot{M}^{s,p}(X,\rho,\mu)}$.

Suppose $q\in(0,p]$. From definitions, we  immediately deduce that $\dot{N}^s_{p,p}(X,\rho,\mu)=\dot{M}^s_{p,p}(X,\rho,\mu)$ as sets, with equal semi-norms. Combining this
observation with \eqref{MN-inclusion} and part  (i)  of this proposition gives
$$\dot{N}^s_{p,q}(X,\rho,\mu)\hookrightarrow\dot{N}^s_{p,p}(X,\rho,\mu)=\dot{M}^s_{p,p}(X,\rho,\mu)
\hookrightarrow\dot{M}^s_{p,\infty}(X,\rho,\mu)=\dot{M}^{s,p}(X,\rho,\mu),$$
with appropriate control of the associated semi-norms. This finishes the proof of (ii).

To address the claims in  (iii), fix $\varepsilon\in(0,s)$ and a $\rho$-ball
$B:=B_\rho(x,r)\subset X$ with $x\in X$ and $r\in(0,\infty)$.
Since $\dot{N}^s_{p,q}(B,\rho,\mu)\hookrightarrow\dot{N}^s_{p,\infty}(B,\rho,\mu)$
(with appropriate control of the norms) for any given $q\in(0,\infty]$, to complete the proof of (ii),
it suffices to show $\dot{N}^s_{p,\infty}(B,\rho,\mu)\hookrightarrow\dot{M}^{\varepsilon,p}(B,\rho,\mu)$
and $\Vert u\Vert_{\dot{M}^{\varepsilon,p}(B,\rho,\mu)}
\lesssim r^{s-\varepsilon}\Vert u\Vert_{\dot{N}^s_{p,\infty}(B,\rho,\mu)}$
for all $u\in \dot{N}^s_{p,\infty}(B,\rho,\mu)$. To this end, fix
$u\in \dot{N}^s_{p,\infty}(B,\rho,\mu)$ and take
$\vec{g}:=\{g_k\}_{k\in\mathbb{Z}}\in\mathbb{D}_\rho^s(u)$. With $C_\rho$ and $\widetilde{C}_\rho\in[1,\infty)$
as in \eqref{C-RHO.111} and \eqref{C-RHO.111XXX}, let $k_0$ be the unique integer satisfying
$2^{-k_0-1}\leq C_\rho\widetilde{C}_\rho r<2^{-k_0}$ and define
$$g:=\left[\sum_{k=k_0}^\infty2^{-k(s-\varepsilon)p}g_k^p\right]^{\frac{1}{p}}.$$

We claim that $g\in\mathcal{D}_\rho^s(u)$. Indeed, observe that for any $y,\, z\in B$, we have
$$
\rho(y,z)\leq C_\rho\max\{\rho(y,x),\rho(x,z)\}<C_\rho\widetilde{C}_\rho r<2^{-k_0}.
$$
As such, for each $y,\, z\in B$, there exists a unique integer $j \geq k_0$ such that
$2^{-j-1}\leq \rho(y,z)<2^{-j}.$ Since $\{g_k\}_{k\in\zz}\in\mathbb{D}_\rho^s(u)$,
it follows that there exists an $E\subset X$ with $\mu(E)=0$ such that
\begin{align*}
|u(y)-u(z)|&\leq[\rho(y,z)]^s\lf[g_{j}(y)+g_{j}(z)\r]\leq
[\rho(y,z)]^\varepsilon 2^{-j(s-\varepsilon)}\lf[g_{j}(y)+g_{j}(z)\r]\\
&\leq [\rho(y,z)]^\varepsilon \lf[g(y)+g(z)\r]
\end{align*}
whenever $y,\, z\in B\setminus E$,
which implies that $g\in\mathcal{D}_\rho^\varepsilon(u)$. On the other hand, we know that
\begin{align*}
\|g\|_{L^p(B)}
&=\left[\sum_{k=k_0}^\infty 2^{-k(s-\varepsilon)p}\int_{B}
g_k^p\,d\mu\right]^{\frac{1}{p}}\leq\left[\sum_{k=k_0}^\infty 2^{-k(s-\varepsilon)p}
\right]^{\frac{1}{p}}\sup_{j\in\mathbb{Z}}\Vert g_j\Vert_{L^p(B)}\\
&\lesssim 2^{-k_0(s-\varepsilon)}\Vert\vec{g}\Vert_{L^p(B,\ell^\infty)}
\approx r^{s-\varepsilon}\Vert\vec{g}\Vert_{L^p(B,\ell^\infty)}.
\end{align*}
From this, we  deduce that $u\in\dot{M}^{\varepsilon,p}(B,\rho,\mu)$ and $\Vert u\Vert_{\dot{M}^{\varepsilon,p}(B,\rho,\mu)}\lesssim r^{s-\varepsilon}\Vert u\Vert_{\dot{N}^s_{p,\infty}(B,\rho,\mu)}$ whenever $u\in \dot{N}^s_{p,\infty}(B,\rho,\mu)$.	This finishes the proof of Proposition \ref{bas}.
\end{proof}

\begin{remark}  By  \cite[Theorem 1.2 and Proposition 2.1]{KYZ11} (see also \cite{Y03}),
\cite[Remark 2.2]{y17}
 and \cite[Theorem 3.1.1]{ST95}, we know that, for any $s\in(0,1)$,
the inhomogeneous counterpart on $\rn$ of the embeddings in
Proposition \ref{bas}(ii)
$${N}^s_{p,q}(\rn)={B}^s_{p,q}(\rn)\hookrightarrow  F^s_{p,\infty}(\rn)=M^s_{p,\infty}(\rn)=\dot{M}^{s,p}(\rn)$$
holds true if and only if $q\in(0,p]$.
\end{remark}

The next result  shows that
the homogeneous and the
inhomogeneous versions for each of the Sobolev, Triebel--Lizorkin, and Besov spaces agree on balls.

\begin{proposition}
\label{equalBDD}
Let $(X,\rho,\mu)$ be a quasi-metric measure space and suppose that $B\subset X$ is a $\rho$-ball with radius $r\in(0,\infty)$. Then $\dot{M}^{s,p}(B,\rho,\mu)={M}^{s,p}(B,\rho,\mu)$, as sets, for any $s,\,p\in(0,\infty)$. As a corollary, one has $\dot{M}^s_{p,q}(B,\rho,\mu)=M^s_{p,q}(B,\rho,\mu)$ and $\dot{N}^s_{p,q}(B,\rho,\mu)=N^s_{p,q}(B,\rho,\mu)$, as sets, for any $s,\,p\in(0,\infty)$ and $q\in(0,\infty]$.
\end{proposition}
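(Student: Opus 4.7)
The plan is to prove the core containment $\dot{M}^{s,p}(B,\rho,\mu)\subset L^p(B,\mu)$, from which all three equalities will follow via the inclusions already established in Proposition~\ref{sobequal} and \eqref{MN-inclusion}.

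\medskip

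\textbf{Step 1 (The core claim).} Let $B:=B_\rho(x_0,r)$ and fix $u\in\dot{M}^{s,p}(B,\rho,\mu)$. Choose some $g\in\mathcal{D}_\rho^s(u)$ with $\|g\|_{L^p(B)}<\infty$, and let $E\subset B$ with $\mu(E)=0$ be the corresponding exceptional set from \eqref{fracHajlasz}. Using the quasi-subadditivity of $\rho$ together with \eqref{C-RHO.111XXX}, for any $x,\,y\in B$,
\begin{equation*}
\rho(x,y)\leq C_\rho\max\{\rho(x,x_0),\rho(x_0,y)\}\leq C_\rho\widetilde{C}_\rho\,r,
\end{equation*}
so \eqref{fracHajlasz} yields $|u(x)-u(y)|\leq (C_\rho\widetilde{C}_\rho r)^s[g(x)+g(y)]$ for all $x,\,y\in B\setminus E$. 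Since $g\in L^p(B,\mu)$ is finite $\mu$-almost everywhere, we can fix a point $y_0\in B\setminus E$ with $g(y_0)<\infty$; as $u$ is real-valued measurable, $|u(y_0)|<\infty$ as well. Then for $\mu$-almost every $x\in B$,
\begin{equation*}
|u(x)|\leq |u(y_0)|+(C_\rho\widetilde{C}_\rho r)^s\bigl[g(x)+g(y_0)\bigr].
\end{equation*}
Raising to the $p$-th power and integrating over $B$ (recalling $\mu(B)\in(0,\infty)$ by the definition of a quasi-metric measure space) gives $\|u\|_{L^p(B)}<\infty$, so $u\in M^{s,p}(B,\rho,\mu)$. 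The reverse containment $M^{s,p}(B)\subset\dot{M}^{s,p}(B)$ is immediate from the definition of $M^{s,p}$, and thus $\dot{M}^{s,p}(B,\rho,\mu)=M^{s,p}(B,\rho,\mu)$ as sets.

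\medskip

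\textbf{Step 2 (Triebel--Lizorkin case).} For any $q\in(0,\infty]$, the monotonicity \eqref{MN-inclusion} together with part (i) of Proposition~\ref{sobequal} yields
\begin{equation*}
\dot{M}^s_{p,q}(B,\rho,\mu)\hookrightarrow\dot{M}^s_{p,\infty}(B,\rho,\mu)=\dot{M}^{s,p}(B,\rho,\mu)\subset L^p(B,\mu),
\end{equation*}
where the last inclusion comes from Step~1. Hence every $u\in\dot{M}^s_{p,q}(B,\rho,\mu)$ lies in $L^p(B,\mu)$, giving $\dot{M}^s_{p,q}(B,\rho,\mu)=M^s_{p,q}(B,\rho,\mu)$.

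\medskip

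\textbf{Step 3 (Besov case).} For any $q\in(0,\infty]$, fix any $\varepsilon\in(0,s)$ and apply part (iii) of Proposition~\ref{sobequal} to conclude that $\dot{N}^s_{p,q}(B,\rho,\mu)\hookrightarrow\dot{M}^{\varepsilon,p}(B,\rho,\mu)$. Combining with Step~1 (applied with the smoothness exponent $\varepsilon$ in place of $s$) gives $\dot{N}^s_{p,q}(B,\rho,\mu)\subset L^p(B,\mu)$, so $\dot{N}^s_{p,q}(B,\rho,\mu)=N^s_{p,q}(B,\rho,\mu)$.

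\medskip

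No step is truly an obstacle: everything reduces to the elementary pointwise bound in Step~1, whose only subtle ingredient is the selection of a single ``good'' base point $y_0$ where both $u$ and $g$ are finite --- possible because $g$ is $L^p$ and hence a.e.\ finite on $B$, and because $\mu(B)<\infty$ allows the constant term $|u(y_0)|+(C_\rho\widetilde{C}_\rho r)^s g(y_0)$ to be absorbed into an $L^p(B)$ bound. The quasi-metric (as opposed to metric) setting introduces no new difficulty beyond replacing the diameter bound by $C_\rho\widetilde{C}_\rho r$.
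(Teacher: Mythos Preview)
Your proof is correct and follows essentially the same approach as the paper: the core pointwise bound in Step~1 is identical (the paper writes $[{\rm diam}_\rho(B)]^s$ where you write $(C_\rho\widetilde{C}_\rho r)^s$, which is the same estimate), and the corollaries are deduced via Proposition~\ref{sobequal} in both cases. The paper simply states that the Triebel--Lizorkin and Besov equalities ``easily follow from Proposition~\ref{sobequal} and what we have just shown,'' whereas you spell out which parts of that proposition are invoked --- (i) together with \eqref{MN-inclusion} for $\dot{M}^s_{p,q}$, and (iii) for $\dot{N}^s_{p,q}$ --- but the underlying argument is the same.
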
	

\begin{proof}
Fix $s,\,p\in(0,\infty)$. Clearly, we have  $M^{s,p}(B,\rho,\mu)\subset\dot{M}^{s,p}(B,\rho,\mu)$.
To show the inverse inclusion,  we need to prove that every   $u\in \dot{M}^{s,p}(B,\rho,\mu)$
 also belongs to $L^p(B)$. To see this, choose $g\in\mathcal{D}_\rho^s(u)$
with $\Vert g\Vert_{L^p(B)}<\infty$, and let $E\subset B$ be a $\mu$-measurable set such that $\mu(E)=0$ and
\begin{equation}\label{av-ui2X}
\vert u(x)-u(y)\vert\leq [\rho(x,y)]^s\left[g(x)+g(y)\right],\quad\forall\ x,\,y\in B\setminus E.
\end{equation}
Fix a point $y_0\in B\setminus E$ where $g(y_0)<\infty$. Note that such a choice is possible,
because  $\Vert g\Vert_{L^p(B)}<\infty$. Then, by \eqref{av-ui2X}, for any $x\in B\setminus E$, one has
\begin{equation}\label{av-ui2XX}
\vert u(x)\vert\leq[\rho(x,y)]^s\left[g(x)+g(y_0)\right]+\vert u(y_0)\vert
\leq[{\rm diam}_\rho(B)]^s\left[g(x)+g(y_0)\right]+\vert u(y_0)\vert.
\end{equation}
Raising the extreme most sides of \eqref{av-ui2XX} to the power $p$ and integrating in the $x$ variable over $B$,
we find that
\begin{align*}
\int_B\vert u\vert^p\,d\mu
&\lesssim \int_B \lf\{g(x)+g(y_0)+\vert u(y_0)\vert\r\}^p\,d\mu(x)\nonumber\\
&\lesssim \int_B g^p\,d\mu+\mu(B)[g(y_0)]^p+\mu(B)\vert u(y_0)\vert^p<\infty.
\end{align*}
Thus, $u\in L^p(B)$, as desired.
This finishes the proof of $\dot{M}^{s,p}(B,\rho,\mu)=M^{s,p}(B,\rho,\mu)$, as sets.

The equalities $\dot{M}^s_{p,q}(B,\rho,\mu)=M^s_{p,q}(B,\rho,\mu)$ and $\dot{N}^s_{p,q}(B,\rho,\mu)=N^s_{p,q}(B,\rho,\mu)$ now easily follow from Proposition~\ref {sobequal} and what we have just shown. This finishes the proof of Proposition \ref{equalBDD}.
\end{proof}



\section{Embedding Theorems for Triebel--Lizorkin and Besov Spaces on Quasi-Metric Measure Spaces}
\label{section:embeddings}

In this section, we establish the Sobolev-type embedding properties
for the Haj\l asz--Triebel--Lizorkin and Haj\l asz--Besov spaces on quasi-metric measure spaces,
which deduce the implications
from the item (a) to
all other items in Theorems \ref{LMeasINTCor}, \ref{DoubMeasINTCor},   and \ref{GlobalEmbeddCor-INT}.

We begin by establishing Sobolev-type embeddings of the
Haj\l asz--Sobolev spaces $\dot{M}^{s,p}$, which, together with the inclusions in
\eqref{MN-inclusion} and Proposition~\ref{sobequal}, further implies  the corresponding embeddings for
Haj\l asz--Triebel--Lizorkin spaces $\dot{M}^s_{p,q}$ and  Haj\l asz--Besov spaces $\dot{N}^s_{p,q}$.

To facilitate the formulation of the result, we introduce the
following piece of notation:\  Given $Q,\,b\in(0,\infty)$, $\sigma\in[1,\infty)$
and a ball $B_0\subset X$ of radius $R_0\in(0,\infty)$, the measure $\mu$ is
said to satisfy the \emph{$V(\sigma B_0,Q,b)$ condition}
\footnote{This condition is a slight variation of the one in \cite[p.\,197]{hajlasz}.} provided
\begin{equation}
\label{measbound}
\mu\big(B_\rho(x,r)\big)\geq br^Q
\quad
\text{for any}\,\,
\text{$x\in X$ and $r\in(0,\sigma R_0]$ satisfying $B_\rho(x,r)\subset\sigma B_0$.}
\end{equation}
Recall that the constants $C_\rho,\,\widetilde{C}_\rho\in[1,\infty)$ are defined in \eqref{C-RHO.111} and \eqref{C-RHO.111XXX}.
We have the following conclusion on Sobolev-type embeddings of
Haj\l asz--Sobolev spaces, whose proof is similar to that of \cite[Theorem 6]{agh20}.
However, because we are working with quasi-metrics instead of metrics, the proof is a bit
more delicate and so we give some details here for the sake of completeness.

\begin{theorem}
\label{embedding}
Let $(X,\rho,\mu)$ be a quasi-metric measure space.	
Let $s,\,p\in(0,\infty)$, $\sigma\in[C_\rho,\infty)$, and $B_0$ be  a
$\rho$-ball of radius $R_0\in(0,\infty)$.
 Assume that the measure $\mu$
satisfies the  $V(\sigma B_0,Q,b)$ condition for some $Q,\,b\in(0,\infty)$.
Then there exist positive constants $C$, $C_1$ and $C_2$, depending only on $\rho$,
$s$, $p$, $Q$, and $\sigma$, such that, for any $u\in\dot{M}^{s,p}(\sigma B_0,\rho,\mu)$
and $g\in \mathcal{D}_\rho^s(u)$, the following statements hold true.
\begin{enumerate}
\item[{\rm (a)}] If $p\in(0,Q/s)$, then $u\in L^{p^*}(B_0)$, where $p^*:=Qp/(Q-sp)$, and $u$ satisfies
\begin{equation}
\label{eq18}
\left(\, \mvint_{B_0} |u|^{p^*}\, d\mu\right)^{1/p^*}\leq
C\left[\frac{\mu(\sigma B_0)}{bR_0^Q}\right]^{1/p}
R_0^s\left(\,\mvint_{\sigma B_0}g^{p}\, d\mu\right)^{1/p}
+C\left(\,\mvint_{\sigma B_0}|u|^p\, d\mu\right)^{1/p}
\end{equation}
and
\begin{equation}
\label{eq19}
\inf_{\gamma\in\mathbb{R}}\left(\, \mvint_{B_0} |u-\gamma|^{p^*}\, d\mu\right)^{1/p^*}\leq
C\left[\frac{\mu(\sigma B_0)}{bR_0^Q}\right]^{1/p}R_0^s\left(\,\mvint_{\sigma B_0}g^{p}\, d\mu\right)^{1/p}.
\end{equation}

\item[{\rm (b)}] If $p=Q/s$ and $g\in \mathcal{D}_\rho^{s,+}(u)$, then
\begin{equation}
\label{eq20-X}
\mvint_{B_0} {\rm exp}\left(C_1b^{s/Q}\frac{|u-u_{B_0}|}{\|g\|_{L^{Q/s}(\sigma B_0)}}\right)\,d\mu\leq C_2.
\end{equation}

\item[{\rm (c)}] If $p>Q/s$, then
\begin{equation}
\label{eq29}
\Vert u-u_{B_0}\Vert_{L^\infty(B_0)}\leq
C\left[\frac{\mu(\sigma B_0)}{bR_0^Q}\right]^{1/p}R_0^s\left(\,\mvint_{\sigma B_0}g^{p}\, d\mu\right)^{1/p}.
\end{equation}
In particular, $u$ has a H\"older continuous representative of order $s-Q/p$ on $B_0$, denoted by $u$ again, satisfying
\begin{equation}
\label{eq30}
|u(x)-u(y)|\leq C b^{-1/p}[\rho(x,y)]^{s-Q/p}\left(\,\int_{\sigma B_0}g^{p}\, d\mu\right)^{1/p},
\quad \forall\ x,\,y\in B_0.
\end{equation}
\end{enumerate}
\end{theorem}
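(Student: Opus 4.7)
The plan is to follow the classical three-step scheme (pointwise telescoping estimate, followed by a fractional maximal function bound, followed by the standard Sobolev/Trudinger/Morrey argument in the appropriate range of $p$), adapting each step to the quasi-metric framework by using the constant $C_\rho$ carefully when chaining balls. The hypothesis $\sigma \geq C_\rho$ is precisely what is needed to guarantee that all the shrinking balls constructed in the argument stay inside $\sigma B_0$, so that the lower measure bound \eqref{measbound} can be applied to them.

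First I would establish the key pointwise estimate. Fix $u\in\dot{M}^{s,p}(\sigma B_0,\rho,\mu)$, $g\in\mathcal{D}_\rho^s(u)$, and let $x\in B_0$ be a Lebesgue point of $u$. Using the quasi-triangle inequality together with $\sigma\geq C_\rho$, I would construct a geometric chain of $\rho$-balls $\{B_k\}_{k=0}^\infty$ centered near $x$ with radii $r_k\approx 2^{-k}R_0$, all contained in $\sigma B_0$, with $B_{k+1}\subset B_k$ and $B_k\to\{x\}$. The telescoping identity $u(x)-u_{B_0}=\sum_{k=0}^\infty(u_{B_{k+1}}-u_{B_k})$ combined with the $s$-gradient inequality \eqref{fracHajlasz} (applied pairwise inside $B_k$) yields
\begin{equation*}
|u_{B_{k+1}}-u_{B_k}|\;\lesssim\; r_k^s\,\mvint_{B_k}g\,d\mu,
\end{equation*}
so that after invoking the $V(\sigma B_0,Q,b)$ condition to replace $\mu(B_k)$ by $b\,r_k^Q$, one obtains the fractional-maximal-type bound
\begin{equation*}
|u(x)-u_{B_0}|\;\lesssim\; b^{-1/p}\Bigl(\,\int_{\sigma B_0} g^p\,d\mu\Bigr)^{1/p}\!\!\sum_{k=0}^\infty r_k^{s-Q/p}
\end{equation*}
whenever $p>Q/s$ (where the series converges), and a corresponding localized fractional maximal estimate $|u(x)-u_{B_0}|\lesssim R_0^{s-Q/p}[\mathcal{M}_{s,\sigma B_0}g(x)]^{?}$ in the other ranges.

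From this pointwise estimate, the three conclusions follow by the classical arguments. For part (a), the fractional maximal function $\mathcal{M}_{s,\sigma B_0}g$ maps $L^p\to L^{p^*}$ with the correct constant depending on $Q$, $s$, $p$, and the measure bound $b$; combining this with $\|u-u_{B_0}\|_{L^{p^*}(B_0)}\lesssim\|\mathcal{M}_{s,\sigma B_0}g\|_{L^{p^*}(B_0)}$ gives \eqref{eq19}, and \eqref{eq18} is obtained by adding and subtracting $u_{B_0}$ and bounding the resulting $L^p$ term on $\sigma B_0$. For part (c), the convergent series $\sum_k r_k^{s-Q/p}$ produces an $L^\infty$ bound on $u-u_{B_0}$; the Hölder inequality \eqref{eq30} then follows by fixing $x,y\in B_0$, choosing a ball $B$ of radius $\sim\rho(x,y)$ containing both (which is possible because $\sigma\geq C_\rho$), applying \eqref{eq29} on $B$, and summing the two contributions $|u(x)-u_B|+|u(y)-u_B|$. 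For part (b), I would expand the exponential as a power series and estimate each $\|u-u_{B_0}\|_{L^{p^*}(B_0)}$ for $p=Q/s$ and all integer $p^*$ using case (a), choosing the constant $C_1$ so that the resulting series in $p^*$ sums to a finite quantity.

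The main obstacle is geometric rather than analytic. In the genuine metric setting of \cite{agh20}, chaining shrinking balls $B_k\subset B_{k-1}$ is automatic from the triangle inequality; here the quasi-subadditivity \eqref{TR-ineq.2} introduces the multiplicative constant $C_\rho$ at every concatenation, and without the hypothesis $\sigma\geq C_\rho$ one cannot keep the entire chain inside $\sigma B_0$ so as to invoke \eqref{measbound}. I would manage this either by working directly with $\rho$ and tracking the $C_\rho$ factors through each doubling step, or, more cleanly, by passing to the bi-Lipschitz equivalent quasi-metric $\rho_{\#}$ from Theorem \ref{DST1} (so that $(\rho_{\#})^\alpha$ is a genuine metric for some $\alpha$), carrying out the telescoping there, and then translating the estimates back to $\rho$ using \eqref{DEQV1}. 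In either case, the dependence of the final constants on $\rho$ reduces to a dependence on $C_\rho$ and $\widetilde{C}_\rho$, as stated.
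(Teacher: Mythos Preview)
Your telescoping/fractional-maximal-function scheme is the standard route when the measure is \emph{doubling}, but it breaks down under the hypothesis here: the $V(\sigma B_0,Q,b)$ condition \eqref{measbound} is only a \emph{lower} bound on the measure of balls. Boundedness of the Hardy--Littlewood maximal operator (and hence of its fractional variant $\mathcal{M}_{s,\sigma B_0}$) on $L^p$ rests on a Vitali-type covering argument that compares $\mu(5B)$ with $\mu(B)$; with a one-sided lower bound there is no such control, so the step ``$\mathcal{M}_{s,\sigma B_0}g$ maps $L^p\to L^{p^*}$'' is simply unavailable. This is a genuine gap in your proof of part~(a), and part~(b), which you reduce to~(a), inherits it. There is also a secondary issue: for $p<1$ the Jensen step $\mvint_{B_k}g\le(\mvint_{B_k}g^p)^{1/p}$ in the telescoping estimate goes the wrong way, so even part~(c) would need a separate argument in that range. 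Passing to $\rho_\#$ via Theorem~\ref{DST1} repairs the geometry of chaining balls but does nothing for the missing measure-theoretic input.

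The paper (following \cite{agh20}) circumvents maximal functions altogether. It slices $\sigma B_0$ into the gradient level sets $E_k:=\{g\le 2^k\}$, on each of which $u$ is $2^{k+1}$-H\"older of order $s$ by the $s$-gradient inequality. For a point $x_k\in E_k\cap B_0$ one then constructs a \emph{finite chain of points} $x_k,x_{k-1},\dots,x_{k_0}$ with $x_{k-i}\in E_{k-i}$ at controlled mutual distance; the lower measure bound is used only to compare $\mu(B)\ge b r^Q$ against the Chebyshev estimate $\mu(\sigma B_0\setminus E_{k-i})\le 2^{-(k-i)p}\|g\|_p^p$, which guarantees that each level set meets the relevant small ball so that the next point in the chain can be selected. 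Summing the H\"older increments along this chain yields the pivotal bound $a_k:=\sup_{E_k\cap B_0}|u-\gamma|\lesssim b^{-s/Q}\|g\|_p^{sp/Q}\sum_{j=k_0}^{k-1}2^{j(1-sp/Q)}+\sup_{E_{k_0}}|u-\gamma|$, and all three regimes of $p$ then follow by elementary series manipulations. The essential conceptual shift is that the chain runs through \emph{level sets of $g$} rather than through nested balls, and this is exactly what makes the argument insensitive to the absence of doubling.
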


\begin{remark}
Let all notation be as in Theorem \ref{embedding}(a).
If $p^*\in[1,\fz)$, then, by the H\"older inequality, we have, for any ball $B_0$, $u\in L^1(B_0)$ and
\begin{equation}
\label{eq20}
\left(\, \mvint_{B_0} |u-u_{B_0}|^{p^*}\, d\mu\right)^{1/p^*}\leq
2\inf_{\gamma\in\mathbb{R}}\left(\, \mvint_{B_0} |u-\gamma|^{p^*}\, d\mu\right)^{1/p^*}.
\end{equation}
Hence we can replace the expression on the left hand side of \eqref{eq19} with the one on
the left hand side of \eqref{eq20}. Moreover, if $s<Q$ and  $p\in [1,Q/s)$,
we also find that \eqref{eq18} easily follows from this new version of \eqref{eq19}.
However, we do not know how to
conclude \eqref{eq18} from \eqref{eq19} when $p<1$.
\end{remark}

\begin{proof}[Proof of Theorem~\ref{embedding}]
Let $u\in\dot{M}^{s,p}(\sigma B_0,\rho,\mu)$
and $g\in \mathcal{D}_\rho^s(u)$.
Without loss of generality, we may assume $\int_{\sigma B_0}g^p\, d\mu>0$.
Indeed, if this integral equals zero,
then $g=0$ $\mu$-almost everywhere in $X$ which, in turn, implies that
$u$ is a constant function $\mu$-almost everywhere and the result is obvious
in this scenario.

By replacing, if necessary, $g$ with
$\widetilde{g}:=g+ (\mvint_{\sigma B_0} g^p\, d\mu )^{1/p}$, we may further assume that
\begin{equation}
\label{eq4}
g(x)\geq 2^{-(1+1/p)}\left(\, \mvint_{\sigma B_0} g^p\,  d\mu\right)^{1/p}>0
\quad
\text{for $\mu$-almost every $x\in\sigma B_0$.}
\end{equation}
Let $E\subset \sigma B_0$ be a set of measure zero such that the pointwise inequality
\eqref{fracHajlasz} holds true for every $x,\,y\in \sigma B_0\setminus E$. For any $k\in\mathbb{Z}$, let
$$
E_k:=\lf\{ x\in\sigma B_0\setminus E:\, g(x)\leq 2^k\r\}.
$$
Clearly  $E_{k-1}\subset E_k$ for any $k\in\zz$. By \eqref{eq4},  we find that
\begin{equation}
\label{eq42}
\mu\left(\sigma B_0\setminus\bigcup_{k\in\mathbb{Z}} (E_k\setminus E_{k-1})\right)=0.
\end{equation}
It follows from the pointwise inequality \eqref{fracHajlasz} that $u$ restricted to $E_k$ is $2^{k+1}$-H\"older continuous of order $s$, that is,
\begin{equation}
\label{eq8}
|u(x)-u(y)|\leq 2^{k+1}[\rho(x,y)]^s,
\quad
\forall\ x,\,y\in E_k.
\end{equation}
Also, applying the Chebyshev inequality, we have
\begin{equation}
\label{eq7}
\mu(\sigma B_0\setminus E_k) = \mu\Big(\Big\{x\in\sigma B_0:\, g(x)>2^k\Big\}\Big)\leq 2^{-kp}\int_{\sigma B_0} g^p\, d\mu.
\end{equation}
In light of \eqref{eq42},   we  write
\begin{equation}
\label{eq5}
\int_{\sigma B_0} g^p\, d\mu\approx
\sum_{k\in\mathbb{Z}}2^{kp}\mu(E_k\setminus E_{k-1}).
\end{equation}
Moreover, if $p\in(0,Q/s)$,  for any  $\gamma\in\bbbr$, one has
\begin{equation}
\label{eq6}
\int_{B_0} |u-\gamma|^{p^*}\, d\mu \leq\sum_{k\in\mathbb{Z}} a_k^{p^*}\mu(B_0\cap(E_k\setminus E_{k-1})),
\end{equation}
where,  for any $k\in\mathbb{Z}$,
\begin{eqnarray}
\label{eq43}
a_k:=\left\{
\begin{array}{ll}
\sup\limits_{E_k\cap B_0} |u-\gamma|
\quad&
\text{if }\, E_k\cap B_0\neq\emptyset,\\[15pt]
\qquad0
&\text{otherwise.}
\end{array}
\right.
\end{eqnarray}
Here and thereafter, $\sup$ denotes the essential supremum.
Clearly, $a_k\leq a_{k+1}$ for any $k\in\zz$.

The idea of the proof in the case $p\in(0,Q/s)$ is
to estimate the series in \eqref{eq6}
by the series in \eqref{eq5}. A similar approach
is also used in the cases $p=Q/s$ and $p>Q/s$. In the sequel,
we need the following elementary result, whose proof  is  similar to that of \cite[Lemma 8]{agh20},
and so we omit the details.

\begin{lemma}
\label{joasia}
For any $x\in X$ and $r\in(0,\fz)$, if $B_\rho(x,r)\subset\sigma B_0$
and $\mu\big(B_\rho(x,r)\big)\geq 2\mu(\sigma B_0\setminus E_k)$ for some $k\in\bbbz$, then
$$
\mu(B_\rho(x,r)\cap E_k)\geq\frac{1}{2}\mu(B_\rho(x,r))>0.
$$
\end{lemma}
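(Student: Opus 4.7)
The plan is to observe that this lemma is essentially a direct consequence of the finite additivity of $\mu$ together with the hypothesis, with no geometric input beyond the inclusion $B_\rho(x,r)\subset\sigma B_0$. The two pieces to combine are (i) the disjoint decomposition
$$B_\rho(x,r)=\big(B_\rho(x,r)\cap E_k\big)\cup\big(B_\rho(x,r)\setminus E_k\big),$$
and (ii) the quantitative comparison between $\mu(\sigma B_0\setminus E_k)$ and $\mu(B_\rho(x,r))$ built into the hypothesis.

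First I would use the inclusion $B_\rho(x,r)\subset \sigma B_0$ to deduce $B_\rho(x,r)\setminus E_k\subset \sigma B_0\setminus E_k$, and hence, by monotonicity of the measure followed by the standing hypothesis $\mu(B_\rho(x,r))\geq 2\mu(\sigma B_0\setminus E_k)$,
$$\mu\big(B_\rho(x,r)\setminus E_k\big)\leq \mu\big(\sigma B_0\setminus E_k\big)\leq \tfrac{1}{2}\mu\big(B_\rho(x,r)\big).$$
Next, using the disjoint decomposition above, I would write
$$\mu\big(B_\rho(x,r)\cap E_k\big)=\mu\big(B_\rho(x,r)\big)-\mu\big(B_\rho(x,r)\setminus E_k\big)\geq \tfrac{1}{2}\mu\big(B_\rho(x,r)\big).$$
Finally, the strict positivity $\mu(B_\rho(x,r))>0$ is built into the very definition of a quasi-metric measure space (every $\rho$-ball of positive radius has positive and finite measure), so the conclusion $\mu(B_\rho(x,r)\cap E_k)\geq \tfrac{1}{2}\mu(B_\rho(x,r))>0$ follows at once.

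There is no real obstacle here: the argument is bookkeeping with measures, and the hypothesis has been tailored exactly to make the halving work. One small point worth flagging is that $E_k$ was defined as a subset of $\sigma B_0\setminus E$ (where $E$ is a $\mu$-null set), but since $\mu(E)=0$ this plays no role in the argument; the identity $\mu(B_\rho(x,r))=\mu(B_\rho(x,r)\cap E_k)+\mu(B_\rho(x,r)\setminus E_k)$ is unaffected. Thus the lemma reduces to one line of monotonicity plus one line of additivity, which is why the authors feel comfortable omitting details.
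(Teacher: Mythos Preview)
Your proof is correct and is exactly the elementary monotonicity-plus-additivity argument the paper has in mind; the authors omit the proof and refer to \cite[Lemma~8]{agh20}, whose argument is the same as yours. Your remark about the null set $E$ and the positivity of $\mu(B_\rho(x,r))$ from the standing assumptions on quasi-metric measure spaces are the only points worth noting, and you handled both.
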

%

Moving on, fix any $\beta\in\big(0,(\log_2C_\rho)^{-1}\big)$ and let $k_0$ be the least integer such that
\begin{equation*}
2^{k_0}\geq \left[\frac{2^{1/Q}\widetilde{C}_\rho C_\rho^3}{\sigma(1-2^{-p\beta/Q})^{1/\beta}}\right]^{Q/p}\Big(bR_0^Q\Big)^{-1/p}\left(\, \int_{\sigma B_0} g^p\, d\mu\right)^{1/p}
\end{equation*}
or, equivalently,
\begin{equation}
\label{eq2}
2^{-k_0p/Q}\,\frac{2^{1/Q}\widetilde{C}_\rho C_\rho^3\,b^{-1/Q}}{\big(1-2^{-p\beta/Q}\big)^{1/\beta}}\,\left(\,\int_{\sigma B_0} g^p\,d\mu\right)^{1/Q}\leq \sigma R_0.
\end{equation}
Clearly,
\begin{equation}
\label{eq16}
2^{k_0}\approx \Big(bR_0^Q\Big)^{-1/p}\left(\, \int_{\sigma B_0} g^p\, d\mu\right)^{1/p},
\end{equation}
where the positive equivalence constants depend on $Q$, $p$, $\sigma$, $\rho$, and $\beta$.
Moreover, we have the following variant of \cite[Lemma 9]{agh20}.

\begin{lemma}
\label{piotr}
Under the above assumptions, one has  $\mu(E_{k_0})\geq\mu(\sigma B_0)/2$.
\end{lemma}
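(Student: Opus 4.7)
\noindent\textbf{Proof plan for Lemma~\ref{piotr}.} The plan is to combine the Chebyshev-type bound \eqref{eq7} with the calibration of $k_0$ given by \eqref{eq2} and the lower measure bound on $\sigma B_0$ itself (obtained by applying the $V(\sigma B_0,Q,b)$ condition to the ball $\sigma B_0=B_\rho(x_0,\sigma R_0)$, which is trivially contained in $\sigma B_0$). Specifically, the $V$ condition gives $\mu(\sigma B_0)\geq b(\sigma R_0)^Q$, so it suffices to show that $\mu(\sigma B_0\setminus E_{k_0})\leq \mu(\sigma B_0)/2$, from which $\mu(E_{k_0})\geq \mu(\sigma B_0)/2$ follows since $\mu(E)=0$.

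First, I would rewrite \eqref{eq2} by raising both sides to the $Q$-th power, which turns it into the estimate
\begin{equation*}
2^{-k_0 p}\int_{\sigma B_0} g^p\,d\mu \leq \frac{b(\sigma R_0)^Q\big(1-2^{-p\beta/Q}\big)^{Q/\beta}}{2\,\widetilde{C}_\rho^{\,Q}\,C_\rho^{\,3Q}}.
\end{equation*}
Since $0<1-2^{-p\beta/Q}<1$ and $\widetilde{C}_\rho,C_\rho\geq 1$, the numerical factor $(1-2^{-p\beta/Q})^{Q/\beta}/(\widetilde{C}_\rho^{\,Q}C_\rho^{\,3Q})$ is strictly less than $1$, so the right-hand side is bounded above by $b(\sigma R_0)^Q/2\leq \mu(\sigma B_0)/2$ using the lower measure bound noted above.

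Next, I would invoke the Chebyshev inequality exactly as in \eqref{eq7} with $k=k_0$ to obtain
\begin{equation*}
\mu(\sigma B_0\setminus E_{k_0})\leq 2^{-k_0 p}\int_{\sigma B_0} g^p\,d\mu,
\end{equation*}
and chain this with the inequality from the previous step to conclude $\mu(\sigma B_0\setminus E_{k_0})\leq \mu(\sigma B_0)/2$. The desired bound $\mu(E_{k_0})\geq \mu(\sigma B_0)/2$ then follows at once from $\mu(E)=0$ and $E_{k_0}=(\sigma B_0\setminus E)\cap\{g\leq 2^{k_0}\}$.

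There is no real obstacle here, since the calibration of $k_0$ in \eqref{eq2} was clearly engineered so that this bookkeeping works out; the only thing one has to be slightly careful about is verifying that the numerical constant absorbed during the calibration is indeed $\leq 1$ after raising to the $Q$-th power, and that the $V(\sigma B_0,Q,b)$ condition applies to the ball $\sigma B_0$ itself (which it does because $\sigma R_0\leq \sigma R_0$ and $\sigma B_0\subset\sigma B_0$).
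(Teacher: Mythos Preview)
Your proof is correct and uses the same three ingredients as the paper---the Chebyshev bound \eqref{eq7}, the calibration \eqref{eq2}, and the $V(\sigma B_0,Q,b)$ condition---but packages them differently. The paper argues by contradiction: assuming $\mu(E_{k_0})<\mu(\sigma B_0)/2$, it sets $r:=2^{1/Q}\widetilde{C}_\rho C_\rho^3\,b^{-1/Q}[\mu(\sigma B_0\setminus E_{k_0})]^{1/Q}$, uses \eqref{eq7} and \eqref{eq2} to show $r<\sigma R_0$, and then applies the $V$ condition to the ball $B_\rho(z_0,r)$ (centered at the center of $B_0$) to force $\mu(\sigma B_0)>\mu(\sigma B_0)$. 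Your route is more direct: you apply the $V$ condition once to $\sigma B_0$ itself to get $\mu(\sigma B_0)\geq b(\sigma R_0)^Q$, and then simply raise \eqref{eq2} to the $Q$-th power and chain with \eqref{eq7}. Your version is a bit cleaner and avoids the auxiliary radius $r$; the paper's version makes slightly more explicit why the extra constants $\widetilde{C}_\rho C_\rho^3$ were built into the definition of $k_0$ (they are needed elsewhere in the chaining argument, not really for this lemma). Either way the content is the same.
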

\begin{proof}
Suppose to the contrary that $\mu(E_{k_0})<\mu(\sigma B_0)/2$. Then
\begin{equation}
\label{eq41}
\mu(\sigma B_0\setminus E_{k_0})>\mu(\sigma B_0)/2.
\end{equation}
By \eqref{eq7} and \eqref{eq2}, we find that
\begin{equation*}
\begin{split}
r:=& \, 2^{1/Q}\widetilde{C}_\rho C_\rho^3\,b^{-1/Q}[\mu(\sigma B_0\setminus E_{k_0})]^{1/Q}\\
\leq& \,2^{1/Q}\widetilde{C}_\rho C_\rho^3\, b^{-1/Q} 2^{-k_0p/Q}\left(\,\int_{\sigma B_0} g^p\, d\mu\right)^{1/Q}\\
\leq& \,\sigma\lf(1-2^{-p\beta/Q}\r)^{1/\beta} R_0<\sigma R_0.
\end{split}
\end{equation*}
Therefore, if $z_0\in X$ is the center of the ball $B_0$, then $B_\rho(z_0,r)\subset\sigma B_0$, so the $V(\sigma B_0,Q,b)$ condition and \eqref{eq41} give (keeping in mind $C_\rho,\widetilde{C}_\rho\geq1$)
$$
\mu(\sigma B_0)\geq \mu(B_\rho(z_0,r))\geq br^Q=2\Big(\widetilde{C}_\rho C_\rho^3\Big)^Q\,\mu(\sigma B_0\setminus E_{k_0})
\geq 2\mu(\sigma B_0\setminus E_{k_0})>\mu(\sigma B_0),
$$
which is an obvious contradiction. This finishes the proof of Lemma \ref{piotr}.
\end{proof}

Now we prove that, for any integer  $k>k_0$,
\begin{equation}
\label{eq44}
a_k\lesssim b^{-s/Q}\left(\,\int_{\sigma B_0} g^p\, d\mu\right)^{s/Q}\,
\sum_{j=k_0}^{k-1} 2^{j(1-sp/Q)} + \sup_{E_{k_0}}|u-\gamma|.
\end{equation}
To see this, let $k\in\mathbb{Z}$ satisfy $k>k_0$. Observe that, if $E_k\cap B_0=\emptyset$, then
 $a_k=0$ due to its definition, and \eqref{eq44} is trivially true.
So we only need to consider the case  $E_k\cap B_0\neq\emptyset$ below. In this case, $a_k=\sup_{E_k\cap B_0} |u-\gamma|$. Now, for any $i\in \{0,1,\ldots,k-k_0-1\}$,  define
$$
r_{k-i}:=2^{1/Q}b^{-1/Q}2^{-[k-(i+1)]p/Q}\left(\,\int_{\sigma B_0}g^p\, d\mu\right)^{1/Q}.
$$
Then,  by \eqref{eq2}, we conclude that
\begin{align}
\label{michal}
&\lf[(r_k)^\beta+(r_{k-1})^\beta+\ldots+(r_{k_0+1})^\beta\r]^{1/\beta}\nonumber\\
&\quad=
2^{1/Q}b^{-1/Q}\left(\,\int_{\sigma B_0} g^p\, d\mu\right)^{1/Q}\left(\sum_{i=0}^{k-k_0-1} 2^{-[k-(i+1)]p\beta/Q}\right)^{1/\beta}\nonumber\\
&\quad<
2^{-k_0 p/Q}\frac{2^{1/Q}b^{-1/Q}}{\lf(1-2^{-p\beta/Q}\r)^{1/\beta}}\left(\,\int_{\sigma B_0} g^p\, d\mu\right)^{1/Q}
\leq \frac{\sigma R_0}{\widetilde{C}_\rho C_\rho^3}.
\end{align}
The importance of \eqref{michal} will reveal itself shortly. Since $E_k\cap B_0\neq\emptyset$, we can choose a point $x_k\in E_k\cap B_0$ arbitrarily. We   now use induction with respect to $i$ to define a sequence $x_{k-i}\in\sigma B_0$, $i\in\{1,\ldots,k-k_0\}$, such that
$$
x_{k-1}\in E_{k-1}\cap B_\rho(x_k,r_k),\quad
x_{k-2}\in E_{k-2}\cap B_\rho(x_{k-1},r_{k-1}),\quad\ldots,\quad
x_{k_0}\in E_{k_0}\cap B_\rho(x_{k_0+1},r_{k_0+1}).
$$
To be precise, for $i=1$,  we choose $x_{k-1}$ as follows. First,
by \eqref{michal}, we find that, for any point $y\in B_\rho(x_k,r_k)$,
\begin{align*}
\rho(z_0,y)
&\leq C_\rho\max\{\rho(z_0,x_k),\rho(x_k,y)\}<C_\rho\max\{R_0,r_k\}\\
&\leq \max\{C_\rho R_0,\sigma R_0(\widetilde{C}_\rho C_\rho^2)^{-1}\}
\leq \max\{C_\rho R_0,\sigma R_0\}=\sigma R_0,
\end{align*}
where we have also used the fact that $\sigma\geq C_\rho$ and $C_\rho,\widetilde{C}_\rho\geq1$.
This   implies $B_\rho(x_k,r_k)\subset \sigma B_0$.
 As such, applying the $V(\sigma B_0,Q,b)$ condition  and  \eqref{eq7}, we conclude that
$$
\mu(B_\rho(x_k,r_k))\geq br_k^Q=2\cdot 2^{-(k-1)p}\int_{\sigma B_0} g^p\, d\mu\geq2\mu(\sigma B_0\setminus E_{k-1}),
$$
which, together with  Lemma~\ref{joasia},
further  implies that $\mu (E_{k-1}\cap B_\rho(x_k,r_k))>0$ and, hence,
we can find a point $x_{k-1}\in E_{k-1}\cap B_\rho(x_k,r_k)$.
Clearly $x_{k-1}\in\sigma B_0$.
If $k-k_0=1$, then we are done, so suppose that $k-k_0>1$
and assume that we already selected point
$x_{k-1},\ldots, x_{k-i}$ for some $1\leq i<k-k_0$ satisfying
\begin{equation*}
x_{k-j}\in \sigma B_0\cap E_{k-j}\cap B_\rho(x_{k-j+1},r_{k-j+1})
\quad
\mbox{for\ any\ $j\in\{1,\ldots,i\}$}.
\end{equation*}
It remains to  select
$$
x_{k-(i+1)}\in \sigma B_0\cap E_{k-(i+1)}\cap B_\rho(x_{k-i},r_{k-i}).
$$
By Remark~\ref{triquasi} and \eqref{michal}, we find that, for any $y\in B_\rho(x_{k-i},r_{k-i})$,
\begin{align*}
\rho(y,x_k)&
\leq \widetilde{C}_\rho C_\rho^2\lf\{[\rho(y,x_{k-i})]^\beta+[\rho(x_{k-i},x_{k-i+1})]^\beta+\cdots+ [\rho(x_{k-1},x_k)]^\beta\r\}^{1/\beta}\\
&<
\widetilde{C}_\rho C_\rho^2\lf[(r_{k-i})^\beta+(r_{k-i+1})^\beta+\ldots+(r_k)^\beta\r]^{1/\beta}\leq \frac{\sigma R_0}{C_\rho},
\end{align*}
which, together with $x_k\in B_0$, implies that
\begin{equation*}
\rho(z_0,y)\leq C_\rho\max\{\rho(z_0,x_k),\rho(x_k,y)\}<C_\rho\max\{R_0,\sigma R_0/C_\rho\}
=\max\{C_\rho R_0,\sigma R_0\}=\sigma R_0.
\end{equation*}
Thus, $B_\rho(x_{k-i},r_{k-i})\subset\sigma B_0$, and then the $V(\sigma B_0,Q,b)$ condition  and  \eqref{eq7} imply
$$
\mu(B_\rho(x_{k-i},r_{k-i}))\geq br_{k-i}^Q=2\cdot 2^{-[k-(i+1)]p}\int_{\sigma B_0} g^p\, d\mu\geq2\mu(\sigma B_0\setminus E_{k-(i+1)}).
$$
Applying Lemma~\ref{joasia}, we have $\mu(E_{k-(i+1)}\cap B_\rho(x_{k-i},r_{k-i}))>0$ and  can find a point
$$
x_{k-(i+1)}\in E_{k-(i+1)}\cap B_\rho(x_{k-i},r_{k-i}).
$$
Clearly $x_{k-(i+1)}\in\sigma B_0$. That finishes the inductive argument on the choose
of $\{x_{k_0},\ldots,x_{k-1}\}$.

Notice that, for any $i\in\{0,1,\ldots,k-k_0-1\}$,
\begin{equation*}
\begin{split}
\rho(x_{k-i},x_{k-(i+1)})<r_{k-i}
&=2^{1/Q}b^{-1/Q}2^{-[k-(i+1)]p/Q}\left(\,\int_{\sigma B_0} g^p\, d\mu\right)^{1/Q}.
\end{split}
\end{equation*}

Also, recall that, by \eqref{eq8}, $u$ restricted to $E_{k-i}$ is $2^{k-i+1}$-H\"older continuous of
order $s$. From these and the fact that $x_{k-i},\,x_{k-(i+1)}\in E_{k-i}$
and $x_{k_0}\in E_{k_0}$, it follows that
\begin{align}
\label{eq45}
|u(x_k)-\gamma|
&\leq
\sum_{i=0}^{k-k_0-1}|u(x_{k-i})-u(x_{k-(i+1)})| + |u(x_{k_{0}})-\gamma|\nonumber\\
&\leq
\sum_{i=0}^{k-k_0-1} 2^{k-i+1}[\rho(x_{k-i},x_{k-(i+1)})]^s + \sup_{E_{k_0}}|u-\gamma|\nonumber\\
&\lesssim
2^{s/Q}b^{-s/Q}\left(\,\int_{\sigma B_0} g^p\, d\mu\right)^{s/Q}\,
\sum_{i=0}^{k-k_0-1} 2^{[k-(i+1)](1-sp/Q)}+\sup_{E_{k_0}}|u-\gamma|\nonumber\\
&\lesssim b^{-s/Q}\left(\,\int_{\sigma B_0} g^p\, d\mu\right)^{s/Q}\,
\sum_{j=k_0}^{k-1} 2^{j(1-sp/Q)} + \sup_{E_{k_0}}|u-\gamma|.
\end{align}
Since $x_k\in E_k\cap B_0$ was selected arbitrarily, taking the supremum in \eqref{eq45} over all $x_k\in E_k\cap B_0$, we  obtain the desired estimate in \eqref{eq44}.

Observe that, on the one hand,  since
$$
\rho(x,y)\leq C_\rho\max\{\rho(x,z_0),\rho(z_0,y)\}
\leq C_\rho\max\{\widetilde{C}_\rho \sigma R_0,\sigma R_0\}= C_\rho\widetilde{C}_\rho \sigma R_0,\quad \forall\ x,\,y\in\sigma B_0,
$$
one has
\begin{equation}\label{estdia}
{\rm diam}_\rho(\sigma B_0)\leq \widetilde{C}_\rho C_\rho\sigma R_0.
\end{equation}
On the other hand, since $\mu(E_{k_0})>0$ (see Lemma~\ref{piotr}),
there exists a $y\in E_{k_0}\subset \sigma B_0$. Now,
taking $\gamma=u(y)$ and applying the H\"older continuity \eqref{eq8}, \eqref{gabn-T.2},
and \eqref{eq16}, we conclude that
\begin{align}\label{eq17}
\sup_{E_{k_0}}|u-\gamma|
&\leq 2^{k_0+1}\big[{\rm diam}_\rho(\sigma B_0)\big]^s \leq
2^{k_0+1}\big[\widetilde{C}_\rho C_\rho\sigma R_0\big]^s
\lesssim R_0^s\Big(bR_0^Q\Big)^{-1/p}\left(\, \int_{\sigma B_0} g^p\, d\mu\right)^{1/p},
\end{align}
where the implicit positive
constant depends only on $Q$, $p$, $\sigma$, $s$, $\rho$, and $\beta$ (which ultimately depends on $\rho$).

Now we are ready to prove Theorem~\ref{embedding}(a).

\vskip.1in

\noindent
{\em Proof of Theorem~\ref{embedding}{\rm(a)}.} In this case, $p\in(0,Q/s)$.
First we   prove   \eqref{eq19}.
Let $\{a_k\}_{k\in\mathbb{Z}}$ be as in \eqref{eq43} with $\gamma\in\rr$.
Since $p\in(0,Q/s)$,
\eqref{eq44} gives
$$
a_k\leq Cb^{-s/Q}\left(\, \int_{\sigma B_0} g^p\, d\mu\right)^{s/Q} 2^{k(1-sp/Q)}
+\sup_{E_{k_0}} |u-\gamma|
\quad
\text{for any integer $k>k_0$,}
$$
where $C$ is a positive constant depending only on $s$, $p$, and $Q$.
However, since
$$
a_k=\sup_{E_k\cap B_0}|u-\gamma|\leq \sup_{E_{k_0}\cap\sigma B_0}|u-\gamma|=\sup_{E_{k_0}}|u-\gamma|
\quad
\text{whenever\ $k\leq k_0$}
$$
we actually have
\begin{equation}\label{ess}
a_k\leq Cb^{-s/Q}\left(\, \int_{\sigma B_0} g^p\, d\mu\right)^{s/Q} 2^{k(1-ps/Q)}+\sup_{E_{k_0}} |u-\gamma|
\quad  \forall\  k\in\bbbz.
\end{equation}

Now, we take $\gamma=u(y)$   as in \eqref{eq17}.
Then, by \eqref{eq6},
\eqref{ess}, \eqref{eq5},    and \eqref{eq17}, we find that
\begin{align}\label{repe}
\int_{B_0} |u-\gamma|^{p^*}\, d\mu
&\leq\sum_{k\in\mathbb{Z}} a_k^{p^*}\mu(B_0\cap(E_k\setminus E_{k-1}))\nonumber\\
&\ls
 b^{-sp^*/Q}\left(\, \int_{\sigma B_0} g^p\, d\mu\right)^{sp^*/Q}\sum_{k\in\mathbb{Z}} 2^{kp}\mu(E_{k}\setminus E_{k-1})
+ \left(\sup_{E_{k_0}}|u-\gamma|\right)^{p^*}\mu(B_0)\nonumber\\
&\ls
b^{-sp^*/Q}\left(\, \int_{\sigma B_0} g^p\, d\mu\right)^{p^*/p}
+ \left(\sup_{E_{k_0}}|u-\gamma|\right)^{p^*}\mu(B_0)\nonumber\\
&\ls
b^{-sp^*/Q}\left[1+\frac{\mu(B_0)}{bR_0^Q}\right]\left(\, \int_{\sigma B_0} g^p\, d\mu\right)^{p^*/p}\nonumber\\
&\ls
b^{-sp^*/Q}\frac{\mu(B_0)}{bR_0^Q}\left(\, \int_{\sigma B_0} g^p\, d\mu\right)^{p^*/p},
\end{align}
where, in the last inequality, we used the $V(\sigma B_0,Q,b)$ condition to estimate
$$1+\mu(B_0)/(bR_0^Q)\leq 2\mu(B_0)/(bR_0^Q).$$ It is now straightforward to
check that the above estimate of $\int_{B_0} |u-\gamma|^{p^*}\, d\mu$ implies \eqref{eq19}.

To finish the proof of the statement in (a), it remains to establish inequality \eqref{eq18}.
Now we take $\gamma =0$ and let $b_{k_0}:=\inf_{E_{k_0}}|u|$. Since $b_{k_0}^p{\bf 1}_{E_{k_0}}\leq |u|^p{\bf 1}_{\sigma B_0},$
Lemma~\ref{piotr} gives
$$
\frac{\mu(\sigma B_0)}{2} b_{k_0}^p\leq
b_{k_0}^p\mu(E_{k_0}) \leq
\int_{\sigma B_0} |u|^p\, d\mu
$$
and so
$$
b_{k_0}\leq 2^{1/p}\left(\, \mvint_{\sigma B_0}|u|^p\, d\mu\right)^{1/p}\, .
$$
From this, the fact $E_{k_0}\subset \sigma B_0$,
the H\"older continuity \eqref{eq8}, \eqref{estdia}, and \eqref{eq16}, we deduce that
\begin{align}\label{sim-e}
\sup_{E_{k_0}}|u|
&\leq 2^{k_0+1}\lf[{\rm diam}_\rho(\sigma B_0)\r]^s + b_{k_0}\leq
2^{k_0+1}\big[\widetilde{C}_\rho C_\rho\sigma R_0\big]^s+ 2^{1/p}\left(\, \mvint_{\sigma B_0} |u|^p\, d\mu\right)^{1/p}\nonumber\\
&\ls  R_0^s\lf(bR_0^Q\r)^{-1/p}\left(\, \int_{\sigma B_0} g^p\, d\mu\right)^{1/p} + \left(\, \mvint_{\sigma B_0} |u|^p\, d\mu\right)^{1/p}.
\end{align}
Repeating the proof of \eqref{repe} with $\gamma=0$, and using \eqref{sim-e}
instead of \eqref{eq17},   we find that
\begin{align*}
\int_{B_0} |u|^{p^*}\, d\mu
&\ls
b^{-sp^*/Q}\left(\, \int_{\sigma B_0} g^p\, d\mu\right)^{p^*/p}
+ \left(\sup_{E_{k_0}}|u|\right)^{p^*}\mu(B_0)\nonumber\\
&\ls b^{-sp^*/Q}\left[1+\frac{\mu(B_0)}{bR_0^Q}\right]\left(\, \int_{\sigma B_0} g^p\, d\mu\right)^{p^*/p} +
\left(\, \mvint_{\sigma B_0} |u|^p\, d\mu\right)^{p^*/p}\mu(B_0)\nonumber\\
&\ls
b^{-sp^*/Q}\frac{\mu(B_0)}{bR_0^Q}\left(\, \int_{\sigma B_0} g^p\, d\mu\right)^{p^*/p}+
\left(\, \mvint_{\sigma B_0} |u|^p\, d\mu\right)^{p^*/p}\mu(B_0).
\end{align*}
This estimate easily implies  $u\in L^{p^*}(B_0)$ and \eqref{eq18}.
The item (a) of Theorem~\ref{embedding} is then proved.

\vskip.1in	
Next we turn to prove the items (b) and (c) of Theorem~\ref{embedding}. Observe that, in both cases,
$u \in\dot{M}^{s,p}(\sigma B_0,\rho,\mu)$ for some $p\in [Q/s,\fz)$, which implies $u\in\dot{M}^{s,q}(\sigma B_0,\rho,\mu)$ with $q=Q/(Q+s)$, due to the H\"older inequality. Thus, by the already proved Theorem~\ref{embedding}({\rm a}), one has $u\in L^{q^*}(B_0)=L^1(B_0)$, which implies that $u_{B_0}$ is well defined and finite.
\vskip.1in
	
\noindent {\em Proof of Theorem~\ref{embedding}{\rm(b)}.} In this case, $p=Q/s$. Let $\gamma$   be as in \eqref{eq17} and $\{a_k\}_{k\in\mathbb{Z}}$
as in \eqref{eq43}. For any $a\in(0,\fz)$, by the Jensen inequality and the convexity of $e^t$, we have
\begin{align*}
\mvint_{B_0}e^{a|u(x)-u_{B_0}|}\, d\mu(x)
&\leq
\mvint_{B_0}\exp\left(\, \mvint_{B_0}a|u(x)-u(z)|\, d\mu(z)\right)\, d\mu(x)\\
&\leq
\mvint_{B_0}\mvint_{B_0}e^{a|u(x)-u(z)|}d\mu(z)\, d\mu(x)
\leq
\mvint_{B_0}e^{a|u(x)-\gamma|}\, d\mu(x)\,
\mvint_{B_0}e^{a|u(z)-\gamma|}\, d\mu(z) \\
&=
\left\{\, \mvint_{B_0}e^{a|u(x)-\gamma|}\, d\mu(x)\right\}^{2}.
\end{align*}
Hence
\begin{equation}
\label{eq23}
\mvint_{B_0}\exp\left(C_1b^{s/Q}\frac{|u-u_{B_0}|}{\|g\|_{L^{Q/s}(\sigma B_0)}}\right)\, d\mu
\leq
\left\{\, \mvint_{B_0}\exp\left(C_1b^{s/Q}\frac{|u-\gamma|}{\|g\|_{L^{Q/s}(\sigma B_0)}}\right)\, d\mu\right\}^{2},
\end{equation}
where $C_1\in(0,\fz)$ is arbitrary and will be determined later. Thus, to prove \eqref{eq20-X},
it suffices to estimate the right hand side of \eqref{eq23}.
Since $p=Q/s$, inequality \eqref{eq17} reads as
\begin{equation}
\label{eq24}
\sup_{E_{k_0}} |u-\gamma|\le C b^{-s/Q}\,\|g\|_{L^{Q/s}(\sigma B_0)}
\end{equation}
for some positive constant $C$,
which, together with  \eqref{eq44}, implies that
\begin{equation}
\label{eq25}
a_k\le  \wz C b^{-s/Q}\,\|g\|_{L^{Q/s}(\sigma B_0)}\,(k-k_0)
\quad\text{for any integer \ $k>k_0$,}
\end{equation}
where $\wz C$ is a positive constant independent of $k$.
From  \eqref{eq43} and \eqref{eq25}, it follows that
\begin{equation}
\label{eq27}
\frac{C_1b^{s/Q}|u(x)-\gamma|}{\|g\|_{L^{Q/s}(\sigma B_0)}}\leq
 C_1 \wz C(k-k_0)
\quad
\text{for any integer  $k>k_0$ and any $x\in E_k\cap B_0$}.
\end{equation}
Next we take the positive constant  $C_1$ satisfying   that $\exp(\widetilde{C}C_1)=2^{Q/s}$.
	
Let us split the integral in the right-hand side of \eqref{eq23}   into two integrals as follows
\begin{align*}
\mvint_{B_0}\exp\left(\frac{C_1 b^{s/Q}|u-\gamma|}{\|g\|_{L^{Q/s}(\sigma B_0)}}\right)\, d\mu
&=\frac{1}{\mu(B_0)}\int_{B_0\cap E_{k_0}}\exp\left(\frac{C_1 b^{s/Q}|u-\gamma|}{\|g\|_{L^{Q/s}(\sigma B_0)}}\right)\, d\mu +\,\, \frac{1}{\mu(B_0)}\int_{B_0\setminus E_{k_0}}\cdots\\
&=: {\rm I}_1+{\rm I}_2.
\end{align*}
Estimate \eqref{eq24} gives
$$
{\rm I}_1\leq\frac{\mu(B_0\cap E_{k_0})}{\mu(B_0)}\exp(CC_1)\leq\exp(CC_1),
$$
while   \eqref{eq27}, \eqref{eq42}, \eqref{eq5}, and the fact  $\exp(\widetilde{C}C_1)=2^{Q/s}=2^p$ imply that
\begin{equation*}
\begin{split}
{\rm I}_2
&\leq
\frac{1}{\mu(B_0)}\sum_{k=k_0+1}^\infty
e^{\widetilde{C}C_1(k-k_0)}\mu\lf(B_0\cap(E_k\setminus E_{k-1})\r)\\
&\leq \frac{2^{-k_0Q/s}}{\mu(B_0)}\sum_{k=-\infty}^\infty 2^{kQ/s}\mu(E_k\setminus E_{k-1})
\ls \frac{2^{-k_0Q/s}}{\mu(B_0)}\|g\|_{L^{Q/s}(\sigma B_0)}^{s/Q}
\ls\frac{bR_0^Q}{\mu(B_0)}\ls 1,
\end{split}
\end{equation*}
where the last two estimates follow from \eqref{eq16} and the $V(\sigma B_0,Q,b)$ condition, respectively.
This finishes the proof of Theorem~\ref{embedding}(b).

\vskip.1in

\noindent
{\em Proof of Theorem~\ref{embedding}{\rm(c)}.}
In this case, $p\in(Q/s,\fz)$. Let $\gamma$   be as in \eqref{eq17} and $\{a_k\}_{k\in\mathbb{Z}}$
as in \eqref{eq43} again.
Since $2^{1-sp/Q}<1$, by \eqref{eq44},
we find that,  for any integer  $k> k_0$,
\begin{equation}
\label{eq28}
a_k\ls b^{-s/Q}\left(\, \int_{\sigma B_0} g^p\, d\mu\right)^{s/Q} 2^{k_0(1-sp/Q)}
+\sup_{E_{k_0} }|u-\gamma|.
\end{equation}
From \eqref{eq28}, \eqref{eq16}, and \eqref{eq17}, we deduce that,  for any integer  $k> k_0$,
\begin{equation}\label{kysr}
a_k\ls \lf(bR_0^Q\r)^{-1/p}R_0^s\left(\, \int_{\sigma B_0}g^p\, d\mu\right)^{1/p}
\approx \left[\frac{\mu(\sigma B_0)}{bR_0^Q}\right]^{1/p}
R_0^s\left(\, \mvint_{\sigma B_0} g^p\, d\mu\right)^{1/p}.
\end{equation}
Noticing that  the right hand side of \eqref{kysr}  is a positive
constant independent of $k$, by \eqref{eq42} and
the definition of $a_k$ in \eqref{eq43}, we conclude that
$|u-\gamma|$ equals almost everywhere to a  function
on $B_0$ which is bounded by  the right hand side of \eqref{kysr}
modulo a positive constant. Therefore,
we obtain
$$
\Vert u-u_{B_0}\Vert_{L^\infty(B_0)}\leq 2\Vert u-\gamma\Vert_{L^\infty(B_0)}
\ls \left[\frac{\mu(\sigma B_0)}{bR_0^Q}\right]^{1/p}
R_0^s\left(\, \mvint_{\sigma B_0} g^p\, d\mu\right)^{1/p}.
$$
This proves \eqref{eq29}.

Next we show  the  H\"older continuity of $u$ along with the estimate \eqref{eq30}.
To this end, let $A_0:=\{x\in B_0:\ |u(x)|\le \|u\|_{L^\fz(B_0)}\}$. It follows from
\eqref{eq29} that $\mu(B_0\setminus A_0)=0$.

Fix $x,\,y\in A_0$. If $2\rho(x,y)\leq (C_\rho)^{-1}R_0$ with $C_\rho$ being as in \eqref{C-RHO.111},   let
$R_1:=2\rho(x,y)$. Clearly,   $x,\,y\in B_1:=B_\rho(x,R_1)$. Moreover,
if $z_0$ denotes the center of $B_0$, then, for any $z\in \sigma B_1$, we have (keeping in mind that $\sigma\geq C_\rho$)
\begin{align*}
\rho(z_0,z)&\leq C_\rho\max\{\rho(z_0,x),\rho(x,z)\}< C_\rho\max\{R_0,\sigma R_1\}\\
&\leq C_\rho\max\{R_0,\sigma (C_\rho)^{-1}R_0\}=\sigma R_0,
\end{align*}
which implies that $\sigma B_1\subset\sigma B_0$.
From this and the assumption that $\mu$ satisfies the $V(\sigma B_0,Q,b)$ condition,
it follows that $\mu$ also satisfies the $V(\sigma B_1,Q,b)$ condition,
and the estimate \eqref{eq29} applied to $B_1$ in place of $B_0$, together with the definition of $A_0$, gives
\begin{equation*}
\begin{split}
|u(x)-u(y)|
&\leq
2\Vert u-u_{B_1}\Vert_{L^\infty(B_1)}\ls \left[\frac{\mu(\sigma B_1)}{bR_1^Q}\right]^{1/p}R_1^s\, \left(\, \mvint_{\sigma B_1}g^p\, d\mu\right)^{1/p}\\
&\approx b^{-1/p}[\rho(x,y)]^{s-Q/p}\left(\, \int_{\sigma B_1} g^p\, d\mu\right)^{1/p}
\ls b^{-1/p}[\rho(x,y)]^{s-Q/p}\left(\, \int_{\sigma B_0} g^p\, d\mu\right)^{1/p}.
\end{split}
\end{equation*}
If $2\rho(x,y)>(C_\rho)^{-1}R_0$, then \eqref{eq29} immediately gives
\begin{equation*}
\begin{split}
|u(x)-u(y)|
&\leq
2\Vert u-u_{B_0}\Vert_{L^\infty(B_0)}
\ls\left[\frac{\mu(\sigma B_0)}{bR_0^Q}\right]^{1/p}R_0^s\, \left(\, \mvint_{\sigma B_0}g^p\, d\mu\right)^{1/p}\\
&\ls b^{-1/p}[\rho(x,y)]^{s-Q/p}\left(\, \int_{\sigma B_0} g^p\, d\mu\right)^{1/p}.
\end{split}
\end{equation*}
These estimates imply \eqref{eq30}, and  the proof of Theorem~\ref{embedding} is finished.
\end{proof}

As an immediate consequence of Theorem~\ref{embedding} and the inclusions in Proposition~\ref{sobequal},
we   obtain the following local embeddings for $\dot{M}^s_{p,q}$ and $\dot{N}^s_{p,q}$,
under the assumption that the measure is lower Ahlfors-regular.

\begin{theorem}
\label{LBembedding}
Let $(X,\rho,\mu)$ be a quasi-metric measure space and suppose that there exist
positive constants $\kappa$ and $Q$ satisfying
\begin{equation}\label{lowermeasure-thm}
\kappa\,r^Q\leq\mu(B_\rho(x,r))  \qquad \mbox{for any}\ x\in X\
\mbox{and any finite}\ r\in(0,{\rm diam}_\rho(X)].
\end{equation}
Let $s,\,p\in(0,\infty)$, $q\in(0,\infty]$, and $\sigma\in[C_\rho,\infty)$
with $C_\rho$  as in \eqref{C-RHO.111}. Then there exist positive constants $C$, $C_1$, and $C_2$,
depending  on $\rho$, $\kappa$, $Q$, $s$, $p$, $q$, and $\sigma$, such that the following
statements hold true for any $\rho$-ball $B_0:=B_\rho(x_0,R_0)$, where
$x_0\in X$ and $R_0\in(0,{\rm diam}_\rho(X)]$ is finite.
\begin{enumerate}
\item[{\rm(a)}] If $p\in(0,Q/s)$, then, for any $u\in\dot{M}^s_{p,q}(\sigma B_0,\rho,\mu)$, one has
$u\in L^{p^*}(B_0)$, where $p^*:=Qp/(Q-sp)$, and the following inequalities are satisfied:
\begin{equation}
\label{eq18-LB}
\left(\, \mvint_{B_0} |u|^{p^*}\, d\mu\right)^{1/p^*}\leq
CR_0^{-Q/p}\left[
R_0^s\Vert u\Vert_{\dot{M}^s_{p,q}(\sigma B_0)}
+\Vert u\Vert_{L^{p}(\sigma B_0)}\right],
\end{equation}
and
\begin{equation}
\label{eq19-LB}
\inf_{\gamma\in\mathbb{R}}\left(\, \mvint_{B_0} |u-\gamma|^{p^*}\, d\mu\right)^{1/p^*}\leq
CR_0^{s-Q/p}\Vert u\Vert_{\dot{M}^s_{p,q}(\sigma B_0)}.
\end{equation}

\item[{\rm(b)}] If $p=Q/s$, then, for any $u\in\dot{M}^s_{p,q}(\sigma B_0,\rho,\mu)$ with
$\Vert u\Vert_{\dot{M}^s_{p,q}(\sigma B_0)}>0$, one has
\begin{equation}
\label{eq20-LB}
\mvint_{B_0} {\rm exp}\left(\frac{C_1|u-u_{B_0}|}{\Vert u\Vert_{\dot{M}^s_{p,q}(\sigma B_0)}}\right)\,d\mu\leq C_2.
\end{equation}

\item[{\rm(c)}] If $p>Q/s$, then each function $u\in\dot{M}^s_{p,q}(X,\rho,\mu)$
has a H\"older continuous representative of order $s-Q/p$ on $X$, denoted by $u$ again, satisfying
\begin{equation}
\label{eq30-LB}
|u(x)-u(y)|\leq C[\rho(x,y)]^{s-Q/p}\Vert u\Vert_{\dot{M}^s_{p,q}(X)},
\quad \forall\ x,\,y\in X.
\end{equation}
\end{enumerate}
In addition, if $q\leq p$, then all of the statements above are valid with  $\dot{M}^s_{p,q}$ replaced by $\dot{N}^s_{p,q}$.
\end{theorem}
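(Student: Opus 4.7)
The plan is to reduce Theorem~\ref{LBembedding} to the Haj{\l}asz--Sobolev embedding Theorem~\ref{embedding}, bridging the Haj{\l}asz--Triebel--Lizorkin and Haj{\l}asz--Besov scales with the Haj{\l}asz--Sobolev scale through \eqref{MN-inclusion} and Proposition~\ref{sobequal}. First, I would verify that the global assumption \eqref{lowermeasure-thm} delivers a uniform $V(\sigma B_0,Q,b)$ condition with $b$ depending only on $\kappa$, $Q$, and $\sigma$. Indeed, for any $B_\rho(x,r) \subset \sigma B_0$ with $r \in (0, \sigma R_0]$: if $r \leq {\rm diam}_\rho(X)$, then \eqref{lowermeasure-thm} gives $\mu(B_\rho(x,r)) \geq \kappa r^Q$ directly; otherwise $B_\rho(x,r) = X$, and applying \eqref{lowermeasure-thm} at the radius ${\rm diam}_\rho(X)$ yields $\mu(X) \geq \kappa [{\rm diam}_\rho(X)]^Q \geq \kappa \sigma^{-Q} r^Q$, so $b := \kappa \sigma^{-Q}$ works throughout.

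For item (a), fix $u \in \dot{M}^s_{p,q}(\sigma B_0,\rho,\mu)$. By \eqref{MN-inclusion} and Proposition~\ref{sobequal}(i), we have $u \in \dot{M}^{s,p}(\sigma B_0,\rho,\mu)$ with $\|u\|_{\dot{M}^{s,p}(\sigma B_0)} \leq \|u\|_{\dot{M}^s_{p,q}(\sigma B_0)}$, so we may take a sequence of $s$-gradients $g \in \mathcal{D}_\rho^s(u)$ with $\|g\|_{L^p(\sigma B_0)}$ arbitrarily close to $\|u\|_{\dot{M}^s_{p,q}(\sigma B_0)}$. Plugging such $g$ into Theorem~\ref{embedding}(a) and simplifying via the identities $[\mu(\sigma B_0)/(bR_0^Q)]^{1/p}(\,\mvint_{\sigma B_0} g^p \, d\mu)^{1/p} = b^{-1/p} R_0^{-Q/p} \|g\|_{L^p(\sigma B_0)}$ and $(\,\mvint_{\sigma B_0} |u|^p \, d\mu)^{1/p} \leq \kappa^{-1/p} R_0^{-Q/p} \|u\|_{L^p(\sigma B_0)}$ (the latter from \eqref{lowermeasure-thm} applied to $B_0 \subset \sigma B_0$) produces \eqref{eq18-LB} and \eqref{eq19-LB} after passing to the infimum over $g$. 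For (c), Theorem~\ref{embedding}(c) provides a local Hölder estimate on each admissible ball; to extract the global estimate \eqref{eq30-LB} on $X$, I would apply the local result either to a ball centered at $x$ of radius comparable to $\rho(x,y)$ (when $\rho(x,y)$ does not exceed a fixed multiple of ${\rm diam}_\rho(X)$) or to a ball of radius ${\rm diam}_\rho(X)$ otherwise, bounding $\|g\|_{L^p(\sigma B_0)} \leq \|g\|_{L^p(X)}$ in either case and then taking the infimum over $g$.

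Item (b) requires a small extra step due to the exponential integrability. After applying Theorem~\ref{embedding}(b) to an $s$-gradient $g$, we obtain \eqref{eq20-LB} with $\|g\|_{L^{Q/s}(\sigma B_0)}$ in the denominator. Choosing a minimizing sequence $\{g_n\} \subset \mathcal{D}_\rho^{s,+}(u)$ with $\|g_n\|_{L^{Q/s}(\sigma B_0)} \downarrow \|u\|_{\dot{M}^{s,Q/s}(\sigma B_0)}$ makes the integrand monotonically increase, so Fatou's lemma (or monotone convergence) allows us to replace the denominator by $\|u\|_{\dot{M}^{s,Q/s}(\sigma B_0)}$; enlarging this denominator to $\|u\|_{\dot{M}^s_{Q/s,q}(\sigma B_0)}$ via \eqref{MN-inclusion} and Proposition~\ref{sobequal}(i) only decreases the left-hand side, yielding \eqref{eq20-LB}. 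Finally, the Besov analogue of every item follows by the identical scheme once the appeal to Proposition~\ref{sobequal}(i) is replaced by the inclusion $\dot{N}^s_{p,q} \hookrightarrow \dot{M}^{s,p}$ of Proposition~\ref{sobequal}(ii), which is exactly where the restriction $q \leq p$ enters. I anticipate that the only substantive (though still light) technical hurdle is the bookkeeping of constants and the handling of the edge case $\sigma R_0 > {\rm diam}_\rho(X)$ in verifying the $V(\sigma B_0, Q, b)$ condition; the core analytical work is entirely absorbed into Theorem~\ref{embedding} and Proposition~\ref{sobequal}.
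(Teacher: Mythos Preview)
Your proposal is correct and follows essentially the same route as the paper: verify the $V(\sigma B_0,Q,b)$ condition with $b=\kappa\sigma^{-Q}$, pass from $\dot{M}^s_{p,q}$ (resp.\ $\dot{N}^s_{p,q}$ when $q\le p$) to $\dot{M}^{s,p}$ via \eqref{MN-inclusion} and Proposition~\ref{sobequal}, and then invoke Theorem~\ref{embedding}. The only cosmetic difference is in part~(b): the paper avoids your Fatou/monotone-convergence step by simply choosing a single $g\in\mathcal{D}^s_\rho(u)$ with $\|g\|_{L^{Q/s}(\sigma B_0)}\le 2\|u\|_{\dot{M}^s_{Q/s,q}(\sigma B_0)}$ (possible by \eqref{qpp-509}), noting that $g\in\mathcal{D}^{s,+}_\rho(u)$ since $\|u\|_{\dot{M}^s_{Q/s,q}(\sigma B_0)}>0$ forces $u$ nonconstant, and absorbing the factor~$2$ into $C_1$.
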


\begin{remark}
\label{embedlocalLB}
In the context of Theorem~\ref{LBembedding}, if, instead of the lower measure bound \eqref{lowermeasure-thm}, one assumes that there exists some $r_\ast\in(0,\infty)$ such that $r^Q\lesssim\mu(B_\rho(x,r))$ for any $x\in X$ and any $r\in(0,r_\ast]$, then the items (a) and (b) of Theorem~\ref{LBembedding} still hold true for
any $\rho$-ball $B_0:=B_\rho(x_0,R_0)$, with $x_0\in X$ and $R_0\in(0,r_\ast]$.
In such a scenario, when $p>Q/s$, the global pointwise inequality in \eqref{eq30-LB} would be replaced by a local condition, where \eqref{eq30-LB} only holds true for functions $u$ in $\dot{M}^s_{p,q}(\sigma B_0,\rho,\mu)$ and points in $B_0$.
\end{remark}	

\begin{proof}[Proof of Theorem \ref{LBembedding}]
Fix a ball $B_0$ having finite radius $R_0\in\big(0,{\rm diam}_\rho(X)\big]$. If $B_\rho(x,r)\subset\sigma B_0$ and $r\in (0,\sigma R_0]$, then $\sigma^{-1}r\leq\min\{r,{\rm diam}_\rho(X)\}$,
and   \eqref{lowermeasure-thm} gives
\begin{equation*}
\mu(B_\rho(x,r))\geq\mu(B_\rho(x,\sigma^{-1}r))\geq\kappa(\sigma^{-1}r)^Q=br^Q,
\end{equation*}
where $b:=\kappa\sigma^{-Q}\in(0,\infty)$. Thus, $\mu$ satisfies the $V(\sigma B_0,Q,b)$ condition in \eqref{measbound}.
On the other hand, \eqref{MN-inclusion} and Proposition~\ref{sobequal}(i) imply that
$$\dot{M}^s_{p,q}(\sigma B_0,\rho,\mu)\hookrightarrow\dot{M}^s_{p,\infty}(\sigma B_0,\rho,\mu)
=\dot{M}^{s,p}(\sigma B_0,\rho,\mu).$$ Combining this with Proposition~\ref{constant}
permits us to deduce that,
\begin{align}
\label{qpp-509}
&\mbox{for any $u\in\dot{M}^s_{p,q}(\sigma B_0,\rho,\mu)$, there exists a}\nonumber \\
&\qquad \mbox{$g\in\mathcal{D}^s_\rho(u)$ such that }\,\, \Vert g\Vert_{L^p(\sigma B_0)}\lesssim \Vert u\Vert_{\dot{M}^s_{p,q}(\sigma B_0)}.
\end{align}
As such, \eqref{eq19-LB} follows immediately from combining \eqref{qpp-509} with \eqref{eq19}  in Theorem~\ref{embedding}.

Now we prove \eqref{eq18-LB}. Observe that, by \eqref{lowermeasure-thm},
$$
\left[\frac{\mu(\sigma B_0)}{bR_0^Q}\right]^{1/p}\geq
\left[\frac{\kappa[\sigma R_0]^Q}{bR_0^Q}\right]^{1/p}=\sigma^{2Q/p}>1,
$$
where $b=\kappa\sigma^{-Q}$, which, together with  \eqref{eq18}  and \eqref{qpp-509}, implies that
\begin{equation*}
\begin{split}
\left(\, \mvint_{B_0} |u|^{p^*}\, d\mu\right)^{1/p^*}
&\ls\left[\frac{\mu(\sigma B_0)}{bR_0^Q}\right]^{1/p}
R_0^s\left(\,\mvint_{\sigma B_0}g^p\, d\mu\right)^{1/p}
+\left(\,\mvint_{\sigma B_0}|u|^p\, d\mu\right)^{1/p}\\
&\ls \left[\frac{\mu(\sigma B_0)}{R_0^Q}\right]^{1/p}
\left[R_0^s\left(\,\mvint_{\sigma B_0}g^p\, d\mu\right)^{1/p}
+\left(\,\mvint_{\sigma B_0}|u|^p\, d\mu\right)^{1/p}\right]\\
&\approx R_0^{-Q/p}
\left[R_0^s\left(\,\int_{\sigma B_0}g^p\, d\mu\right)^{1/p}
+\left(\,\int_{\sigma B_0}|u|^p\, d\mu\right)^{1/p}\right]\\
&\ls R_0^{-Q/p}\left[
R_0^s\Vert u\Vert_{\dot{M}^s_{p,q}(\sigma B_0)}
+\Vert u\Vert_{L^{p}(\sigma B_0)}\right].
\end{split}
\end{equation*}
Hence, the inequality in \eqref{eq18-LB} is valid.

For \eqref{eq20-LB}, observe that, if $u\in\dot{M}^s_{p,q}(\sigma B_0,\rho,\mu)$ with $\Vert u\Vert_{\dot{M}^s_{p,q}(\sigma B_0)}>0$, then  $g$   in \eqref{qpp-509} satisfies $\Vert g\Vert_{L^p(\sigma B_0)}>0$. Otherwise, if $\Vert g\Vert_{L^p(\sigma B_0)}=0$, then $g=0$ $\mu$-almost everywhere in $\sigma B_0$ and it would follow that $u$ is a constant in $\sigma B_0$, and hence $\Vert u\Vert_{\dot{M}^s_{p,q}(\sigma B_0)}=0$. Thus, $g\in\mathcal{D}^{s,+}_\rho(u)$. Combining \eqref{qpp-509} with \eqref{eq20-X} in Theorem~\ref{embedding}, we obtain \eqref{eq20-LB}.

We now turn our attention to verifying  \eqref{eq30-LB}.
Since $\mu$ satisfies the $V(\sigma B_0,Q,b)$ condition with $b=\kappa\sigma^{-Q}$,
from \eqref{qpp-509} and   Theorem~\ref{embedding}(c) (specifically \eqref{eq30}), we deduce that there exists a
positive constant $C$ such that, for any $u\in\dot{M}^s_{p,q}(\sigma B_0,\rho,\mu)$,
\begin{equation}\label{Jnb.1}
|u(x)-u(y)|\leq C\,b^{-1/p}[\rho(x,y)]^{s-Q/p}\Vert u\Vert_{\dot{M}^s_{p,q}(\sigma B_0)},
\quad\ \forall\ x,\ y\in B_0.
\end{equation}
Notice that, if $u\in\dot{M}^s_{p,q}(X,\rho,\mu)$,
then its pointwise restriction to $\sigma B_0$ belongs to $\dot{M}^s_{p,q}(\sigma B_0,\rho,\mu)$
with $\|u\|_{\dot{M}^s_{p,q}(\sigma B_0)}\le \|u\|_{\dot{M}^s_{p,q}(X)}.$
Then, by \eqref{Jnb.1},  we have
\begin{equation}\label{Jnb.10}
|u(x)-u(y)|\leq C\,b^{-1/p}[\rho(x,y)]^{s-Q/p}\Vert u\Vert_{\dot{M}^s_{p,q}(X)},
 \quad\ \forall\ x,\ y\in B_0.
\end{equation}
As the positive constants $C$ and $b$ are independent of $B_0$, it follows
that \eqref{Jnb.10} implies \eqref{eq30-LB}, as desired.

The justification of the embeddings for $\dot{N}^s_{p,q}$ when $q\leq p$ follows a similar reasoning using  Proposition~\ref{sobequal}(ii) in place of Proposition~\ref{sobequal}(i). This finishes the proof of Theorem \ref{LBembedding}.
\end{proof}

The following theorem is the counterpart of Theorem~\ref{LBembedding} for doubling measures.

\begin{theorem}
\label{DOUBembedding}
Let $(X,\rho,\mu)$ be a quasi-metric measure space and suppose that there exist positive
constants $\kappa$ and $Q$ satisfying
\begin{equation}\label{Doub-thm}
\kappa\lf(\frac{r}{R}\r)^{Q}\leq\frac{\mu(B_\rho(x,r))}{\mu(B_\rho(y,R))},
\end{equation}
whenever $x,\,y\in X$ with $B_\rho(x,r)\subset B_\rho(y,R)$ and $0<r\leq R<\infty$.
Let $s,\,p\in(0,\infty)$, $q\in(0,\infty]$, and $\sigma\in[C_\rho,\infty)$
with $C_\rho$   as in \eqref{C-RHO.111}.
Then there exist positive constants $C$, $C_1$, and $C_2$, depending only on $\rho$, $\kappa$,
$Q$, $s$, $p$, and $\sigma$, such that the following statements hold true for any
$\rho$-ball $B_0:=B_\rho(x_0,R_0)$, where $x_0\in X$ and $R_0\in(0,\infty)$.
\begin{enumerate}
\item[{\rm(a)}] If $p\in(0,Q/s)$,
then, for any $u\in\dot{M}^s_{p,q}(\sigma B_0,\rho,\mu)$,
one has $u\in L^{p^*}(B_0)$, where $p^*:=Qp/(Q-sp)$,
and the following inequalities are satisfied:
\begin{equation}
\label{eq18-DOUB}
\Vert u\Vert_{L^{p^\ast}(B_0)}\leq
\frac{C}{[\mu(\sigma B_0)]^{s/Q}}\left[
R_0^s\Vert u\Vert_{\dot{M}^s_{p,q}(\sigma B_0)}
+\Vert u\Vert_{L^{p}(\sigma B_0)}\right],
\end{equation}
and
\begin{equation}
\label{eq19-DOUB}
\inf_{\gamma\in\mathbb{R}}\Vert u-\gamma\Vert_{L^{p^\ast}(B_0)}\leq
\frac{C}{[\mu(\sigma B_0)]^{s/Q}}\,R_0^{s}\Vert u\Vert_{\dot{M}^s_{p,q}(\sigma B_0)}.
\end{equation}

\item[{\rm(b)}] If $p=Q/s$, then, for any $u\in\dot{M}^s_{p,q}(\sigma B_0,\rho,\mu)$
with $\Vert u\Vert_{\dot{M}^s_{p,q}(\sigma B_0)}>0$, one has
\begin{equation*}
\mvint_{B_0} {\rm exp}\left(C_1\frac{[\mu(\sigma B_0)]^{s/Q}|u-u_{B_0}|}{R_0^s\Vert u\Vert_{\dot{M}^s_{p,q}(\sigma B_0)}}\right)\,d\mu\leq C_2.
\end{equation*}

\item[{\rm(c)}] If $p>Q/s$, then  each function $u\in\dot{M}^s_{p,q}(\sigma B_0,\rho,\mu)$ has a H\"older continuous representative of order $s-Q/p$ on $B_0$, denoted by $u$ again, satisfying
\begin{equation*}
|u(x)-u(y)|\leq C[\rho(x,y)]^{s-Q/p}\frac{R_0^{Q/p}}{[\mu(\sigma B_0)]^{1/p}}\,\Vert u\Vert_{\dot{M}^s_{p,q}(\sigma B_0)},
\quad \forall \ x,\, y\in B_0.
\end{equation*}
\end{enumerate}
In addition, if $q\leq p$, then all of the statements above are valid with  $\dot{M}^s_{p,q}$ replaced by $\dot{N}^s_{p,q}$.
\end{theorem}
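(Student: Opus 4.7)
The plan is to reduce the entire theorem to Theorem~\ref{embedding} by verifying the $V(\sigma B_0,Q,b)$ condition with an appropriate ball-dependent $b=b(B_0)$, and then to re-express the resulting estimates in terms of $\mu(\sigma B_0)$ rather than $R_0^Q$. Specifically, I claim that $\mu$ satisfies the $V(\sigma B_0,Q,b)$ condition of \eqref{measbound} with $b:=\kappa\mu(\sigma B_0)/(\sigma R_0)^Q$. Indeed, if $x\in X$ and $r\in(0,\sigma R_0]$ are such that $B_\rho(x,r)\subset\sigma B_0=B_\rho(x_0,\sigma R_0)$, then the $Q$-doubling hypothesis \eqref{Doub-thm} gives $\mu(B_\rho(x,r))\geq\kappa(r/(\sigma R_0))^Q\mu(\sigma B_0)=br^Q$, as claimed. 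Mirroring \eqref{qpp-509} from the proof of Theorem~\ref{LBembedding}, for each $u\in\dot{M}^s_{p,q}(\sigma B_0,\rho,\mu)$ I can select $g\in\mathcal{D}_\rho^s(u)$ with $\|g\|_{L^p(\sigma B_0)}\lesssim\|u\|_{\dot{M}^s_{p,q}(\sigma B_0)}$ via \eqref{MN-inclusion} and Proposition~\ref{sobequal}(i).

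Next I will dispatch (a), (b), (c) in parallel by substituting the value of $b$ into Theorem~\ref{embedding}. For case (a) with $p\in(0,Q/s)$, the dimensionless prefactor $\mu(\sigma B_0)/(bR_0^Q)$ appearing in \eqref{eq18}--\eqref{eq19} collapses to the constant $\sigma^Q/\kappa$. Multiplying both inequalities by $[\mu(B_0)]^{1/p^*}$ converts the averages over $B_0$ into unnormalised $L^{p^*}(B_0)$-norms, and the identity $1/p^*=1/p-s/Q$, combined with the equivalence $\mu(B_0)\approx\mu(\sigma B_0)$ (which follows from applying \eqref{Doub-thm} to $B_0\subset\sigma B_0$), rearranges the remaining prefactor into $[\mu(\sigma B_0)]^{-s/Q}$, yielding \eqref{eq18-DOUB} and \eqref{eq19-DOUB}. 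For (b), substituting $b^{s/Q}=\kappa^{s/Q}[\mu(\sigma B_0)]^{s/Q}/(\sigma R_0)^s$ into \eqref{eq20-X} and invoking $\|g\|_{L^{Q/s}(\sigma B_0)}\lesssim\|u\|_{\dot{M}^s_{Q/s,q}(\sigma B_0)}$ directly produces the stated exponential integral bound after relabelling the positive constants $C_1,\,C_2$. For (c), substituting $b^{-1/p}=\kappa^{-1/p}(\sigma R_0)^{Q/p}[\mu(\sigma B_0)]^{-1/p}$ into \eqref{eq30}, and again controlling $\|g\|_{L^p(\sigma B_0)}$ by $\|u\|_{\dot{M}^s_{p,q}(\sigma B_0)}$, yields the claimed H\"older bound with the prescribed weight $R_0^{Q/p}/[\mu(\sigma B_0)]^{1/p}$.

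The corresponding statements for $\dot{N}^s_{p,q}$ under the assumption $q\leq p$ follow by exactly the same scheme, invoking Proposition~\ref{sobequal}(ii) in place of Proposition~\ref{sobequal}(i) to produce an $s$-gradient $g$ whose $L^p$-norm is controlled by $\|u\|_{\dot{N}^s_{p,q}(\sigma B_0)}$. I do not anticipate a serious obstacle: all of the substantive analytic work is already packaged inside Theorem~\ref{embedding}, so the real content here is just the bookkeeping needed to re-express the quantities $b$, $R_0$, and $\mu(\sigma B_0)$ in the normalisation demanded by the statement. The one step that deserves a modicum of care is the equivalence $\mu(B_0)\approx\mu(\sigma B_0)$ used above; the upper bound is trivial because $B_0\subset\sigma B_0$, while the lower bound $\mu(B_0)\geq\kappa\sigma^{-Q}\mu(\sigma B_0)$ is an immediate consequence of \eqref{Doub-thm} applied to the pair $B_0\subset\sigma B_0$.
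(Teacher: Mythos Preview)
Your proposal is correct and follows essentially the same approach as the paper: the paper's proof also verifies the $V(\sigma B_0,Q,b)$ condition with $b:=\kappa\mu(\sigma B_0)(\sigma R_0)^{-Q}$ and then defers to Theorem~\ref{embedding} combined with Proposition~\ref{sobequal}, exactly as you do. You have in fact supplied more of the bookkeeping (the computation $\mu(\sigma B_0)/(bR_0^Q)=\sigma^Q/\kappa$, the use of $1/p^*=1/p-s/Q$, and the doubling comparison $\mu(B_0)\approx\mu(\sigma B_0)$) than the paper, which simply says the proof ``proceeds just as the proof of Theorem~\ref{LBembedding}.''
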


\begin{proof}
Consider a $\rho$-ball  $B_0:= B_\rho(x_0,R_0)$ with $x_0\in X$ and $R_0\in(0,\infty)$, and let $B_\rho(y,R):=\sigma B_0$. Then \eqref{Doub-thm} implies that the  measure $\mu$ satisfies the $V(\sigma B_0, Q, b)$ condition
with $b:=\kappa\mu(\sigma B_0)(\sigma R_0)^{-Q}$. The proof now proceeds just as the proof of Theorem~\ref{LBembedding}, via combining the embeddings in Theorem~\ref{embedding} (for $\dot{M}^{s,p}$ spaces) with the inclusions in Proposition~\ref{sobequal}. This finishes the proof of Theorem \ref{Doub-thm}.
\end{proof}

\begin{remark}
\label{embedlocaldoub}
In the context of Theorem~\ref{DOUBembedding}, if, instead of
the doubling condition \eqref{Doub-thm}, one assumes that the measure $\mu$ is
\textit{locally $Q$-doubling up to scale $r_0\in(0,\infty)$}, namely, if one assumes
\begin{equation*}
\lf(\frac{r}{R}\r)^{Q}\lesssim\frac{\mu(B_\rho(x,r))}{\mu(B_\rho(y,R))},
\end{equation*}
whenever $x,\,y\in X$ with $B_\rho(x,r)\subset B_\rho(y,R)$ and $0<r\leq R\leq r_0$, then, for any fixed $\sigma\in[C_\rho,\infty)$,   the embeddings in  (a)-(c) of Theorem~\ref{DOUBembedding}
hold true for any $\rho$-ball $B_0:=B_\rho(x_0,R_0)$ with $x_0\in X$ and $R_0\in(0,r_0/\sigma]$.
\end{remark}

The embeddings for the Haj\l asz-Besov spaces  $\dot{N}^s_{p,q}$  in Theorems~\ref{LBembedding} and \ref{DOUBembedding} are restricted to the case when $q\leq p$; however, an upper bound on the exponent $q$ is to be expected (see Remark~\ref{BesovUB} below).  In the following theorem, we prove that one can relax the restriction on $q$ and still obtain Sobolev-type embeddings with the critical exponent $Q/s$ replaced by $Q/\varepsilon$, where $\varepsilon\in(0,s)$ is any fixed number.

\begin{theorem}
\label{mainembedding-epsilon}
Let $(X,\rho,\mu)$ be a quasi-metric measure space and suppose that
$u\in\dot{N}^s_{p,q}(\sigma B_0,\rho,\mu)$, where $s,\,p\in(0,\infty)$,
$q\in(0,\infty]$, $\sigma\in[C_\rho,\infty)$ with $C_\rho$ as in \eqref{C-RHO.111},
and $B_0$ is a $\rho$-ball of radius $R_0\in(0,\infty)$. Assume that the
measure $\mu$ satisfies the $V(\sigma B_0,Q,b)$ condition for some $Q,\,b\in(0,\infty)$.
Then, for any fixed $\varepsilon\in(0,s)$, there exist positive constants $C$, $C_1$, and $C_2$,
depending  only on $\rho$, $s$, $\varepsilon$, $p$, and $\sigma$,  such that
the following statements hold true.
\begin{enumerate}
\item[{\rm(a)}] If $p\in(0,Q/\varepsilon)$, then  $u\in L^{p^*}(B_0,\mu)$, where $p^*:=Qp/(Q-\varepsilon p)$,
and the following inequalities are satisfied:
\begin{equation*}
\left(\, \mvint_{B_0} |u|^{p^*}\, d\mu\right)^{1/p^*}\leq
Cb^{-1/p}R_0^{-Q/p}\left[
R_0^s\Vert u\Vert_{\dot{N}^s_{p,q}(\sigma B_0)}
+\Vert u\Vert_{L^{p}(\sigma B_0)}\right],
\end{equation*}
and
\begin{equation*}
\inf_{\gamma\in\mathbb{R}}\left(\, \mvint_{B_0} |u-\gamma|^{p^*}\, d\mu\right)^{1/p^*}\leq
Cb^{-1/p}R_0^{s-Q/p}\Vert u\Vert_{\dot{N}^s_{p,q}(\sigma B_0)}.
\end{equation*}

\item[{\rm(b)}] If $p=Q/\varepsilon$ and $\Vert u\Vert_{\dot{N}^s_{p,q}(\sigma B_0,\rho,\mu)}>0$, then
\begin{equation*}
\mvint_{B_0} {\rm exp}\left(C_1b^{1/p}\frac{|u-u_{B_0}|}{R_0^{s-\varepsilon}\Vert u\Vert_{\dot{N}^s_{p,q}(\sigma B_0)}}\right)\,d\mu\leq C_2.
\end{equation*}

\item[{\rm(c)}] If $p>Q/\varepsilon$, then
\begin{equation*}
\Vert u-u_{B_0}\Vert_{L^\infty(B_0)}\leq
Cb^{-1/p}R_0^{s-Q/p}\Vert u\Vert_{\dot{N}^s_{p,q}(\sigma B_0)}.
\end{equation*}
In particular, $u$ has a H\"older continuous representative of order $s-Q/p$ on $B_0$, denoted by $u$ again,  satisfying
\begin{equation*}
|u(x)-u(y)|\leq C b^{-1/p}[\rho(x,y)]^{s-Q/p}\Vert u\Vert_{\dot{N}^s_{p,q}(\sigma B_0)},
\quad \forall\ x,\,y\in B_0.
\end{equation*}
\end{enumerate}
\end{theorem}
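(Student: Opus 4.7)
The plan is to reduce the $\dot{N}^s_{p,q}$ problem to a $\dot{M}^{\varepsilon,p}$ problem via Proposition~\ref{sobequal}(iii): for any $\rho$-ball $B$ of radius $r$ and any $q\in(0,\infty]$, one has
$$\|u\|_{\dot{M}^{\varepsilon,p}(B)}\leq C r^{s-\varepsilon}\|u\|_{\dot{N}^s_{p,q}(B)}.$$
This is precisely the mechanism that removes the $q\leq p$ restriction imposed in Theorems~\ref{LBembedding} and \ref{DOUBembedding}, at the cost of shifting the critical exponent from $Q/s$ to $Q/\varepsilon$. Once this reduction is in place, each of (a), (b), (c) will follow from the corresponding part of Theorem~\ref{embedding} specialized to smoothness parameter $\varepsilon$ in place of $s$.

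For parts (a) and (b), I would begin by selecting $g\in\mathcal{D}_\rho^\varepsilon(u)$ with $\|g\|_{L^p(\sigma B_0)}\leq 2\|u\|_{\dot{M}^{\varepsilon,p}(\sigma B_0)}\lesssim R_0^{s-\varepsilon}\|u\|_{\dot{N}^s_{p,q}(\sigma B_0)}$, the last inequality being Proposition~\ref{sobequal}(iii) applied on $\sigma B_0$ (whose radius is $\sigma R_0\approx R_0$). Substituting this into Theorem~\ref{embedding}(a) converts the factor $R_0^\varepsilon\|g\|_{L^p(\sigma B_0)}$ into $R_0^s\|u\|_{\dot{N}^s_{p,q}(\sigma B_0)}$, and the prefactor $[\mu(\sigma B_0)/(bR_0^Q)]^{1/p}$ combines with the implicit $[\mu(\sigma B_0)]^{-1/p}$ to yield $b^{-1/p}R_0^{-Q/p}$; the $L^p$ summand is estimated similarly via $\mu(\sigma B_0)\geq b(\sigma R_0)^Q$ from the $V(\sigma B_0,Q,b)$ condition. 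For (b), the identity $\varepsilon/Q=1/p$ aligns the $b^{\varepsilon/Q}$ prefactor of Theorem~\ref{embedding}(b) with the required $b^{1/p}$; dividing through by the gradient-norm bound and absorbing the implicit constant into $C_1$ completes the argument. Nontriviality of the chosen $g$ is ensured by Proposition~\ref{constant} together with the standard perturbation $g\mapsto g+(\mvint_{\sigma B_0}g^p\,d\mu)^{1/p}$ already used inside the proof of Theorem~\ref{embedding}.

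Part (c) is the main obstacle: the $L^\infty$ bound itself follows directly from Theorem~\ref{embedding}(c) by the same substitution (the factor $R_0^\varepsilon \cdot R_0^{s-\varepsilon}$ collapses to $R_0^s$, giving exponent $s-Q/p$), but the pointwise Hölder estimate requires more care because a single application on $B_0$ only yields exponent $\varepsilon-Q/p$, not the desired $s-Q/p$. To recover the correct exponent I would emulate the localization step in the proof of Theorem~\ref{embedding}(c): for distinct $x,y\in B_0$ with $2\rho(x,y)\leq(C_\rho)^{-1}R_0$, set $R_1:=2\rho(x,y)$ and $B_1:=B_\rho(x,R_1)$. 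A short quasi-subadditivity computation shows $\sigma B_1\subset\sigma B_0$, so $\mu$ inherits the $V(\sigma B_1,Q,b)$ condition, and Theorem~\ref{embedding}(c) applied on $B_1$ with smoothness $\varepsilon$ gives
$$|u(x)-u(y)|\leq 2\|u-u_{B_1}\|_{L^\infty(B_1)}\lesssim b^{-1/p}R_1^{\varepsilon-Q/p}\|g\|_{L^p(\sigma B_1)}.$$
Invoking Proposition~\ref{sobequal}(iii) on the ball $\sigma B_1$ then yields $\|g\|_{L^p(\sigma B_1)}\lesssim R_1^{s-\varepsilon}\|u\|_{\dot{N}^s_{p,q}(\sigma B_0)}$, and the product $R_1^{\varepsilon-Q/p}\cdot R_1^{s-\varepsilon}=R_1^{s-Q/p}\approx\rho(x,y)^{s-Q/p}$ delivers the claimed exponent. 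The complementary case $2\rho(x,y)>(C_\rho)^{-1}R_0$ gives $\rho(x,y)\approx R_0$ and is handled by the $L^\infty$ bound on $B_0$ that was just established.
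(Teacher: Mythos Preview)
Your proposal is correct and follows exactly the paper's approach: reduce to $\dot{M}^{\varepsilon,p}$ via Proposition~\ref{sobequal}(iii) and then invoke Theorem~\ref{embedding} with smoothness parameter $\varepsilon$. You have in fact been more careful than the paper in part~(c): the paper's one-line proof leaves implicit the point you correctly flag --- that obtaining the H\"older exponent $s-Q/p$ (rather than merely $\varepsilon-Q/p$) requires re-applying Proposition~\ref{sobequal}(iii) on the localized ball $\sigma B_1$ at scale $R_1\approx\rho(x,y)$, so that the factor $R_1^{s-\varepsilon}$ combines with $R_1^{\varepsilon-Q/p}$ to give the claimed exponent.
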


\begin{proof}
The conclusions of this theorem are
immediate consequences of Theorem~\ref{embedding}
 and Proposition~\ref{sobequal}(iii), which gives
$\dot{N}^s_{p,q}(\sigma B_0,\rho,\mu)\hookrightarrow\dot{M}^{\varepsilon,p}(\sigma B_0,\rho,\mu)$
 with
 $$\Vert u\Vert_{\dot{M}^{\varepsilon,p}(\sigma B_0,\rho,\mu)}\lesssim R_0^{s-\varepsilon}\Vert u\Vert_{\dot{N}^s_{p,q}(\sigma B_0,\rho,\mu)},\quad  \forall \  u\in \dot{N}^s_{p,q}(\sigma B_0,\rho,\mu).$$
 This finishes the proof of  Theorem \ref{mainembedding-epsilon}.
\end{proof}

\begin{remark}
Using Theorem~\ref{mainembedding-epsilon}, one can then establish the analogues of Theorems~\ref{LBembedding} (for lower Ahlfors-regular measures) and \ref{DOUBembedding} (for doubling measures) for the Haj\l asz--Besov spaces $N^s_{p,q}$ with the full range of $q\in(0,\infty]$,
but with the critical exponent $Q/s$ replaced by $Q/\varepsilon$,
where $\varepsilon\in(0,s)$ is any fixed number.
\end{remark}

\section{From Embeddings to The Lower Bound of   Measures}
\label{section:measequiv}

In this section, we show that  the
 Sobolev-type embedding properties in any of (b)-(e)
of   Theorems \ref{LMeasINTCor}  and \ref{GlobalEmbeddCor-INT},  or (b)-(d)
of  Theorem  \ref{DoubMeasINTCor},
imply the lower bound condition of the measure in the items (a) of these theorems.
To this end, we first give some necessary tools in Subsection \ref{sssec:maintools},
 and  use them to prove our desired results, respectively,
 in Subsection~\ref{non-db} for non-doubling measures and Subsection~\ref{dbc} for doubling measures.

\subsection{Main Tools}
\label{sssec:maintools}

We begin with collecting a few technical results that were established in \cite{agh20} and \cite{AM15}.
Given the important role that these results play in the proofs of the main theorems in this article, we include their statements for the convenience of the reader. For now, the reader may skip to the next section and return to it as needed.

Let $\Omega\subset\mathbb{R}^n$ be an open connected set. For any $x\in\Omega$ and $r\in(0,\infty)$, it is always possible to find a radius $\tilde{r}<r$ such that
$\mathcal{L}^n(B(x,\tilde{r})\cap\Omega)=\frac{1}{2}\mathcal{L}^n(B(x,r)\cap\Omega)$, where $\mathcal{L}^n$ denotes the $n$-dimensional Lebesgue measure on $\mathbb{R}^n$. Such a radius $\tilde{r}$ still exists in \textit{geodesic} metric measure spaces $(X,d,\mu)$ (or, more generally, spaces where the boundary of balls have zero measure), where the natural condition that one now seeks is $\mu(B(x,\tilde{r}))=\frac{1}{2}\mu(B(x,r))$.
However, in a general quasi-metric measure space, there is
no guarantee that there exists a concentric ball with half of
the measure of the original ball. To overcome this, we consider the following suitable
replacement for $\tilde{r}$: given a quasi-metric measure space  $(X,\rho,\mu)$,
for any $x\in X$ and $r\in[0,\infty)$,  define
\begin{equation}
\label{radphi}
\mbox{$\vi^x_{\rho}(r):=\sup\lf\{s\in [0,r]:\, \mu(B_\rho(x,s))\leq\frac{1}{2}\mu(B_\rho(x,r))\r\}.$}
\end{equation}
Notice that, when $s=0$, $B_\rho(x,s)=\emptyset$ and so $\varphi^x_{\rho}(r)\geq 0$.


We now take a moment to list some basic properties of $\vi^x_{\rho}(r)$ in the lemma below. This result was established in \cite[Lemma 10]{agh20} in the context of metric measure spaces, but the proof in the quasi-metric setting is the same and,
therefore, is omitted.

\begin{lemma}
\label{Gds.24}
Let $(X,\rho,\mu)$ be a quasi-metric measure
space and fix $x\in X$ and $r\in[0,\infty)$. Then the following statements hold true.
\begin{enumerate}[label=(\roman*)]
\item[${\rm(i)}$] It holds true that
\begin{equation}
\label{LLk-3}
\mu\lf(B_\rho(x,\varphi^x_{\rho}(r))\r)\leq\frac{1}{2}\,\mu\lf(B_\rho(x,r)\r)
\leq\mu\lf(\overline{B}_\rho(x,\varphi^x_{\rho}(r))\r).
\end{equation}
\item[${\rm(ii)}$] It holds true that $\varphi^x_{\rho}(r)\in[0,r]$, and $\varphi^x_{\rho}(r)=r$ if and only $r=0$.
\end{enumerate}
\end{lemma}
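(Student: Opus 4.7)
The plan is to reduce both items to two standard monotone convergence properties of $\mu$ along families of balls, after carefully tracking how $B_\rho(x,\cdot)$ interacts with strict versus non-strict inequalities on the radius. Throughout, abbreviate $\varphi:=\varphi^x_\rho(r)$ and set $S_r:=\{s\in[0,r]:\mu(B_\rho(x,s))\leq\tfrac12\mu(B_\rho(x,r))\}$, so that $\varphi=\sup S_r$.

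For (ii), the bound $\varphi\in[0,r]$ is immediate from the definition. In the case $r=0$, one has $B_\rho(x,0)=\emptyset$, so $S_0=\{0\}$ and therefore $\varphi=0=r$. For the converse, I would argue by contradiction: suppose $r>0$ but $\varphi=r$. Either $r\in S_r$, giving $\mu(B_\rho(x,r))\leq\tfrac12\mu(B_\rho(x,r))$ and hence $\mu(B_\rho(x,r))=0$, contradicting the standing assumption that $\mu(B_\rho(x,r))\in(0,\infty)$; or there is a sequence $s_n\in S_r$ with $s_n\uparrow r$ strictly, in which case $B_\rho(x,s_n)\uparrow B_\rho(x,r)$ (since $\bigcup_n\{y:\rho(x,y)<s_n\}=\{y:\rho(x,y)<r\}$) and continuity of $\mu$ from below forces $\mu(B_\rho(x,r))\leq\tfrac12\mu(B_\rho(x,r))$, the same contradiction.

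For the upper bound in (i), I would split cases based on whether $\varphi\in S_r$. If the supremum is attained, the bound is the defining inequality. Otherwise, pick $s_n\in S_r$ with $s_n\uparrow\varphi$ strictly; then $B_\rho(x,s_n)\uparrow B_\rho(x,\varphi)$, and continuity of $\mu$ from below yields $\mu(B_\rho(x,\varphi))=\lim_n\mu(B_\rho(x,s_n))\leq\tfrac12\mu(B_\rho(x,r))$.

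For the lower bound in (i), the case $r=0$ is trivial since the left-hand side is $0$. For $r>0$, part (ii) gives $\varphi<r$, so I can choose $t_n\downarrow\varphi$ with $t_n\in(\varphi,r]$. Each such $t_n$ lies outside $S_r$, hence $\mu(B_\rho(x,t_n))>\tfrac12\mu(B_\rho(x,r))$. The key set-theoretic identity is
\begin{equation*}
\bigcap_{n\in\mathbb{N}}B_\rho(x,t_n)=\{y\in X:\rho(x,y)<t_n\text{ for all }n\}=\{y\in X:\rho(x,y)\leq\varphi\}=\overline{B}_\rho(x,\varphi),
\end{equation*}
which uses that the $t_n$ decrease strictly to $\varphi$. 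Since $\mu(B_\rho(x,t_1))<\infty$, continuity of $\mu$ from above applies and gives $\mu(\overline{B}_\rho(x,\varphi))=\lim_n\mu(B_\rho(x,t_n))\geq\tfrac12\mu(B_\rho(x,r))$. The only mild obstacle is being careful with the two set identities for increasing/decreasing radii, which rest precisely on the distinction between $<$ and $\leq$ in the definitions of $B_\rho(x,\cdot)$ and $\overline{B}_\rho(x,\cdot)$; no geometric structure beyond measurability and finiteness of balls is needed.
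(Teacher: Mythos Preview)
Your argument is correct. The paper itself omits the proof of this lemma, remarking that it was established in \cite[Lemma~10]{agh20} for metric measure spaces and that the quasi-metric case goes through identically; your proposal supplies precisely the expected monotone-convergence argument, and no alternative comparison is called for.
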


Recall that a quasi-metric space $(X,\rho)$ is said to be {\it uniformly} {\it perfect}
 if there exists a constant $\lambda\in(0,1)$ with the property that, for any $x\in X$ and  $r\in(0,\infty)$,
\begin{equation}
\label{U-perf}
B_\rho(x,r)\setminus B_\rho(x,\lambda r)\neq\emptyset\quad
\mbox{ whenever }\quad X\setminus B_\rho(x,r)\neq\emptyset.
\end{equation}
It follows immediately from \eqref{U-perf} that, if \eqref{U-perf} holds true for some $\lambda\in(0,1)$, then it also holds true for any $\lambda'\in(0,\lambda]$.
As such, in what follows, we will always assume that $0<\lambda<(C_\rho\widetilde{C}_\rho)^{-2}$, where $C_\rho,\,\widetilde{C}_\rho\in[1,\infty)$ are as in \eqref{C-RHO.111} and \eqref{C-RHO.111XXX}, respectively.

Moreover, if $(X,\rho)$ is uniformly perfect, then, for any quasi-metric $\varrho$ on $X$ with $\varrho\approx\rho$,
$(X,\varrho)$ is also uniformly perfect. More specifically, if $\lambda$ is as in \eqref{U-perf}
and there exists a constant
$c\in(1,\infty)$ such that $c^{-1}\varrho\leq\rho\leq c\varrho$ pointwise on $X\times X$, then $(X,\varrho)$ is uniformly perfect with constant $\widetilde{\lambda}:=c^{-2}\lambda\in(0,1)$.

\begin{lemma}\label{tx}
Let $(X,\rho,\mu)$ be a uniformly perfect quasi-metric measure space and suppose that $\lambda\in(0,(C_\rho\widetilde{C}_\rho)^{-2})\,$ is as in \eqref{U-perf}, where $C_\rho,\,\widetilde{C}_\rho\in[1,\infty)$ are as in \eqref{C-RHO.111} and \eqref{C-RHO.111XXX}, respectively. Then, for any  $x\in X$ and any finite $r\in (0,{\rm \diam}_\rho(X)]$ satisfying $r>C_\rho\varphi^x_{\rho}(r)/\lambda^2$,  there exists a point $\tilde{x}\in X$ and a radius $\tilde{r}\in(0,\infty)$ satisfying $\lambda r\leq\tilde{r}\leq\min\{r,C_\rho\varphi^{\tilde{x}}_{\rho}(\tilde{r})/\lambda^2\}$ such that $B_\rho(\tilde{x},\tilde{r})\subset B_\rho(x,r)$.
\end{lemma}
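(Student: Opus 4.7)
My plan is to build $\tilde{x}$ and $\tilde{r}$ in one shot: I will take $\tilde{r}:=\lambda C_\rho\widetilde{C}_\rho r$, locate $\tilde{x}$ via uniform perfectness applied at scale $\lambda r$, and then exploit the mass concentration forced by the hypothesis to deduce the required bound on $\varphi^{\tilde{x}}_\rho(\tilde{r})$. To begin, I would unpack the hypothesis: the inequality $r>C_\rho\varphi^x_\rho(r)/\lambda^2$ is equivalent to $\varphi^x_\rho(r)<\lambda^2 r/C_\rho$, so by the definition of $\varphi^x_\rho$ together with Lemma~\ref{Gds.24}(i) it forces
$$\mu\lf(B_\rho(x,\lambda^2 r/C_\rho)\r)>\tfrac12\,\mu(B_\rho(x,r)),$$
i.e., the bulk of the mass of $B_\rho(x,r)$ is captured by a very small concentric sub-ball around $x$.

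Next, using $r\leq{\rm diam}_\rho(X)$, the quasi-triangle and quasi-symmetry inequalities, and the bound $\lambda<(C_\rho\widetilde{C}_\rho)^{-2}<1/(C_\rho\widetilde{C}_\rho)$, I would check that $X\setminus B_\rho(x,\lambda r)\neq\emptyset$; otherwise every pair $u,v\in X$ would satisfy $\rho(u,v)\leq C_\rho\widetilde{C}_\rho\lambda r<r$, contradicting $r\leq{\rm diam}_\rho(X)$. Uniform perfectness \eqref{U-perf} applied at scale $\lambda r$ then produces a point $\tilde{x}\in B_\rho(x,\lambda r)\setminus B_\rho(x,\lambda^2 r)$, giving $\lambda^2 r\leq\rho(x,\tilde{x})<\lambda r$.

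With this choice, $\lambda r\leq\tilde{r}\leq r$ is immediate from $C_\rho,\widetilde{C}_\rho\geq 1$ and the bound on $\lambda$, and the inclusion $B_\rho(\tilde{x},\tilde{r})\subset B_\rho(x,r)$ follows from the quasi-triangle inequality together with $\lambda C_\rho^2\widetilde{C}_\rho<1$ (again supplied by $\lambda<(C_\rho\widetilde{C}_\rho)^{-2}$).

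The principal obstacle will be verifying the remaining inequality $\varphi^{\tilde{x}}_\rho(\tilde{r})\geq\lambda^2\tilde{r}/C_\rho$, which by the definition of $\varphi^{\tilde{x}}_\rho$ reduces to the measure estimate $\mu(B_\rho(\tilde{x},\lambda^2\tilde{r}/C_\rho))\leq\tfrac12\mu(B_\rho(\tilde{x},\tilde{r}))$. My plan is to establish, by quasi-triangle and quasi-symmetry estimates using the bounds $\lambda^2 r\leq\rho(x,\tilde{x})<\lambda r$, two geometric inclusions: first $B_\rho(x,\lambda^2 r/C_\rho)\subset B_\rho(\tilde{x},\tilde{r})$, and second $B_\rho(\tilde{x},\lambda^2\tilde{r}/C_\rho)\cap B_\rho(x,\lambda^2 r/C_\rho)=\emptyset$. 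Each of these reduces to an arithmetic comparison among products of $\lambda$, $C_\rho$, $\widetilde{C}_\rho$, with the strict bound $\lambda<(C_\rho\widetilde{C}_\rho)^{-2}$ supplying exactly the slack needed to make these two inclusions compatible with the single choice $\tilde{r}=\lambda C_\rho\widetilde{C}_\rho r$. From the first inclusion and the mass concentration above I would deduce
$$\mu(B_\rho(\tilde{x},\tilde{r}))\geq\mu(B_\rho(x,\lambda^2 r/C_\rho))>\tfrac12\mu(B_\rho(x,r))\geq\tfrac12\mu(B_\rho(\tilde{x},\tilde{r})),$$
so $\mu(B_\rho(x,\lambda^2 r/C_\rho))$ accounts for strictly more than half of $\mu(B_\rho(\tilde{x},\tilde{r}))$; combining the two inclusions would then yield
$$\mu(B_\rho(\tilde{x},\lambda^2\tilde{r}/C_\rho))\leq\mu(B_\rho(\tilde{x},\tilde{r}))-\mu(B_\rho(x,\lambda^2 r/C_\rho))<\tfrac12\mu(B_\rho(\tilde{x},\tilde{r})),$$
which is exactly the desired condition on $\varphi^{\tilde{x}}_\rho(\tilde{r})$.
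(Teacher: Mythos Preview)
Your proposal is correct and follows essentially the same approach as the paper: you choose the identical $\tilde{x}\in B_\rho(x,\lambda r)\setminus B_\rho(x,\lambda^2 r)$ and $\tilde{r}=C_\rho\widetilde{C}_\rho\lambda r$, and you verify the bound on $\varphi^{\tilde{x}}_\rho(\tilde{r})$ by the same disjoint-subsets-measure-comparison trick. The only cosmetic difference is in the auxiliary balls: the paper uses $\overline{B}_\rho(x,\varphi^x_\rho(r))$ as the ``heavy'' subset and $B_\rho(\tilde{x},(C_\rho\widetilde{C}_\rho)^{-2}\lambda\tilde{r})$ as the disjoint piece (obtaining first $\varphi^{\tilde{x}}_\rho(\tilde{r})\geq(C_\rho\widetilde{C}_\rho)^{-2}\lambda\tilde{r}$ and then weakening), whereas you use $B_\rho(x,\lambda^2 r/C_\rho)$ and $B_\rho(\tilde{x},\lambda^2\tilde{r}/C_\rho)$ to land directly on the target inequality---arguably a slightly more streamlined bookkeeping, but the same argument.
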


\begin{proof}
Fix a point $x\in X$ and a finite number $r\in (0,{\rm \diam}_\rho(X)]$
satisfying $r>C_\rho\varphi^x_{\rho}(r)/\lambda^2$.

To find the point $\tilde{x}$, we first claim that $X\setminus B_\rho(x,\lambda r)\neq\emptyset$.
Indeed, if we had $B_\rho(x,\lambda r)=X$,
then, for any $z,\,y\in X$,
$$
\rho(z,y)\leq C_\rho\max\big\{\widetilde{C}_\rho\rho(x,z),\rho(x,y)\big\}<C_\rho\widetilde{C}_\rho\lambda r,
$$
from which it follows that ${\rm diam}_\rho(X)\leq C_\rho\widetilde{C}_\rho\lambda r$. Combining this,
$\lambda\in(0,(C_\rho\widetilde{C}_\rho)^{-2})$,  the fact that $r\leq{\rm \diam}_\rho(X)$, and
$C_\rho,\,\widetilde{C}_\rho\in[1,\infty)$, we find that
$$
{\rm diam}_\rho(X)\leq C_\rho\widetilde{C}_\rho\lambda r<\big(C_\rho\widetilde{C}_\rho\big)^{-1}{\rm \diam}_\rho(X)\leq{\rm \diam}_\rho(X),
$$
which is a contradiction. This proves the above claim that
$X\setminus B_\rho(x,\lambda r)\neq\emptyset$.

From the above claim  and that  $X$ is uniformly perfect, we
deduce that $ B_\rho(x,\lambda r)\setminus B_\rho(x,\lambda^2 r)\neq\emptyset.$
Now we choose a point $\tilde{x}\in B_\rho(x,\lambda r)\setminus B_\rho(x,\lambda^2 r)$ and let
$\tilde{r}:=C_\rho\widetilde{C}_\rho\lambda r$. Since $C_\rho\widetilde{C}_\rho\geq1$
and $\lambda<(C_\rho\widetilde{C}_\rho)^{-2}$, it is straightforward to show that
\begin{equation*}
\lambda r\leq\tilde{r}=C_\rho\widetilde{C}_\rho\lambda r<(C_\rho\widetilde{C}_\rho)^{-1}r\leq r.
\end{equation*}

Next we prove
\begin{equation}
\label{dx.1}
\overline{B}_\rho\lf(x,\varphi^x_{\rho}(r)\r)\subset B_\rho(\tilde{x},\tilde{r})\subset B_\rho(x,r)
\quad
\text{and}
\quad
B_\rho\lf(\tilde{x},(C_\rho\widetilde{C}_\rho)^{-2}\lambda \tilde{r}\r)\subset B_\rho(\tilde{x},\tilde{r})\setminus
\overline{B}_\rho\lf(x,\varphi^x_{\rho}(r)\r).
\end{equation}
To this end, we first observe that,
if $z\in \overline{B}_\rho(x,\varphi^x_{\rho}(r))$, then, by the choice of $\tilde{x}$ and
the facts that $r>C_\rho\varphi^x_{\rho}(r)/\lambda^2$ and $\lambda/C_\rho<1$,  one has
\begin{equation*}
\begin{split}
\rho(\tilde{x},z)&\leq C_\rho\max\lf\{\rho(\tilde{x},x),\rho(x,z)\r\}
\le C_\rho\max\lf\{\widetilde{C}_\rho \rho(x,\tilde{x}) ,\varphi^x_{\rho}(r)\r\}
<C_\rho\max\lf\{\widetilde{C}_\rho\lambda r,\varphi^x_{\rho}(r)\r\}\\
&<C_\rho\widetilde{C}_\rho\max\lf\{\lambda r,\lambda^2r/C_\rho\r\}
=C_\rho\widetilde{C}_\rho\lambda r=\tilde{r},
\end{split}
\end{equation*}
which implies the first inclusion $\overline{B}_\rho(x,\varphi^x_{\rho}(r))
\subset B_\rho(\tilde{x},\tilde{r})$ in \eqref{dx.1}.
The second inclusion $B_\rho(\tilde{x},\tilde{r})\subset B_\rho(x,r)$ in \eqref{dx.1}
is an immediate consequence of the observation that, for any $z\in B_\rho(\tilde{x},\tilde{r})$,
\begin{equation*}
\begin{split}
\rho(x,z)&\leq C_\rho\max\big\{\rho(x,\tilde{x}),\rho(\tilde{x},z)\big\}
< C_\rho\max\big\{\lambda r,\tilde{r}\big\}\\
&=C_\rho\tilde{r}=C_\rho^2\widetilde{C}_\rho\lambda r<r.
\end{split}
\end{equation*}
To finish the proof of \eqref{dx.1},
we need to prove
$B_\rho(\tilde{x},(C_\rho\widetilde{C}_\rho)^{-2}\lambda \tilde{r})
\subset B_\rho(\tilde{x},\tilde{r})\setminus\overline{B}_\rho(x,\varphi^x_{\rho}(r))$.
Since $(C_\rho\widetilde{C}_\rho)^{-2}\lambda<1$, we have
$B_\rho(\tilde{x},(C_\rho\widetilde{C}_\rho)^{-2}\lambda \tilde{r})\subset B_\rho(\tilde{x},\tilde{r})$.
Fix $z\in B_\rho(\tilde{x},(C_\rho\widetilde{C}_\rho)^{-2}\lambda \tilde{r})$.
Observe that, by the choices of $\tilde{x}$  and $\tilde{r}$, one has
\begin{equation*}
\begin{split}
\lambda^2 r\leq \rho(x,\tilde{x})
&\leq C_\rho\max\lf\{\rho(x,z),\rho(z,\tilde{x})\r\}\leq C_\rho\max\lf\{\rho(x,z), \widetilde{C}_\rho\rho(\tilde{x},z)\r\}\\
&< C_\rho\max\lf\{\rho(x,z),\widetilde{C}_\rho(C_\rho\widetilde{C}_\rho)^{-2}\lambda \tilde{r}\r\}= \max\lf\{C_\rho\rho(x,z),\lambda^2 r\r\},
\end{split}
\end{equation*}
which implies that $\lambda^2 r<C_\rho\rho(x,z).$
Hence, $\rho(x,z)>\lambda^2 r/C_\rho> \varphi^x_{\rho}(r)$, which   implies the desired inclusion
$B_\rho(\tilde{x},(C_\rho\widetilde{C}_\rho)^{-2}\lambda \tilde{r})\subset B_\rho(\tilde{x},\tilde{r})\setminus \overline{B}_\rho(x,\varphi^x_{\rho}(r)).
$
This finishes the proof of \eqref{dx.1}.

To complete the proof of the present lemma, it suffices to show $\tilde{r}\leq  C_\rho\varphi^{\tilde{x}}_{\rho}(\tilde{r})/\lambda^2$. 	
It follows from \eqref{dx.1} that $\overline{B}_\rho(x,\varphi^x_{\rho}(r))$ and $B_\rho(\tilde{x},(C_\rho\widetilde{C}_\rho)^{-2}\lambda \tilde{r})$ are disjoint subsets of $B_\rho(\tilde{x},\tilde{r})$ and
\begin{align}
\label{rn-2}
\mu\lf(B_\rho\lf(\tilde{x},(C_\rho\widetilde{C}_\rho)^{-2}\lambda \tilde{r}\r)\r)
&=\frac{1}{2}\lf[\mu\lf(B_\rho\lf(\tilde{x},(C_\rho\widetilde{C}_\rho)^{-2}\lambda \tilde{r}\r)\r)+\mu\lf(B_\rho(\tilde{x},\lf(C_\rho\widetilde{C}_\rho)^{-2}\lambda \tilde{r}\r)\r)\r]\nonumber\\
&\leq\frac{1}{2}\lf[\mu\lf(B_\rho(x,r)\setminus\overline{B}_\rho\lf(x,\varphi^x_{\rho}(r)\r)\r)
+\mu\lf(B_\rho\lf(\tilde{x},(C_\rho\widetilde{C}_\rho)^{-2}\lambda \tilde{r}\r)\r)\r]\nonumber\\
&\leq\frac{1}{2}\lf[\mu\lf(\overline{B}_\rho\lf(x,\varphi^x_{\rho}(r)\r)\r)
+\mu\lf(B_\rho\lf(\tilde{x},(C_\rho\widetilde{C}_\rho)^{-2}\lambda \tilde{r}\r)\r)\r]\nonumber\\
&
\leq\frac{1}{2}\,\mu\lf(B_\rho(\tilde{x},\tilde{r})\r),
\end{align}
where, in obtaining the second inequality of \eqref{rn-2}, we have used the	second inequality of
\eqref{LLk-3}.  From  \eqref{rn-2} and \eqref{radphi}, it follows that $\varphi^{\tilde{x}}_{\rho}(\tilde{r})\geq (C_\rho\widetilde{C}_\rho)^{-2}\lambda \tilde{r}$, and hence
$$\tilde{r}\leq (C_\rho\widetilde{C}_\rho)^{2}\varphi^{\tilde{x}}_{\rho}(\tilde{r})/\lambda\leq C_\rho\varphi^{\tilde{x}}_{\rho}(\tilde{r})/\lambda^2,$$
due to  $(C_\rho\widetilde{C}_\rho)^{2}<1/\lambda\leq C_\rho/\lambda$. This finishes the proof   of Lemma \ref{tx}.
\end{proof}

The following lemma contains, among other things, a generalization of \cite[Lemma~12]{agh20} to the setting of quasi-metric measure spaces. Using Lemma~\ref{tx} instead of \cite[Lemma 11]{agh20}, its proof is a
straightforward modification of \cite[Lemma~12]{agh20},
as the argument therein is independent of the fact that $\rho$ is a genuine metric. We omit the details.

\begin{lemma}
\label{en2-4}
Let $(X,\rho,\mu)$ be a uniformly perfect quasi-metric measure space and  $Q\in (0,\infty)$.
Let $\lambda\in(0,(C_\rho\widetilde{C}_\rho)^{-2})$ be as in \eqref{U-perf}. Then the following statements are valid.
\begin{enumerate}
\item[${\rm(i)}$] Assume that there exists a positive constant $C$ such that $\mu(B_\rho(x,r))\geq Cr^Q$
whenever $x\in X$ and $r\in(0,{\rm diam}_\rho(X)]$ satisfy $r\leq C_\rho\varphi^x_{\rho}(r)/\lambda^2$. Then,
for any $x\in X$ and any finite $r\in(0,{\rm diam}_\rho(X)]$,
$\mu(B_\rho(x,r))\geq \widetilde{C}r^Q$, where $\widetilde{C}:=C\lambda^{Q}$.
		
\item[${\rm(ii)}$]  Assume that there exists a positive constant $C$ such that,
for any $x,\,y\in X$ and $0<r\leq R<\infty$ satisfying $B_\rho(x,r)\subset B_\rho(y,R)$
and  $r\leq C_\rho\varphi^x_{\rho}(r)/\lambda^2$,  it holds true that
\begin{equation}
\label{eq48}
\frac{\mu(B_\rho(x,r))}{\mu(B_\rho(y,R))}\geq C\left(\frac{r}{R}\right)^Q.
\end{equation}
Then, for any $x,\,y\in X$ and $0<r\leq R<\infty$ satisfying $B_\rho(x,r)\subset B_\rho(y,R)$,
\begin{equation}
\label{eq49}
\frac{\mu(B_\rho(x,r))}{\mu(B_\rho(y,R))}\geq \widetilde{C}\left(\frac{r}{R}\right)^Q,
\end{equation}
where $\widetilde{C}:=C\lambda^{Q}$.
\end{enumerate}
\end{lemma}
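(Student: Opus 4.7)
The plan is to execute a dichotomy on the radius according to whether the condition $r\leq C_\rho\varphi^x_\rho(r)/\lambda^2$ appearing in the hypothesis is already satisfied. When it is, the conclusion is immediate from the hypothesis; when it is not, Lemma~\ref{tx} produces a sub-ball on which the condition does hold, and monotonicity of $\mu$ transfers the resulting estimate back to the original ball, at the cost of a factor $\lambda^Q$ in the constant.

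For item (i), fix $x\in X$ and a finite $r\in(0,{\rm diam}_\rho(X)]$. If $r\leq C_\rho\varphi^x_\rho(r)/\lambda^2$, then by hypothesis $\mu(B_\rho(x,r))\geq Cr^Q\geq \widetilde{C}r^Q$, since $\lambda<1$. Otherwise, invoke Lemma~\ref{tx} to obtain $\tilde{x}\in X$ and $\tilde{r}\in[\lambda r,r]$ such that $B_\rho(\tilde{x},\tilde{r})\subset B_\rho(x,r)$ and $\tilde{r}\leq C_\rho\varphi^{\tilde{x}}_\rho(\tilde{r})/\lambda^2$. Since $\tilde{r}\leq r\leq {\rm diam}_\rho(X)$, the hypothesis applies to this sub-ball, whence
$$
\mu(B_\rho(x,r))\geq \mu(B_\rho(\tilde{x},\tilde{r}))\geq C\tilde{r}^Q\geq C(\lambda r)^Q=\widetilde{C}r^Q.
$$

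Item (ii) is handled by the same template. Fix $x,\,y\in X$ and $0<r\leq R<\infty$ with $B_\rho(x,r)\subset B_\rho(y,R)$. If $r\leq C_\rho\varphi^x_\rho(r)/\lambda^2$, the desired estimate is exactly the hypothesis. Otherwise (reducing first, if necessary, to $r\leq {\rm diam}_\rho(X)$, since for $r$ exceeding the diameter both balls collapse to $X$ and the inequality is trivial modulo assuming $C\leq 1$), Lemma~\ref{tx} produces $\tilde{x}\in X$ and $\tilde{r}\in[\lambda r,r]$ with $B_\rho(\tilde{x},\tilde{r})\subset B_\rho(x,r)\subset B_\rho(y,R)$ and $\tilde{r}\leq C_\rho\varphi^{\tilde{x}}_\rho(\tilde{r})/\lambda^2$. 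Applying the hypothesis to the pair of balls $B_\rho(\tilde{x},\tilde{r})\subset B_\rho(y,R)$ and using the monotonicity $\mu(B_\rho(x,r))\geq \mu(B_\rho(\tilde{x},\tilde{r}))$ gives
$$
\frac{\mu(B_\rho(x,r))}{\mu(B_\rho(y,R))}\geq C\left(\frac{\tilde{r}}{R}\right)^Q\geq C\lambda^Q\left(\frac{r}{R}\right)^Q=\widetilde{C}\left(\frac{r}{R}\right)^Q.
$$

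All of the geometric content has been concentrated in Lemma~\ref{tx}: the existence of a comparable sub-ball satisfying $\tilde{r}\leq C_\rho\varphi^{\tilde{x}}_\rho(\tilde{r})/\lambda^2$ inside any ball failing this condition. Beyond this, only routine bookkeeping is required, consistent with the authors' remark that the proof is a straightforward modification of the metric case in \cite{agh20}. The loss of $\lambda^Q$ in the constant directly reflects the radius ratio $\tilde{r}/r\geq \lambda$ supplied by Lemma~\ref{tx}, so this factor is not improvable by the present method.
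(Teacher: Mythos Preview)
Your proof is correct and follows precisely the approach the paper indicates: use Lemma~\ref{tx} to pass to a sub-ball where the hypothesis applies, then transfer the estimate back via monotonicity at the cost of $\lambda^Q$. Since the paper omits the details and simply points to Lemma~\ref{tx} as the replacement for \cite[Lemma~11]{agh20}, your argument is exactly what was intended; the only minor remark is that in part~(ii) the reduction to $r\le{\rm diam}_\rho(X)$ can be made cleaner by noting that one may always assume $C\le 1$ without loss of generality (replacing $C$ by $\min\{C,1\}$ preserves the hypothesis).
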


Before proceeding, we take a moment to recall the smoothness class
of H\"older functions  in the context of quasi-metric spaces.
Let $(X,\rho)$ be a quasi-metric space and fix $\alpha\in(0,\infty)$.
The {\it homogeneous} {\it H\"older} {\it space} (of order $\alpha$) $\dot{\mathscr{C}}^\alpha(X,\rho)$ is defined
to be the collection of all functions $f\colon X\to\mathbb{R}$ such that the following semi-norm
\begin{eqnarray}\label{Hol.T2}
\|f\|_{\dot{\mathscr{C}}^\alpha(X,\rho)}:=
\sup_{x,\,y\in X,\ x\not=y}\frac{|f(x)-f(y)|}{\rho(x,y)^\alpha}<\fz.
\end{eqnarray}
Define an equivalence relation, $\sim$, on $\dot{\mathscr{C}}^\alpha(X,\rho)$,
by $f\sim g$ if and only if $f-g$ is a constant function on $X$.
Then $\dot{\mathscr{C}}^\alpha(X,\rho)/\sim$
is a Banach space   equipped with
the norm $\|\cdot\|_{\dot{\mathscr{C}}^\alpha(X,\rho)}$.
It is also  easy to prove that,   when $\rho$ and $\varrho$ are two quasi-metrics on $X$ satisfying $\rho\approx\varrho$,
then $\dot{\mathscr{C}}^\alpha(X,\rho)=\dot{\mathscr{C}}^\alpha(X,\varrho)$ with equivalent semi-norms.
As a notational convention,   we  write
$${\rm Lip}(X,\rho):=\dot{\mathscr{C}}^1(X,\rho).$$

In order to prove that the lower bound condition of measures follows from
the Sobolev-type embedding properties of the spaces $\dot{M}^s_{p,q}$ and $\dot{N}^s_{p,q}$,
we need to construct a suitable family of bump functions that belong to $\dot{M}^s_{p,q}$ and $\dot{N}^s_{p,q}$,
and approximate arbitrarily well the characteristic function of a given ball.
Recall that \cite[Lemma~13]{agh20}   constructed such bump
functions that are Lipschitz and belong to the  spaces $M^{1,p}$. However,
in a general quasi-metric space, there is no guarantee that nonconstant Lipschitz
functions exist. From this perspective, we need to work with the more
general scale of H\"older continuous and, as it turns out, the
maximal amount of smoothness  (measured here on the H\"older scale)
that such a bump function can possess is intimately linked to
the geometry of the underlying space. More specifically, we have the following lemma.

\begin{lemma}
\label{GVa2}
Suppose $(X,\rho,\mu)$ is a quasi-metric measure space
and $\varrho$ is any quasi-metric on $X$ such that $\varrho\approx\rho$.
Let $C_\varrho\in[1,\infty)$ be as in \eqref{C-RHO.111} and
fix a finite number $\alpha\in(0,(\log_{2}C_\varrho)^{-1}]$, along with the
parameters $p\in(0,\infty)$, $q\in(0,\infty]$, and  $s\in(0,\alpha]$,
where the value  $s=\alpha$ is only permissible when $q=\infty$.
Finally, let $\varrho_{\#}$ be the regularized quasi-metric given by Theorem~\ref{DST1}, and suppose $x\in X$ and  $0\leq r<R<\infty$.
Then there exists a function $\Phi_{r,R}\in\dot{\mathscr{C}}^\alpha(X,\varrho_{\#})$ such that
\begin{equation}\label{PMab5}
0\leq\Phi_{r,R}\leq 1\,\,\mbox{ on }\,\,X,\quad
\Phi_{r,R}\equiv 0\,\,\mbox{ on }\,\,X\setminus B_{\varrho_{\#}}(x,R),\quad
\Phi_{r,R}\equiv 1\,\,\mbox{ on }\,\,\overline{B}_{\varrho_{\#}}(x,r).
\end{equation}
Moreover, $\Phi_{r,R}$ belongs to both $M^s_{p,q}(X,\rho,\mu)$ and $N^s_{p,q}(X,\rho,\mu)$,
and there holds true %
\begin{equation}
\label{gradest}
0<\Vert\Phi_{r,R}\Vert_{\dot{M}^s_{p,q}(X)}\leq C\lf(R^\alpha-r^\alpha\r)^{-s/\alpha}\lf[\mu(B_{\varrho_{\#}}(x,R))\r]^{1/p},
\end{equation}
and
\begin{equation}
\label{gradest2}
0<\Vert\Phi_{r,R}\Vert_{\dot{N}^s_{p,q}(X)}\leq C\lf(R^\alpha-r^\alpha\r)^{-s/\alpha}\lf[\mu(B_{\varrho_{\#}}(x,R))\r]^{1/p},
\end{equation}
where  $C$ is a positive constant depending only on $s$, $q$, $\alpha$, $\varrho$, and $\rho$.
\end{lemma}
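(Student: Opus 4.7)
The plan is to define $\Phi_{r,R}$ explicitly in terms of the regularized quasi-metric $\varrho_\#$ provided by Theorem~\ref{DST1}, and then to verify each of the stated properties. Motivated by the fact that $\varrho_\#^\alpha$ is a genuine metric on $X$ for the admissible range of $\alpha$, I would set
\begin{equation*}
\Phi_{r,R}(y):=\max\left\{0,\min\left\{1,\frac{R^\alpha-\varrho_\#(x,y)^\alpha}{R^\alpha-r^\alpha}\right\}\right\},\qquad y\in X.
\end{equation*}
The properties in \eqref{PMab5} are then immediate. Since $y\mapsto\varrho_\#(x,y)^\alpha$ is $1$-Lipschitz with respect to the metric $\varrho_\#^\alpha$ (by the reverse triangle inequality for that metric), and the truncation $t\mapsto\max\{0,\min\{1,t\}\}$ is $1$-Lipschitz on $\mathbb{R}$, one obtains
\begin{equation*}
|\Phi_{r,R}(y)-\Phi_{r,R}(z)|\leq(R^\alpha-r^\alpha)^{-1}\varrho_\#(y,z)^\alpha,\qquad\forall\,y,\,z\in X,
\end{equation*}
so $\Phi_{r,R}\in\dot{\mathscr{C}}^\alpha(X,\varrho_\#)$. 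Combining this with the equivalence $\varrho_\#\approx\varrho\approx\rho$ (from Theorem~\ref{DST1} together with the hypothesis $\varrho\approx\rho$) yields a positive constant $H\lesssim(R^\alpha-r^\alpha)^{-1}$ such that $|\Phi_{r,R}(y)-\Phi_{r,R}(z)|\leq H\rho(y,z)^\alpha$ for every $y,z\in X$.

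Next I would exhibit a fractional $s$-gradient of $\Phi_{r,R}$ adapted simultaneously to the Hölder estimate and to the trivial bound $|\Phi_{r,R}|\leq 1$. Set
\begin{equation*}
g_k(y):=c_0\,\min\left\{H\cdot 2^{-k(\alpha-s)},\,2^{ks}\right\}\,\mathbf{1}_{B_{\varrho_\#}(x,R)}(y),\qquad k\in\mathbb{Z},\ y\in X,
\end{equation*}
where $c_0$ is a structural constant depending only on $s,\,\alpha,\,\rho,\,\varrho$ to be fixed. To check $\{g_k\}_{k\in\mathbb{Z}}\in\mathbb{D}_\rho^s(\Phi_{r,R})$, fix $y,z\in X$ with $2^{-k-1}\leq\rho(y,z)<2^{-k}$. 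The case in which both $y,z$ lie outside $B_{\varrho_\#}(x,R)$ is trivial. When both $y,z\in B_{\varrho_\#}(x,R)$, I would distinguish the regime $H\rho(y,z)^\alpha\leq 1$ (use the Hölder bound together with $\rho(y,z)^{\alpha-s}\leq 2^{-k(\alpha-s)}$; this matches the first branch of the minimum) from the regime $H\rho(y,z)^\alpha>1$ (use the trivial bound $|\Phi_{r,R}(y)-\Phi_{r,R}(z)|\leq 2$; in this regime necessarily $k<\alpha^{-1}\log_2 H$, so the second branch of the minimum is active and the inequality persists). In the remaining case, say $y\in B_{\varrho_\#}(x,R)$ and $z\notin B_{\varrho_\#}(x,R)$, the crucial observation is that $\varrho_\#(x,z)\geq R$, so the triangle inequality for $\varrho_\#^\alpha$ yields $R^\alpha-\varrho_\#(x,y)^\alpha\leq\varrho_\#(y,z)^\alpha$ and therefore $\Phi_{r,R}(y)\lesssim H\rho(y,z)^\alpha$; since $g_k(z)=0$ here, the same dichotomy closes the verification once $c_0$ has been chosen large enough.

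Finally, I would estimate the norms. The map $k\mapsto\min\{H\cdot 2^{-k(\alpha-s)},2^{ks}\}$ peaks at $k^\ast\approx\alpha^{-1}\log_2 H$ with peak value $H^{s/\alpha}$, and decays geometrically in both directions provided $s<\alpha$; at the endpoint $s=\alpha$ (permissible only for $q=\infty$) the sequence stabilizes at the constant value $H$ for $k\geq k^\ast$, but $\ell^\infty$-summability still suffices. Hence, uniformly in $y\in B_{\varrho_\#}(x,R)$, one obtains $\|\{g_k(y)\}_{k\in\mathbb{Z}}\|_{\ell^q}\lesssim H^{s/\alpha}$, and, since $g_k$ is a constant multiple of $\mathbf{1}_{B_{\varrho_\#}(x,R)}$, the same bound holds for $\{\|g_k\|_{L^p(X)}\}_{k\in\mathbb{Z}}$ in $\ell^q$. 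Both norms are thereby controlled by $H^{s/\alpha}[\mu(B_{\varrho_\#}(x,R))]^{1/p}\approx(R^\alpha-r^\alpha)^{-s/\alpha}[\mu(B_{\varrho_\#}(x,R))]^{1/p}$, yielding \eqref{gradest} and \eqref{gradest2}. The condition $\Phi_{r,R}\in L^p(X)$ is immediate from $|\Phi_{r,R}|\leq\mathbf{1}_{B_{\varrho_\#}(x,R)}$, and the strict positivity of $\|\Phi_{r,R}\|_{\dot{M}^s_{p,q}(X)}$ and its Besov counterpart follows from Proposition~\ref{constant}, since $\Phi_{r,R}$ equals $1$ on $\overline{B}_{\varrho_\#}(x,r)$ and $0$ outside $B_{\varrho_\#}(x,R)$ and is therefore not constant $\mu$-almost everywhere.

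The main obstacle will be the bookkeeping in the gradient verification, namely, arranging that a single sequence $\{g_k\}$ accommodates the Hölder regime at small scales and the trivial $L^\infty$ regime at scales beyond the support, and that the balance between the two branches of the minimum produces precisely the exponent $s/\alpha$ (rather than $s$) in the final estimate. The fact that $\varrho_\#^\alpha$ is a genuine metric is what permits extending the Hölder-based bound across the boundary of the support in the case analysis, which is also where the restriction $\alpha\leq(\log_2 C_\varrho)^{-1}$ enters in an essential way.
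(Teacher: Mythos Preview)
Your proposal is correct and follows essentially the same route as the paper. The paper defines $\Phi_{r,R}$ by the identical formula, records the same H\"older bound $\|\Phi_{r,R}\|_{\dot{\mathscr{C}}^\alpha(X,\varrho_\#)}\leq (R^\alpha-r^\alpha)^{-1}$, and then builds a fractional $s$-gradient by an explicit case split at an index $k_0$ with $2^{k_0}\approx\|\Phi_{r,R}\|_{\dot{\mathscr{C}}^\alpha}^{1/\alpha}$: for $k\geq k_0$ it takes $g_k=c^\alpha\|\Phi_{r,R}\|_{\dot{\mathscr{C}}^\alpha}2^{-k(\alpha-s)}\mathbf{1}_{B_{\varrho_\#}(x,R)}$, and for $k<k_0$ it takes $g_k=2\cdot 2^{(k+1)s}\mathbf{1}_{B_{\varrho_\#}(x,R)}$. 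Your single formula $g_k=c_0\min\{H\,2^{-k(\alpha-s)},\,2^{ks}\}\mathbf{1}_{B_{\varrho_\#}(x,R)}$ is a compact repackaging of exactly this split (the crossover of the minimum occurs at $k^\ast\approx\alpha^{-1}\log_2 H$, which is the paper's $k_0$), and the $\ell^q$ summation and the positivity via Proposition~\ref{constant} proceed identically.

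Two small remarks. First, your dichotomy on $H\rho(y,z)^\alpha\lessgtr 1$ does not line up perfectly with which branch of the minimum is active (there is a one-index overlap near $k^\ast$); the cleanest fix is to argue directly from $|\Phi_{r,R}(y)-\Phi_{r,R}(z)|\leq\min\{H\rho(y,z)^\alpha,\,2\}$ and split instead on whether $H\leq 2^{k\alpha}$ or $H>2^{k\alpha}$, which makes the choice of $c_0$ transparent. Second, your separate cross-boundary argument via the triangle inequality for $\varrho_\#^\alpha$ is correct but unnecessary: the H\"older estimate $|\Phi_{r,R}(y)-\Phi_{r,R}(z)|\leq H\rho(y,z)^\alpha$ already holds for all $y,z\in X$, so the paper simply uses the single available term $g_k(z)$ when $z$ lies in the ball, exactly as in the interior case.
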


\begin{remark}
Lemma~\ref{GVa2} improves \cite[Lemma~13]{agh20} even in the metric setting. Indeed, for the metric space $(\mathbb{R},|\cdot-\cdot|^{1/2})$, Lemma~\ref{GVa2} guarantees the existence of a family of H\"older continuous bump functions of any order $\alpha\in(0,2)$, where as \cite[Lemma~13]{agh20} can only yield Lipschitz functions.
\end{remark}

\begin{remark}
\label{trivpoin}
It follows from Lemma~\ref{GVa2} that $\dot{M}^s_{p,q}(X)$ and
$\dot{N}^s_{p,q}(X)$ contain plenty of nonconstant
functions whenever $p\in(0,\infty)$, $q\in(0,\infty]$, and
$s\in(0,\infty)$ satisfies $s\preceq_q{\rm ind}\,(X,\rho)$. In particular,  we have that ${\rm ind}\,(X,\rho)=1$ if $(X,\rho,\mu)$ is a metric measure space that supports a weak $(1, p)$-Poincar\'e inequality with $p>1$ and $\mu$ is a doubling measure. Indeed, if ${\rm ind}\,(X,\rho)>1$ then  Lemma~\ref{GVa2} implies that $\dot{M}^1_{p,q}(X)$ and
$\dot{N}^1_{p,q}(X)$ contain nonconstant functions for any $q\in(0,\infty]$ which would contradict \cite[Theorem~4.1]{GKZ13}.
\end{remark}

\begin{proof}[Proof of Lemma \ref{GVa2}]
By Theorem~\ref{DST1},
we find that the function $(\varrho_{\#})^\alpha\colon X\times X\to[0,\infty)$ is a genuine metric on $X$.
From this and a straightforward computation, it follows that the function $\Phi_{r,R}\colon X\to\mathbb{R}$, defined by
setting, for any $y\in X$,
\begin{eqnarray*}
\Phi_{r,R}(y):=
\left\{
\begin{array}{ll}
 1\qquad\,\,&\mbox{if\,\, $y\in \overline{B}_{\varrho_{\#}}(x,r)$,}\\
\displaystyle\frac{R^\alpha-[\varrho_{\#}(x,y)]^\alpha}{R^\alpha-r^\alpha} &\mbox{if\,\, $y\in B_{\varrho_{\#}}(x,R)\setminus \overline{B}_{\rho_{\#}}(x,r)$,}\\
0 &\mbox{if\,\,  $y\in X\setminus B_{\varrho_{\#}}(x,R)$,}
\end{array}
\right.
\end{eqnarray*}
is a Lipschitz function in ${\rm Lip}(X,(\varrho_{\#})^\alpha)$ (hence, also, $\mu$-measurable
and in $\dot{\mathscr{C}}^\alpha(X,\varrho_{\#})$) that satisfies the properties listed in \eqref{PMab5} as well as the estimate
\begin{equation}
\label{PMab6}
0<\|\Phi_{r,R}\|_{\dot{\mathscr{C}}^\alpha(X,\varrho_{\#})}=
\|\Phi_{r,R}\|_{{\rm Lip}(X,(\varrho_{\#})^\alpha)}\leq\lf(R^\alpha-r^\alpha\r)^{-1}.
\end{equation}

Fix $s\in(0,\alpha]$, $p\in(0,\infty)$, and $q\in(0,\infty]$. We now turn to construct
a fractional  $s$-gradient of $\Phi_{r,R}$. To this end,
let $k_0\in\mathbb{Z}$ be the unique integer such that
\begin{equation}
\label{VN-2}
2^{k_0-1}\leq\|\Phi_{r,R}\|^{1/\alpha}_{\dot{\mathscr{C}}^\alpha(X,\varrho_{\#})}<2^{k_0}.
\end{equation}
Since  $\varrho_{\#}\approx\varrho$ (see Theorem~\ref{DST1})
and $\varrho\approx\rho$, it follows that there exists a constant $c\in(1,\infty)$ such that
\begin{eqnarray}
\label{VN-2-X}
c^{-1}\rho(z,y)\leq\varrho_{\#}(z,y)\leq c\,\rho(z,y),\quad\forall\,z,\,y\in X.
\end{eqnarray}
Now, for any  $k\in\mathbb{Z}$ and $y\in X$, define
\begin{eqnarray*}
g_k(y):=\left\{
\begin{array}{ll}
2^{-k(\alpha-s)}c^\alpha\,\|\Phi_{r,R}\|_{\dot{\mathscr{C}}^\alpha(X,\varrho_{\#})}\,{\bf 1}_{B_{\varrho_{\#}}(x,R)}(y)\quad&\mbox{if}\,\,\,k\geq k_0,
\\[6pt]
2^{(k+1)s+1}\,{\bf 1}_{B_{\varrho_{\#}}(x,R)}(y)&\mbox{if}\,\,\,k<k_0.
\end{array}
\right.
\end{eqnarray*}
Clearly, for any $k\in\zz$,   $g_k$ is $\mu$-measurable,
because $\mu$ is a Borel measure and each $\varrho_{\#}$-ball is an open set (cf.~Theorem~\ref{DST1}).

Now we prove that $\vec g:=\{g_k\}_{k\in\zz}$ is a
fractional $s$-gradient of $\Phi_{r,R}$ (with respect to $\rho$),
namely,  $\vec{g}\in\mathbb{D}^s_\rho(\Phi_{r,R})$. To this
end, fix $k\in\mathbb{Z}$ and take $y,\,z\in X$ satisfying
that $2^{-k-1}\leq \rho(y,z)< 2^{-k}$.
We first consider the case when $k\geq k_0$.
If $y,\,z\in X\setminus B_{\varrho_{\#}}(x,R)$, then
$\Phi_{r,R}(y)=\Phi_{r,R}(z)=0$ and there is nothing to show.
If, on the other hand, $y\in X$ and $z\in B_{\varrho_{\#}}(x,R)$,
then, using the H\"older continuity of $\Phi_{r,R}$, the second
inequality in \eqref{VN-2-X}, and the fact that $s\leq\alpha$, we conclude that
\begin{equation*}
\begin{split}
|\Phi_{r,R}(y)-\Phi_{r,R}(z)|
&\leq [\varrho_{\#}(y,z)]^{\alpha}\|\Phi_{r,R}\|_{\dot{\mathscr{C}}(X,\varrho_{\#})}	
\leq [\rho(y,z)]^{\alpha}c^\alpha \,\|\Phi_{r,R}\|_{\dot{\mathscr{C}}^\alpha(X,\varrho_{\#})}
\\
&= [\rho(y,z)]^{s}[\rho(y,z)]^{\alpha-s}c^\alpha\,\|\Phi_{r,R}\|_{\dot{\mathscr{C}}(X,\varrho_{\#})}\,{\bf 1}_{B_{\varrho_{\#}}(x,R)}(z)
\\
&\leq [\rho(y,z)]^{s}2^{-k(\alpha-s)}c^\alpha\,\|\Phi_{r,R}\|_{\dot{\mathscr{C}}(X,\varrho_{\#})}\,{\bf 1}_{B_{\varrho_{\#}}(x,R)}(z)
\\
&\leq [\rho(y,z)]^{s}\lf[g_k(y)+g_k(z)\r].
\end{split}
\end{equation*}	
Since the quasi-metric $\varrho_{\#}$ is symmetric, the above inequalities remain true
if we exchange the positions of $y$ and $z$.
This gives the desired inequality in the case when $k\geq k_0$.
Assume next that $k<k_0$. As in the case $k\geq k_0$, it suffices to
consider the scenario when $y\in X$ and $z\in B_{\varrho_{\#}}(x,R)$. Observe that,
by $\|\Phi_{r,R}\|_{L^\fz(X)}\leq1$, we have
\begin{equation*}
\begin{split}
|\Phi_{r,R}(y)-\Phi_{r,R}(z)|&\leq 2\, [\rho(y,z)]^{s}[\rho(y,z)]^{-s}\,{\bf 1}_{B_{\varrho_{\#}}(x,R)}(z)
\\
&\leq [\rho(y,z)^{s}]2^{s(k+1)+1}\,{\bf 1}_{B_{\varrho_{\#}}(x,R)}(z)
\\
&\leq [\rho(y,z)]^{s}\lf[g_k(y)+g_k(z)\r].
\end{split}
\end{equation*}	
This finishes the proof of the fact that $\vec{g}:=\{g_k\}_{k\in\mathbb{Z}}\in\mathbb{D}^s_\rho(\Phi_{r,R})$.
	
Next we use $\vec{g}:=\{g_k\}_{k\in\mathbb{Z}}$ to show
\eqref{gradest}. If $q<\infty$, then, by the definition of $g_k$,  \eqref{VN-2}, and \eqref{PMab6},
 we have (keeping in mind  the fact that $s<\alpha$ in this case)
\begin{align*}
\lf(\sum_{k\in\mathbb{Z}}|g_k|^q\r)^{1/q}
&\lesssim\lf(\sum_{k=-\infty}^{k_0-1}2^{(ks+s+1)q}\r)^{1/q}+c^\alpha
\,\|\Phi_{r,R}\|_{\dot{\mathscr{C}}^\alpha(X,\varrho_{\#})}
\lf(\sum_{k=k_0}^\infty2^{-kq(\alpha-s)}\r)^{1/q}
\nonumber\\
&\lesssim 2^{k_0s}+\|\Phi_{r,R}\|_{\dot{\mathscr{C}}^\alpha(X,\varrho_{\#})}\,2^{-k_0(\alpha-s)}
\lesssim \|\Phi_{r,R}\|_{\dot{\mathscr{C}}^\alpha(X,\varrho_{\#})}^{s/\alpha}\lesssim \big(R^\alpha-r^\alpha\big)^{-s/\alpha}.
\end{align*}
From this, $\vec{g}\in\mathbb{D}^s_\rho(\Phi_{r,R})$,  and
the fact that each $g_k$ is supported in $B_{\varrho_{\#}}(x,R)$, it follows that
$$
\Vert\Phi_{r,R}\Vert_{\dot{M}^s_{p,q}(X,\rho,\mu)}\leq\Vert\vec{g}\Vert_{L^p(X,\ell^q)}\lesssim \lf(R^\alpha-r^\alpha\r)^{-s/\alpha}\lf[\mu(B_{\varrho_{\#}}(x,R))\r]^{1/p}.
$$
Thus, the second estimate in \eqref{gradest} holds true. The case when $q=\infty$ is handled similarly.
	
Furthermore, observe that, if $q<\infty$, then
\begin{align*}
&\Vert\Phi_{r,R}\Vert_{\dot{N}^s_{p,q}(X,\rho,\mu)}\\
&\quad\leq\Vert\vec{g}\Vert_{\ell^q(L^p(X))}
\lesssim\lf(\sum_{k=-\infty}^{k_0-1}\Vert g_k\Vert_{L^p(X,\mu)}^{q}\r)^{1/q}
+\lf(\sum_{k=k_0}^\infty\Vert g_k\Vert_{L^p(X,\mu)}^{q}\r)^{1/q}
\\
&\quad\lesssim \lf[\mu\lf(B_{\varrho_{\#}}(x,R)\r)\r]^{1/p}\left[\lf(\sum_{k=-\infty}^{k_0-1}
2^{(ks+s+1)q}\r)^{1/q}+c^\alpha\,\|\Phi_{r,R}\|_{\dot{\mathscr{C}}^\alpha(X,\varrho_{\#})}
\lf\{\sum_{k=k_0}^\infty2^{-kq(\alpha-s)}\r\}^{1/q}\right]
\\
&\quad\lesssim\lf[\mu\lf(B_{\varrho_{\#}}(x,R)\r)\r]^{1/p}\left[ 2^{k_0s}+\|\Phi_{r,R}\|_{\dot{\mathscr{C}}^\alpha(X,\varrho_{\#})}\,2^{-k_0(\alpha-s)}\right]\\
&\quad\lesssim \|\Phi_{r,R}\|_{\dot{\mathscr{C}}^\alpha(X,\varrho_{\#})}^{s/\alpha}
\lf[\mu\lf(B_{\varrho_{\#}}(x,R)\r)\r]^{1/p}
\lesssim \lf(R^\alpha-r^\alpha\r)^{-s/\alpha}\lf[\mu\lf(B_{\varrho_{\#}}(x,R)\r)\r]^{1/p}.
\end{align*}
This proves  the second estimate in \eqref{gradest2} when $q<\fz$.
The estimates in the case $q=\infty$ follow along a similar line of reasoning.

Finally, notice that, by \eqref{PMab5},  $\Phi_{r,R}$ is
nonconstant in $X$. Therefore, according to Proposition~\ref{constant}, we find that $\Vert\Phi_{r,R}\Vert_{\dot{N}^s_{p,q}(X,\rho,\mu)}>0$ and $\Vert\Phi_{r,R}\Vert_{\dot{M}^s_{p,q}(X,\rho,\mu)}>0$.
This proves the first inequalities of \eqref{gradest} and
\eqref{gradest2}, and hence finishes the proof of
Lemma \ref{GVa2}.
\end{proof}

The next result constitutes the main result of this section, where we use Lemma~\ref{GVa2} to construct a family of maximally smooth H\"older continuous functions that approximate well the characteristic functions of balls.

\begin{lemma}\label{HolderBump}
Suppose $(X,\rho,\mu)$ is a quasi-metric measure space and $\varrho$ is any quasi-metric on $X$ such that $\varrho\approx\rho$. Let $C_\varrho\in[1,\infty)$ be as in \eqref{C-RHO.111} and fix
exponents $p\in(0,\infty)$ and $q\in(0,\infty]$, along with
a finite number $s\in(0,(\log_{2}C_\varrho)^{-1}]$,
where the value  $s=(\log_{2}C_\varrho)^{-1}$
is only permissible when $q=\infty$. Also suppose
that $\varrho_{\#}$ is the regularized quasi-metric
given by Theorem~\ref{DST1}. Then there exists a
number $\delta\in(0,1)$, depending only on $s$, $\varrho$, and the proportionality
constants in $\varrho\approx\rho$, such that, for any $x\in X$ and any finite
$r\in(0,{\rm diam}_\rho(X)]$,
there exist a sequence $\{r_j\}_{j\in\mathbb{N}}$ of radii
and a collection $\{u_j\}_{j\in\mathbb{N}}$ of functions  such that, for any $j\in\mathbb{N}$,
\begin{enumerate}[label=\rm{(\alph*)}]
\item $B_{\varrho_{\#}}(x,r_j)\subset B_{\rho}(x,r)$ \,\,\mbox{and }\,\,$\delta r<r_{j+1}<r_j<r$;
\item $0\leq u_j\leq 1$ pointwise on $X$;
\item $u_j\equiv 0$ on $X\setminus B_{\varrho_{\#}}(x,r_j)$;
\item $u_j\equiv 1$ on $\overline{B}_{\varrho_{\#}}(x,r_{j+1})$;
\item $u_j$ belongs to $M^s_{p,q}(X,\rho,\mu)$ and $N^s_{p,q}(X,\rho,\mu)$, and
$$0<\Vert u_j\Vert_{\dot{M}^s_{p,q}(X,\rho,\mu)}\leq C 2^{j}r^{-s}\lf[\mu(B_{\varrho_{\#}}(x,r_j))\r]^{1/p}$$
and
$$0<\Vert u_j\Vert_{\dot{N}^s_{p,q}(X,\rho,\mu)}\leq C 2^{j}r^{-s}\lf[\mu(B_{\varrho_{\#}}(x,r_j))\r]^{1/p}
$$
for some positive constant $C$ which is independent of $x$, $r$, and $j$.
\end{enumerate}
	
Furthermore, if there exists a constant $c_0\in(1,\infty)$
such that $r\leq c_0\varphi^x_{\rho}(r)$, where
$\varphi^x_{\rho}(r)$ is as in \eqref{radphi},
then the radii $\{r_j\}_{j\in\mathbb{N}}$ and the
functions $\{u_j\}_{j\in\mathbb{N}}$ can be chosen to
have the following property in addition
to  {\rm(a)}-{\rm(e)}  above:
\begin{enumerate}
\item[{\rm(f)}] for any fixed $j\in\mathbb{N}$ and $\gamma\in(0,\infty)$, there exists a $\mu$-measurable set $E^\gamma_j\subset B_\rho(x,r)$ such that $\mu(E^\gamma_j)\geq\mu(B_{\varrho_{\#}}(x,r_{j+1}))$ and $|u_j-\gamma\,|\geq\frac{1}{2}$ pointwise on $E^\gamma_j$.
\end{enumerate}
In this case, the number $\delta$   depends also on $c_0$.
\end{lemma}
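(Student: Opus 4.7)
The plan is to apply Lemma~\ref{GVa2} to carefully chosen pairs of nested radii. Fix $\alpha := (\log_2 C_\varrho)^{-1}$, so that $s \leq \alpha$ under the hypothesis (with $s = \alpha$ only when $q = \infty$, which matches the admissibility in Lemma~\ref{GVa2}). Since $\varrho_{\#} \approx \varrho \approx \rho$, there exists a constant $c \in [1, \infty)$ such that $c^{-1}\rho(y,z) \leq \varrho_{\#}(y,z) \leq c\,\rho(y,z)$ for all $y, z \in X$; in particular $B_{\varrho_{\#}}(x, \tau) \subset B_\rho(x, c\tau)$ for every $\tau > 0$. Choose the reference radius $R$ by cases: when only (a)--(e) are sought, set $R := r/(2c)$; when in addition the hypothesis $r \leq c_0 \varphi^x_{\rho}(r)$ is assumed, set $R := \varphi^x_{\rho}(r)/c$, so that $R \geq r/(cc_0)$ and $cR = \varphi^x_{\rho}(r) \leq r$. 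Either way, $B_{\varrho_{\#}}(x, R) \subset B_\rho(x, r)$.

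For each $j \in \mathbb{N}$ define
\[
r_j := \bigl[R^\alpha\bigl(\tfrac{1}{2} + 2^{-j-1}\bigr)\bigr]^{1/\alpha},
\]
which yields a strictly decreasing sequence satisfying $R\,2^{-1/\alpha} < r_{j+1} < r_j < R$ and the sharp identity $r_j^\alpha - r_{j+1}^\alpha = R^\alpha\,2^{-j-2}$. Applying Lemma~\ref{GVa2} with $(r_{j+1}, r_j)$ in the role of $(r, R)$ produces the desired function $u_j := \Phi_{r_{j+1}, r_j}$; properties (b)--(d) of the present lemma are then immediate from \eqref{PMab5}, while (a) follows by taking $\delta := 1/(cc_0\,2^{1/\alpha})$ in the second case and $\delta := 1/(2c\,2^{1/\alpha})$ in the first, which is a positive constant of exactly the type required. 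For (e), Lemma~\ref{GVa2} combined with the increment formula gives
\[
0 < \|u_j\|_{\dot{M}^s_{p,q}(X)} \leq C\,R^{-s}\,2^{(j+2)s/\alpha}\,\bigl[\mu(B_{\varrho_{\#}}(x, r_j))\bigr]^{1/p},
\]
and since $s/\alpha \leq 1$ and $R \approx r$ with admissible constants, this is bounded by $C\,2^j r^{-s}\,[\mu(B_{\varrho_{\#}}(x, r_j))]^{1/p}$; the analogous estimate for the Besov norm follows verbatim from \eqref{gradest2}.

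For property (f), assume $r \leq c_0 \varphi^x_{\rho}(r)$, so that $cR = \varphi^x_{\rho}(r)$. Then $B_{\varrho_{\#}}(x, r_j) \subset B_\rho(x, cr_j) \subset B_\rho(x, cR) = B_\rho(x, \varphi^x_{\rho}(r))$, and \eqref{LLk-3} supplies the crucial bound
\[
\mu\bigl(B_{\varrho_{\#}}(x, r_j)\bigr) \leq \mu\bigl(B_\rho(x, \varphi^x_{\rho}(r))\bigr) \leq \tfrac{1}{2}\mu\bigl(B_\rho(x, r)\bigr) \qquad \text{for every } j \in \mathbb{N}.
\]
Given $\gamma \in (0, \infty)$, dichotomize. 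If $\gamma \leq 1/2$, set $E^\gamma_j := \overline{B}_{\varrho_{\#}}(x, r_{j+1}) \subset B_\rho(x, r)$; then $u_j \equiv 1$ on $E^\gamma_j$ by (d), hence $|u_j - \gamma| = 1 - \gamma \geq 1/2$, and trivially $\mu(E^\gamma_j) \geq \mu(B_{\varrho_{\#}}(x, r_{j+1}))$. If $\gamma > 1/2$, set $E^\gamma_j := B_\rho(x, r) \setminus B_{\varrho_{\#}}(x, r_j)$; by (c), $u_j \equiv 0$ on this set, so $|u_j - \gamma| = \gamma > 1/2$, and the measure estimate $\mu(E^\gamma_j) \geq \mu(B_\rho(x, r)) - \tfrac{1}{2}\mu(B_\rho(x, r)) \geq \mu(B_{\varrho_{\#}}(x, r_{j+1}))$ follows from the displayed bound. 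The main obstacle is precisely this final balancing act in the choice of $R$: it must be small enough that every $\varrho_{\#}$-ball $B_{\varrho_{\#}}(x, r_j)$ lies inside the half-measure ball $B_\rho(x, \varphi^x_{\rho}(r))$, yet the resulting radii $r_j$ cannot be so small that the norm bound in (e) or the uniform lower bound for $\delta$ degrades; the geometric choice $r_j^\alpha \in (R^\alpha/2, R^\alpha)$ together with the constraint $s \leq \alpha = (\log_2 C_\varrho)^{-1}$ is precisely what realizes this trade-off, absorbing $2^{(j+2)s/\alpha}$ into the clean factor $2^{j+2}$.
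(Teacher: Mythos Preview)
Your proof is correct and follows essentially the same route as the paper's: both construct a decreasing sequence with $r_j^\alpha - r_{j+1}^\alpha \approx 2^{-j}R^\alpha$ for a reference radius $R$ comparable to $r$ (the paper takes $r_1=r/c$ or $\varphi^x_\rho(r)/c$ as the top of the sequence, while you introduce a separate $R$ and start strictly below it, but this is cosmetic), apply Lemma~\ref{GVa2} to each pair $(r_{j+1},r_j)$, and handle (f) by the same dichotomy on~$\gamma$ combined with \eqref{LLk-3}. One small technical slip: setting $\alpha:=(\log_2 C_\varrho)^{-1}$ fails when $C_\varrho=1$ (ultrametric~$\varrho$), since then $\alpha=\infty$ and neither the definition of $r_j$ nor the hypothesis of Lemma~\ref{GVa2} makes sense; the paper avoids this by choosing any \emph{finite} $\alpha\in[s,(\log_2 C_\varrho)^{-1}]$ with $\alpha>s$ unless $s=(\log_2 C_\varrho)^{-1}$, and your argument goes through verbatim with that adjustment.
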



\begin{remark}
We say that the sequence $\{r_j\}_{j\in\mathbb{N}}$ of radii  and the collection $\{u_j\}_{j\in\mathbb{N}}$ of functions as in Lemma~\ref{HolderBump}
are associated to the ball $B_\rho(x,r)$.
\end{remark}

\begin{proof}[Proof of Lemma~\ref{HolderBump}]
Fix a ball $B_\rho(x,r)$, where $x\in X$ and $r\in\big(0,{\rm diam}_\rho(X)\big]$ is finite. Since  $\varrho_{\#}\approx\varrho$ (see Theorem~\ref{DST1}) and $\varrho\approx\rho$,
 it follows that there exists a constant $c\in(1,\infty)$ such that
\begin{equation}
\label{fny-586}
c^{-1}\rho(z,y)\leq\varrho_{\#}(z,y)\leq c\,\rho(z,y),\quad\forall\,z,\,y\in X
\end{equation}
and hence,
\begin{equation}\label{ye.1}
B_{\varrho_{\#}}\big(x,r/c\big)\subset B_{\rho}(x,r).
\end{equation}
%
%
%
Fix a finite number $\alpha\in[s,(\log_{2}C_\varrho)^{-1}]$, where $\alpha\neq s$
unless $s=(\log_{2}C_\varrho)^{-1}$. We inductively define a
sequence $\{r_j\}_{j\in\mathbb{N}}$ of radii  by setting
\begin{equation}
\label{gg.e}
r_1:=r/c\quad\mbox{ and	}\quad r_{j+1}:=\lf[(r_j)^\alpha-2^{-(j+1)}(r_1)^\alpha\r]^{1/\alpha}
\quad\mbox{for any $j\in\mathbb{N}:=\{1,2,\ldots\}$}.
\end{equation}
Observe that the second condition
in \eqref{gg.e} is equivalent to $(r_j)^\alpha-(r_{j+1})^\alpha=2^{-(j+1)}(r_1)^\alpha$.
By \eqref{ye.1} and the definition of $\{r_j\}_{j\in\mathbb{N}}$,
we obtain $r_{j+1}<r_j<r$ and
$$B_{\varrho_{\#}}\big(x,r_j)\subset B_{\varrho_{\#}}\big(x,r_1)\subset B_{\rho}(x,r).$$
Moreover, for any $j\in\mathbb{N}$, we have
\begin{align}\label{lbrj}
r_{j+1}&=\lf[(r_1)^\alpha-\sum_{k=1}^{j}2^{-(k+1)}(r_1)^\alpha\r]^{1/\alpha}
>\lf[(r_1)^\alpha-\sum_{k=1}^{\infty}2^{-(k+1)}(r_1)^\alpha\r]^{1/\alpha}\nonumber\\
&=r_12^{-1/\alpha}=r/\lf(2^{1/\alpha}c\r).
\end{align}
Thus, the family $\{r_j\}_{j\in\mathbb{N}}$ of radii  satisfy (a) with  $\delta:=\big(2^{1/\alpha}c\big)^{-1}\in(0,1)$.	

Moving forward, for any $j\in\mathbb{N}$,
define $u_j:\,X\to[0,1]$ by setting
$u_j:=\Phi_{r_{j+1},r_j}\in\dot{\mathscr{C}}^\alpha(X,\varrho_{\#})$,
where $\Phi_{r_{j+1},r_j}$ is given by Lemma~\ref{GVa2}.
By the conclusion of Lemma~\ref{GVa2}, it is easy to show
that $\{u_j\}_{j\in\mathbb{N}}$ satisfies (b), (c), and  (d) of the present lemma
as well as the estimates
$$
0<\Vert u_j\Vert_{\dot{M}^s_{p,q}(X,\rho,\mu)}\leq C\lf[(r_j)^\alpha-(r_{j+1})^\alpha\r]^{-s/\alpha}
\lf[\mu(B_{\varrho_{\#}}(x,r_j))\r]^{1/p},\quad\forall\,j\in\mathbb{N},
$$
and
$$
0<\Vert u_j\Vert_{\dot{N}^s_{p,q}(X,\rho,\mu)}\leq C\lf[(r_j)^\alpha-(r_{j+1})^\alpha\r]^{-s/\alpha}\lf[\mu(B_{\varrho_{\#}}(x,r_j))\r]^{1/p},
\quad\forall\ j\in\mathbb{N},
$$
where $C$ is a positive
constant depending only on $s$, $q$, $\alpha$, $\varrho$, and $\rho$.
From these estimates, the observation that
$$\lf[(r_j)^\alpha-(r_{j+1})^\alpha\r]^{-s/\alpha}=c^{s} 2^{s(j+1)/\alpha}r^{-s},\quad\forall\ j\in\mathbb{N},$$
and the fact that $s/\alpha\leq 1$, we further deduce
the desired estimates in   (e).

Assume now that there exists a  constant
$c_0\in(1,\infty)$ such that  $r\leq c_0\varphi^x_{\rho}(r)$,  where $\varphi^x_{\rho}(r)$
is as in \eqref{radphi}. By this assumption and   Lemma~\ref{Gds.24}(ii), we have $0<\varphi^x_{\rho}(r)<r$. Now, take
\begin{equation*}
r_1:=\varphi^x_{\rho}(r)/c\quad\mbox{ and }\quad r_{j+1}:=\lf[(r_j)^\alpha-2^{-(j+1)}(r_1)^\alpha\r]^{1/\alpha}
\quad\mbox{for any $j\in\mathbb{N}$},
\end{equation*}
where $c\in(1,\infty)$ is as in \eqref{fny-586}. Then, similarly to
\eqref{lbrj} and \eqref{ye.1}, we have, for any $j\in\nn$,
$$r/\big(c\cdot c_02^{1/\alpha}\big)\leq\varphi^x_{\rho}(r)/\big(c2^{1/\alpha}\big)<r_{j+1}<r_j<\varphi^x_{\rho}(r)<r$$
and
\begin{equation}
\label{xzw-1}
B_{\varrho_{\#}}(x,r_j)\subset B_{\varrho_{\#}}(x,r_1)\subset B_\rho(x,\varphi^x_{\rho}(r))\subset B_\rho(x,r).
\end{equation}
Moreover, by $s/\alpha\leq1$ and $r\leq c_0\varphi^x_{\rho}(r)$, we also find that
\begin{equation}
\label{xzw-1-tn}
\lf[(r_j)^\alpha-(r_{j+1})^\alpha\r]^{-s/\alpha}=c^{s} 2^{s(j+1)/\alpha}\lf[\varphi^x_{\rho}(r)\r]^{-s}
\leq 2(c\cdot c_0)^{s}\cdot 2^{j}r^{-s},\quad \forall\ j\in\nn.
\end{equation}
Thus, by defining $u_j:\,X\to[0,1]$ as $u_j:=\Phi_{r_{j+1},r_j}$,
where $j\in\nn$ and  $\Phi_{r_{j+1},r_j}$ is given by Lemma~\ref{GVa2},
it follows from the above reasoning that $\{r_j\}_{j\in\mathbb{N}}$ and
$\{u_j\}_{j\in\mathbb{N}}$ satisfy  ({a})-({d})
of the present lemma with $\delta:=(c\cdot c_02^{1/\alpha})^{-1}$.
Moreover, by \eqref{xzw-1-tn}, we find that Lemma~\ref{GVa2} guarantees that
$u_j$ belongs to  $M^s_{p,q}(X,\rho,\mu)\bigcap   N^s_{p,q}(X,\rho,\mu)$
and satisfies the estimates in (e).
	
To complete the proof of the present lemma, we only need to prove ({\rm f}) now.
To this end, fix $\gamma\in(0,\infty)$. For any $j\in\nn$,
since $u_j\equiv1$ on $B_{\varrho_{\#}}(x,r_{j+1})$ and $u_j\equiv0$
on $B_\rho(x,r)\setminus B_{\varrho_{\#}}(x,r_{j})$, we deduce that
$|u_j-\gamma\,|\geq\frac{1}{2}$ on at least one of the
sets $B_{\varrho_{\#}}(x,r_{j+1})$ and $B_\rho(x,r)\setminus B_{\varrho_{\#}}(x,r_{j})$.
Let $E^\gamma_j\subset B_\rho(x,r)$ be one of these two sets in which
$|u_j-\gamma\,|\geq\frac{1}{2}$.
Note that $E^\gamma_j$ is
$\mu$-measurable because $\varrho_\#$-balls are open sets and $\mu$
is a Borel measure. We claim that $\mu(E^\gamma_j)\geq
\mu(B_{\varrho_{\#}}(x,r_{j+1}))$. To see this, we use
 \eqref{xzw-1}  and \eqref{LLk-3} to conclude that
$$
\mu\lf(B_{\varrho_{\#}}(x,r_{j+1})\r)\leq\mu\lf(B_\rho(x,\varphi^x_{\rho}(r))\r)\leq\frac{1}{2}\mu(B_\rho(x,r)),
$$
and
\begin{align*}
\mu\lf(B_\rho(x,r)\setminus B_{\varrho_{\#}}(x,r_{j})\r)
&=\mu(B_\rho(x,r))-\mu\lf(B_{\varrho_{\#}}(x,r_{j})\r)\\
&\geq\mu(B_\rho(x,r))-\mu\lf(B_\rho(x,\varphi^x_{\rho}(r))\r)\geq\frac{1}{2}\mu(B_\rho(x,r)).
\end{align*}
Therefore, $\mu(B_\rho(x,r)\setminus B_{\varrho_{\#}}(x,r_{j}))\geq\mu(B_{\varrho_{\#}}(x,r_{j+1}))$ and hence
$$
\mu(E^\gamma_j)\geq\min\lf\{\mu\lf(B_{\varrho_{\#}}(x,r_{j+1})\r),\
\mu\lf(B_\rho(x,r)\setminus B_{\varrho_{\#}}(x,r_{j})\r)\r\} =\mu\lf(B_{\varrho_{\#}}(x,r_{j+1})\r),
$$
as desired.	This finishes the proof of (f) and, in turn, the proof of Lemma~\ref{HolderBump}.
\end{proof}

The next result is an abstract iteration scheme
that will be applied many times in the proofs of
our main results. A version of this lemma in the
setting of metric measure spaces was proven in
\cite[Lemma~16]{agh20}, which itself is an
abstract version of an argument used in
\cite{korobenkomr}. Since its proof  is a
simple modification of that of \cite[Lemma~16]{agh20}, we omit the details.

\begin{lemma}
\label{iteration}
Let $(X,\rho,\mu)$ be a quasi-metric measure space.
Suppose that $a,\,b,\,p,\,t,\,\theta\in(0,\infty)$ satisfy $a<b$ and $p<t$.
Let  $x\in X$ and $\{r_j\}_{j\in\mathbb{N}}$  be  a sequence of radii  such that
\begin{equation*}
a\leq r_j\leq b
\quad
\text{and}
\quad
\lf[\mu(B_\rho(x,r_{j+1}))\r]^{1/t}\leq\theta 2^j \lf[\mu(B_\rho(x,r_j))\r]^{1/p},
\qquad
\forall\ j\in\mathbb{N}.
\end{equation*}
Then
$$\mu(B_\rho(x,r_{1}))\geq \theta^{-pt/(t-p)}\,2^{-pt^2/(t-p)^2}.$$
\end{lemma}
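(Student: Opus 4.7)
The plan is to iterate the hypothesized recursive inequality on the measures of the balls. Set $V_j:=\mu(B_\rho(x,r_j))$ and $W_j:=V_j^{1/t}$, and let $q:=t/p$, which is strictly greater than $1$ by the assumption $p<t$. Then the hypothesis becomes the simpler scalar recursion
$$W_{j+1}\leq\theta\,2^{j}\,W_j^{q},\qquad\forall\,j\in\mathbb{N}.$$
Since $r_j\in[a,b]$, monotonicity of measure together with the standing axiom $\mu(B_\rho(x,a))\in(0,\infty)$ gives the uniform lower bound $W_j\geq[\mu(B_\rho(x,a))]^{1/t}>0$ for all $j$.

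Next I would establish by induction on $j$ that
$$W_{j+1}\leq\theta^{(q^{j}-1)/(q-1)}\,2^{E_j}\,W_1^{q^{j}},\qquad\forall\,j\in\mathbb{N},$$
where $E_j:=\sum_{k=0}^{j-1}(j-k)q^{k}$. A direct telescoping comparison with $\sum_{m=1}^{\infty}mq^{-m}=q/(q-1)^{2}$ yields $E_j\leq q^{j+1}/(q-1)^{2}$. Collecting exponents, the bound rewrites as
$$W_{j+1}\leq\bigl(\theta^{1/(q-1)}\,2^{q/(q-1)^{2}}\,W_1\bigr)^{q^{j}}=:A^{q^{j}},$$
where the key point is that all dependence on $\theta$, $2$, and $W_1$ has been packaged into a single base $A$ raised to the doubly-exponential power $q^{j}$.

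I would then argue by contradiction. Suppose $A<1$, i.e.\ $W_1<\theta^{-1/(q-1)}\,2^{-q/(q-1)^{2}}$. Since $q>1$, the power $q^{j}\to\infty$, so $A^{q^{j}}\to 0$ and hence $W_{j+1}\to 0$ as $j\to\infty$. This contradicts the uniform positive lower bound $W_j\geq[\mu(B_\rho(x,a))]^{1/t}>0$ established at the outset. Consequently $A\geq 1$, which gives $W_1\geq\theta^{-1/(q-1)}\,2^{-q/(q-1)^{2}}$. Raising to the power $t$, substituting $q=t/p$ so that $1/(q-1)=p/(t-p)$ and $tq/(q-1)^{2}=t^{2}p/(t-p)^{2}$, and recalling $V_1=W_1^{t}=\mu(B_\rho(x,r_1))$, yields the desired inequality
$$\mu(B_\rho(x,r_1))\geq\theta^{-pt/(t-p)}\,2^{-pt^{2}/(t-p)^{2}}.$$

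The only mildly delicate piece is bounding the exponent $E_j$ on the $2$'s; everything else is bookkeeping. The crucial conceptual step, borrowed from \cite[Lemma~16]{agh20}, is recognizing that the combination of a super-linear recursion $W_{j+1}\lesssim W_j^{q}$ with $q>1$ forces either rapid blow-up or rapid collapse, and the uniform positivity of $W_j$ (which is where the assumption $r_j\geq a>0$ enters, even though $a$ does not appear in the final bound) rules out collapse and thereby pins down $W_1$ from below.
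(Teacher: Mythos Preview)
Your approach is correct and coincides with the iteration scheme the paper invokes (the paper omits the proof and refers to \cite[Lemma~16]{agh20}, which is precisely the argument you reproduce). One small slip worth noting: the displayed bound $W_{j+1}\leq A^{q^{j}}$ is not literally valid when $\theta<1$, because replacing the exponent $(q^{j}-1)/(q-1)$ on $\theta$ by $q^{j}/(q-1)$ goes the wrong way in that regime. The clean fix is to keep the exact factorization $\theta^{(q^{j}-1)/(q-1)}=\theta^{-1/(q-1)}\cdot\theta^{q^{j}/(q-1)}$, which together with $E_{j}\leq q^{j+1}/(q-1)^{2}$ gives
\[
W_{j+1}\leq \theta^{-1/(q-1)}\,A^{q^{j}}.
\]
This extra constant factor is harmless for the contradiction: if $A<1$ then $\theta^{-1/(q-1)}A^{q^{j}}\to 0$ as $j\to\infty$, still violating the uniform lower bound $W_{j}\geq[\mu(B_{\rho}(x,a))]^{1/t}>0$. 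Hence $A\geq 1$, and the remainder of your computation goes through verbatim.
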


\subsection{The Non-Doubling Case}\label{non-db}

The first main result of this subsection, Theorem~\ref{LMeasINT} below,  establishes the
converse of Theorem~\ref{LBembedding}. Therefore, one has that
Theorems~\ref{LMeasINT} and  \ref{LBembedding} imply Theorem~\ref{LMeasINTCor}.

Let  $(X,\rho,\mu)$ be a quasi-metric measure space.
Recall that the measure $\mu$ is said to be  \emph{lower $Q$-Ahlfors-regular}
with  $Q\in(0,\infty)$, if there exists a positive constant $\kappa$ satisfying
\begin{eqnarray}\label{lowermeasure}
\kappa\,r^Q\leq\mu(B_\rho(x,r))\quad\mbox{for any}\ x\in X
\ \mbox{ and any finite }\ r\in(0,{\rm diam}_\rho(X)].
\end{eqnarray}
We also recall the following piece of notational convention.

\begin{convention}
\label{sindex}
Given a quasi-metric space $(X,\rho)$ and fixed numbers
$s\in(0,\infty)$ and $q\in(0,\infty]$, we will
understand by $s\preceq_q{\rm ind}\,(X,\rho)$
that $s\leq{\rm ind}\,(X,\rho)$ and that the
value $s={\rm ind}\,(X,\rho)$ is only
permissible when $q=\infty$ and the supremum
defining ${\rm ind}\,(X,\rho)$ in \eqref{index} is attained.
\end{convention}

\begin{theorem}\label{LMeasINT}
Let $(X,\rho,\mu)$ be a quasi-metric measure space and fix  $\sigma\in[1,\infty)$, $q\in(0,\infty]$, and $Q\in(0,\infty)$. Also, assume that
$s\in(0,\infty)$ satisfies $s\preceq_q{\rm ind}\,(X,\rho)$. Then the following statements are valid.
\begin{enumerate}[label=\rm{(\alph*)}]
\item Suppose that there exist a $p\in(0,Q/s)$ and a $C_S\in(0,\infty)$ such that, for any
$\rho$-ball $B:=B_\rho(x,r)$, where $x\in X$ and  $r\in(0,{\rm diam}_\rho(X)]$ is finite, and any
$u\in M^s_{p,q}(\sigma B,\rho,\mu)$,
\begin{equation}\label{hdx-1}
\left(\, \mvint_{B} |u|^{p^*}\, d\mu\right)^{1/p^*}\leq C_Sr^{-Q/p}
\left[r^s\|u\|_{\dot{M}^s_{p,q}(\sigma B)}+\Vert u\Vert_{L^p(\sigma B)}\right],
\end{equation}
where  $p^*:=Qp/(Q-sp)$. Then $\mu$ is lower $Q$-Ahlfors-regular.
\end{enumerate}	
	
\noindent If the space $(X,\rho)$ is uniformly perfect\footnote{Recall \eqref{U-perf}.}, then the following statements also hold true:
	
\begin{enumerate}[label=\rm{(\alph*)}]\addtocounter{enumi}{1}
\item Suppose that there exist a $p\in(0,Q/s)$ and a $C_P\in(0,\infty)$ such that, for
any $\rho$-ball $B:=B_\rho(x,r)$, where $x\in X$ and  $r\in(0,{\rm diam}_\rho(X)]$ is finite,
and any $u\in M^s_{p,q}(\sigma B,\rho,\mu)$,
\begin{equation}\label{HHs-175}
\inf_{\gamma\in\mathbb{R}}\left(\, \mvint_{B} |u-\gamma|^{p^*}\, d\mu\right)^{1/p^*}\leq
C_Pr^{s-Q/p}\|u\|_{\dot{M}^s_{p,q}(\sigma B)},
\end{equation}
where $p^*:=Qp/(Q-sp)$. Then $\mu$ is lower $Q$-Ahlfors-regular.
		
\item Suppose that  there exist positive constants $C_1,\,C_2$, and $\omega$ such that,
for any $\rho$-ball $B\subset X$  $($with radius at most ${\rm diam}_\rho(X)$$)$
and any $u\in M^s_{Q/s,\,q}(\sigma B,\rho,\mu)$
with $\|u\|_{\dot{M}^s_{Q/s,q}(\sigma B)}>0$,
\begin{eqnarray}\label{hdx-3}
\mvint_{B} {\rm exp}\lf(C_1\frac{|u-u_{B}|}{\|u\|_{\dot{M}^s_{Q/s,q}(\sigma B)}}\r)^{\omega}\,d\mu\leq C_2.
\end{eqnarray}
Then $\mu$ is lower $Q$-Ahlfors-regular.
		
\item Suppose that there exist a $p\in(Q/s,\infty)$
and a $C_H\in(0,\infty)$ such that
every   $u\in M^s_{p,q}(X,\rho,\mu)$ has
a H\"older continuous representative of order $s-Q/p$ on $X$,
denoted by $u$ again,  satisfying
\begin{eqnarray}\label{hdx-4}
|u(x)-u(y)|\leq C_H [\rho(x,y)]^{s-Q/p}\|u\|_{\dot{M}^s_{p,q}(X)},\qquad\forall\,x,\,y\in X.
\end{eqnarray}
Then $\mu$ is lower $Q$-Ahlfors-regular.
\end{enumerate}
In addition, all of the statements above are also valid with $M^s_{p,q}$ and $\dot{M}^s_{p,q}$ replaced by $N^s_{p,q}$ and $\dot{N}^s_{p,q}$, respectively.
\end{theorem}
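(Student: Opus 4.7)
The plan is a quasi-metric adaptation of the bump-function-plus-iteration machinery developed in \cite{agh20}. For a given ball $B:=B_\rho(x_0,r)$ with finite $r\in(0,\mathrm{diam}_\rho(X)]$, I shall feed the sequence $\{u_j\}_{j\in\nn}$ of nearly-characteristic H\"older bumps produced by Lemma~\ref{HolderBump} into the hypothesized embedding inequality to extract a recursion of the form
\begin{equation*}
[\mu(B_{\varrho_\#}(x_0,r_{j+1}))]^{1/t}\le\theta\,2^j\,[\mu(B_{\varrho_\#}(x_0,r_j))]^{1/p},
\end{equation*}
with $\theta$ encoding the target scale $r^Q$, and then conclude via Lemma~\ref{iteration} together with the inclusion $B_{\varrho_\#}(x_0,r_1)\subset B$. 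The hypothesis $s\preceq_q{\rm ind}\,(X,\rho)$ is used precisely to justify a choice of quasi-metric $\varrho\approx\rho$ in Lemma~\ref{HolderBump} whose H\"older exponent $\alpha=s$ is admissible; the attainment clause in Convention~\ref{sindex} is what handles the boundary value $s={\rm ind}\,(X,\rho)$ together with $q=\infty$.

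For part (a) no uniform perfectness is required. Plugging $u_j$ into \eqref{hdx-1}, the left-hand side is bounded below by $[\mu(B)]^{-1/p^*}[\mu(B_{\varrho_\#}(x_0,r_{j+1}))]^{1/p^*}$ via Lemma~\ref{HolderBump}(a) and (d), while the right-hand side is bounded above using Lemma~\ref{HolderBump}(e) together with $|u_j|\le1$ on its support $B_{\varrho_\#}(x_0,r_j)$; the factors of $r^s$ cancel, leaving the recursion above with $t=p^*$ and $\theta=Cr^{-Q/p}[\mu(B)]^{1/p^*}$. The identity $pp^*/(p^*-p)=Q/s$ then forces $\mu(B)^{Q/(sp)}\gtrsim r^{Q^2/(sp)}$ via Lemma~\ref{iteration}, hence $\mu(B)\gtrsim r^Q$.

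Parts (b)-(d) follow the same framework but now invoke the uniform perfectness of $(X,\rho)$: by Lemma~\ref{en2-4}(i), it suffices to establish the lower bound for balls satisfying the thickness condition $r\le c_0\varphi^{x_0}_\rho(r)$, which is exactly the regime in which property (f) of Lemma~\ref{HolderBump} activates. For (b), I would choose $\gamma^\ast\in\bbbr$ with $(\mvint_B|u_j-\gamma^\ast|^{p^*}d\mu)^{1/p^*}\le2\inf_{\gamma}(\mvint_B|u_j-\gamma|^{p^*}d\mu)^{1/p^*}$ and apply property (f) at $\gamma=\gamma^\ast$ to recover the same lower bound on the left-hand side of \eqref{HHs-175} as in case (a); the resulting recursion is identical. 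For (d), only the first bump $u_1$ is required: pick any $x\in\overline{B}_{\varrho_\#}(x_0,r_2)$ where $u_1=1$, and by uniform perfectness (together with the lower bound $r_1\gtrsim r$ from Lemma~\ref{HolderBump}(a)) a point $y\in B_\rho(x_0,r)\setminus B_{\varrho_\#}(x_0,r_1)$ with $\rho(x_0,y)\lesssim r$; the H\"older bound \eqref{hdx-4} at $(x,y)$ combined with Lemma~\ref{HolderBump}(e) then delivers $r^{Q/p}\lesssim[\mu(B)]^{1/p}$ directly, with no iteration.

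The main obstacle I expect is case (c). Applying \eqref{hdx-3} to $u_j$ with $\gamma=(u_j)_B$ in property (f) and taking logarithms produces the non-polynomial recursion
\begin{equation*}
\mu(B_{\varrho_\#}(x_0,r_j))\gtrsim\frac{2^{-jQ/s}\,r^Q}{\bigl[\log\bigl(C_2\mu(B)/\mu(B_{\varrho_\#}(x_0,r_{j+1}))\bigr)\bigr]^{Q/(s\omega)}},
\end{equation*}
to which Lemma~\ref{iteration} does not apply verbatim. My plan is to proceed by contradiction: assume $\mu(B)\le\varepsilon r^Q$ with $\varepsilon>0$ small, and feed the above inequality back into itself, tracking how small $\mu(B_{\varrho_\#}(x_0,r_j))$ must be as $j$ grows; an iterated-exponential blow-up then forces a contradiction with $\mu(B_{\varrho_\#}(x_0,r_j))>0$ once $\varepsilon$ is chosen below an absolute threshold depending only on $s$, $q$, $Q$, $\omega$, and $\rho$. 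Finally, the $\dot N^s_{p,q}$ analogues of (a)-(d) follow identically upon replacing the $\dot M^s_{p,q}$-norm estimate \eqref{gradest} of Lemma~\ref{HolderBump}(e) by the $\dot N^s_{p,q}$-norm estimate \eqref{gradest2}.
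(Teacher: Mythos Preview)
Your treatment of parts (a), (b), and (d) is essentially the paper's argument, with only cosmetic differences. For (d), the paper does not go through Lemma~\ref{HolderBump} at all but builds a single bump $\Phi_{0,\lambda'r_0}$ directly from Lemma~\ref{GVa2} and evaluates \eqref{hdx-4} at the center and a point produced by uniform perfectness; your use of $u_1$ achieves the same thing.

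The one substantive divergence is (c). Your contradiction scheme via iterated-exponential shrinkage is unnecessary, and as stated is incomplete: the measures $\mu(B_{\varrho_\#}(x_0,r_j))$ are only known to be positive, not bounded below by an absolute constant, so ``blow-up'' does not by itself yield a contradiction with $\varepsilon$ chosen independently of the ball. The paper instead \emph{polynomializes} the logarithmic recursion and feeds it back into Lemma~\ref{iteration}. After taking logarithms you have
\begin{equation*}
\frac{Cr^s}{2^{j}\,[\mu(B_{\varrho_\#}(x_0,r_j))]^{s/Q}}
\le\Bigl[\log\Bigl(C_2\,\mu(B)/\mu(B_{\varrho_\#}(x_0,r_{j+1}))\Bigr)\Bigr]^{1/\omega};
\end{equation*}
now apply the elementary bound $\log y\le \tfrac{2Q}{s\omega}\,y^{s\omega/(2Q)}$ for $y>0$ to the right-hand side. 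This replaces the logarithm by a power, and after rearranging one obtains
\begin{equation*}
[\mu(B_{\varrho_\#}(x_0,r_{j+1}))]^{s/(2Q)}
\lesssim\frac{[\mu(B)]^{s/(2Q)}}{r^s}\,2^{j}\,[\mu(B_{\varrho_\#}(x_0,r_j))]^{s/Q},
\end{equation*}
which is precisely the hypothesis of Lemma~\ref{iteration} with $p=Q/s$, $t=2Q/s$, and $\theta$ a constant multiple of $[\mu(B)]^{s/(2Q)}r^{-s}$. The lemma then gives $\mu(B)\gtrsim r^Q$ directly. So Lemma~\ref{iteration} \emph{does} handle the critical case once you trade the logarithm for a small power; no separate contradiction argument is needed.
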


\begin{remark}
If one assumes that the inequalities in \eqref{hdx-1}-\eqref{hdx-4} only hold true for balls having radii at most $r_\ast\in(0,\infty)$ then the proof of Theorem~\ref{LMeasINT} that is presented below can be modified to show that $\mu$ is lower $Q$-Ahlfors-regular on balls having radii at most $r_\ast\in(0,\infty)$. In this case, one only needs to assume that the uniformly perfect property holds true on balls with small radii.
\end{remark}

\begin{proof}[Proof of Theorem~\ref{LMeasINT}]
Since $s\preceq_q{\rm ind}\,(X,\rho)$, we can choose a
quasi-metric $\varrho$ on $X$ such that $\varrho\approx\rho$  and $s\leq(\log_{2}C_\varrho)^{-1}$, where  $C_\varrho\in[1,\infty)$ is as in \eqref{C-RHO.111}. Note that, by convention,
the value  $s=(\log_{2}C_\varrho)^{-1}$ is only
permissible when $q=\infty$ and $C_\varrho>1$.
In what follows, we let $\varrho_\#$ be the
regularized quasi-metric given by Theorem~\ref{DST1}.

We begin with the proof of the statement ({a}).
Fix a ball $B:=B_\rho(x,r)$ with $x\in X$ and a finite $r\in(0,{\rm diam}_\rho(X)]$.
Let $\{r_j\}_{j\in\mathbb{N}}$ and $\{u_j\}_{j\in\mathbb{N}}$ be as in
Lemma~\ref{HolderBump}  (associated to the ball $B$).
Then, for any $j\in\nn$,  the pointwise restriction of $u_j$ to the
ball $\sigma B$ belongs to $M^s_{p,q}(\sigma B,\rho,\mu)$
and therefore the function $u_j$ satisfies \eqref{hdx-1}.
Moreover, $B_{\varrho_{\#}}(x,r_j)\subset B$, due to Lemma~\ref{HolderBump}({a}). From these,
it follows from the properties listed in
({b})-({e})  of Lemma~\ref{HolderBump}
that, for any $j\in\mathbb{N}$,
\begin{equation}\label{HD-1-X}
\mbox{$\|u_j\|_{\dot{M}^s_{p,q}(\sigma B)}\lesssim r^{-s}2^{j}\lf[\mu(B_{\varrho_{\#}}(x,r_j))\r]^{1/p}$\qquad and\qquad $\|u_j\|_{L^{p}(\sigma B)}\lesssim \lf[\mu(B_{\varrho_{\#}}(x,r_{j}))\r]^{1/p}$.}
\end{equation}
Moreover, since $u_j\equiv1$ on $B_{\varrho_\#}(x,r_{j+1})$, we have
\begin{equation}\label{HD-3-X}
\left(\, \mvint_{B} |u_j|^{p^*}\, d\mu\right)^{1/p^*}\geq\left[\frac{\mu(B_{\varrho_{\#}}(x,r_{j+1}))}{\mu(B_\rho(x,r))}\right]^{1/p^*}.
\end{equation}
Combining \eqref{hdx-1}, \eqref{HD-1-X}, and \eqref{HD-3-X}, we find that
\begin{equation*}
\begin{split}
\left[\frac{\mu(B_{\varrho_{\#}}(x,r_{j+1}))}{\mu(B_\rho(x,r))}\right]^{1/p^*}
&\lesssim r^{-Q/p}\lf(2^{j}+1\r)\lf[\mu(B_{\varrho_\#}(x,r_j))\r]^{1/p}
\\
&\lesssim r^{-Q/p}\,2^{j}\,\lf[\mu(B_{\varrho_{\#}}(x,r_j))\r]^{1/p},\qquad\forall\,j\in\mathbb{N}.\nonumber
\end{split}
\end{equation*}
Therefore, there exists a positive constant  $C'$, independent of $x$, $r$, and $j$, such that
\begin{equation}
\label{hj-10}
\lf[\mu(B_{\varrho_{\#}}(x,r_{j+1}))\r]^{1/p^*}
\leq
\left(\frac{C'[\mu(B_{\rho}(x,r))]^{1/p^*}}{r^{Q/p}}\right) 2^{j}\lf[\mu(B_{\varrho_{\#}}(x,r_j))\r]^{1/p},
\qquad\forall\,j\in\mathbb{N}.
\end{equation}
Since $0<\delta r<r_j<r<\infty$ for every $j\in\mathbb{N}$ (where $\delta\in(0,1)$ is as in Lemma~\ref{HolderBump}),
we  invoke Lemma~\ref{iteration} with the quasi-metric $\varrho_{\#}$, and
$$
p:=p,\quad
t:=p^*,\quad
\mbox{and}\quad
\theta:=\frac{C'[\mu(B_\rho(x,r))]^{1/p^*}}{r^{Q/p}}
$$
to deduce that
\begin{equation}
\label{hj-1}
\mu(B_\rho(x,r))\geq\mu(B_{\varrho_{\#}}(x,r_1))\gtrsim 2^{-\frac{Q^2}{s^2p}}\lf[\mu(B_\rho(x,r))\r]^{1-\frac{Q}{sp}}\, r^\frac{Q^2}{sp},
\end{equation}
which further implies  \eqref{lowermeasure}. This finishes the proof of  ({a}).
	
To prove ({b}), ({c}), and ({d}) of the present theorem,
we now assume that the space $(X,\rho)$ is uniformly perfect and
let $\lambda\in(0,1)$ be as in \eqref{U-perf}.
For the remainder of the proof, without loss of generality,  we may
assume\footnote{Recall that we may make this assumption
without sacrificing the generality of our result.}
$\lambda<(C_\rho\widetilde{C}_\rho)^{-2}$,
where $C_\rho,\,\widetilde{C}_\rho\in[1,\infty)$
are as in \eqref{C-RHO.111} and \eqref{C-RHO.111XXX}, respectively.

In order to prove the statement (b), fix a point $x\in X$ and a finite radius $r\in(0,{\rm diam}_\rho(X)]$.
Let $B:=B_\rho(x,r)$.  Our goal is to establish an estimate like the one in \eqref{hj-10}.
To accomplish this, assume for the moment that $r\leq C_\rho\varphi^x_{\rho}(r)/\lambda^2$.
Then, by Lemma~\ref{HolderBump}  with $c_0:=C_\rho\lambda^{-2}\in(1,\infty)$,
we find that there exist a sequence $\{r_j\}_{j\in\mathbb{N}}\subset(0,\fz)$ and a collection $\{u_j\}_{j\in\mathbb{N}}$
of functions
 satisfying the properties (a)-(f)   of
Lemma~\ref{HolderBump}. In particular,    for any $j\in\nn$,  $u_j$ satisfies
\begin{equation}
\label{Gkw-924}
r^{s-Q/p}\,\|u_j\|_{\dot{M}^s_{p,q}(\sigma B)}\lesssim 2^{j}r^{-Q/p}\lf[\mu(B_{\varrho_\#}(x,r_j))\r]^{1/p}
\end{equation}
and, accordingly,  $u_j$ satisfies \eqref{HHs-175}, due to the assumption of (b).
Moreover,    Lemma~\ref{HolderBump}(f) implies that
\begin{align}
\label{Gkw-925}
\inf_{\gamma\in\mathbb{R}}\left(\, \mvint_{B} |u-\gamma|^{p^*}\, d\mu\right)^{1/p^*}
\ge \frac12 \inf_{\gamma\in\mathbb{R}} \left[\frac{\mu(E_{j}^\gamma)}{\mu(B_\rho(x,r))}\right]^{1/p^*}
\geq\frac{1}{2}\left[\frac{\mu(B_{\varrho_{\#}}(x,r_{j+1}))}{\mu(B_\rho(x,r))}\right]^{1/p^*},
\end{align}
where $E_j^\gamma$ is as in  Lemma~\ref{HolderBump}{(f)}.
Combining \eqref{HHs-175}, \eqref{Gkw-924}, and \eqref{Gkw-925}, we conclude that
\begin{equation*}
\frac{1}{2}\left[\frac{\mu(B_{\varrho_{\#}}(x,r_{j+1}))}{\mu(B_\rho(x,r))}\right]^{1/p^*}
\lesssim 2^{j}\,r^{-Q/p}\lf[\mu(B_{\varrho_{\#}}(x,r_j))\r]^{1/p}.
\end{equation*}
Hence,
\begin{equation*}
\lf[\mu(B_{\varrho_{\#}}(x,r_{j+1}))\r]^{1/p^*}
\leq
\left(\frac{C''[\mu(B_\rho(x,r))]^{1/p^*}}{r^{Q/p}}\right) 2^{j}\lf[\mu(B_{\varrho_{\#}}(x,r_j))\r]^{1/p},
\quad\forall\,j\in\mathbb{N},
\end{equation*}
where $C''$ is a positive
constant independent of $x$, $r$, and $j$.
At this stage, we have arrived at a step similar to \eqref{hj-10}.
Thus, arguing as in \eqref{hj-10} and \eqref{hj-1}, we conclude that
$\mu$ satisfies the desired estimate in \eqref{lowermeasure}
for any balls having radii $r\in (0,{\rm diam}_\rho(X)]$
for which $r\leq C_\rho\varphi^x_{\rho}(r)/\lambda^2$. Now, appealing to
Lemma~\ref{en2-4}(i) implies that
$$
\mu(B_\rho(x,r))\gtrsim r^Q\quad\mbox{whenever\ $x\in X$\ and\ $r\in (0,{\rm diam}_\rho(X)]$\ is\ finite.}
$$
This finishes the proof of the statement in ({b}).
	
We now turn our attention to proving ({c}) by fixing $x\in X$ and a finite number $r\in(0,{\rm diam}_\rho(X)]$,
and setting $B:=B_\rho(x,r)$. As in the above proof of ({b}), we
first consider the case when $r\leq C_\rho\varphi^x_{\rho}(r)/\lambda^2$.
Let $\{r_j\}_{j\in\mathbb{N}}$ and $\{u_j\}_{j\in\mathbb{N}}$
be as in Lemma~\ref{HolderBump} having the properties ({a})-({f}) therein.
Then
\begin{equation}
\label{yre-1}
0<\|u_j\|_{\dot{M}^s_{Q/s,q}(\sigma B)}\lesssim 2^{j}r^{-s}\lf[\mu(B_{\varrho_{\#}}(x,r_j))\r]^{s/Q},
\quad\forall\,j\in\mathbb{N}.
\end{equation}
Hence, $u_j$ satisfies \eqref{hdx-3}. Now,  combining \eqref{yre-1},  Lemma~\ref{HolderBump}(f),
and \eqref{hdx-3}, with $\gamma:=(u_j)_B$, we conclude that
\begin{align}
\label{Iue.5-2-X}
&\frac{\mu(B_{\varrho_{\#}}(x,r_{j+1}))}{\mu(B_\rho(x,r))}\,
{\rm exp}\left(\frac{Cr^s}{2^{j+1}\,[\mu(B_{\varrho_{\#}}(x,r_{j}))]^{s/Q}}\right)^\omega\nonumber\\
&\quad\le
\frac{1}{\mu(B_\rho(x,r))}\,\int_{E_j^\gamma}
{\rm exp}\left(\frac{Cr^s}{2^{j+1}\,[\mu(B_{\varrho_{\#}}(x,r_{j}))]^{s/Q}}\right)^\omega\,d\mu\nonumber\\
&\quad\le
\frac{1}{\mu(B_\rho(x,r))}\,\int_{E_j^\gamma}
{\rm exp}\left(\frac{C_1|u_j-(u_j)_B|}{\|u_j\|_{\dot{M}^s_{Q/s,q}(\sigma B)}}\right)^\omega\,d\mu\leq C_2
\end{align}
for some constant $C\in(0,\infty)$ independent of $x$, $r$, and $j$,
where $E_j^\gamma$ is as in Lemma~\ref{HolderBump}(f).
Without loss of generality, we may assume that $C_2>1$.
Then, using the estimate
$$\log(y)\leq 2Q(s\omega)^{-1}\,y^{s\omega/2Q},\quad\ \forall\ \  y\in(0,\infty),$$
a rewriting of \eqref{Iue.5-2-X} implies
\begin{align*}
\frac{Cr^s}{2^{j}\,[\mu(B_{\varrho_{\#}}(x,r_{j}))]^{s/Q}}
&\leq\lf[\log\lf(C_2\frac{\mu(B_\rho(x,r))}{\mu(B_{\varrho_{\#}}(x,r_{j+1}))}\r)\r]^{1/\omega}\\
&\leq \lf[2Q(s\omega)^{-1}\r]^{1/\omega}\,C_2^{s/(2Q)}\lf[\frac{\mu(B_\rho(x,r))}{\mu(B_{\varrho_{\#}}(x,r_{j+1}))}\r]
^{s/(2Q)}.
\end{align*}
Therefore,
\begin{equation*}
\lf[\mu(B_{\varrho_{\#}}(x,r_{j+1}))\r]^{s/(2Q)}\le \lf[2Q(s\omega)^{-1}\r]^{1/\omega}\,C_2^{s/(2Q)}
 \frac{[\mu(B_\rho(x,r))]^{s/(2Q)}}{Cr^s} \,2^{j}\,\lf[\mu(B_{\varrho_{\#}}(x,r_{j}))\r]^{s/Q}.
\end{equation*}
Now, applying Lemma~\ref{iteration} with the quasi-metric $\varrho_{\#}$, and
$$
p:=Q/s,\quad
t:=2Q/s,\quad
\text{and}
\quad
\theta:=\lf[2Q(s\omega)^{-1}\r]^{1/\omega}\,C_2^{s/(2Q)}\frac{\mu(B_\rho(x,r))^{s/(2Q)}}{Cr^s},
$$
we obtain
$$
\mu(B_\rho(x,r))\geq\mu(B_{\varrho_{\#}}(x,r_1))\gtrsim
\left[\frac{\mu(B_\rho(x,r))^{s/(2Q)}}{Cr^s}\right]^{-2Q/s}\,
2^{-4Q/s}.
$$
Therefore, we have $\mu(B_\rho(x,r))\gtrsim \,r^Q$
for all $\rho$-balls having radii $r\in\big(0,{\rm diam}_\rho(X)\big]$
for which $r\leq C_\rho\varphi^x_{\rho}(r)/\lambda^2$.
Now the desired condition in  \eqref{lowermeasure}
follows from  Lemma~\ref{en2-4}(i). The proof of (c) is then complete.

It remains to prove the statement in (d). Fix a point $x\in X$
and a finite radius $r\in (0,{\rm diam}_\rho(X)]$. If $B:=B_\rho(x,r)=X$,
then $r={\rm diam}_\rho(X)\in(0,\infty)$ and $\mu(X)\in(0,\infty)$, and  hence
\begin{equation*}
\mu(B_\rho(x,r))=\mu(X)=\mu(X)[{\rm diam}_\rho(X)]^{-Q}\,r^Q.
\end{equation*}
Thus, the measure condition in \eqref{lowermeasure} is
satisfied with $\kappa:=\mu(X)[{\rm diam}_\rho(X)]^{-Q}\in(0,\infty)$.
Therefore, we will assume that $X\setminus B_\rho(x,r)\neq\emptyset$
in what follows. Since $\varrho\approx\rho$ and $\varrho_{\#}\approx\varrho$
(see Theorem~\ref{DST1}), we have $\varrho_{\#}\approx\rho$.
Hence, there is a constant $c\in(1,\infty)$ such that
$$
c^{-1}\varrho_{\#}(z,y)\leq\rho(z,y)\leq c\,\varrho_{\#}(z,y),\quad\forall\,z,\, y\in X.
$$

If we let $r_0:=r/c$, then $B_{\varrho_{\#}}(x,r_0)\subset B_\rho(x,r)$,
and so $X\setminus B_{\varrho_{\#}}(x,r_0)\neq\emptyset$. Moreover,
since $(X,\rho)$ is uniformly perfect and $\varrho_{\#}\approx \rho$,
we find that $(X,\varrho_{\#})$ also satisfies the uniformly perfect property
with  constant $\lambda':=c^{-2}\lambda<\lambda$. Thus, we may select a
point $x_0\in B_{\varrho_{\#}}(x,r_0)\setminus B_{\varrho_{\#}}(x,\lambda' r_0)$.
Fix any finite number $\alpha\in[s,(\log_{2}C_\varrho)^{-1}]$,
where $\alpha\neq s$ unless $s=(\log_{2}C_\varrho)^{-1}$, and
consider the function $\Phi_{0,\lambda' r_0}\colon\, X\to[0,1]$
given by Lemma~\ref{GVa2}. Then $\Phi_{0,\lambda' r_0}\in M^s_{p,q}(X,\rho,\mu)$ and \eqref{gradest} implies that
$$
\lf\|\Phi_{0,\lambda' r_0}\r\|_{\dot{M}^s_{p,q}(X)}
\lesssim \lf[(\lambda' r_0)^\alpha-0^\alpha\r]^{-s/\alpha}\lf[\mu(B_{\varrho_\#}(x,\lambda' r_0))\r]^{1/p}\lesssim (\lambda' r_0)^{-s}\lf[\mu(B_{\varrho_\#}(x,\lambda' r_0))\r]^{1/p}.
$$
In particular,   $\Phi_{0,\lambda' r_0}$
satisfies \eqref{hdx-4}. Then, since
$x_0\in B_{\varrho_{\#}}(x,r_0)\setminus B_{\varrho_{\#}}(x,\lambda' r_0)$,  by
the inclusion $B_{\varrho_{\#}}(x,\lambda'r_0)\subset B_\rho(x,r)$,
inequality \eqref{hdx-4}, and the fact that $\Phi_{0,\lambda' r_0}$ is
supported in $B_{\varrho_{\#}}(x,\lambda' r_0)$, we conclude that
\begin{align*}
1&=\lf|\Phi_{0,\lambda' r_0}(x)-\Phi_{0,\lambda' r_0}(x_0)\r|
\leq C_H [\rho(x,x_0)]^{s-Q/p}\lf\|\Phi_{0,\lambda' r_0}\r\|_{\dot{M}^s_{p,q}(X)}\\
&\lesssim r_0^{s-Q/p}\lf(\lambda' r_0\r)^{-s}\lf[\mu(B_{\varrho_{\#}}(x,\lambda' r_0))\r]^{1/p}
\lesssim r^{-Q/p}\lf[\mu(B_\rho(x,r))\r]^{1/p},
\end{align*}
from which \eqref{lowermeasure} follows. This finishes the proof of (d).

The proof of statements (a)-(d) for Haj\l asz--Besov spaces
follows along the same line of reasoning as above.
This finishes the proof of Theorem~\ref{LMeasINT}.	
\end{proof}

According to Theorems~\ref{LMeasINT} and  \ref{LBembedding}, when $p>Q/s$,
the lower Ahlfors-regularity condition \eqref{lowermeasure}
is equivalent to   functions in $M^s_{p,q}$ and $N^s_{p,q}$
satisfying a global H\"older-type condition of order $s-Q/p$
(see \eqref{hdx-4}). In this vein,
we conclude this section by investigating the
relationship between \eqref{lowermeasure} and the
global Sobolev and Sobolev--Poincar\'e inequalities
for the spaces $M^s_{p,q}$ and $N^s_{p,q}$ when $p<Q/s$.

\begin{theorem}\label{GlobalEmbedd1}
Let $(X,\rho,\mu)$ be a quasi-metric measure space and
fix parameters $s,\,p,\,Q\in(0,\infty)$ and $q\in(0,\infty]$
satisfying $s\preceq_q{\rm ind}\,(X,\rho)$ and $sp<Q$.
Then, with $p^*:=Qp/(Q-sp)$, the following statements are valid.
\begin{enumerate}[label=\rm{(\alph*)}]
\item If there exists a positive constant $C_1$ satisfying
\begin{equation*}
\|u\|_{L^{p^\ast}(X)}\leq C_1\|u\|_{M^s_{p,q}(X)}\quad\ \forall\ u\in M^s_{p,q}(X,\rho,\mu),
\end{equation*}
then there exists a positive constant $\kappa$ such that
\begin{eqnarray*}
\mu(B_\rho(x,r))\geq \kappa\,r^Q \quad\mbox{for any}\ x\in X\ \mbox{ and any}\ r\in(0,1].
\end{eqnarray*}
\end{enumerate}

\noindent If the space $(X,\rho)$ is uniformly perfect,
then the following statement also holds true.

\begin{enumerate}[label=\rm{(\alph*)}]\addtocounter{enumi}{1}

\item If there exists a positive constant $C_2$ satisfying
\begin{equation}\label{yvx-1}
\inf_{\gamma\in\mathbb{R}}\|u-\gamma\|_{L^{p^\ast}(X)}\leq
C_2\|u\|_{\dot{M}^s_{p,q}(X)},\quad\ \forall\ u\in \dot{M}^s_{p,q}(X,\rho,\mu),
\end{equation}
then there exists a positive constant $\kappa$ such that
\begin{eqnarray*}
\mu(B_\rho(x,r))\geq \kappa\,r^Q\quad\  \mbox{for any }\ x\in X\ \mbox{ and any finite }\ r\in(0,{\rm diam}_\rho(X)].
\end{eqnarray*}
\end{enumerate}
In addition, all of the statements above  are also valid with
$M^s_{p,q}$ and $\dot{M}^s_{p,q}$ replaced, respectively,
by $N^s_{p,q}$ and $\dot{N}^s_{p,q}$.
\end{theorem}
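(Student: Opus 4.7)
The plan is to adapt the strategy from the proof of Theorem~\ref{LMeasINT}, using the bump functions provided by Lemma~\ref{HolderBump} together with the iteration scheme of Lemma~\ref{iteration}. Since $s\preceq_q{\rm ind}\,(X,\rho)$, we may fix a quasi-metric $\varrho\approx\rho$ with $s\leq(\log_2C_\varrho)^{-1}$ (equality permitted only when $q=\infty$ and the supremum in \eqref{index} is attained), and work throughout with the regularized quasi-metric $\varrho_{\#}$ furnished by Theorem~\ref{DST1}.

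For part (a), fix $x\in X$ and $r\in(0,1]$. When $r>{\rm diam}_\rho(X)$ (which can only occur if $X$ is bounded), the estimate is immediate since $B_\rho(x,r)=X$ and $r^Q\leq 1$, so we may assume $r\leq{\rm diam}_\rho(X)$ and invoke Lemma~\ref{HolderBump} to obtain a sequence $\{r_j\}_{j\in\nn}$ and a collection $\{u_j\}_{j\in\nn}$ associated to $B_\rho(x,r)$. Since each $u_j$ is supported in $B_{\varrho_{\#}}(x,r_j)$ with $0\leq u_j\leq 1$, parts (c) and (e) of Lemma~\ref{HolderBump} yield
$$\|u_j\|_{L^p(X)}\leq[\mu(B_{\varrho_{\#}}(x,r_j))]^{1/p}\quad\text{and}\quad\|u_j\|_{\dot{M}^s_{p,q}(X)}\ls 2^j r^{-s}[\mu(B_{\varrho_{\#}}(x,r_j))]^{1/p}.$$
The restriction $r\leq 1$ gives $1+r^{-s}\ls r^{-s}$, so $\|u_j\|_{M^s_{p,q}(X)}\ls 2^j r^{-s}[\mu(B_{\varrho_{\#}}(x,r_j))]^{1/p}$. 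Since $u_j\equiv 1$ on $\overline{B}_{\varrho_{\#}}(x,r_{j+1})$, the hypothesized global embedding yields
$$[\mu(B_{\varrho_{\#}}(x,r_{j+1}))]^{1/p^*}\leq \|u_j\|_{L^{p^*}(X)}\ls 2^j r^{-s}[\mu(B_{\varrho_{\#}}(x,r_j))]^{1/p},\quad\forall\ j\in\nn.$$
Lemma~\ref{iteration} applied with $p:=p$, $t:=p^*$, and $\theta\approx r^{-s}$, together with the identity $pt/(t-p)=Q/s$, yields $\mu(B_\rho(x,r))\geq\mu(B_{\varrho_{\#}}(x,r_1))\gtrsim r^Q$, as desired.

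For part (b), assume $(X,\rho)$ is uniformly perfect with parameter $\lambda\in(0,(C_\rho\widetilde{C}_\rho)^{-2})$ as in \eqref{U-perf}. Fix $x\in X$ and a finite $r\in(0,{\rm diam}_\rho(X)]$, and first treat the subclass of balls satisfying $r\leq C_\rho\varphi^x_{\rho}(r)/\lambda^2$, where $\varphi^x_\rho(r)$ is as in \eqref{radphi}. Lemma~\ref{HolderBump} applied with $c_0:=C_\rho\lambda^{-2}$ then furnishes $\{r_j\}$ and $\{u_j\}$ satisfying the additional property (f). Combining the semi-norm bound $\|u_j\|_{\dot{M}^s_{p,q}(X)}\ls 2^j r^{-s}[\mu(B_{\varrho_{\#}}(x,r_j))]^{1/p}$ with the fact that, for every $\gamma\in\rr$, there is $E_j^\gamma\subset B_\rho(x,r)$ with $\mu(E_j^\gamma)\geq\mu(B_{\varrho_{\#}}(x,r_{j+1}))$ and $|u_j-\gamma|\geq\tfrac{1}{2}$ on $E_j^\gamma$, the hypothesis \eqref{yvx-1} gives
$$\tfrac12[\mu(B_{\varrho_{\#}}(x,r_{j+1}))]^{1/p^*}\leq\inf_{\gamma\in\rr}\|u_j-\gamma\|_{L^{p^*}(X)}\leq C_2\|u_j\|_{\dot{M}^s_{p,q}(X)}\ls 2^j r^{-s}[\mu(B_{\varrho_{\#}}(x,r_j))]^{1/p}.$$
Lemma~\ref{iteration} then yields $\mu(B_\rho(x,r))\gtrsim r^Q$ for every such ball, and Lemma~\ref{en2-4}(i) upgrades this to the full class of balls with finite radius in $(0,{\rm diam}_\rho(X)]$. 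The $\dot{N}^s_{p,q}$ analogues of both (a) and (b) are proved identically, because Lemma~\ref{HolderBump}(e) supplies the same bounds for the Haj\l asz--Besov semi-norms of the bump functions.

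The main technical point is the interplay, in (b), between the restriction $r\leq C_\rho\varphi^x_{\rho}(r)/\lambda^2$ (which is precisely what enables property (f) of Lemma~\ref{HolderBump} and thereby converts the Poincar\'e-type hypothesis into the quantitative inequality above) and the subsequent appeal to Lemma~\ref{en2-4}(i) needed to remove this restriction and obtain the conclusion for all balls. A secondary accounting point in both parts is the verification that the exponent identity $pt/(t-p)=Q/s$ in Lemma~\ref{iteration} is exactly what converts the $r^{-s}$ factor picked up in the $\dot{M}^s_{p,q}$ norm of $u_j$ into the target $r^Q$ lower bound on $\mu(B_\rho(x,r))$.
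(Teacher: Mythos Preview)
Your proof is correct and follows essentially the same approach as the paper, which simply states that parts (a) and (b) of Theorem~\ref{GlobalEmbedd1} are proved analogously to parts (a) and (b) of Theorem~\ref{LMeasINT} and omits the details. Your write-up faithfully carries out that adaptation: the bump functions from Lemma~\ref{HolderBump}, the iteration via Lemma~\ref{iteration} with the identity $pt/(t-p)=Q/s$, and (for part (b)) the restriction $r\leq C_\rho\varphi^x_\rho(r)/\lambda^2$ followed by the upgrade through Lemma~\ref{en2-4}(i) are exactly the ingredients the paper has in mind.
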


The proofs of parts (a) and (b) in
Theorem~\ref{GlobalEmbedd1} are similar to those
of (a) and (b) in
Theorem~\ref{LMeasINT}, respectively. We omit the  details.

\begin{remark}
As a consequence of Theorems~\ref{GlobalEmbedd1}
and \ref{LBembedding},  on uniformly perfect quasi-metric measure spaces,
the global Sobolev--Poincar\'e inequality in \eqref{yvx-1},
as well as  its Haj\l asz--Besov space variant, implies all
of the local estimates in \eqref{eq18-LB}-\eqref{eq20-LB}
as well as the global H\"older condition in \eqref{eq30-LB}.
\end{remark}

As is illustrated by the following result, in certain settings,
the lower Ahlfors-regularity condition \eqref{lowermeasure}
is actually equivalent to the global Sobolev
and Sobolev--Poincar\'e inequalities for the
spaces $\dot{M}^s_{p,q}$ and $\dot{N}^s_{p,q}$   when $p<Q/s$.

\begin{theorem}\label{GlobalEmbeddCor}
Let $(X,\rho,\mu)$ be a quasi-metric measure space and fix parameters
$s,\,p,\,Q\in(0,\infty)$ and $q\in(0,\infty]$ satisfying
$s\preceq_q{\rm ind}\,(X,\rho)$ and $sp<Q$.
Also, suppose that either ${\rm diam}_\rho(X)<\infty$
or $\mu$ is $Q$-doubling $($see \eqref{Doub-2}$)$.	
Then, with $p^*:=Qp/(Q-sp)$, the following statements are equivalent.
\begin{enumerate}[label=\rm{(\alph*)}]
\item There exists a positive constant $\kappa$ such that
\begin{equation}
\label{gxz-2}
\kappa\,r^Q\leq\mu(B_\rho(x,r))\,\,\,\mbox{for any }\ x\in X\
\mbox{and any finite}\ r\in(0,{\rm diam}_\rho(X)].
\end{equation}
	
\item There exists a  positive constant $C_1$ satisfying
\begin{equation}\label{GBLS}
\|u\|_{L^{p^*}(X)}\leq C_1\|u\|_{\dot{M}^s_{p,q}(X)}+\frac{C_1}{[{\rm diam}_\rho(X)]^s}\,\|u\|_{L^p(X)}\quad\mbox{for any }\ u\in \dot{M}^s_{p,q}(X,\rho,\mu).
\end{equation}		
\end{enumerate}

\noindent If the space $(X,\rho)$ is uniformly perfect, then {\rm (a)} (hence,
also {\rm(b)}) is further equivalent to

\begin{enumerate}[label=\rm{(\alph*)}]\addtocounter{enumi}{2}	
\item There exists a positive constant $C_2$ satisfying
\begin{equation}\label{GBLP}
\inf_{\gamma\in\mathbb{R}}\|u-\gamma\|_{L^{p^\ast}(X)}\leq
C_2\|u\|_{\dot{M}^s_{p,q}(X)}\quad\mbox{for any}\ u\in \dot{M}^s_{p,q}(X,\rho,\mu).
\end{equation}
\end{enumerate}
In addition, if $q\leq p$ then all of the
statements above continue to be equivalent with $\dot{M}^s_{p,q}$ replaced by $\dot{N}^s_{p,q}$.
\end{theorem}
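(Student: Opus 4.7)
My plan is to establish the equivalence (a) $\Leftrightarrow$ (b) in general, and then, under uniform perfectness, to enhance this to the full chain (a) $\Leftrightarrow$ (b) $\Leftrightarrow$ (c); the Besov analogues will follow by parallel arguments throughout. The forward implications (a) $\Rightarrow$ (b),(c) will reduce to the local embeddings of Theorems~\ref{LBembedding} and \ref{DOUBembedding}, with their coefficients collapsing to the desired form by virtue of the lower Ahlfors bound. The reverse implications (b),(c) $\Rightarrow$ (a) will follow the bump-function-plus-iteration scheme of Theorem~\ref{LMeasINT}: apply the assumed global inequality to the bump functions furnished by Lemma~\ref{HolderBump} and iterate using Lemma~\ref{iteration}.

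\textbf{Forward direction.} When $\mu$ is $Q$-doubling, apply Theorem~\ref{DOUBembedding}(a) on $B_0:=B_\rho(x_0,R_0)$. The lower Ahlfors condition yields $[\mu(\sigma B_0)]^{-s/Q}\lesssim [\min\{\sigma R_0,{\rm diam}_\rho(X)\}]^{-s}$, so the estimate \eqref{eq18-DOUB} simplifies to
\[
\|u\|_{L^{p^*}(B_0)}\lesssim \|u\|_{\dot{M}^s_{p,q}(\sigma B_0)} + [\min\{R_0,{\rm diam}_\rho(X)\}]^{-s}\,\|u\|_{L^p(\sigma B_0)}.
\]
Letting $R_0\uparrow{\rm diam}_\rho(X)$ (or $R_0\to\fz$ when the diameter is infinite) and appealing to monotone convergence produces \eqref{GBLS}. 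In the remaining case, ${\rm diam}_\rho(X)<\fz$ without doubling, apply Theorem~\ref{LBembedding}(a) with $R_0:={\rm diam}_\rho(X)$ to balls $B_\rho(y,R_0)$ for $y\in X$: the coefficient $[\mu(B_\rho(y,R_0))]^{1/p^*}R_0^{-Q/p}$ is controlled by a constant multiple of $R_0^{-s}$ via the lower Ahlfors bound together with $\mu(B_\rho(y,R_0))\le \mu(X)<\fz$, and a maximal $R_0/(2C_\rho)$-separated net in $X$ is forced to be finite (since disjoint sub-balls each have measure $\gtrsim R_0^Q$ while $\mu(X)<\fz$). Summing the local estimates over this finite cover via the $\ell^{p^*}$-triangle-type inequality yields \eqref{GBLS}. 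The implication (a) $\Rightarrow$ (c) follows by the same scheme starting from the Sobolev--Poincar\'e variants \eqref{eq19-LB} and \eqref{eq19-DOUB}; here the arithmetic identity $1/p^*-1/p=-s/Q$ causes the coefficient $[\mu(B_0)]^{1/p^*}R_0^{s-Q/p}$ to collapse to a pure constant, and no $L^p$-term appears on the right-hand side.

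\textbf{Reverse direction and Besov case.} For (b) $\Rightarrow$ (a), fix $x\in X$ and a finite $r\in(0,{\rm diam}_\rho(X)]$ and apply \eqref{GBLS} to the bump functions $\{u_j\}_{j\in\nn}$ of Lemma~\ref{HolderBump} associated with $B_\rho(x,r)$. Parts (b)--(e) of that lemma, combined with $u_j\equiv 1$ on $B_{\varrho_{\#}}(x,r_{j+1})$, give
\[
[\mu(B_{\varrho_{\#}}(x,r_{j+1}))]^{1/p^*}\lesssim \bigl(2^j r^{-s}+[{\rm diam}_\rho(X)]^{-s}\bigr)[\mu(B_{\varrho_{\#}}(x,r_j))]^{1/p};
\]
since $r\le {\rm diam}_\rho(X)$ the second term is absorbed into the first, producing exactly the hypothesis of Lemma~\ref{iteration} (cf.~\eqref{hj-10}), which then yields $\mu(B_\rho(x,r))\geq\mu(B_{\varrho_{\#}}(x,r_1))\gtrsim r^Q$. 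Under uniform perfectness, the implication (c) $\Rightarrow$ (a) runs along the lines of the proof of Theorem~\ref{LMeasINT}(b), with \eqref{GBLP} in place of \eqref{GBLS}: dichotomize on the condition $r\leq C_\rho\varphi^x_\rho(r)/\lambda^2$, use Lemma~\ref{HolderBump}(f) to bound $\inf_\gamma\|u_j-\gamma\|_{L^{p^*}(X)}\geq\tfrac12[\mu(B_{\varrho_{\#}}(x,r_{j+1}))]^{1/p^*}$ from below and iterate, and invoke Lemma~\ref{en2-4}(i) to propagate the resulting lower bound from the uniformly-perfect regime to all radii. The Besov versions are parallel throughout: Lemma~\ref{HolderBump}(e) delivers the matching bounds on $\|u_j\|_{\dot{N}^s_{p,q}}$, and the $\dot{N}^s_{p,q}$ halves of Theorems~\ref{LBembedding} and \ref{DOUBembedding} (valid for $q\leq p$) supply the local ingredients. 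The main technical obstacle is the covering step in the finite-diameter non-doubling case of the forward direction: one must produce a finite cover of $X$ by $\rho$-balls of radius at most ${\rm diam}_\rho(X)$ and pin down a uniform summation constant, both requiring careful use of the lower Ahlfors condition together with the quasi-triangle inequality.
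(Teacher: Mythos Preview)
Your reverse implications (b)$\Rightarrow$(a) and (c)$\Rightarrow$(a) are correct and match the paper's approach (which routes them through Theorem~\ref{GlobalEmbedd1}, whose proof is itself the bump-function-plus-iteration argument you describe). Your forward implication (a)$\Rightarrow$(b) is also correct, though in the finite-diameter non-doubling case you are working much harder than necessary: the paper simply observes that the lower Ahlfors bound extends trivially to radii $r\in(0,2\,{\rm diam}_\rho(X)]$ (since $\mu(B_\rho(x,r))\geq\mu(B_\rho(x,r/2))\geq\kappa 2^{-Q}r^Q$), and then takes a \emph{single} ball $B_0:=X$ of radius $R_0:=2\,{\rm diam}_\rho(X)$, so that the local embedding of Remark~\ref{embedlocalLB} \emph{is} the global one. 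No covering or net argument is needed.

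There is, however, a genuine gap in your (a)$\Rightarrow$(c). Your plan is to start from the local Sobolev--Poincar\'e inequality \eqref{eq19-DOUB} (or \eqref{eq19-LB}), obtain $\inf_{\gamma}\|u-\gamma\|_{L^{p^\ast}(B_0)}\lesssim\|u\|_{\dot{M}^s_{p,q}(X)}$ for each ball $B_0$, and then ``let $R_0\uparrow{\rm diam}_\rho(X)$ by monotone convergence.'' But the infimum on the left is achieved at a constant $\gamma=\gamma(B_0)$ that depends on the ball, and there is no monotonicity that forces $\inf_\gamma\|u-\gamma\|_{L^{p^\ast}(B_k)}\to\inf_\gamma\|u-\gamma\|_{L^{p^\ast}(X)}$. (The same difficulty undermines your covering approach in the finite-diameter case: you would have one $\gamma_i$ per covering ball, and the $\ell^{p^\ast}$-triangle inequality does not combine them into a single global $\gamma$.) The paper resolves this in the unbounded doubling case by a compactness argument: fix $x_0$, set $B_k:=B_\rho(x_0,k)$, choose $\gamma_k$ nearly realizing the infimum on $B_k$, and then use the \emph{already established} implication (a)$\Rightarrow$(b) to show that $\|u\|_{L^{p^\ast}(B_1)}\lesssim\|u\|_{\dot{M}^s_{p,q}(X)}$, whence
\[
|\gamma_k|\lesssim[\mu(B_1)]^{-1/p^\ast}\big(\|u-\gamma_k\|_{L^{p^\ast}(B_1)}+\|u\|_{L^{p^\ast}(B_1)}\big)\lesssim\|u\|_{\dot{M}^s_{p,q}(X)}.
\]
Thus $\{\gamma_k\}$ is bounded; extracting a convergent subsequence $\gamma_{k_j}\to\gamma_\ast$ and applying Fatou's lemma to $|u-\gamma_{k_j}|^{p^\ast}{\bf 1}_{B_{k_j}}$ yields $\|u-\gamma_\ast\|_{L^{p^\ast}(X)}\lesssim\|u\|_{\dot{M}^s_{p,q}(X)}$. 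You need to supply this step (or, in the finite-diameter case, adopt the single-ball trick above, which sidesteps the issue entirely).
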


\begin{proof}
It follows from Theorem~\ref{GlobalEmbedd1}(b) that
Theorem~\ref{GlobalEmbeddCor}(c) implies Theorem~\ref{GlobalEmbeddCor}(a).

Now we prove  Theorem~\ref{GlobalEmbeddCor}(b)
implies Theorem~\ref{GlobalEmbeddCor}(a).
If  ${\rm diam}_\rho(X)<\infty$,
then, by Theorem~\ref{GlobalEmbedd1}(a),  \eqref{GBLS} implies that
there exists a positive constant $\kappa$ such that
\begin{equation*}
\mu(B_\rho(x,r))\geq \kappa\,r^Q \quad \text{for any $x\in X$ and  $r\in(0,1]$.}
\end{equation*}
If
$1<r\leq{\rm diam}_\rho(X)<\infty$, then $[{\rm diam}_\rho(X)]^{-1}r\leq1$, which implies
$$\mu(B_\rho(x,r))\geq \mu\lf(B_\rho\lf(x,[{\rm diam}_\rho(X)]^{-1}r\r)\r)\geq\kappa[{\rm diam}_\rho(X)]^{-Q}\,r^Q.$$
Hence, \eqref{GBLS} implies that \eqref{gxz-2}
in the case when ${\rm diam}_\rho(X)<\infty$.
If ${\rm diam}_\rho(X)=\infty$, then \eqref{GBLS} reduces to
\begin{equation*}
\|u\|_{L^{p^*}(X)}\ls \|u\|_{\dot{M}^s_{p,q}(X)},\quad\forall\,u\in \dot{M}^s_{p,q}(X,\rho,\mu).
\end{equation*}
At this stage, one can also prove \eqref{gxz-2}, by an argument similar to that used in  the proof of  Theorem~\ref{LMeasINT}(a). This finishes the proof of
showing that Theorem~\ref{GlobalEmbeddCor}(b) implies Theorem~\ref{GlobalEmbeddCor}(a).

It remains to show that Theorem~\ref{GlobalEmbeddCor}(a)
implies both (b)  and (c) of Theorem~\ref{GlobalEmbeddCor}.
At this time we need to use  the assumption that either ${\rm diam}_\rho(X)<\infty$
or $\mu$ is $Q$-doubling.

Assume that \eqref{gxz-2} holds true and suppose first that ${\rm diam}_\rho(X)<\infty$.
Observe that, if $x\in X$ and $r\in({\rm diam}_\rho(X),2{\rm diam}_\rho(X)]$,
then   $r/2\leq{\rm diam}_\rho(X)$, which, together with \eqref{gxz-2}, implies
$$\mu(B_\rho(x,r))\geq \mu(B_\rho\big(x,r/2\big))\geq\kappa2^{-Q}\,r^Q.$$
Hence, \eqref{gxz-2} holds true for any
$x\in X$ and $r\in(0,2{\rm diam}_\rho(X)]$, with $\kappa$ therein
replaced by $\kappa 2^{-Q}$.
Then, by taking any $\rho$-ball $B_0:=X$ having radius $R_0:=2{\rm diam}_\rho(X)$,
and applying  Remark~\ref{embedlocalLB} (with $r_\ast=2{\rm diam}_\rho(X)$), we conclude that
\eqref{GBLS} and \eqref{GBLP} are immediate consequences of \eqref{eq18-LB}
and \eqref{eq19-LB} in Theorem~\ref{LBembedding}. This proves that  Theorem~\ref{GlobalEmbeddCor}(a)
implies both (b)  and (c) of Theorem~\ref{GlobalEmbeddCor} in the case
when ${\rm diam}_\rho(X)<\infty$.

Now we assume that \eqref{gxz-2} holds true and $\mu$ is $Q$-doubling.
Fix
$u\in \dot{M}^s_{p,q}(X,\rho,\mu)$. Then, for any $\rho$-ball $B\subset X$,
$u\in \dot{M}^s_{p,q}(C_\rho B,\rho,\mu)$, where $C_\rho$ is as in \eqref{C-RHO.111}.
From this, applying  \eqref{eq18-DOUB} in Theorem~\ref{DOUBembedding}
with $\sigma:=C_\rho$ and \eqref{gxz-2}, we deduce that,
for any $\rho$-ball $B:=B_\rho(x,r)\subset X$ with $r\in(0,{\rm diam}_\rho(X))$,
\begin{align}
\label{eq18-DOUB-X}
\Vert u\Vert_{L^{p^\ast}(B)}
&\lesssim \lf[\mu(\sigma B)\r]^{-s/Q}\left[
r^s\Vert u\Vert_{\dot{M}^s_{p,q}(\sigma B)}
+\Vert u\Vert_{L^{p}(\sigma B)}\right]\nonumber\\
&\lesssim r^{-s}\left[
r^s\Vert u\Vert_{\dot{M}^s_{p,q}(\sigma B)}
+\Vert u\Vert_{L^{p}(\sigma B)}\right]\nonumber
\\
&\lesssim \Vert u\Vert_{\dot{M}^s_{p,q}(X)}
+r^{-s}\Vert u\Vert_{L^{p}(X)}.
\end{align}
Letting $r$ tend to  ${\rm diam}_\rho(X)$ in \eqref{eq18-DOUB-X},  we obtain
the desired estimate \eqref{GBLS}. Thus, (b) is proven.

We now turn our attention to proving (c).
By what we have already shown above,
it suffices to consider the case when
${\rm diam}_\rho(X)=\infty$. With this in mind,
fix a point $x_0\in X$ and let $B_k:=B(x_0,k)$ for any $k\in\nn$.
Appealing to inequality \eqref{eq19-DOUB} in Theorem~\ref{DOUBembedding}
(used here with $\sigma:=C_\rho$), the lower measure
bound in \eqref{gxz-2} gives
\begin{equation*}
\inf_{\gamma\in\mathbb{R}}\Vert u-\gamma\Vert_{L^{p^\ast}(B_k)}\lesssim
\lf[\mu(\sigma B_k)\r]^{-s/Q}\,k^{s}\Vert u\Vert_{\dot{M}^s_{p,q}(\sigma B_k)}
\lesssim\Vert u\Vert_{\dot{M}^s_{p,q}(X)},\quad\forall\ k\in\nn.
\end{equation*}
Thus, for any $k\in\nn$, there exists a $\gamma_k\in\mathbb{R}$ satisfying
\begin{equation}
\label{eq19-DOUB-XX}
\Vert u-\gamma_k\Vert_{L^{p^\ast}(B_k)}
\lesssim\Vert u\Vert_{\dot{M}^s_{p,q}(X)}.
\end{equation}
On the other hand, by \eqref{eq18-DOUB-X} with $B$ therein replaced by $B_k$, we have
\begin{equation}
\label{ayrtb-3}
\Vert u\Vert_{L^{p^\ast}(B_1)}\leq\Vert u\Vert_{L^{p^\ast}(B_k)}\lesssim \Vert u\Vert_{\dot{M}^s_{p,q}(X)}
+k^{-s}\Vert u\Vert_{L^{p}(X)},\quad\forall\ k\in\mathbb{N},
\end{equation}
which further implies that
$\Vert u\Vert_{L^{p^\ast}(B_1)}\lesssim
\Vert u\Vert_{\dot{M}^s_{p,q}(X)}$, via letting $k\to\fz$ in
\eqref{ayrtb-3}. Combining this, \eqref{eq19-DOUB-XX}, and
\eqref{gxz-2}, we find that, for any $k\in\nn$,
\begin{equation*}
\begin{split}
|\gamma_k|&\lesssim \lf[\mu(B_1)\r]^{-1/p^*}\left[\Vert u-\gamma_k\Vert_{L^{p^\ast}(B_1)}+\Vert u\Vert_{L^{p^\ast}(B_1)}\right]\\
&\lesssim\Vert u-\gamma_k\Vert_{L^{p^\ast}(B_k)}+\Vert u\Vert_{L^{p^\ast}(B_1)}\lesssim\Vert u\Vert_{\dot{M}^s_{p,q}(X)}<\infty,
\end{split}
\end{equation*}
which implies that $\{\gamma_{k}\}_{k\in\nn}$ is a bounded sequence of
real numbers. From this, we deduce that there exist a
subsequence $\{\gamma_{k_j}\}_{j\in\nn}$ and a constant $\gamma_\ast\in\rr$
such that $\gamma_\ast=\lim_{j\to\infty}\gamma_{k_j}$. As such, by \eqref{eq19-DOUB-XX} and Fatou's lemma, we further conclude that
\begin{equation*}
\begin{split}
\inf_{\gamma\in\mathbb{R}}\|u-\gamma\|_{L^{p^\ast}(X)}
&\leq\|u-\gamma_\ast\|_{L^{p^\ast}(X)}
=\left[\int_{X}\lim_{j\to\infty}\big|(u(x)-\gamma_{k_j})
{\bf 1}_{B_{k_j}}(x)\big|^{p^\ast}d\mu(x)\right]^{1/{p^\ast}}\\
&\leq\liminf_{j\to\infty}\left[\int_{B_{k_j}}\big|u(x)-\gamma_{k_j}\big|^{p^\ast}d\mu(x)\right]^{1/p^\ast}
\lesssim\Vert u\Vert_{\dot{M}^s_{p,q}(X)}.
\end{split}
\end{equation*}
This finishes the proof of  (c) and hence, the proof of Theorem~\ref{GlobalEmbeddCor}.
\end{proof}

\begin{remark}
\label{GlobalEmbeddCor-rmk}
It follows from Theorem~\ref{GlobalEmbeddCor} that, in the
context of uniformly perfect quasi-metric measure spaces
that are either bounded or equipped with a doubling
measure, the global estimates in \eqref{GBLS} and
\eqref{GBLP} are equivalent to the local estimates
in \eqref{hdx-1}-\eqref{hdx-3} as well as the
global H\"older condition in \eqref{eq30-LB}.
Additionally, it is worth noting that the
uniform perfect property is only used in
showing that \eqref{GBLP} implies \eqref{gxz-2}.
Moreover, the limitation
$s\preceq_q{\rm ind}\,(X,\rho)$  is only used in
proving that (\textit{a}) follows from   (\textit{b}) or (\emph{c}).
\end{remark}

\begin{remark}
\label{BesovUB}
The embeddings for the Haj\l asz--Besov spaces $\dot{N}^s_{p,q}$ in
Theorem~\ref{GlobalEmbeddCor} are restricted to the case
when $q\leq p$, which is a limitation that is inherited
from the embeddings in Theorems~\ref{LBembedding} and \ref{DOUBembedding}.
An upper bound on the exponent $q$ is natural because, on the one hand, \eqref{GBLS} implies
that $N^s_{p,q}(\mathbb{R}^n)\hookrightarrow L^{p^\ast}(\mathbb{R}^n)$,
while on the other hand, it follows from  \cite[Theorem~1.73]{T06} that  $N^s_{p,q}(\mathbb{R}^n)=B^s_{p,q}(\mathbb{R}^n)\hookrightarrow
L^{p^\ast}(\mathbb{R}^n)$ if and only if $q\leq p^\ast$.
In the general context of quasi-metric measure spaces,
it is not clear if the embedding properties of the spaces $N^s_{p,q}$ in
Theorems~\ref{LBembedding} and \ref{DOUBembedding}
hold true for all $q\leq p^\ast$ (when $p<Q/s$).
\end{remark}

\subsection{The Doubling Case}\label{dbc}

The main goal of this subsection is to
present an analogue of Theorem~\ref{LMeasINT} for
doubling measures in Theorem~\ref{DoubMeasINT}.
Then, Theorem~\ref{DoubMeasINTCor} will follow as a consequence of
Theorems~\ref{DOUBembedding} and \ref{DoubMeasINT}.
Recall that a measure $\mu$ is said to be \emph{$Q$-doubling}
for some  $Q\in(0,\infty)$, provided that there
exists a positive constant $\kappa$ satisfying
\begin{equation}\label{Doub-2-eh}
\kappa\,\bigg(\frac{r}{R}\bigg)^{Q}\leq\frac{\mu\big(B_\rho(x,r)\big)}{\mu\big(B_\rho(y,R)\big)},
\end{equation}
whenever $x,\,y\in X$ and $0<r\leq R<\infty$ satisfy $B_\rho(x,r)\subset B_\rho(y,R)$.

\begin{theorem}\label{DoubMeasINT}
Let $(X,\rho,\mu)$ be a quasi-metric measure space and fix  $\sigma\in[1,\infty)$,
$q\in(0,\infty]$, and $Q\in(0,\infty)$. Also,
assume that $s\in(0,\infty)$ satisfies $s\preceq_q{\rm ind}\,(X,\rho)$.
Then the following statements are valid.
\begin{enumerate}[label=\rm{(\alph*)}]
		
\item Suppose that there exist a $p\in(0,Q/s)$ and a $C_S\in(0,\infty)$ with the property that
\begin{equation}\label{doubsob}
\Vert u\Vert_{L^{p^\ast}(B_0)}\leq
\frac{C_S}{[\mu(\sigma B_0)]^{s/Q}}\left[
R_0^s\|u\|_{\dot{M}^s_{p,q}(\sigma B_0)}
+\Vert u\Vert_{L^{p}(\sigma B_0)}\right],
\end{equation}
whenever $B_0:=B_\rho(x_0,R_0)$ is a $\rho$-ball with $x_0\in X$ and $R_0\in(0,\infty)$, and $u\in M^s_{p,q}(\sigma B_0,\rho,\mu)$. Here, $p^*:=Qp/(Q-sp)$. Then $\mu$ is a $Q$-doubling measure.
		
\end{enumerate}

\noindent If the space $(X,\rho)$ is uniformly perfect, then the following statements also hold true.
		
\begin{enumerate}[label=\rm{(\alph*)}]\addtocounter{enumi}{1}
\item Suppose that there exist a $p\in(0,Q/s)$ and a $C_P\in(0,\infty)$ with the property that

\begin{equation}\label{doubpoin}
\inf_{\gamma\in\mathbb{R}}\Vert u-\gamma\Vert_{L^{p^\ast}(B_0)}\leq
\frac{C_P}{[\mu(\sigma B_0)]^{s/Q}}\,R_0^{s}\|u\|_{\dot{M}^s_{p,q}(\sigma B_0)},
\end{equation}
whenever $B_0:=B_\rho(x_0,R_0)$ is a $\rho$-ball with $x_0\in X$ and $R_0\in(0,\infty)$, and $u\in M^s_{p,q}(\sigma B_0,\rho,\mu)$.  Here, $p^*:=Qp/(Q-sp)$. Then $\mu$ is a $Q$-doubling measure.

\item Suppose that there exist positive constants $c_1$, $c_2$, and $\omega$ such that
\begin{equation}\label{amv-1}
\mvint_{B_0} {\rm exp}\left(c_1\frac{[\mu(\sigma B_0)]^{s/Q}|u-u_{B_0}|}{R_0^s\|u\|_{\dot{M}^s_{Q/s,q}(\sigma B_0)}}\right)^\omega\,d\mu\leq c_2,
\end{equation}
whenever $B_0\subset X$ is a $\rho$-ball of radius $R_0\in(0,\infty)$ and $u\in M^{s}_{p,q}(\sigma B_0,\rho,\mu)$ satisfies $\|u\|_{\dot{M}^s_{Q/s,q}(\sigma B_0)}>0$. Then $\mu$ is a $(Q+\varepsilon)$-doubling measure for any fixed $\varepsilon\in(0,\infty)$.

\item Suppose that there exist a $p\in(Q/s,\infty)$ and a $C_H\in(0,\infty)$
with the property that, for any fixed ball
$B_0:=B_\rho(x_0,R_0)$, where $x_0\in X$ and $R_0\in(0,\infty)$,
one has that every function
$u\in M^s_{p,q}(\sigma B_0,\rho,\mu)$ has a H\"older continuous
representative of order $s-Q/p$ on $B_0$, denoted by $u$ again,  satisfying
\begin{eqnarray}
\label{doubHold}
|u(x)-u(y)|\leq C_H\,\lf[\rho(x,y)\r]^{s-Q/p}\frac{R_0^{Q/p}}{[\mu(\sigma B_0)]^{1/p}}\,\|u\|_{\dot{M}^s_{p,q}(\sigma B_0)}
\quad\mbox{for any  $x,\,y\in B_0$.}
\end{eqnarray}
Then $\mu$ is a $Q$-doubling measure.
\end{enumerate}
In addition, all of the statements  above  are also valid with $M^s_{p,q}$ and $\dot{M}^s_{p,q}$ replaced by $N^s_{p,q}$ and $\dot{N}^s_{p,q}$, respectively.
\end{theorem}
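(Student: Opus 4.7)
The overall strategy parallels the proof of Theorem~\ref{LMeasINT}, but now, rather than bounding $\mu(B_\rho(x,r))$ from below by $r^Q$, one must compare the measures of two nested balls $B_\rho(x,r)\subset B_\rho(y,R)$. The key observation is that the doubling-style normalization $[\mu(\sigma B_0)]^{s/Q}$ (as opposed to the factor $R_0^s$ in the lower-Ahlfors setting) is precisely the weight which, after iteration, produces the volumetric ratio $\mu(B_\rho(x,r))/\mu(B_\rho(y,R))$ on the correct side. Accordingly, for each of (a)-(d), the plan is to apply the relevant assumed inequality with the \emph{outer} ball $B_0:=B_\rho(y,R)$, plugging in the bump functions of Lemma~\ref{HolderBump} (or Lemma~\ref{GVa2} in part (d)) associated to the \emph{inner} ball $B_\rho(x,r)\subset B_0\subset \sigma B_0$, and then to iterate via Lemma~\ref{iteration}.

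For part (a), I would fix a quasi-metric $\varrho$ with $\varrho\approx\rho$ and $s\leq(\log_2 C_\varrho)^{-1}$, let $\{r_j\}_{j\in\nn}$ and $\{u_j\}_{j\in\nn}$ be associated to $B_\rho(x,r)$ via Lemma~\ref{HolderBump}, and apply \eqref{doubsob} with $B_0=B_\rho(y,R)$ and $u=u_j$. Since $u_j\equiv 1$ on $B_{\varrho_\#}(x,r_{j+1})$, the left-hand side dominates $[\mu(B_{\varrho_\#}(x,r_{j+1}))]^{1/p^*}$, while the global bounds in Lemma~\ref{HolderBump}(e), together with $R\geq r$, yield
$$R^s\|u_j\|_{\dot{M}^s_{p,q}(\sigma B_0)}+\|u_j\|_{L^p(\sigma B_0)}\lesssim \lf(\frac{R}{r}\r)^s 2^j\lf[\mu(B_{\varrho_\#}(x,r_j))\r]^{1/p}.$$
Combining these with the uniform bounds $\delta r<r_j<r$, Lemma~\ref{iteration} (applied with $p$ and $t:=p^*$, so that $pt/(t-p)=Q/s$) produces
$$\mu(B_\rho(x,r))\geq\mu(B_{\varrho_\#}(x,r_1))\gtrsim\mu(\sigma B_0)\lf(\frac{r}{R}\r)^Q\geq\mu(B_\rho(y,R))\lf(\frac{r}{R}\r)^Q,$$
exactly the $Q$-doubling condition.

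Parts (b) and (c) both invoke uniform perfectness; by Lemma~\ref{en2-4}(ii), I would reduce to pairs $(x,r)$ satisfying $r\leq C_\rho\varphi^x_\rho(r)/\lambda^2$, which is precisely the regime in which Lemma~\ref{HolderBump}(f) furnishes a $\mu$-measurable set $E_j^\gamma\subset B_\rho(x,r)$ on which $|u_j-\gamma|\geq\tfrac12$ and $\mu(E_j^\gamma)\geq\mu(B_{\varrho_\#}(x,r_{j+1}))$. For part (b), taking the infimum over $\gamma$ in \eqref{doubpoin} and using property (f) yields the same iterative estimate as in (a) up to constants, giving $Q$-doubling. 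For part (c), I would set $\gamma:=(u_j)_{B_0}$, restrict the exponential integral in \eqref{amv-1} to $E_j^\gamma$, and then linearize the resulting logarithm via $\log y\leq 2(Q+\varepsilon)(s\omega)^{-1}\,y^{s\omega/[2(Q+\varepsilon)]}$, mirroring the step in the proof of Theorem~\ref{LMeasINT}(c) but with $Q+\varepsilon$ in place of $2Q$. This converts the exponential estimate into an iterative estimate of the form required by Lemma~\ref{iteration} with $t=2(Q+\varepsilon)/s$, and concludes with $(Q+\varepsilon)$-doubling, which is then upgraded to the universal bound via Lemma~\ref{en2-4}(ii) applied with exponent $Q+\varepsilon$.

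For part (d), I would adapt the strategy of Theorem~\ref{LMeasINT}(d). Using the uniform perfectness of $(X,\varrho_\#)$, select $x_0\in B_{\varrho_\#}(x,r_0)\setminus B_{\varrho_\#}(x,\lambda' r_0)$ with $r_0\approx r$, take $u:=\Phi_{0,\lambda' r_0}\in\dot{\mathscr{C}}^\alpha(X,\varrho_\#)$ from Lemma~\ref{GVa2}, and apply \eqref{doubHold} to the pair $x,x_0\in B_\rho(y,R)$. Since $u(x)-u(x_0)=1$, $\rho(x,x_0)\approx r$, and $\|u\|_{\dot{M}^s_{p,q}(\sigma B_0)}\leq \|u\|_{\dot{M}^s_{p,q}(X)}\lesssim r^{-s}[\mu(B_\rho(x,r))]^{1/p}$, one directly obtains $1\lesssim (R/r)^{Q/p}[\mu(B_\rho(x,r))/\mu(\sigma B_0)]^{1/p}$, which rearranges to the desired $Q$-doubling inequality. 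The main obstacle I anticipate is part (c): carefully tracking how the logarithmic linearization introduces the $\varepsilon$-loss, and verifying that the choice $t=2(Q+\varepsilon)/s$ in Lemma~\ref{iteration} produces precisely the exponent $Q+\varepsilon$ (not $Q$), which is exactly the asymmetry already visible in the statement of (c).
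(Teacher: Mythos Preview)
Your proposal follows essentially the same route as the paper: apply the assumed inequality on the outer ball $B_0=B_\rho(y,R)$, plug in the bump functions from Lemma~\ref{HolderBump} (or Lemma~\ref{GVa2}) built on the inner ball $B_\rho(x,r)$, iterate via Lemma~\ref{iteration}, and in parts (b)--(d) reduce via Lemma~\ref{en2-4}(ii) to the regime $r\leq C_\rho\varphi^x_\rho(r)/\lambda^2$. Parts (a), (b), and (d) are handled correctly and match the paper.

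There is, however, a computational slip in part (c). Your choice of linearization $\log y\leq 2(Q+\varepsilon)(s\omega)^{-1}y^{s\omega/[2(Q+\varepsilon)]}$, i.e.\ $t=2(Q+\varepsilon)/s$ in Lemma~\ref{iteration}, does \emph{not} produce $(Q+\varepsilon)$-doubling. Carrying the extra factor $[\mu(B_0)]^{s/Q}$ through the iteration, one finds the resulting exponent on $r/R$ is
\[
\frac{pt}{t-p}\cdot s \;=\; \frac{2Q(Q+\varepsilon)}{Q+2\varepsilon},
\]
which exceeds $Q+\varepsilon$ whenever $\varepsilon<Q/2$. The analogy with Theorem~\ref{LMeasINT}(c) breaks down precisely because the doubling version carries the additional measure factor $[\mu(B_0)]^{s/Q}$ on the right-hand side, which alters the bookkeeping. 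The paper fixes this by introducing a free parameter $\beta\in(1,\infty)$, linearizing via $\log z\leq \beta Q(s\omega)^{-1}z^{s\omega/(\beta Q)}$ (so $t=\beta Q/s$), and obtaining $[\beta Q/(\beta-1)]$-doubling; since $\beta Q/(\beta-1)\to Q$ as $\beta\to\infty$, any exponent $Q+\varepsilon$ is reached by taking $\beta$ large. Equivalently, to hit $Q+\varepsilon$ exactly you need $\beta=(Q+\varepsilon)/\varepsilon$, i.e.\ $t=Q(Q+\varepsilon)/(s\varepsilon)$, not $2(Q+\varepsilon)/s$. With this correction your argument goes through.
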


\begin{remark}
If one assumes that the inequalities in \eqref{doubsob}-\eqref{doubHold} only hold true for balls having radii at most $r_\ast\in(0,\infty)$ then the proof of Theorem~\ref{DoubMeasINTCor}  that is presented below can be modified to show that $\mu$ is doubling on balls having radii at most $r_\ast\in(0,\infty)$. In this case, one only needs to assume that the uniformly perfect property holds true on balls with small radii.
\end{remark}

\begin{proof}[Proof of Theorem~\ref{DoubMeasINT}]
Since $s\preceq_q{\rm ind}\,(X,\rho)$, we   choose a quasi-metric $\varrho$ on $X$ such that $\varrho\approx\rho$  and $s\leq(\log_{2}C_\varrho)^{-1}$, where  $C_\varrho\in[1,\infty)$ is as in \eqref{C-RHO.111}. Note that, by convention, we have that the value  $s=(\log_{2}C_\varrho)^{-1}$ is only permissible when $q=\infty$ and $C_\varrho>1$. In what follows, let $\varrho_\#$ be the regularized quasi-metric given by Theorem~\ref{DST1}.

Focusing first on proving Theorem~\ref{DoubMeasINT}(a),
fix a $\rho$-ball $B_0:=B_\rho(y,R)$, where $y\in X$ and $R\in(0,\infty)$,
and suppose that $B:=B_\rho(x,r)\subset B_0$ is a $\rho$-ball
with $r\in(0,R]$. If $r>{\rm diam}_\rho(X)$ then \eqref{Doub-2-eh} is trivially
satisfied for any $\kappa\in(0,1]$. Thus, we can assume that
$r\leq{\rm diam}_\rho(X)$. Let $\{r_j\}_{j\in\mathbb{N}}$ and $\{u_j\}_{j\in\mathbb{N}}$
be as in Lemma~\ref{HolderBump} associated to the ball $B$.
Then, for any $j\in\mathbb{N}$,  $u_j\in M^s_{p,q}(\sigma B_0,\rho,\mu)$
which, in turn, implies that $u_j$ satisfies \eqref{doubsob}
with $B_0=B_\rho(y,R)$. Moreover, since $B_{\varrho_{\#}}(x,r_j)\subset B\subset B_0$,
it follows from the properties listed in (b)-(e) of Lemma~\ref{HolderBump}
that, for any $j\in\mathbb{N}$,
\begin{equation}\label{xu-12}
R^s\|u_j\|_{\dot{M}^s_{p,q}(\sigma B_0)}
\lesssim\frac{R^s2^j}{r^s}\lf[\mu(B_{\varrho_\#}(x,r_j))\r]^{1/p}
\quad\mbox{and}\quad\Vert u_j\Vert_{L^{p}(\sigma B_0)}\lesssim
\lf[\mu(B_{\varrho_\#}(x,r_j))\r]^{1/p}.
\end{equation}
Moreover, since $u_j\equiv1$ on $B_{\varrho_\#}(x,r_{j+1})$, we have
\begin{equation}\label{xu-14}
\Vert u_j\Vert_{L^{p^\ast}(B_0)}\geq \lf[\mu(B_{\varrho_{\#}}(x,r_{j+1}))\r]^{1/p^*}.
\end{equation}
Combining \eqref{doubsob}, \eqref{xu-12}, and \eqref{xu-14}, we conclude that
\begin{equation*}
\begin{split}
\lf[\mu(B_{\varrho_{\#}}(x,r_{j+1}))\r]^{1/p^*}
&\lesssim\frac{C_S}{[\mu(\sigma B_0)]^{s/Q}}\left(\frac{2^jR^s}{r^s}+1\right)\lf[\mu(B_{\varrho_\#}(x,r_j))\r]^{1/p}
\\
&\lesssim \frac{2^jR^s}{r^s}\lf\{\frac{[\mu(B_{\varrho_\#}(x,r_j))]^{1/p}}{[\mu( B_0)]^{s/Q}}\r\},\quad\forall\,j\in\mathbb{N},\nonumber
\end{split}
\end{equation*}
where the last inequality follows from the fact that $r\leq R$ and $2^j,\,\sigma\geq1$.
Therefore, there exists a positive constant  $C'$, independent of $x$, $r$, and $j$, such that
\begin{equation}
\label{xu-15}
\lf[\mu(B_{\varrho_{\#}}(x,r_{j+1}))\r]^{1/p^*}
\leq
\left\{\frac{C'R^s}{r^{s}[\mu(B_0)]^{s/Q}}\right\} 2^{j}\lf[\mu(B_{\varrho_{\#}}(x,r_j))\r]^{1/p},
\quad\forall\,j\in\mathbb{N}.
\end{equation}
Since $0<\delta r<r_j<r<\infty$ for any $j\in\mathbb{N}$, we
invoke Lemma~\ref{iteration} with the quasi-metric $\varrho_{\#}$, and
$$
p:=p,\quad
t:=p^*,\quad
\text{and}
\quad
\theta:=\frac{C'R^s}{r^{s}[\mu(B_0)]^{s/Q}}
$$
to conclude that
\begin{equation}
\label{xu-16}
\mu(B_\rho(x,r))\geq \mu(B_{\varrho_{\#}}(x,r_1))\gtrsim 2^{-\frac{Q^2}{s^2p}}
\left\{\frac{C'R^s}{r^{s}[\mu(B_0)]^{s/Q}}\right\}^{-Q/s}.
\end{equation}
This implies \eqref{Doub-2-eh}, and then finishes the proof of Theorem~\ref{DoubMeasINT}(a).

Moving on, we now establish  (b), (c), and (d) of the present theorem,
under the additional assumption that the space $(X,\rho)$ is uniformly perfect. In this
context, let $\lambda\in(0,1)$ be as in \eqref{U-perf} and recall that there is no loss in
generality by assuming that $\lambda<(C_\rho\widetilde{C}_\rho)^{-2}$.
	
To prove the statement in (b), fix $x,\,y\in X$
and $r,\, R\in(0,\infty)$ such that $B:=B_\rho(x,r)\subset B_\rho(y,R)$ and $r\leq R$. In
light of Lemma~\ref{en2-4}(ii), we may assume that $r\leq C_\rho\varphi^x_{\rho}(r)/\lambda^2$. Moreover,
as the proof of Theorem~\ref{DoubMeasINT}(a), we can also assume that $r\leq{\rm diam}_\rho(X)$.
Then, by Lemma~\ref{HolderBump}, there exist $\{r_j\}_{j\in\mathbb{N}}\subset (0,\fz)$ and  a collection
$\{u_j\}_{j\in\mathbb{N}}$  of
functions having the properties ({a})-({f}) listed in Lemma~\ref{HolderBump}.
In particular,  $u_j\in M^s_{p,q}(\sigma B_0,\rho,\mu)$ for any $j\in\mathbb{N}$,
where $B_0:=B_\rho(y,R)$. Hence, by the assumption of (b) in this theorem, for any $j\in\nn$,
$u_j$ satisfies \eqref{doubpoin}.
Observe that, by Lemma~\ref{HolderBump}(e),  for any $j\in\mathbb{N}$, we have (keeping in mind that $\sigma\geq1$)
\begin{equation}
\label{Gkw-924-2}
\frac{C_P}{[\mu(\sigma B_0)]^{s/Q}}\,R^{s}\|u_j\|_{\dot{M}^s_{p,q}(\sigma B_0)}
\lesssim \frac{2^{j}R^s}{r^{s}[\mu(B_0)]^{s/Q}}\,\lf[\mu(B_{\varrho_\#}(x,r_j))\r]^{1/p}.
\end{equation}
Moreover,  Lemma~\ref{HolderBump}(f) implies
\begin{equation}
\label{Gkw-925-2}
\inf_{\gamma\in\mathbb{R}}\Vert u_j-\gamma\Vert_{L^{p^\ast}(B_0)}\ge \inf_{\gamma\in\mathbb{R}}\Vert u_j-\gamma\Vert_{L^{p^\ast}(E_j^\gamma)} \geq\frac{1}{2}
\lf[\mu(B_{\varrho_{\#}}(x,r_{j+1}))\r]^{1/p^*},
\end{equation}
where $E_j^\gamma$ is as in Lemma~\ref{HolderBump}(f).
In concert, by \eqref{Gkw-925-2}, \eqref{doubpoin}, and \eqref{Gkw-924-2}, we find that
\begin{equation*}
\frac{1}{2}\lf[\mu(B_{\varrho_{\#}}(x,r_{j+1}))\r]^{1/p^*}
\lesssim
\left\{\frac{R^s}{r^{s}[\mu(B_0)]^{s/Q}}\right\} 2^{j}\,\lf[\mu(B_{\varrho_\#}(x,r_j))\r]^{1/p},
\quad\forall\ j\in\mathbb{N}.
\end{equation*}
Hence, \eqref{xu-15} holds true. At this stage, we can argue as in the proofs of
\eqref{xu-15} and \eqref{xu-16} to conclude that $\mu$ satisfies the desired
estimate   \eqref{Doub-2-eh}. This finishes the proof of the statement in Theorem~\ref{DoubMeasINT}(b).

In order to prove Theorem~\ref{DoubMeasINT}(c), we need to show that, for any fixed
$\varepsilon\in(0,\infty)$, there exists a positive constant $\kappa$ such that
\begin{equation}\label{measboundthm-52-X}
\kappa\lf(\frac{r}{R}\r)^{Q+\varepsilon}\leq\frac{\mu(B_\rho(x,r))}{\mu(B_\rho(y,R))},
\end{equation}
whenever $x,\,y\in X$ and $0<r\leq R<\infty$ satisfy $B_\rho(x,r)\subset B_\rho(y,R)$.
Note that the estimate in \eqref{measboundthm-52-X} will follow once
we prove that, for any  $\beta\in(1,\infty)$, there exists a
$\kappa_\beta\in(0,\infty)$ satisfying
\begin{equation}\label{ME13-52}
\frac{\mu(B_\rho(x,r))}{\mu(B_\rho(y,R))}
\geq \kappa_\beta\lf(\frac{r}{R}\r)^{\beta s/(\beta-1)},
\end{equation}
whenever $x,\,y\in X$ and $0<r\leq R<\infty$ satisfy $B_\rho(x,r)\subset B_\rho(y,R)$.
To this end, fix $\beta\in(1,\infty)$ and suppose  $B:=B_\rho(x,r)\subset
B_\rho(y,R)=:B_0$ for some $x,\, y\in X$ and $0<r\leq R<\infty$.
As in the proof of (b), it suffices to consider the case
when $r\leq C_\rho\varphi^x_{\rho}(r)/\lambda^2$ and $r\leq{\rm diam}_\rho(X)$, given Lemma~\ref{en2-4}.
Now let $\{r_j\}_{j\in\mathbb{N}}$ and $\{u_j\}_{j\in\mathbb{N}}$ be as in
Lemma~\ref{HolderBump} (applied here to the ball $B_\rho(x,r)$). Then
then we still have $u_j\in M^s_{p,q}(\sigma B_0,\rho,\mu)$
and $\|u_j\|_{\dot{M}^s_{Q/s,q}(\sigma B_0)}>0$ for any $j\in\mathbb{N}$.
As such, for any $j\in\nn$,  $u_j$ satisfies \eqref{amv-1} with $B_0$. That is,
\begin{equation}
\label{amv-1-32}
\mvint_{B_0} {\rm exp}\left(c_1\frac{[\mu(\sigma B_0)]^{s/Q}|u_j-(u_j)_{B_0}|}{R^s\|u_j\|_{\dot{M}^s_{Q/s,q}(\sigma B_0)}}\right)^\omega\,d\mu\leq c_2,
\quad\forall\,j\in\mathbb{N}.
\end{equation}
Observe that   Lemma~\ref{HolderBump}(e) implies
\begin{equation*}
0<\|u_j\|_{\dot{M}^s_{Q/s,q}(\sigma B_0)}\lesssim 2^{j}r^{-s}\lf[\mu(B_{\varrho_{\#}}(x,r_j))\r]^{s/Q},\quad\forall\ j\in\mathbb{N}.
\end{equation*}
Thus, by $\sigma\geq1$, we have
$$
\frac{[\mu(\sigma B_0)]^{s/Q}|u_j-(u_j)_{B_0}|}{R^s\|u_j\|_{\dot{M}^s_{Q/s,q}(\sigma B_0)}}
\geq\frac{C\mu(B_0)^{s/Q}r^s|u_j-(u_j)_{B_0}|}{R^s2^{j}\,[\mu(B_{\varrho_{\#}}(x,r_{j}))]^{s/Q}},\quad\forall\,j\in\mathbb{N},
$$
for some constant $C\in(0,\infty)$ independent of $x$, $y$, $r$, $R$, and $j$.
Combining this with Lemma~\ref{HolderBump}(f) and \eqref{amv-1-32}, we find that	
\begin{align}
\label{Iue.5-2-X-52}
\frac{\mu\big(B_{\varrho_{\#}}(x,r_{j+1})\big)}{\mu(B_\rho(y,R) )}\,
{\rm exp}\left(\frac{C\mu(  B_0)^{s/Q}r^s}{R^s2^{j+1}\,\mu(B_{\varrho_{\#}}(x,r_{j})\big)^{s/Q}}\right)^\omega\leq C_2.
\end{align}
By increasing the constant $C_2$, we may assume that $C_2>1$.
As such, using the elementary estimate
$$\log(z)\leq \beta
Q(s\omega)^{-1}\,z^{s\omega/\beta Q},\quad \forall\ z\in(0,\infty),$$
a rewriting of \eqref{Iue.5-2-X-52} implies
\begin{align*}
\frac{C[\mu(B_\rho(y,R))]^{s/Q}r^s}{R^s2^{j+1}\,[\mu(B_{\varrho_{\#}}(x,r_{j}))]^{s/Q}}
&\leq\lf\{\log\lf(C_2\frac{\mu(B_\rho(y,R))}{\mu(B_{\varrho_{\#}}(x,r_{j+1}))}\r)\r\}^{1/\omega}\\
&\leq \lf[\beta Q(s\omega)^{-1}\r]^{1/\omega}\,C_2^{s/(\beta Q)}\left[\frac{\mu(B_\rho(y,R))}{\mu(B_{\varrho_{\#}}(x,r_{j+1}))}\right]^{s/(\beta Q)}.
\end{align*}
Therefore,
\begin{equation*}
\lf[\mu(B_{\varrho_{\#}}(x,r_{j+1}))\r]^{s/(\beta Q)}\lesssim
\left\{\frac{R^s [\mu(B_\rho(y,R))]^{\frac{s(1-\beta)}{Q\beta}}}{r^s}\right\}\,2^{j}
\,\lf[\mu(B_{\varrho_{\#}}(x,r_{j})\r]^{s/Q},\quad \forall\ j\in\nn.
\end{equation*}
From  this, applying Lemma~\ref{iteration} with  the quasi-metric $\varrho_{\#}$, and
$$
p:=Q/s,\quad
t:=\beta Q/s,\quad
\text{and}
\quad
\theta:=\frac{R^s[\mu(B_\rho(y,R))]^{\frac{s(1-\beta)}{Q\beta}}}{Cr^s},
$$
we deduce that
$$
\mu(B_\rho(x,r))\geq\mu(B_{\varrho_{\#}}(x,r_1))\gtrsim
\left\{\frac{R^s[\mu(B_\rho(y,R))]^{\frac{s(1-\beta)}{Q\beta}}}{Cr^s}\right\}^{\frac{-\beta Q}{s(\beta-1)}}\,
2^{\frac{-\beta^2 Q}{s(\beta-1)^2}},
$$
which implies
$$
\frac{\mu(B_\rho(x,r))}{\mu(B_\rho(y,R))}\gtrsim \,\left(\frac{r}{R}\right)^{Q\beta/(\beta-1)}.
$$
This finishes the proof of \eqref{ME13-52} and hence, the proof of Theorem~\ref{DoubMeasINT}(c).

Concerning the statement in Theorem~\ref{DoubMeasINT}(d), fix $x,\,y\in X$ and
$0<r\le R<\fz$  such that $B:=B_\rho(x,r)\subset B_\rho(y,R)$.
If $B_\rho(x,r)=X$, then $B_\rho(y,R)= X$ because
$B_\rho(x,r)\subset B_\rho(y,R)$, and \eqref{Doub-2-eh} easily
follows with any choice of $\kappa\in(0,1]$. Thus, in what follows,
 we assume that $X\setminus B_\rho(x,r)\neq\emptyset$.
 Since $\varrho\approx\rho$ and $\varrho_{\#}\approx\varrho$ (see Theorem~\ref{DST1}),
 we have $\varrho_{\#}\approx\rho$ and hence there exists a positive constant $c_0$ such that
$$
c_0^{-1}\varrho_{\#}(z,w)\leq\rho(z,w)\leq c_0\,\varrho_{\#}(z,w),\quad\forall\, z,\,w\in X.
$$
Let $r_0:=r/c_0$. Then $B_{\varrho_{\#}}(x,r_0)\subset B_\rho(x,r)$
and $X\setminus B_{\varrho_{\#}}(x,r_0)\neq\emptyset$. Moreover, since $(X,\rho)$
is uniformly perfect and $\varrho_{\#}\approx \rho$, we find that $(X,\varrho_{\#})$
also satisfies the uniformly perfect property with  constant $\lambda':=c_0^{-2}\lambda<\lambda$.
Thus, we may select a point $x_0\in B_{\varrho_{\#}}(x,r_0)\setminus B_{\varrho_{\#}}(x,\lambda' r_0)$.

Next, fix any finite number $\alpha\in [s,(\log_{2}C_\varrho)^{-1}]$
such that $\alpha\neq s$ unless $s=(\log_{2}C_\varrho)^{-1}$, and consider
the function $\Phi_{0,\lambda' r_0}\colon X\to[0,1]$ given by Lemma~\ref{GVa2}.
Then,  by Lemma~\ref{GVa2}, $\Phi_{0,\lambda' r_0}$ belongs to $M^s_{p,q}(X,\rho,\mu)$ and
\begin{equation}\label{estt}
\lf\|\Phi_{0,\lambda' r_0}\r\|_{\dot{M}^s_{p,q}(X)}
\lesssim \lf[(\lambda' r_0)^\alpha-0^\alpha\r]^{-s/\alpha}\lf[\mu(B_{\varrho_\#}(x,\lambda' r_0))\r]^{1/p}
\lesssim (\lambda' r_0)^{-s}\lf[\mu(B_{\varrho_\#}(x,\lambda' r_0))\r]^{1/p}.
\end{equation}
Consider the ball $B_0:=B_\rho(y,R)$. Clearly,	 $\Phi_{0,\lambda' r_0}\in M^s_{p,q}(\sigma B_0,\rho,\mu)$.
Then, by the assumption of (d) of this theorem,  $\Phi_{0,\lambda' r_0}$ satisfies the inequality in \eqref{doubHold}
with the ball $B_0$. Note also that the choice of $x_0$ ensures that
$x_0\in B_{\varrho_{\#}}(x,r_0)\subset B\subset B_0$.
	
Now, by the choice  $x_0\in B_{\varrho_{\#}}(x,r_0)\setminus B_{\varrho_{\#}}(x,\lambda' r_0)$
and the fact that $\Phi_{0,\lambda' r_0}$ is supported in $B_{\varrho_{\#}}(x,\lambda' r_0)$ (due to Lemma~\ref{GVa2}),
it follows from \eqref{doubHold} (used here with the ball $B_0$) and \eqref{estt} that
\begin{align*}
1&=\lf|\Phi_{0,\lambda' r_0}(x)-\Phi_{0,\lambda' r_0}(x_0)\r|\\
&\ls   [\rho(x,x_0)]^{s-Q/p}\frac{R^{Q/p}}{[\mu(\sigma B_0)]^{1/p}}\,\|\Phi_{0,\lambda' r_0}\|_{\dot{M}^s_{p,q}(X)}\\
&\lesssim [\rho(x,x_0)]^{s-Q/p}\frac{R^{Q/p}}{[\mu(B_0)]^{1/p}}\,(\lambda' r_0)^{-s}\lf[\mu(B_{\varrho_\#}(x,\lambda' r_0))\r]^{1/p}\\
&\lesssim r_0^{-Q/p}\frac{R^{Q/p}}{[\mu(B_\rho(y,R))]^{1/p}}\,\lf[\mu(B_{\varrho_\#}(x,\lambda' r_0))\r]^{1/p}\\
&\lesssim \left(\frac{R}{r}\right)^{Q/p}\left[\frac{\mu(B_\rho(x, r))}{\mu(B_\rho(y,R))}\right]^{1/p},
\end{align*}
where the second inequality follows from the fact that $\sigma\geq1$
and the last inequality is deduced from the definition of $r_0$ as well
as the fact that $\lambda'<1$ and $B_{\varrho_{\#}}(x,r_0)\subset B_\rho(x,r)$.
The desired estimate in \eqref{Doub-2-eh} now follows. This finishes the proof of (d) and,
in turn, the proof of Theorem~\ref{DoubMeasINT}.	
\end{proof}

Finally, in the case of doubling measures, we have the following characterization which is a consequence of Theorem~\ref{DOUBembedding}(b) and Theorem~\ref{DoubMeasINT}(c).

\begin{theorem}\label{AS-U-BDD}
Let $(X,\rho,\mu)$ be a uniformly perfect  quasi-metric measure space, $\sigma\in[C_\rho,\infty)$,
and $q\in(0,\infty]$. Also, assume $s\in(0,\infty)$
satisfies $s\preceq_q{\rm ind}\,(X,\rho)$. Then the following two statements are equivalent.
\begin{enumerate}[label=\rm{(\alph*)}]
\item The measure $\mu$ is doubling.
\vskip.08in
		
\item There exist positive constants $c_1$, $c_2$, $p$, and $\omega$ such that
\begin{equation}
\label{eq53}
\mvint_{B_0} {\rm exp}\left(c_1\frac{[\mu(\sigma B_0)]^{1/p}|u-u_{B_0}|}{R_0^s\|u\|_{\dot{M}^s_{p,q}(\sigma B_0)}}\right)^\omega\,d\mu\leq c_2,
\end{equation}
whenever $B_0\subset X$ is a $\rho$-ball of radius $R_0\in(0,\infty)$ and $u\in \dot{M}^{s}_{p,q}(\sigma B_0,\rho,\mu)$ satisfies $\|u\|_{\dot{M}^s_{p,q}(\sigma B_0)}>0$.
\end{enumerate}
\end{theorem}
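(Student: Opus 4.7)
The plan is to derive both implications directly from the previously established Theorems~\ref{DOUBembedding} and \ref{DoubMeasINT}, after matching up the exponents via the bijection $Q \leftrightarrow sp$.

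For the forward direction (a) $\Rightarrow$ (b), I would start by using the fact that doubling implies $Q$-doubling: by \cite[Lemma~4.7]{hajlasz} (as quoted after \eqref{doub}), $\mu$ satisfies \eqref{Doub-2} with $Q := \log_2 C_{\rm doub} \in (0,\infty)$. Since the uniform perfectness of $(X,\rho)$, the condition $s\preceq_q \mathrm{ind}(X,\rho)$, and the range $\sigma \in [C_\rho,\infty)$ are all part of the hypotheses of Theorem~\ref{DOUBembedding}, I would then set $p := Q/s$ and invoke Theorem~\ref{DOUBembedding}(b) at this critical exponent. This yields positive constants $C_1, C_2$, depending only on $\rho, \kappa, Q, s, \sigma$, such that for any $\rho$-ball $B_0$ of radius $R_0\in(0,\infty)$ and any $u\in \dot{M}^s_{p,q}(\sigma B_0,\rho,\mu)$ with $\|u\|_{\dot{M}^s_{p,q}(\sigma B_0)}>0$, there holds
$$
\mvint_{B_0}\exp\left(C_1\,\frac{[\mu(\sigma B_0)]^{s/Q}\,|u-u_{B_0}|}{R_0^s\,\|u\|_{\dot{M}^s_{p,q}(\sigma B_0)}}\right)d\mu\leq C_2.
$$
Since $p=Q/s$ forces $s/Q = 1/p$, this is precisely \eqref{eq53} with $c_1:=C_1$, $c_2:=C_2$, and $\omega:=1$.

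For the reverse direction (b) $\Rightarrow$ (a), suppose $c_1,c_2,p,\omega>0$ are as in \eqref{eq53}. The key observation is to set $Q := sp$, so that $p = Q/s$ and $1/p = s/Q$. With this identification, \eqref{eq53} becomes verbatim the hypothesis \eqref{amv-1} of Theorem~\ref{DoubMeasINT}(c); note that by Proposition~\ref{equalBDD} applied on the ball $\sigma B_0$, the spaces $M^s_{Q/s,q}(\sigma B_0,\rho,\mu)$ and $\dot{M}^s_{Q/s,q}(\sigma B_0,\rho,\mu)$ coincide as sets, so the function class in the two hypotheses is the same. Since $(X,\rho)$ is uniformly perfect and $s\preceq_q \mathrm{ind}(X,\rho)$, Theorem~\ref{DoubMeasINT}(c) applies and delivers that $\mu$ is $(Q+\varepsilon)$-doubling for every $\varepsilon\in(0,\infty)$. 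Specializing this to the chain $B_\rho(x,r)\subset B_\rho(x,2r)$ (applied to the unique constant $\kappa$ produced by $(Q+\varepsilon)$-doubling) yields
$$
\mu(B_\rho(x,2r))\leq \kappa^{-1}\,2^{Q+\varepsilon}\,\mu(B_\rho(x,r)),
$$
which is exactly the doubling condition \eqref{doub}.

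In essence the proof is a bookkeeping exercise: the two substantive analytic facts, namely the Trudinger--Moser type embedding at the critical exponent (Theorem~\ref{DOUBembedding}(b)) and its quantitative converse (Theorem~\ref{DoubMeasINT}(c)), have already been carried out. The only point requiring attention is verifying that the dimension-free formulation \eqref{eq53} and the dimension-indexed formulation \eqref{amv-1} become identical under the correspondence $Q = sp$, together with the observation that $(Q+\varepsilon)$-doubling trivially implies ordinary doubling. I do not anticipate any real obstacle: the hypotheses of uniform perfectness, the index constraint $s\preceq_q \mathrm{ind}(X,\rho)$, and $\sigma\in[C_\rho,\infty)$ are precisely those needed for both external theorems, so the parameters line up without further work.
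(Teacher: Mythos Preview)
Your proposal is correct and follows essentially the same route as the paper: for (a) $\Rightarrow$ (b) you take $Q:=\log_2 C_{\rm doub}$ and $p:=Q/s$ and invoke Theorem~\ref{DOUBembedding}(b), and for (b) $\Rightarrow$ (a) you set $Q:=sp$ and invoke Theorem~\ref{DoubMeasINT}(c), exactly as the paper does. Your added remarks (that $(Q+\varepsilon)$-doubling implies doubling, and the reconciliation of $M^s_{Q/s,q}$ with $\dot M^s_{Q/s,q}$ on balls via Proposition~\ref{equalBDD}) are harmless elaborations of steps the paper leaves implicit.
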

\begin{proof}
In light of Theorem~\ref{DoubMeasINT}(c), we immediately have
that the measure $\mu$ is doubling whenever \eqref{eq53} holds true. To show
the implication (a) $\Rightarrow$  (b) in Theorem~\ref{AS-U-BDD},
we take $p:=s^{-1}\log_2C_{\rm doub}$, where
\begin{equation*}
C_{\rm doub}:=\sup_{x\in X,\, r\in(0,\infty)}\frac{\mu(B_\rho(x,2r))}{\mu(B_\rho(x,r))}\in(1,\infty)
\end{equation*}
is the doubling constant for $\mu$. Then $\mu$ is $Q$-doubling with $Q:=\log_2C_{\rm doub}$
(in the sense of \eqref{Doub-2-eh}). By this and Theorem~\ref{DOUBembedding}(b), we obtain
\eqref{eq53}. This finishes the proof of Theorem \ref{AS-U-BDD}.
\end{proof}

\section{Triviality of Triebel--Lizorkin and Besov  Spaces}\label{sec-triv}

Given a quasi-metric measure space $(X,\rho,\mu)$, it follows
from Lemma~\ref{GVa2} that $\dot{M}^s_{p,q}(X,\rho,\mu)$ and
$\dot{N}^s_{p,q}(X,\rho,\mu)$ contain plenty of nonconstant
functions whenever $p\in(0,\infty)$, $q\in(0,\infty]$, and
$s\in(0,\infty)$ satisfies $s\preceq_q{\rm ind}\,(X,\rho)$.
In this section, we will highlight the fact that, if the
smoothness parameter $s$ is too large, then the spaces $\dot{M}^s_{p,q}(X,\rho,\mu)$
and $\dot{N}^s_{p,q}(X,\rho,\mu)$ may be trivial, in the sense that they only
contain constant functions. As it turns out, the range of $s$ for which these
spaces are trivial is directly related to the geometry of the quasi-metric space
$(X,\rho)$. The latter concept is quantified by the notion of  {H\"older} {index} from
\cite{MMMM13}. Recall that the {\it H\"older} {\it index} of the quasi-metric space $(X,\rho)$
is defined as
\begin{align}\label{VVV-AGBVdef}
{\rm ind}_H(X,\rho)&:=
\inf\Bigg\{\alpha\in(0,\infty):\,\forall\ x,\,y\in X
\,\,\mbox{ and }\,\,
\forall\ \varepsilon\in(0,\infty),\,\,\,\,\exists\ N\in\nn\ \,
\mbox{and} \nonumber
\\[-2pt]
&\quad \lf. \xi_1,...,\xi_{N+1}\in X\
\mbox{ such that $\xi_1=x$, $\xi_{N+1}=y$ and }
\sum_{i=1}^N[\rho(\xi_i,\xi_{i+1})]^\alpha<\varepsilon\r\}
\end{align}
with the agreement that $\inf\emptyset:=\infty$. This index is
purely geometrical and the terminology of ``H\"older index"
used for \eqref{VVV-AGBVdef} is justified by the fact that
\begin{equation}\label{VVV-AGBV}
{\rm ind}_H(X,\rho)=\sup\,\lf\{\alpha\in(0,\infty):\,
\dot{\mathscr{C}}^\alpha(X,\rho)\not={\mathbb{R}}\r\}\in(0,\infty],
\end{equation}
which follows from \cite[p.\,215, Theorem~4.59]{MMMM13}.

The main result of this section is as follows.

\begin{theorem}
\label{trivial}
Let $(X,\rho,\mu)$ be a quasi-metric measure space and
suppose that there exist positive constants $\kappa$ and $Q$ satisfying
\begin{equation}
\label{rtw-578}
\kappa\,r^Q\leq\mu(B_\rho(x,r))\quad\,\,\mbox{for any }\ x\in X
\ \mbox{ and any finite }\ r\in(0,{\rm diam}_\rho(X)].
\end{equation}
Then both of the spaces $\dot{M}^s_{p,q}(X,\rho,\mu)$ and
$\dot{N}^s_{p,q}(X,\rho,\mu)$ are trivial whenever
$s\in({\rm ind}_H(X,\rho),\infty)$, $p\in (Q/[s-{\rm ind}_H(X,\rho)],\infty)$, and $q\in(0,\infty]$.
\end{theorem}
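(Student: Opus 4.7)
The plan is to use the embedding theorems of Section~\ref{section:embeddings} to show that every $u$ in $\dot{M}^s_{p,q}(X,\rho,\mu)$ or $\dot{N}^s_{p,q}(X,\rho,\mu)$ admits a H\"older continuous representative of order $\alpha:=s-Q/p$ on $X$, and then to invoke the identification in~\eqref{VVV-AGBV}. The hypothesis $p>Q/[s-{\rm ind}_H(X,\rho)]$ rewrites as $\alpha>{\rm ind}_H(X,\rho)$, so~\eqref{VVV-AGBV} forces $\dot{\mathscr{C}}^\alpha(X,\rho)=\mathbb{R}$, and hence the H\"older representative, and therefore $u$ itself, must be constant $\mu$-almost everywhere.

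First, the lower measure bound~\eqref{rtw-578} implies (exactly as in the proof of Theorem~\ref{LBembedding}) that on every $\rho$-ball $B_0=B_\rho(x_0,R_0)$ with finite $R_0\in(0,{\rm diam}_\rho(X)]$ and any $\sigma\in[1,\infty)$, the measure $\mu$ satisfies the $V(\sigma B_0,Q,b)$ condition with $b:=\kappa\sigma^{-Q}$. In the Triebel--Lizorkin case, combining~\eqref{MN-inclusion} with Proposition~\ref{sobequal}(i) places $u$ in $\dot{M}^{s,p}(X,\rho,\mu)$; since $p>Q/s$, Theorem~\ref{embedding}(c) yields, after chaining across balls as in the proof of Theorem~\ref{LBembedding}(c), the global estimate $|u(x)-u(y)|\lesssim\rho(x,y)^{s-Q/p}\|u\|_{\dot{M}^s_{p,q}(X)}$ for all $x,y\in X$.

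The Besov case with arbitrary $q\in(0,\infty]$ is the main point, since Proposition~\ref{sobequal}(ii) is not available here. I would pick an auxiliary smoothness parameter $\varepsilon\in({\rm ind}_H(X,\rho)+Q/p,s)$; this interval is nonempty precisely because the assumption on $p$ is equivalent to $s-{\rm ind}_H(X,\rho)>Q/p$. Proposition~\ref{sobequal}(iii) then provides $u|_B\in\dot{M}^{\varepsilon,p}(B,\rho,\mu)$ on every ball $B$ of radius $R$, with $\|u\|_{\dot{M}^{\varepsilon,p}(B)}\lesssim R^{s-\varepsilon}\|u\|_{\dot{N}^s_{p,q}(B)}$. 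For any fixed $x,y\in X$, I would apply Theorem~\ref{embedding}(c) (with smoothness $\varepsilon$) on a ball $B_0\supset\{x,y\}$ of radius $R_0$ comparable to $\min\{\rho(x,y),{\rm diam}_\rho(X)\}$ to obtain
\begin{equation*}
|u(x)-u(y)|\lesssim b^{-1/p}\rho(x,y)^{\varepsilon-Q/p}R_0^{s-\varepsilon}\|u\|_{\dot{N}^s_{p,q}(X)}\lesssim \rho(x,y)^{s-Q/p}\|u\|_{\dot{N}^s_{p,q}(X)},
\end{equation*}
producing global H\"older continuity of order $\alpha=s-Q/p$ on $X$.

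The main technical subtlety is the Besov chaining step: the auxiliary parameter $\varepsilon$ must cancel out of the final H\"older exponent, and this happens only because the radius $R_0$ is chosen comparable to $\rho(x,y)$, so that $R_0^{s-\varepsilon}\rho(x,y)^{\varepsilon-Q/p}\approx\rho(x,y)^{s-Q/p}$. With global H\"older regularity of order $\alpha>{\rm ind}_H(X,\rho)$ in hand for each of the two spaces, the identification~\eqref{VVV-AGBV} forces the continuous representative of $u$ to be constant, and the triviality conclusion follows.
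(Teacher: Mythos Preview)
Your proposal is correct and follows essentially the same route as the paper's proof. In both cases the $\dot{M}^s_{p,q}$ claim is immediate from Theorem~\ref{LBembedding}(c) together with \eqref{VVV-AGBV}, and for $\dot{N}^s_{p,q}$ with arbitrary $q$ both you and the paper introduce an auxiliary $\varepsilon\in({\rm ind}_H(X,\rho)+Q/p,\,s)$ (the paper phrases this equivalently as $\varepsilon\in({\rm ind}_H(X,\rho),s)$ with $p>Q/[\varepsilon-{\rm ind}_H(X,\rho)]$), pass to $\dot{M}^{\varepsilon,p}$ on balls via Proposition~\ref{sobequal}(iii), and then run the H\"older-embedding argument of Theorem~\ref{embedding}(c); the paper packages this last step as Theorem~\ref{mainembedding-epsilon}(c), whose proof is exactly the ball-of-radius-comparable-to-$\rho(x,y)$ chaining you spell out.
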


\begin{proof}
Let $s\in ({\rm ind}_H(X,\rho),\infty)$,
$p\in (Q/[s-{\rm ind}_H(X,\rho)],\infty)$, and $q\in(0,\infty]$.
Then $s-Q/p>{\rm ind}_H(X,\rho)$ and hence, by \eqref{VVV-AGBV},
we have $\dot{\mathscr{C}}^{s-Q/p}(X,\rho)=\mathbb{R}$. Moreover,
since $p>Q/s$, Theorem~\ref{LBembedding} implies that $\dot{M}^s_{p,q}(X,\rho,\mu)\hookrightarrow\dot{\mathscr{C}}^{s-Q/p}(X,\rho)=\rr$, that is,
 $\dot{M}^s_{p,q}(X,\rho,\mu)$ is trivial.

To prove that $\dot{N}^s_{p,q}(X,\rho,\mu)$ is also trivial, take
$\varepsilon\in ({\rm ind}_H(X,\rho),s)$ so that
$p>Q/[\varepsilon-{\rm ind}_H(X,\rho)]$. Then $p>Q/\varepsilon$.
Since $\mu$ satisfies \eqref{rtw-578}, we can
argue as in the proof of \eqref{eq30-LB} (using Theorem~\ref{mainembedding-epsilon}
in place of Theorem~\ref{embedding}) to show
that $\dot{N}^s_{p,q}(X,\rho,\mu)\hookrightarrow\dot{\mathscr{C}}^{s-Q/p}(X,\rho)=\rr$ is valid.
This finishes the proof of Theorem \ref{trivial}.
\end{proof}

\begin{example} It follows from  \cite[Corollary 4.36]{MMMM13} that, if $n\in\mathbb{N}$, then
\begin{equation*}
{\rm ind}\,(\mathbb{R}^n,|\,\cdot-\cdot\,|)={\rm ind}_H(\mathbb{R}^n,|\,\cdot-\cdot\,|)=1,\quad
{\rm ind}\,([0,1]^n,|\,\cdot-\cdot\,|)={\rm ind}_H([0,1],|\,\cdot-\cdot\,|)=1,
\end{equation*}
where $|\,\cdot-\cdot\,|$ is the standard Euclidean distance.
In fact, more generally,   if $(X,\|\,\cdot\,\|)$ is a
nontrivial normed vector space, then ${\rm ind}\,(Y,\|\,\cdot\,\|)={\rm ind}_H(Y,\|\,\cdot\,\|)=1$
for any convex subset $Y\subset X$ of cardinality $\geq2$. In this context, if $\mu$ is a $Q$-lower
Ahlfors-regular measure on $X$ for some $Q\in(0,\infty)$,
then Theorem~\ref{trivial} implies that
$\dot{M}^s_{p,q}(Y)$ and $\dot{N}^s_{p,q}(Y)$ are trivial whenever $s\in(1,\infty)$, $p\in(Q/(s-1),\infty)$,
and $q\in(0,\infty]$, and $\dot{M}^1_{p,\infty}(Y)$, $\dot{N}^1_{p,\infty}(Y)$, $\dot{M}^s_{p,q}(Y)$,
and $\dot{N}^s_{p,q}(Y)$ are all nontrivial whenever $s\in(0,1)$, $p\in(0,\infty)$, and $q\in(0,\infty]$.
\end{example}

\begin{example}
Recall that a  quasi-metric space $(X,\rho)$ is said to be {\it pathwise connected}
if, for any pair of points $x,\,y\in X$, there exists a continuous path
$f\colon [0,1]\to(X,\tau_\rho)$ with $f(0)=x$ and $f(1)=y$, where $\tau_\rho$ represents
the canonical topology induced by the quasi-metric	$\rho$ on $X$.
If   $(X,\rho,\mu)$ is a pathwise connected quasi-metric measure space  equipped with a
nonnegative measure $\mu$ on $X$ satisfying the following upper Ahlfors-regularity
condition for some $Q\in(0,\infty)$: there exists a positive constant $c$ such that
\begin{equation*}
\mu(B_\rho(x,r))\leq c r^Q \quad \mbox{for any }\ x\in X\
\mbox{ and any finite }\ r\in (0,{\rm diam}_\rho\,(X)],
\end{equation*}
then ${\rm ind}\,(X,\rho)\leq{\rm ind}_H(X,\rho)\leq Q$. In this case,
Theorem~\ref{trivial} implies that  $\dot{M}^s_{p,q}(X)$ and
$\dot{N}^s_{p,q}(X)$ are trivial whenever $s\in(Q,\infty)$, $p\in (Q/[s-Q],\infty)$,
and $q\in(0,\infty]$.

A particular case of the above setting which is worth mentioning
 is the graph   $\Sigma$ of a real-valued Lipschitz function defined
in $\mathbb{R}^{n-1}$. In this
case, one has that $(\Sigma, |\cdot-\cdot|, \mathcal{H}^{n-1}\lfloor_\Sigma)$
is a pathwise connected Ahlfors-regular space of dimension $(n-1)$,
where $\mathcal{H}^{n-1}\lfloor_\Sigma$ denotes the $(n-1)$-dimensional Hausdorff
measure restricted to $\Sigma$. Hence, in this context, ${\rm ind}_H(\Sigma,\rho)\leq(n-1)$
and Theorem~\ref{trivial} implies that $\dot{M}^s_{p,q}(\Sigma)$ and $\dot{N}^s_{p,q}(\Sigma)$
are trivial whenever $s\in(n-1,\infty)$, $p\in(\frac{n-1}{s-(n-1)},\infty)$, and $q\in(0,\infty]$.
\end{example}

The triviality of the spaces $\dot{M}^s_{p,q}(X)$ and $\dot{N}^s_{p,q}(X)$ cannot,
in general, be expected in the absence connectivity. Indeed, if $(X,\rho,\mu)$
is a quasi-metric measure space containing an ``island", in the sense that there exist
a point $x_0\in X$ and a number $r_0\in(0,\infty)$ satisfying
$$X\setminus B_\rho(x_0,r_0)\neq\emptyset\quad\mbox{and}\quad{\rm dist}_\rho\lf(B_\rho(x_0,r_0),X\setminus B_\rho(x_0,r_0)\r)>0,
$$
then it is easy to check that $u:={\bf 1}_{B_\rho(x_0,r_0)}\in \dot{M}^s_{p,q}(X)
\cap\dot{N}^s_{p,q}(X)$ for all $s,\,p\in(0,\infty)$ and $q\in(0,\infty]$.

\addcontentsline{toc}{section}{References}

\bigskip

\noindent Ryan Alvarado  (Corresponding author)

\medskip

\noindent Department of Mathematics and Statistics, Amherst College, Amherst, MA, USA

\smallskip

\noindent{\it E-mail:} \texttt{rjalvarado@amherst.edu}
\bigskip

\noindent Dachun Yang  and Wen Yuan

\medskip

\noindent Laboratory of Mathematics and Complex Systems (Ministry of Education of China),
School of Mathematical Sciences, Beijing Normal University, Beijing 100875, People's Republic of China

\smallskip

\noindent{\it E-mails:} \texttt{dcyang@bnu.edu.cn} (D. Yang)

\noindent\phantom{{\it E-mails:} }\texttt{wenyuan\@@bnu.edu.cn} (W. Yuan)
\end{document}